    \newtheorem{Lem}{Lemma}[section]
    \newtheorem{Lem-Def}[Lem]{Lemma-Definition}
    \newtheorem{Prop}[Lem]{Proposition}
     \newtheorem*{thm*}{Theorem}
\newtheorem*{thmA}{Theorem A}
\newtheorem*{thmB}{Theorem B}
    \newtheorem{Thm}[Lem]{Theorem}
\theoremstyle{definition}
    \newtheorem{Def}[Lem]{Definition}
    \newtheorem{Exa}[Lem]{Example}
    \newtheorem{Rem}[Lem]{Remark}
    \newtheorem{Cons}[Lem]{Construction}
    \newtheorem{Not}[Lem]{Notation}
    \newtheorem{CorDef}[Lem]{Corollary-Definition}
\newcommand{\mb}{\mathbb}
\newcommand{\mc}{\mathcal}
\newcommand{\ult}{\underline{t}}
\newcommand{\G}{\Gamma}
\newcommand{\tit}{\textit}
\newcommand{\tbf}{\textbf}
\newcommand{\ora}[1]{\overrightarrow{#1}}
\newcommand{\E}{\mathcal E}
\newcommand{\F}{\mathcal F}
\newcommand{\I}{\mathcal I}
\newcommand{\M}{\mathcal M}
\newcommand{\N}{\mathcal N}
\renewcommand{\L}{\mathcal L}
\renewcommand{\O}{\mathcal O}
\newcommand{\C}{\mathcal C}
\newcommand{\D}{\mathcal D}
\newcommand{\bu}{\mathbf{u}}
\newcommand{\bv}{\mathbf{v}}
\newcommand{\be}{\mathbf{e}}
\newcommand{\nf}{\mathbf{f}}
\newcommand{\R}{\mathbb R}
\renewcommand{\P}{\mathcal{P}}
\newcommand{\col}{\colon}
\newcommand{\ra}{\rightarrow}
\newcommand{\ol}{\overline}
\newcommand{\supp}{\text{supp}}
\newcommand{\Q}{\mathbb{Q}}
\newcommand{\ul}{\underline}
\newcommand{\wt}{\widetilde}
\newcommand{\wh}{\widehat}
\newcommand{\J}{\mathcal{J}}
\renewcommand{\:}{\colon}
\newcommand{\NR}{N_{\mathbb{R}}}
\DeclareMathOperator{\qs}{qs}
\DeclareMathOperator{\sing}{sing}
\DeclareMathOperator{\Spec}{Spec}
\DeclareMathOperator{\stab}{s}
\DeclareMathOperator{\semi}{ss}
\DeclareMathOperator{\cof}{cof}
\DeclareMathOperator{\sm}{sm}
\renewcommand{\l}{\ell}
\newcommand{\cone}{\textnormal{cone}}
\newcommand{\val}{\text{val}}
\newcommand{\trop}{\text{trop}}
\renewcommand{\div}{\textnormal{div}}
\newcommand{\Div}{\text{Div}}
\newcommand{\Pic}{\text{Pic}}
\newcommand{\Prin}{\text{Prin}}
\renewcommand{\Im}{\text{Im}}
\newcommand{\ord}{\textnormal{ord}}
\newcommand{\Conv}{\textnormal{Conv}}
\newcommand{\boundellipse}[3]
{(#1) ellipse (#2 and #3)
}
\title[Abel Maps for nodal curves via tropical geometry]{Abel Maps for nodal curves via tropical geometry}
\author{Alex Abreu, Sally Andria, and Marco Pacini}
\begin{document}

\maketitle

\begin{abstract}
We consider Abel maps for regular smoothing of nodal curves with values in the Esteves compactified Jacobian.
In general, these maps are just rational, and
an interesting question is to find an explicit resolution.
We translate this problem into an explicit combinatorial problem by means of tropical  and toric geometry. We show that the solution of the combinatorial problem gives rise to an explicit resolution of the Abel map. We are able to use this technique to construct and study all the Abel maps of degree one.
\end{abstract}

\bigskip

MSC (2020): 14H10, 14H40, 14T05.

Keywords: Algebraic curve, tropical curve, Jacobian, Abel map, toric variety.

\tableofcontents

\section{Introduction}
   
\subsection{History and motivation}

This work is dedicated to the problem of the construction of an explicit resolution of the Abel map for a one-parameter family of an algebraic nodal curves, with smooth generic fiber and singular special fiber. Our main contribution is to translate this problem into an explicit combinatorial problem by means of tropical  and toric geometry. The solution of the combinatorial problem gives rise to an explicit resolution of the Abel map. We are able to use this technique to construct all the degree-$1$ Abel maps of a nodal curve. 

The theory of Abel maps for algebraic curves goes back to work by Abel in the nineteenth century. It was Riemann, in his seminal paper \cite{R}, that introduced Abel maps; we refer to \cite{K} for the history. 
The Abel map of a smooth curve $C$ is the map $\alpha_d\col S^d(C)\ra \mathcal J^d_C$  taking $Q_1+\dots+Q_d$ to the invertible sheaf $\mathcal O_C(Q_1+\dots+Q_d)$. Here $S^d(C)$ is the $d$-th symmetric product of $C$ and $\mathcal J^d_C$ is the degree-$d$ Jacobian of $C$.
The Abel map of a smooth curve encodes many important geometric properties of the curve. For instance, the complete linear systems and the Brill-Noether varieties 
are, respectively, the fibers and the images of the Abel maps.

A natural research direction is the construction of Abel maps for nodal curves. In fact, 
 a powerful tool  for studying smooth curves consists in taking degenerations to singular curves: it was through this technique that the Brill-Noether and Gieseker-Petri theorems were proved, see \cite{G} and \cite{GH}. It is natural to expect that a construction of the Abel map for singular curves could be very useful to study the degeneration process from smooth curves to singular ones. \par

The first problem to solve when one tries to construct an Abel map for a singular curve is the choice of a target. It is not enough to consider the generalized Jacobian of a curve (parametrizing invertible sheaves
), since, in general, it is not a proper space. Hence one has to fix a compactified Jacobian as a target of the Abel map. In this work we consider Esteves compactified Jacobian, parametrizing torsion-free rank-1 sheaves on the curve that are quasistable with respect to a fixed section and polarization (see \cite{EE01}). The other natural choice would be Caporaso compactified Jacobian, parametrizing balanced line bundles on curves (see \cite{C94}).  At any rate, our choice of target is not a restricting one, since any map towards Esteves compactified Jacobian induces a map towards Caporaso compactified Jacobian  (Caporaso space is the target of a morphism from Esteves space).  

  Several works have been dedicated to Abel maps of singular curves. The first construction appeared in \cite{AK} for irreducible curves. The reducible case is more complicated, principally due to the fact that, in general, the compactified Jacobian does not parametrize all the invertible sheaves on the curve. In this case it is usual to resort to a one-parameter deformation to smooth curves, and construct an Abel map as a limit of the Abel maps of the smooth fibers. This has been started in degree one in \cite{CE} and \cite{CCE}, in degree two in \cite{CEP}, for curves of compact type and any degree in \cite{CP}, and for curves with two components and any degree in \cite{AAJCMP}.   For the degree-$1$ map, see also the recent work \cite{NS}.
  The general problem is still open.

In our work we employ tropical and toric geometry. These techniques are not used in these previous works.
However, the use of tropical and toric geometry in  these types of problems already appeared in the recent works \cite{ID} and \cite{AP2}. A resolution of degree-$d$ Abel map with trivial polarization, for circular nodal curves and nodal curves with two components, is constructed in \cite{ID} by means of toric geometry. 
 Tropical and toric geometry are employed in \cite{AP2} to give a resolution of the universal Abel map, that is the rational map $\overline {\mathcal M}_{g,n}\ra \overline{\mathcal J}_{\mu,g}$ taking an $n$-tuple of points on a stable curve to the associated sheaf. Here $\overline{\mathcal J}_{\mu,g}$ is Esteves universal compactified Jacobian, constructed in \cite{Melo}  building on the work of Esteves \cite{EE01}. The philosophy of our work is very close to the one inspiring \cite{AP2}. The crucial idea is that, by looking at the analogous tropical problem, one obtains a ``tropically inspired" resolution of the Abel map for a one-parameter family of curves, attained by means of  toric geometry.   
 It is worth mentioning that if $\C\to B$ is a one parameter family we have only a rational map $\C^d\dashrightarrow \ol{\M}_{g,d}$. Hence, to use the universal setting to get a resolution of the Abel map $\C^d\dashrightarrow \overline{\mc J}^\sigma_\mu$, we should first resolve the map $\C^d\dashrightarrow \ol{\M}_{g,d}$, which would give  a larger resolution than needed. We will
not pursue this approach here.

\subsection{The results}

Let us explain the details of our results.  Let $C$ be a nodal curve defined over an algebraically closed field $k$. Let $\pi\col\mc C\ra B=\Spec(k[[t]])$ be a one-parameter family of curves, with smooth generic fiber and with $C$ as a special fiber. Assume that $\mc C$ is a regular surface. We call $\pi\col \mc C\ra B$ a  smoothing of $C$. Let $\sigma\col B\ra \C$ be a section of $\pi$ through its smooth locus. Let $\mc L$ and $\mu$ be respectively an invertible sheaf and a polarization of the same degree on $\C/B$. We want to study the Abel map 
\[
\alpha^d_{\mc L} \col\mc C^d=\mc C\times_B \dots\times_B \mc C\dashrightarrow \overline{\mc J}_\mu^\sigma
\]
taking a tuple $(Q_1,\dots,Q_d)$ of smooth points on a fiber $\C_b$ of $\pi$ to the sheaf $\mc L|_{\C_b}(d\sigma(b)-Q_1-\dots-Q_d)$.  Here $\overline{\mc J}_\mu^\sigma$ is Esteves compactified Jacobian, parametrizing torsion-free rank-$1$ sheaves of fixed degree on the fibers of $\C/B$ which are $(\sigma,\mu)$-quasistable (see \cite{EE01}). Of course, $\alpha^d_{\mc L}$ is only a rational map since not every sheaf is $(\sigma,\mu)$-quasistable. In general we need to perform a blowup of $\mc C^d$ to get a resolution of the Abel map. We would like a geometrically meaningful description of the blowup, that is, we would like to understand exactly what are the centers of a blowup giving rise to a resolution of the map. This is in general a very difficult problem. 

The local setting could help in a better understanding of the problem. In a way, it is enough to consider the most degenerate points of $\mc C^d$, which are the points of the form $\mc N=(N_1,\dots,N_d)$, where $N_i$ are nodes of the special fiber $C$ of the family. In fact, thanks to the fact that quasistability is an open property (see \cite[Proposition 34]{EE01}), a resolution of the Abel map at a point implies a resolution locally around this point.  The structure of the completion of the local ring of $\mc C^d$ at $\mc N$ is well-understood:
\begin{equation}
\label{eq:etalenodeint}
\widehat{\mc O}_{\mc C^d,\mc N}\cong \frac{k[[x_1,y_1,\dots,x_d,y_d,t]]}{(x_1y_1-t,\dots,x_dy_d-t)},
\end{equation}
where $x_i,y_i$ are \'etale local coordinates of $\mathcal C$ at $N_i$.
This explicit presentation allows the study of all possible blowups of $\C^d$ whose centers are divisors contained in $C^d$. Then one can try to write down a criterion testing which are the blowups giving rise to a resolution of the Abel map.   
This is the approach of the work  \cite{AAJCMP}, where the authors are able to give a criterion for a resolution of the Abel map locally around a point $\mc N\in \mc C^d$. This criterion involves certain numerical invariants 
and translates the condition of the admissibility (ensuring that the relevant sheaves are torsion-free) and the quasistability. As we shall see below, in the same spirit of the work \cite{AAJCMP}, we give another criterion for the local resolution of the Abel map, which involves the tropical Abel map.

Thus we come to the tropical side of the story.
Let $\Gamma$ be a graph with an orientation and a fixed vertex $v_0$. Let $X$ be the tropical curve with $\Gamma$ as an underlying graph and all edges of unitary length. Let $p_0\in X$ be the point corresponding to $v_0$. Although all the setting and results could be extended to all tropical curves, we only develop partially the general theory. In fact, tropical curves with unitary length function are already enough for our main purpose which, geometrically, consists in the analysis of a smoothing of a nodal curve with smooth total space.

The tropical curve $X$ comes equipped with a tropical Jacobian $J(X)$, parametrizing equivalence classes of divisors on $X$, and a tropical Abel map $\alpha^{\trop}_{d,\mc D^\dagger} \col X^d\ra J(X)$, taking a tuple $(p_1,\dots,p_d)$ to the class of the divisor $\mc D^\dagger-\sum_{1\le i\le d} p_i$, where $\mc D^\dagger$ is a fixed divisor on $X$. It seems that the tropical Abel map is a honest map and nothing is ``wrong" with it. But, when we look closely at the hidden polyhedral structures, we realize that, in general, this map does not respect these finer structures.

In fact, the tropical Jacobian $J(X)$ has a distinguished structure of polyhedral complex introduced in \cite{AAMPJac} by means of $(p_0,\mu)$-quasistability (the tropical analogue of Esteves quasistability), where $p_0\in X$ is a fixed point and $\mu$ is a polarization on $X$. Loosely speaking, the cells $\P_{(\E,D)}$ of the polyhedral decomposition of $J(X)$ parametrize $(p_0,\mu)$-quasistable divisors with a fixed combinatorial type, which amounts to choosing a divisor $D$ on the graph $\Gamma$ (retaining the part of the divisor on the tropical curve supported on the vertices of $\Gamma$) and a subset of edges $\E$ of $\Gamma$ (retaining the part of the divisor on the tropical curve supported in the interiors of the edges). 

On the other hand, since $X$ is a union of segments, it is clear that the product $X^d$ is a union of hypercubes, so it has a natural structure of polyhedral complex. In general the tropical Abel map does not take a hypercube of $X^d$ to a cell $\P_{(\E,D)}$, so it fails to be a morphism of polyhedral complexes. Nevertheless, it is expected that a refinement of the hypercubes of $X^d$ (e.g., into simplexes) should give a finer structure to $X^d$ for which the Abel map becomes a map of polyhedral complexes. 

One realizes that the condition that a simplex of $X^d$ is sent to a cell of the tropical Jacobian $J(X)$ has nice consequences. This property ensures that certain numerical inequalities hold. These numerical inequalities are very important for our main results and are proved in Theorem \ref{Thm:4.2_versão_tropical}. They involve certain combinatorial numbers 
 (see Definitions \ref{Def:a_tropical} and \ref{Def:b_tropical}) which are
  closely related to the numerical invariants introduced in \cite{AAJCMP}.

Next, we come back to geometry. Recall that $\pi\col \mc C\ra B$ is a smoothing of a nodal curve $C$. We let $\Gamma$ be the dual graph of $C$ and $X$ the tropical curve with $\Gamma$ as underlying graph and unitary lengths. We resort to toric geometry,  arguing locally around a point $\mc N=(N_1,\dots,N_d)$ of $\mc C^d$. Combinatorially, this corresponds to a hypercube $\mc H\subset X^d$, since the local equations of $\mc C^d$ at $\mc N$ are the equations of the toric variety $\wh{U}_{\mc H}$ associated to the cone over $\mc H$. The key condition is the existence of a   triangulation of $\mc H$ in simplexes each one of which is sent to a cell of the tropical Jacobian via the tropical Abel map. We call this condition \emph{compatibility with the tropical Abel map}, which is a necessary condition for the tropical Abel map from the refined hypercube $\mc H$ to $J(X)$ to be a morphism of polyhedral complexes (actually, in our situation, the two conditions are equivalent). Using the theory of toric varieties, the refinement of $\mc H$ corresponds to a toric blowup $T\ra \wh{U}_{\mc H}$. The following result is Theorem \ref{thm:tropgeoAbel}.

\begin{thmA}
Let $\C\ra B$ be a smoothing of a curve $C$ with smooth components. Let $\beta\col T\ra \wh{U}_{\mc H}$ be the toric blowup associated to a unimodular triangulation of $\mc H$ compatible with the tropical Abel map.
Then the composed map
\[
T\stackrel{\beta}{\ra}\wh{U}_{\mc H}\cong \Spec(\widehat{\O}_{\C^d,\N})\stackrel{\alpha_\L^d}{\dashrightarrow} \overline{\mathcal{J}}_\mu^\sigma
\]
is a morphism, i.e., it is defined everywhere.
\end{thmA}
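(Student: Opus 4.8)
The plan is to work entirely in the local model \eqref{eq:etalenodeint}, where $\widehat{\O}_{\C^d,\N}\cong \wh U_{\mc H}$ is the affine toric variety attached to the cone over the hypercube $\mc H$, and to show that after the toric blowup $\beta\col T\ra \wh U_{\mc H}$ dictated by a unimodular triangulation compatible with the tropical Abel map, the sheaf produced by $\alpha^d_{\mc L}$ extends to a $(\sigma,\mu)$-quasistable torsion-free rank-$1$ sheaf over all of $T$. Since $\overline{\mc J}^\sigma_\mu$ is separated and $T$ is normal, it suffices to check that such an extension exists over each torus-invariant affine chart $U_\tau\subset T$ corresponding to a maximal simplex $\tau$ of the triangulation, and then these glue automatically by uniqueness.

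First I would fix a maximal simplex $\tau$ of the triangulation and unwind what ``$\alpha^{\trop}_{d,\mc D^\dagger}$ sends $\tau$ into a single cell $\P_{(\E,D)}$ of $J(X)$'' means combinatorially: the vertices of $\tau$ are lattice points of $\mc H$, i.e.\ choices of which of the $d$ nodes $N_i$ to ``smooth along $x_i$'' versus ``along $y_i$'', and the compatibility condition pins down a fixed pair $(\E,D)$ — a subset of edges and a divisor on $\Gamma$ — recording the combinatorial type of the limit sheaf. The candidate extension over $U_\tau$ is then the sheaf $\mc L|_{\C_b}(d\sigma(b)-\sum Q_i)$ on the generic fiber, twisted down by the divisor on $T\times_B C$ read off from $(\E,D)$; concretely, one writes the flat family of sheaves on $\C^d\times_{\C^d}T$ as the pushforward along $\beta$ of $\mc L(d\sigma)$ twisted by the Cartier divisor supported on the components of the central fiber prescribed by $D$ and the (now separated) node-smoothings prescribed by $\E$. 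The point is that on $U_\tau$ the exceptional divisor is already unimodular, so each such twisting divisor is Cartier, and the resulting sheaf is visibly torsion-free of rank $1$ on every fiber.

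The two things that must then be verified fiberwise over the points of $U_\tau$ lying over $\N$ are exactly the two hypotheses built into Esteves' space: \emph{admissibility} (equivalently, that the sheaf is genuinely torsion-free rank $1$ at the nodes it is allowed to fail to be locally free at — no ``extra'' non-free points) and \emph{$(\sigma,\mu)$-quasistability} of the restriction to the central fiber. This is where Theorem~\ref{Thm:4.2_versão_tropical} does the work: the combinatorial numbers of Definitions~\ref{Def:a_tropical} and \ref{Def:b_tropical}, which encode precisely the multidegree shifts caused by crossing from one lattice point of $\mc H$ to an adjacent one inside $\tau$, satisfy the numerical inequalities guaranteed by the fact that $\tau$ lands in the cell $\P_{(\E,D)}$; these inequalities are, on the nose, the admissibility inequalities and the quasistability inequalities for the multidegree $D$ relative to $(\sigma,\mu)$. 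So one translates Theorem~\ref{Thm:4.2_versão_tropical} into: ``the multidegree $D$ is $(\sigma,\mu)$-quasistable and the sheaf is admissible,'' invokes the relevant characterization from \cite{EE01} of when a twist of $\mc L$ is quasistable, and concludes that the extension over $U_\tau$ actually maps to $\overline{\mc J}^\sigma_\mu$.

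The main obstacle I anticipate is the bookkeeping in the dictionary between the toric/lattice data of a simplex $\tau\subset\mc H$ and the geometric data $(\E,D)$ — in particular checking that the Cartier divisor on $T$ one needs to twist by is the \emph{same} on overlaps $U_\tau\cap U_{\tau'}$ of adjacent maximal charts, so that the locally-defined extensions are literally equal and not just abstractly isomorphic. Once separatedness of $\overline{\mc J}^\sigma_\mu$ is invoked this gluing is formal, but one still has to make sure the family is \emph{flat} over $T$ (so that ``torsion-free rank $1$ on each fiber'' is the right condition and the classifying map is a morphism), which requires that the triangulation be unimodular — precisely the hypothesis in the statement — so that $T$ is smooth over $B$ away from the central fibers and the twisting divisors are relatively Cartier. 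A secondary point to be careful about is that $C$ is assumed to have smooth components, which is exactly what lets us identify the dual graph $\Gamma$ with the combinatorial object governing $\mc H$ and avoids self-intersection subtleties at the nodes; I would flag where this hypothesis is used.
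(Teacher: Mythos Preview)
Your overall architecture is right: restrict to a chart $U_\tau$ for each maximal simplex $\tau$, use the compatibility hypothesis together with Theorem~\ref{Thm:4.2_versão_tropical} to get numerical control, and conclude that the rational map extends on each chart. Where you diverge from the paper is in what you do \emph{with} that numerical control.

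You propose to build the extension by hand: write down a twist of $\mc L(d\sigma)$ by a Cartier divisor ``read off from $(\E,D)$'', check it is torsion-free rank $1$ and $(\sigma,\mu)$-quasistable on each fiber, and glue. The gap is that this construction is not as straightforward as you suggest. The pair $(\E,D)$ records only the \emph{combinatorial type} of the target quasistable sheaf; it does not hand you the twisting divisor. In the paper's local setup (Section~2.2) the relevant twists are the $Z_j=\sum_i \ell_{j,i} f_j^*C_i$ of equation~\eqref{eq:Zj}, produced one generic codimension-$1$ point at a time, and showing that a \emph{single} $(\sigma,\mu)$-quasistable torsion-free sheaf over all of $U_\tau$ exists with these limits is exactly the content of Theorem~\ref{Thm:4.2} (i.e.\ \cite[Theorem~4.2]{AAJCMP}). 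Your proposal, read literally, would amount to reproving that theorem.

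The paper instead uses Theorem~\ref{Thm:4.2} as a black box. The key point you are missing is the role of Proposition~\ref{Prop:simpplexo_unitario}: unimodularity of $\tau$ gives $k[S_\tau]\cong k[U_0,\ldots,U_n]$ with $t\mapsto U_0U_1\cdots U_n$, so the chart $U_\tau$ is \emph{literally} the local model $S=\Spec k[[u_1,\ldots,u_{d+1}]]\to B$ to which Theorem~\ref{Thm:4.2} applies. One then only has to match the hypotheses: the geometric invariants $a_j^N(Y)$, $b_j^N(Y,\L)$ of equations~\eqref{a_geometrico}--\eqref{b_geometrico} coincide with the tropical $a^e_{Q_j}(W)$, $b^e_{Q_j}(W,D^\dagger)$ under the dictionary $Q\leftrightarrow C_{v_1}\times\cdots\times C_{v_d}$, $Y\leftrightarrow W$, and Theorem~\ref{Thm:4.2_versão_tropical} is precisely Conditions~(1)--(2) of Theorem~\ref{Thm:4.2}. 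No explicit sheaf, no gluing argument, no flatness check is needed; the existence of the extending morphism on $U_\tau$ is the \emph{conclusion} of Theorem~\ref{Thm:4.2}, and since the $U_\tau$ cover $T$ one is done.
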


In Example \ref{exa:two} we illustrate how one can use the result to get an explicit resolution of degree-$3$ Abel map locally at a point of  $\mc C^3$. 

We also apply our results to study Abel maps of degree $1$ of the family $\pi\col \C\ra B$ for every given polarization on $\mc C/B$.  In the case of the degree $1$ Abel map, the hypercubes in $X$ are simply the edges of the tropical curve, which are already unitary. This allows us to easily check the combinatorial condition of the previous result.
The following result is Theorems \ref{thm:Abel1} and \ref{thm:injective}.

\begin{thmB}
Let $\C\ra B$ be a smoothing of a curve $C$.
The degree-$1$ Abel map $\alpha^1_{\L}\col\C\ra \overline{\J}_{\mu}^\sigma$ is defined everywhere for every polarization $\mu$ and invertible sheaf $\mc L$. Moreover, if the dual graph of $C$ is biconnected, then $\alpha_\L^1\col \C\ra \ol{\J}_\mu^\sigma$ is injective and hence, given a smooth point $P$ of $C$, we have that any point of $\ol{\J}_{C,\mu}^P$ parametrizing an invertible sheaf is contained in a subscheme of $\ol{\J}_{C,\mu}^P$ homeomorphic to $C$.
\end{thmB}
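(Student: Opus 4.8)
The plan is to prove Theorem B in two halves. For the first half — that $\alpha^1_{\mathcal L}$ is defined everywhere — I would argue locally around the worst points of $\mathcal C$, which by the discussion above (openness of quasistability, \cite[Proposition 34]{EE01}) reduces to a neighborhood of a node $N_i$ of $C$, i.e. to the hypercube $\mathcal H \subset X$ which in the degree-$1$ case is simply a single edge $e$ of the tropical curve $X$. Since an edge is already a unimodular simplex, no subdivision is needed, so by Theorem A it suffices to check that the trivial triangulation of $e$ is \emph{compatible with the tropical Abel map}, i.e. that the tropical Abel map $\alpha^{\trop}_{1,\mathcal D^\dagger}$ sends $e$ into a single cell $\mathcal P_{(\mathcal E, D)}$ of the polyhedral decomposition of $J(X)$. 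This is where I expect to invoke Theorem \ref{Thm:4.2_versão_tropical}: as $p$ moves along $e$, the divisor $\mathcal D^\dagger - p$ has constant combinatorial type away from the two endpoints, and one must verify that the associated $(p_0,\mu)$-quasistable representative has a well-defined combinatorial type $(\mathcal E, D)$ — essentially because moving a single point of degree $1$ along one edge changes the multidegree in a controlled, one-parameter way, so the numerical inequalities defining a cell are preserved. I would spell this out by computing the reduced/quasistable divisor explicitly and checking the quasistability inequalities directly; the degree-$1$ case should make these inequalities essentially automatic.

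**For the second half** — injectivity when the dual graph $\Gamma$ of $C$ is biconnected — I would work with the induced map on a single fiber, $\alpha^1_{C,\mu} \colon C \to \overline{\mathcal J}^P_{C,\mu}$, where $P = \sigma(b)$ is the marked smooth point. The map is already known to be defined everywhere by the first half. To show it is injective, suppose $Q, Q'$ are distinct points of $C$ with $\alpha^1(Q) = \alpha^1(Q')$, i.e. the $(P,\mu)$-quasistable sheaves associated to $\mathcal O_C(P - Q)$ and $\mathcal O_C(P - Q')$ are isomorphic. First dispose of the case where $Q, Q'$ are both smooth points: if they lie on the same component $C_v$, the sheaves $\mathcal O_{C_v}(P-Q)$ and $\mathcal O_{C_v}(P-Q')$ restricted to $C_v$ (or more precisely the quasistable representatives) differ, since a degree-$0$ line bundle on an irreducible smooth curve determines the divisor class and $P - Q \sim P - Q'$ forces $Q = Q'$ only if the genus is positive — here biconnectedness of $\Gamma$ is exactly what rules out the separating-edge degeneracies and forces the relevant restriction to detect $Q$. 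If $Q, Q'$ lie on different components, or one is a node, the multidegrees of the quasistable representatives differ (the "twist" of the quasistable sheaf records which component the point was removed from), so the sheaves cannot be isomorphic. The role of biconnectedness is that a graph with a separating vertex or bridge would allow two different smooth points to produce the same combinatorial type \emph{and} the same twisted line bundle, and biconnectedness precisely excludes this.

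**The main obstacle** I anticipate is the bookkeeping in the injectivity argument: one must track, for each point $Q \in C$ (smooth point on a component, or a node), the precise combinatorial data $(\mathcal E, D)$ and the isomorphism class of the torsion-free rank-$1$ sheaf that the quasistability procedure attaches to $\mathcal O_C(P-Q)$, and then show this assignment is injective. The nodes are the subtle case, since there the sheaf is genuinely non-locally-free and one must compare it against the locally free sheaves coming from nearby smooth points; the tropical picture (the point $p$ in the interior of an edge versus at a vertex) organizes exactly this comparison, and I would lean on the identification between cells $\mathcal P_{(\mathcal E,D)}$ of $J(X)$ and strata of $\overline{\mathcal J}^P_{C,\mu}$. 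Once injectivity is established, the final clause of the theorem is formal: $\alpha^1_{C,\mu}(C)$ is the continuous injective image of the compact space $C$, hence homeomorphic to $C$, and it contains the point parametrizing any given invertible sheaf $\mathcal O_C(P-Q)$ by taking that very $Q$ (using the first half to know the image point is well-defined and lands where expected).
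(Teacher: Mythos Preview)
Your outline for the first half is correct in spirit---reduce to a node, note that the hypercube is a single edge (already a unimodular simplex), and invoke Theorem~A---but you underestimate the main step. The claim that ``the degree-$1$ case should make these inequalities essentially automatic'' is precisely where the paper does real work. It is \emph{not} obvious that as $p$ moves along an edge $e_0$ the $(p_0,\mu)$-quasistable representative of $\D^\dagger_\L - p$ has constant combinatorial type. The paper's device is to shift the polarization by $1$ at $s(e_0)$ to obtain $\mu^\sharp$, take the $(v_0,\mu^\sharp)$-quasistable divisor $D\sim D^\dagger_\L$, and form $D^\flat$ on $\Gamma^{\{e_0\}}$. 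If $(\{e_0\},D^\flat)$ is already $(v_0,\mu)$-quasistable one is done; otherwise one needs Lemma~\ref{lem:abel1}, which constructs a different pseudo-divisor $(\wt\E,\wt D)$ via the maximal set $\ol V$ minimizing $\beta_{D^\flat,\mu}$ and proves \emph{that} is quasistable. This lemma is the combinatorial heart of the first half, and without it your compatibility check is incomplete. You also omit the passage from curves with smooth components (a hypothesis of Theorem~A) to arbitrary nodal curves, which the paper handles by a separate comparison-of-multidegrees argument at the end of the proof of Theorem~\ref{thm:Abel1}.

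For the second half your strategy---compare multidegrees first, then compare sheaves on a common component---matches the paper's, but two points need sharpening. First, to show that distinct edges $e_1,e_2$ yield distinct combinatorial types, the paper uses that biconnectedness forces every cut to have at least two edges (so $\E_e\neq\emptyset$ by Lemma~\ref{lem:abel1}), together with injectivity of the \emph{tropical} Abel map (cited from \cite{BF}) and linearity on each edge to rule out two long diagonals landing in the same cell $\P_{\E,D}$; your sketch does not supply this. Second, your argument for the final clause is not quite right: an arbitrary invertible sheaf $I\in\ol\J^P_{C,\mu}$ need not lie in the image of $\alpha^1_\L$ for the \emph{fixed} $\L$. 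The paper instead varies the sheaf, taking $\L=\I$ to be any extension of $I$ to $\C$, so that $I=\alpha^1_{\I}(P)$ and the copy of $C$ through $I$ is $\alpha^1_{\I}(C)$.
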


As we already mentioned, the previous result gives rise to an Abel map of degree $1$ with Caporaso compactified Jacobian as a target, recovering in this way the result of  \cite{CE} (where Caporaso and Esteves constructed the Abel map of degree $1$ for the canonical polarization and the trivial bundle).
In a forthcoming paper we will apply our main result to construct Abel maps of degree 2 for nodal curves, solving one of the biggest unanswered question of \cite{CEP}.


\section{Background in algebraic geometry}\label{Chap:Abel maps for nodal curves}

\subsection{Jacobians of nodal curves}

Let $k$ be an algebraically closed ﬁeld.
A \emph{curve} is a reduced, connected, projective scheme of pure dimension $1$ over $k$. We will always assume our curves to be nodal, meaning that the singularities are nodes, that is, analytically like the origin in the union of the coordinate axes of the plane $\mathbb A^2_k$. A curve may have several irreducible components, which will be simply called \emph{components}. 


Let $C$ be a curve. A \tit{subcurve} of $C$ is a reduced union of components of $C$. A subcurve is not necessarily connected. 
If $Y$ is a subcurve of $C$, we define the \tit{complement of $Y$} as $Y^c:=\overline{C\setminus Y}$.

A coherent sheaf $I$ on $C$ has \emph{rank-$1$} if it has generic rank $1$ at each component of $C$, and it is \tit{invertible} if it is locally free of rank $1$.
Let $Y$ be a subcurve of $C$ and let $I$ be a torsion-free rank-1 sheaf. We denote by $I_{Y}$ the quotient of the restriction $I|_{Y}$ by its torsion subsheaf, that is, 
\begin{equation}\label{Eq:I_Y}
I_{Y}:=I|_Y/\mc{T}(I|_{Y}).
\end{equation}
The \textit{degree} of a torsion-free rank-1 sheaf $I$ on a curve $C$ is defined as  
 \[
\deg(I):=\chi(I)-\chi(\mathcal{O}_C),
 \]
  where $\chi(\cdot)$ is the Euler characteristic.
 
 Assume that the curve $C$ has  components $C_1,\dots,C_h$. A \tit{polarization on $C$} is a tuple $\mu=(\mu_1,\dots,\mu_h)$ of rational numbers summing up to an integer number, called the \emph{degree} of $\mu$. 
Given a proper subcurve  $Y$  of $C,$ we set 
 \[
 \mu(Y)=\sum_{C_i\subset Y}\mu_i.
 \]
We also set $\Sigma_{Y}:=Y\cap Y^{c}$ and $\delta_Y=\# \Sigma_Y$. 
We define
 \[
 \beta_I(Y):=\deg(I_Y)-\mu(Y)+\frac{\delta_Y}{2}.
 \]

\begin{Def}\label{stab} Let $C$ be a curve. Let $\mu$ be a polarization of degree $d$ on $C$ and $I$ be a torsion-free rank-$1$ sheaf of degree $d$ on $C$.  We say that $I$ is  \emph{stable} if $\beta_I(Y)>0$ for every proper subcurve $Y$ of $C$.  We say that $I$ is \emph{$\mu$-semistable} if $\beta_I(Y)\ge0$
for every subcurve $Y$ of $C$. Given a smooth point $P$ of $C$, we say that $I$ is $(P,\mu)$-quasistable if it is semistable and $\beta_I(Y)>0$ for every proper subcurve $Y$ of $C$ containing $P$.
\end{Def}

 
 
A \tit{family of curves} is a flat, projective morphism of schemes $\pi\col\mc{C}\ra B,$ whose fibers are curves. We denote by $\C_b:=\pi^{-1}(b)$ the fiber over $b\in B$. The family $\pi\col\C\ra B$ is called \tit{local of dimension 1} if $B=\Spec(k[[t]])$ and \tit{regular} if $\C$ is regular. 
A \emph{smoothing} of a curve $C$ is a regular family of curves which is local of dimension $1$, with smooth generic fiber and special fiber isomorphic to $C$.

Let $\pi\col\C\ra B$ be a family of curves. 
A \emph{sheaf on $\mc{C}/B$} is a $B$-flat coherent sheaf $\I$ on $\mc{C}.$ 
We say that a sheaf $\I$ on $\C/B$ is  \tit{torsion-free} (resp. \tit{rank-1}, resp. \tit{simple})  if, for each closed point $b\in B$, the restriction $\I|_{\C_b}$ of $\I$ to each fiber $\C_b$ of $\C/B$ is torsion-free (resp. rank-1, resp. simple).

A \emph{polarization} $\mu$ on $\C$ is the datum of a polarization $\mu_b$ on each  fiber $\C_b$ of $\pi$ which is compatible with specializations. This means that if $\mathcal Y$ is a subscheme of $\C$, flat over $B$, whose fiber $\mathcal Y_b$ over $b\in B$ is a subcurve of $\mathcal C_b$, $\forall b\in B$, then $\mu_b(\mathcal Y_b)$ is independent of $b$.
The degree of the polarization $\mu$ is the degree of its restriction to any fiber of the family.

Let $\I$ be a sheaf on $\C/B$. Let $\sigma\col T\to\C$ be a section of $\pi$ through its smooth locus. We say that a sheaf $\I$ over $\C/B$ is $\mu$-stable (respectively $\mu$-semistable, respectively  \emph{$(\sigma,\mu)$-quasistable}) if, for any closed point $b\in B$, the restriction of $\I$ to the fiber $\C_b$ is a torsion-free, rank-$1$ and $\mu_b$-stable (respectively $\mu_b$-semistable, respectively $(\sigma(b),\mu_b)$-quasistable) sheaf on $\C_b$.

The relative compactified Jacobian functor of the family $\C/B$ is the contravariant functor 
\[
\overline{\bf{J}}_{\C/B}\col(B\mbox{-schemes})^{\circ}\ra(\mbox{sets})
\]
that associates to each $B$-scheme $T$ the set of simple torsion-free rank-1 sheaves on $\C_{T}/T$ (here $\C_T=\mc C\times_B T$), modulo the  following equivalence relation: 
for $\I_1,\I_2$ on $\C_{T}/T$,
\begin{equation}\label{eq:equivalence}
\I_1\sim\I_2\ \mbox{ if }\  \exists\ \M\mbox{ invertible sheaf on $T$ such that}\ \I_1\cong \I_2\otimes p_2^{*}\M,
\end{equation}
where $p_2\col \C_T\ra T$ is the projection onto $T.$


The functor $\overline{\mathbf{J}}_{\C/B}$ is representable by an algebraic space $\overline{\mc J}_{\mathcal{C}/B}$. Let $\overline{\mc J}_{\mathcal C/B}^d$ be the locus of $\overline{\mc J}_{\mathcal{C}/B}$ parametrizing degree-$d$ sheaves.
In \cite[Theorem B]{EE01} Esteves showed, under certain hypothesis 
(including the case of a smoothing), that $\overline{\mc J}_{\mathcal{C}/B}$ is a scheme. 
 The space $\overline{\mc J}_{\mathcal{C}/B}$ is connected and satisfies the existence part of the valuative criterion but, in general, it is not separated and only locally of finite type. The failure of separatedness is due to the fact that the limit of a sequence of simple torsion-free rank-$1$ sheaves on $\mathcal{C}/B$ might be not unique. To deal with a manageable piece of it, in \cite{EE01} Esteves built proper subspaces of $\overline{\J}_{\mathcal{C}/B}$ (hence in particular separated). This can be done by resorting to polarizations, as we shall see below.

Let $\mu$ be a degree-$d$ polarization on $\C/B$.
Denote by $\overline{\mc J}^{\stab}_{\mu}$ (respectively $\overline{\mc J}^{\semi}_\mu$, respectively $\overline{\mc J}^{\sigma}_\mu$) the subspaces of $\overline{\mc J}^d_{\mathcal{C}/B}$ parametrizing  torsion-free rank-1 sheaves on $\mathcal{C}/B$ that are $\mu$-stable (respectively $\mu$-semistable, respectively $(\sigma,\mu)$-quasistable). 
 We have
\begin{center}
$\overline{\mc J}^{\stab}_\mu\subset  \overline{\mc J}^{\sigma}_\mu \subset 
\overline{\mc J}^{\semi}_\mu\subset
\overline{\mc J}_{\mathcal C/B}^d$.
\end{center}

In \cite{EE01} Esteves showed that all these spaces are open in $\overline{\mc J}_{\mathcal{C}/B}$ and their formation commutes with base change.
More properties of these spaces are described in the following result.

\begin{Thm}[Theorem A in \cite{EE01}]\label{theoA}
The algebraic space $\overline{\mc J}^{\semi}_\mu$ is of finite type over $B$. Moreover
\begin{enumerate}
\item[(1)] $\overline{\mc J}^{\semi}_\mu$ is universally closed over $B$;
\item[(2)]$\overline{\mc J}^{\stab}_\mu$ is a separated scheme over $B$;
\item[(3)] $\overline{\mc J}^{\sigma}_\mu$ is proper over $B$.
\end{enumerate}
\end{Thm}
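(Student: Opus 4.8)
The plan is to deduce all three statements from a single boundedness result together with the valuative criteria for universal closedness and separatedness, the workhorse throughout being elementary modifications of torsion-free rank-$1$ sheaves along special fibres and the way the numbers $\beta_I(Y)$ transform under them. For finiteness of type: if $I$ is $\mu_b$-semistable on a fibre $\C_b$, then applying $\beta_I(Y)\ge 0$ both to a subcurve $Y$ and to its complement $Y^c$ confines the degree of $I$ on each component to a bounded interval depending only on $\mu_b$ and the dual graph of $\C_b$ (only finitely many dual graphs occur as $b$ varies). Torsion-free rank-$1$ sheaves of bounded multidegree on the fibres of a fixed projective family form a bounded family — each is a quotient of a fixed twist of a power of a relatively very ample sheaf, hence has bounded Hilbert polynomial — so $\overline{\mc J}^{\semi}_\mu$, and therefore also $\overline{\mc J}^{\sigma}_\mu$ and $\overline{\mc J}^{\stab}_\mu$, are of finite type over $B$.

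For part (1) I would check the existence part of the valuative criterion over a DVR $R$ with fraction field $K$ and residue field $k$, which suffices since the space is of finite type. Given a simple $\mu$-semistable sheaf $I_K$ on $\C_K$, extend it after a finite base change to a torsion-free rank-$1$ sheaf $I$ on $\C_R$ (close $I_K$ up inside a fixed ambient sheaf on $\C_R$, saturate, and remove special-fibre torsion by one more elementary modification). If $I|_{\C_k}$ is not semistable, pick a destabilising subcurve $Y$ with $\beta_I(Y)<0$ in a canonical way — for instance the one maximising instability, which is well defined because $\beta$ is monotone along unions of destabilising subcurves — and replace $I$ by the Langton-type elementary modification across $Y$, transferring the excess instability to $Y^c$. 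Then prove termination: a nonnegative integral invariant such as $\sum_Y\max\{0,-\beta_I(Y)\}$ strictly decreases, or else, by the classical Langton argument, an infinite sequence of such modifications would produce a subsheaf of $I_K$ violating its semistability. After finitely many steps $I|_{\C_k}$ is semistable, yielding the required $R$-point.

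Parts (2) and (3) both rest on separatedness. Suppose $I_1,I_2$ are simple sheaves on $\C_R$ whose special fibres are $\mu_k$-stable (resp.\ $(\sigma(k),\mu_k)$-quasistable) and which agree over $K$; the generic isomorphism realises $I_1$ and $I_2$ as two lattices in a single $\C_K$-sheaf, and any two such lattices are linked by a finite chain of elementary modifications along subcurves of $\C_k$. A modification across a subcurve $Y$ moves $\beta(Y)$ and $\beta(Y^c)$ in opposite directions, while always $\beta_I(Y)+\beta_I(Y^c)\le\delta_Y$; the strict inequality $\beta>0$ on every proper subcurve (stable case), resp.\ $\beta>0$ on every proper subcurve through $\sigma(k)$ together with the impossibility of the complementary $\beta$ absorbing the compensating change without turning negative (quasistable case), forces the chain to be trivial by an induction on its length, so $I_1\cong I_2\otimes p_2^*\mc M$ for an invertible $\mc M$ on $\Spec R$. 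Hence $\overline{\mc J}^{\stab}_\mu$ and $\overline{\mc J}^{\sigma}_\mu$ are separated; since $\overline{\mc J}_{\C/B}$ is already a scheme and $\overline{\mc J}^{\stab}_\mu$ is open in it, $\overline{\mc J}^{\stab}_\mu$ is a separated scheme, which is (2). Finally $\overline{\mc J}^{\sigma}_\mu$ is universally closed: if the semistable $R$-limit produced in (1) is not yet quasistable there is a proper $Y\ni\sigma(k)$ with $\beta_I(Y)=0$, and an elementary modification attached to the maximal such $Y$ raises its $\beta$ to a positive value while keeping complementary $\beta$'s nonnegative (using $\delta_Y\ge 1$), preserving semistability; iterating and reusing the termination argument of (1) reaches a quasistable limit. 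Being of finite type, separated and universally closed over $B$, $\overline{\mc J}^{\sigma}_\mu$ is proper over $B$, proving (3).

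The genuinely delicate point is the Langton-type termination underlying (1) and reused in (3): pinning down the canonical destabilising (resp.\ non-quasistabilising) subcurve, the precise elementary modification to perform there, and a monovariant that provably forces the process to halt, all while verifying that no step reintroduces instability on some other subcurve. By contrast the separatedness arguments in (2) are essentially formal once one knows that two lattices in a fixed generic sheaf are chained by elementary modifications and how $\beta$ changes under them.
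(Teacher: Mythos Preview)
The paper does not prove this theorem: it is stated as Theorem~A of \cite{EE01} and used as a black box, with no argument given in the present paper. Your sketch is a reasonable outline of the strategy in Esteves' original paper (boundedness of multidegrees for finite type, Langton-type elementary modifications for the valuative criteria), so there is nothing to compare against here; the paper simply cites the result.
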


\subsection{Abel maps for nodal curves}

Let $\pi\col\mathcal{C}\rightarrow B$ be a smoothing  of a nodal curve $C$. 
Let $\sigma\col B\ra\C$ be a section of $\pi$ through its smooth locus. Let $C_1,\ldots,C_h$ be the components of $C$. Denote the smooth locus of $\pi$ by $\dot{\C}$, and let $\dot{C_i}:=C_i\cap\dot{\C}.$ Set $\dot{\mathcal{C}}^d:=\dot{\mathcal{C}}\times_B \dot{\mathcal{C}}\times_B ...\times_B \dot{\mathcal{C}}$, the product of $d$ copies of $\dot{\C}$ over $B$. 
For each $d$-tuple $\underline{i}=\left(i_1,...,i_d\right)$, we define $\dot{C}_{\underline{i}}=\dot{C}_{i_1}\times ...\times \dot{C}_{i_d}$. Notice that $\dot{C}_{\underline{i}}$ is a divisor of $\dot\C^d$. Moreover, the special fiber of $\dot{\mathcal{C}}^d\rightarrow B$ can be written as:
\[\coprod_{1\leq i_1,...,i_d\leq h}\dot{C}_{\underline{i}}=\coprod_{1\leq i_1,...,i_d\leq h} \dot{C}_{i_1}\times ...\times \dot{C}_{i_d}.
\] 

Let $\mathcal{L}$ be an invertible sheaf on $\C/B$ of degree $k$. Consider the family 
\begin{equation}\label{eq:familyCd}
\dot\C^d\times_B\C\ra \dot\C^d
\end{equation}
whose fibers are the curves of the family $\pi\col \C\ra B$. There exists a \emph{degree-$d$ Abel map} 
\[
\dot{\C}^d\ra \ol{\mc J}_{\C/B}
\]
taking the $d$-tuple $\left(Q_1,...,Q_d\right)$ of points on a fiber $\C_b$ of $\dot\C^d\times_B\C\ra
\dot\C^d$ to the invertible sheaf
\begin{equation}\label{Eq:Ls}
\mathcal{L}|_{\mathcal{C}_b} \left(d\cdot\sigma\left(b\right)-Q_1-...-Q_d\right).
\end{equation}

Our goal is to extend (if possible) this Abel map to $\C^d$ and, to do that, we need to choose a proper target. We consider a polarization $\mu$ on $\C/B$ of degree $k$ and the compactified Jacobian $\overline{\mc J}_\mu^\sigma$ as a target.
However, the sheaf defined in equation \eqref{Eq:Ls} might fail to be $(\sigma,\mu)$-quasistable and thus it does not even induce a map from $\dot{\C}^d$ to $\overline{\mc J}_\mu^\sigma$. We need to modify this sheaf.\par

 Form the fiber diagram  
\[\xymatrix{
\dot{\mathcal{C}}^d\times_B \mathcal{C} \ar[d]^{\pi_d} \ar[r]^{\;\;\;\;\;\;f} & \mathcal{C} \ar[d]^{\pi} \\
\dot{\mathcal{C}}^d \ar[r] & B
}
\]
Let $\Delta_{i,d+1}$ be the preimage of the diagonal subscheme of $\C\times_B\C$ via the projection map \[
\dot{\mathcal{C}}^d\times_B \mathcal{C}\rightarrow \mathcal{C}\times_B \mathcal{C}\] onto the $i$-th and $(d+1)$-th factor.
By \cite[Thm 32 (4)]{EE01} and \cite[Lemma 4.1]{C94}, 
for each $\underline{i}=(i_1,\dots,i_d)$ there is a divisor of $\dot{\mathcal{C}}^d\times_B \mathcal{C}$ of type
\begin{equation}
\label{Eq:twZ}
Z_{\underline{i}}=\sum_{1\le j\le h} \ell_{\underline{i},j} \cdot \dot{C_{\underline{i}}} \times C_j,
\end{equation}
for integers $\ell_{\underline{i},j}$,
 such that the invertible sheaf: 
\[\mathcal{M}:=f^*\mathcal{L}\otimes \mathcal{O}_{\dot{\mathcal{C}}^d\times_B \mathcal{C}} \left(d\cdot f^*\sigma\left(B\right)-\sum_{1\le i\le d} \Delta_{i,d+1}\right)\otimes \mathcal{O}_{\dot{\mathcal{C}}^d\times_B \mathcal{C}}\left(-\sum_{\underline{i}}Z_{\underline{i}}\right)\]
on the family $\dot{\C}^d\times_B\C \ra \dot{\C}^d$
is $(f^*\sigma,f^*\mu)$-quasistable.  Thus, the sheaf $\M$ induces an Abel map:
 \begin{equation}\label{eq:Abelmap}\alpha^d_{\mathcal{L}}\col\dot{\mathcal{C}}^d\rightarrow \overline{\mc J}_\mu^\sigma
 \end{equation}
whose restriction over the generic point of $B$ is the Abel map of the generic fiber of the family $\pi\col \C\ra B$.

\bigskip

In \cite{AAJCMP} the authors describe local conditions to determine when the Abel map $\alpha_{\mathcal{L}}^d\col\dot{\C}^d\rightarrow \overline{\mc J}^\sigma_\mu$ extends to a suitable desingularization of $\C^d$. We will briefly describe this result.

We will always consider curves with smooth irreducible components.
We let $C^{\sing}$ be the set of nodes of a curve $C$. 
Consider the spectra of power series: 
\[
B=\Spec(k[[t]]) \quad \text{ and } \quad
S=\Spec(k[[u_1,\ldots,u_{d+1}]]).
\]
Consider the map $S\ra B$ given by $t=u_1\cdot u_2\cdot\ldots\cdot u_{d+1}$. When no confusion may arise, we will denote by $0$ both the closed points of $B$ and $S$. 

Let $\pi\col\mc{C}\ra B$ be a smoothing of a nodal curve $C$. Let   $\sigma\col B\ra \mc{C}$ be a section of $\pi$ through its smooth locus. We set $P:=\sigma(0)\in C$. Consider $\mc{C}_S:=\mc{C}\times_{B}S$ and let $\pi_{S}\col\mc{C}\ra S$ be the map given by pulling-back $\pi$. So we get the fiber diagram
\[
\xymatrix{
\mathcal{C}_S \ar[d]^{\pi_S} \ar[r]^{f} & \mathcal{C} \ar[d]^{\pi} \\
S \ar[r] & B.
}
\]
The key observation is that the family $\C^d\times_B \C\ra \C^d$ is a compactification of the family in \eqref{eq:familyCd}, and  the map $\pi_S\col \C_S\ra S$ is a local model for a desingularization of the family $\C^d\times_B \C\ra \C^d$.

A section 
$S\ra\mc{C}_S$ of the map $\pi_S$ induces a $B$-map $S\ra\mc{C}$ by composition. Conversely, every $B$-map $S\ra\mc{C}$ induces a section of $\pi_S$. We will abuse notation using the same name for both the section and the $B$-map.

Let $\delta\col S\ra \mc{C}$ be a $B$-map. Assume that $\delta(0)=N$, where $N\in C^{\sing}$ is a node of $C$. We can write the completion of the local ring of $\mc{C}$ at $N$ as
\[\widehat{\mc{O}}_{\mc C,N}\cong \frac{k[[x,y,t]]}{(xy-t)},\]
where $x,y$ are local \'etale coordinates of $\mc C$ at $N$.
The map $\pi\col\mc{C}\ra B$ is given by $xy=t$ locally around $N$. 

For a proper nonempty subset $A$ of $\{1,\ldots,d+1\}$, we define
\[
u_A:=\prod_{j\in A}u_j
\quad \text{ and } \quad 
u_{A^c}:=\prod_{j\in A^c}u_j,
\]
where $A^c=\{1,\ldots, d+1\}\setminus A$. Up to multiplication by an invertible element, the map $\delta$ is given by
\[
x=u_A \quad \mbox{ and }\quad y=u_{A^c},
\]
for some $A\subset \{1,\dots,d\}$.
Notice that $x$ and $y$ are the local equations of Cartier divisors of $\C$ supported on the  components of $\C$ meeting at $N$, that is, $V(x)=C_1$ and $V(y)=C_2$ locally at $N$, with $N=C_1\cap C_2$. Thus, if $Q_{j}$ is the generic point of $V(u_j)\subset S$, we see that $\delta(Q_{j})\subset V(x)$ if and only if $j\in A$.

\begin{Def}
Given sections $\delta_1,\ldots,\delta_m$ of $\pi_S$ passing through nodes of $C$, a subcurve $Y$ of $C$ and a node $N$ of $C$, we define 
\begin{eqnarray}\label{a_geometrico}
a_j^{N}(Y):=\#\{k\ |\ \delta_k(0)=N\mbox{ and }\delta_k(Q_j)\subset Y^{c} \}.
\end{eqnarray}
\end{Def}

Note that if 
$N\notin \Sigma_{Y},$ then the  index $j$ plays no role; in this case we simply write $a^N(Y).$ Also notice that if $N\in Y^{\sing}$, then $a^{N}(Y)=0.$

Let $S_{\{j\}^{c}}$ be the complement of $\bigcup_{i\neq j}V(u_i)$ in $S$, that is,
\begin{eqnarray}
S_{\{j\}^{c}}=\Spec\left(k[[u_1,\ldots,u_{d+1}]]_{u_{\{j\}^{c}}}\right).
\end{eqnarray}
Consider the natural inclusion map $S_{\{j\}^{c}}\ra S$, and let $\mc{C}_{S_{\{j\}^{c}}}:=\mc{C}_S\times_{S} S_{\{j\}^{c}}$. We denote by $g_j\col  \mc{C}_{S_{\{j\}^{c}}}\ra \C_S$ the projection onto the first factor. So we get the fiber diagram:
\[
\xymatrix{
\mathcal{C}_{S_{\{j\}^{c}}} \ar[d]^{\pi_j} \ar[r]^{g_j} & \mathcal{C}_S \ar[d]^{\pi_S} \\
S_{\{j\}^{c}} \ar[r] & S
}
\]
Moreover, we consider the composition:
\[
f_j:=f\circ g_j\col \mathcal{C}_{S_{\{j\}^{c}}}\ra \C.
\]
Let $\delta_1,\ldots,\delta_m$ be sections of $\pi_S$ passing through nodes of $C$, that is $\delta_i(0)$ is a node of $C$. If we restrict these sections to $S_{\{j\}^{c}},$ we get sections $S_{\{j\}^{c}}\ra \mc{C}_{S_{\{j\}^{c}}}$ of $\pi_j$ passing through its smooth locus.

Let $\mu$ be a polarization on $\C/B$ of degree $k$. 
Let $\L$ be a degree-$k$ invertible sheaf over $\mc{C}/B$. Denote by $\L_S$ the pullback of $\L$ to $\mc{C}_S$ via $f$. Consider the sheaf on $\C_S/S$ given by:
\begin{eqnarray}\label{eq:M}
\M:=\L_S\otimes f^{*}\O_{\mc{C}}(m\cdot\sigma(B))\otimes\I_{\delta_1(S)|\C_S}\otimes\cdots\otimes\I_{\delta_m(S)|\C_S}.
\end{eqnarray}
Note that the sheaf $\M$ induces a rational map $S\dashrightarrow \ol{\mc J}^\sigma_\mu$, since the generic fiber of $\pi_S$ is smooth. When we identify $S$ with a local model of a blow up of $\C^d$ and the sections $\delta_i$  with the projections $\C^d\to \C$, this rational map coincide  with the Abel map $\alpha_{\L}^d\col \dot{\C}^d\ra \mc J_{\mu}^\sigma$ defined in \eqref{eq:Abelmap}.\par
For each $j\in\{1,\ldots,d+1\}$, we also define 
\begin{eqnarray}
\M_j:=g^{*}_j\M.
\end{eqnarray}
Since the restrictions of the sections $\delta_1,\ldots,\delta_m$ to $S_{\{j\}^{c}}$ are sections passing through the smooth locus of $C,$ the sheaf $\M_j$ is invertible on $\C_{S_{\{j\}^c}}/S_{\{j\}^c}$. Also, since $S_{\{j\}^{c}}$ is the spectrum of a discrete valuation ring, with closed point $Q_j$, using \cite[Theorem 32 (4)]{EE01} and \cite[Lemma 4.1]{C94}, we find integers $\ell_{j,i}$ for $i\in\{1,\dots,h\}$ such that, if we let 
\begin{eqnarray}\label{eq:Zj}
Z_j=\sum_{i=1}^h\ell_{j,i}\cdot f^{*}_j C_i,
\end{eqnarray}
then the invertible sheaf on $\C_{S_{\{j\}^c}}/S_{\{j\}^c}$ defined as
\begin{eqnarray}
\M_j\otimes\O_{S_{\{j\}^{c}}}(-Z_j)
\end{eqnarray}
is $(f_j^*\sigma,f_j^*\mu)$-quasistable.

\begin{Def}
Given an invertible sheaf $\L$ of  degree $k$ on $\C/B$, sections $\delta_1,\ldots,\delta_m$ of $\pi_S$, a subcurve $Y$ of $C$ and a node $N$ in the intersection of components $C_r$ and $C_s$ of $C$, for every $j\in\{1,\dots,d+1\}$ we define
\begin{equation}\label{b_geometrico}
b_{j}^{N}(Y,\L):=\left\{\begin{array}{ll}
\ell_{j,s}-\ell_{j,r},&\mbox{if }C_r\subset Y\mbox{ and } C_s\not\subset Y;\\
\ell_{j,r}-\ell_{j,s},&\mbox{if }C_r\not\subset Y\mbox{ and } C_s\subset Y;\\
0 &\mbox{otherwise.}
\end{array}\right. \end{equation}
\end{Def}

Recall that, since we are working with curves with smooth irreducible components, for every node $N$ there are distinct indices $r$ and $s$ such that $N$ is in the intersection of $C_r$ and $C_s$. 

The key result about the local resolution of the Abel map is given by the following theorem. Recall that the sheaf $\M$ induces a rational map $S\dashrightarrow \ol{\mc J}^\sigma_\mu$ which we can see as a local model of the rational Abel map $\alpha_{\L}^d\col \dot{\C}^d\ra \ol{\mc J}_{\mu}^\sigma$ (after a suitable blowup).

\begin{Thm}[Theorem 4.2 in \cite{AAJCMP}]\label{Thm:4.2}
Let $\L$ and $\mu$ be, respectively, a polarization and an invertible sheaf on $\C/B$ of degree $k$.
Let $\delta_1,\ldots,\delta_m$ be sections of $\pi_S$ passing through nodes of $C$. There exists a map $S\ra\overline{\mc J}^\sigma_\mu$ extending the rational map defined by $\M$  if the following two conditions hold for every subcurve $Y\subset C$ containing $P=\sigma(0)$:
\begin{itemize}
\item[(1)]for every $j_1,j_2=1,\ldots,d+1$ and every node $N\in\Sigma_Y,$ we have
\[
\left|\big(a_{j_1}^{N}(Y)-b_{j_1}^{N}(Y,\L)\big)-\big(a_{j_2}^{N}(Y)-b_{j_2}^{N}(Y,\L)\big)\right|\leq1.
\]
\item[(2)] 
for every function $j\col C^{\sing}\ra\{1,\ldots,d+1\}$, we have 
\[
-\frac{\delta_Y}{2}<\deg(\L|_Y)-\mu(Y)
+\sum_{N\in C^{\sing}}
\big(a_{j(N)}^{N}(Y)-b_{j(N)}^{N}(Y,\L)\big)
\leq\frac{\delta_Y}{2}
\]
\end{itemize}
\end{Thm}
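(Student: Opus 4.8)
The plan is to reduce the statement to a concrete check on the normal bundle / valuations along the divisor $S_{\{j\}^c}$ for each $j$, and then invoke the valuative criterion of properness for $\overline{\mc J}^\sigma_\mu$ (Theorem \ref{theoA}(3)). Concretely, to show that the rational map $S\dashrightarrow \overline{\mc J}^\sigma_\mu$ extends to a morphism, it suffices to extend it over each codimension-$1$ point of $S$, i.e. over each generic point $Q_j$ of $V(u_j)$, because $S$ is regular (hence its local rings are UFDs, so the locus of indeterminacy of a rational map to a separated scheme has codimension $\ge 2$, and then properness of $\overline{\mc J}^\sigma_\mu$ over $B$ together with the existence part of the valuative criterion, which holds at every codimension-$1$ point, forces the map to be defined everywhere — here one uses that $\overline{\mc J}^\sigma_\mu$ is proper, hence separated, so the limit is unique). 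So the real content is: for each $j$, the restricted sheaf $\M_j$, after the twist by $Z_j$ chosen so that it becomes $(f_j^*\sigma, f_j^*\mu)$-quasistable over the DVR $S_{\{j\}^c}$, has a specialization to the closed point $Q_j$, and all these specializations glue into a single sheaf on $\C_S/S$ flat over $S$.

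The key step is to relate the combinatorial quantities $a_j^N(Y)$ and $b_j^N(Y,\L)$ to $\beta$-invariants of the limit sheaves $\M_j\otimes\O(-Z_j)|_{\C\times Q_j}$. The point is that the twisting divisors $Z_j=\sum_i \ell_{j,i}\cdot f_j^*C_i$ differ from component to component, and the $\I_{\delta_k(S)|\C_S}$ in the definition \eqref{eq:M} of $\M$ contribute, after restriction to $S_{\{j\}^c}$ and further to the special fiber, a twist by $-C_{i}$ on the component the section $\delta_k$ moves into as $u_j\to 0$; this is exactly what $a_j^N(Y)$ records. Thus for a subcurve $Y\subset C$ containing $P$ one computes, at the closed point $Q_j$,
\[
\beta_{\M_j\otimes\O(-Z_j)}(Y) = \deg(\L|_Y)-\mu(Y) + \sum_{N\in C^{\sing}} \big(a_j^N(Y)-b_j^N(Y,\L)\big) + \frac{\delta_Y}{2},
\]
and semistability/quasistability of the twisted sheaf over the DVR forces $0\le \beta_{\M_j\otimes\O(-Z_j)}(Y)$ and $<$ strictly when $P\in Y$. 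Condition (2) is then precisely the statement that for the ``worst'' choice of node-indexing function $j(\cdot)$ — and, by a convexity/linearity argument in the $a$'s, for \emph{every} such function — the relevant $\beta$ stays in the quasistability window $(-\delta_Y/2,\delta_Y/2]$. Condition (1), bounding the jump in $a_j^N(Y)-b_j^N(Y,\L)$ as $j$ varies by at most $1$, is what guarantees that the limits coming from different $Q_j$ differ by at most an elementary modification at the node $N$, which is exactly the compatibility needed for them to be restrictions of a single flat sheaf on $\C_S/S$ (an elementary transformation at a node changes $a^N$ by $1$); said differently, (1) ensures the candidate limit sheaf is torsion-free (admissible) when we glue, rather than acquiring torsion or dropping rank.

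So the proof proper would run: (i) by regularity of $S$ and properness of $\overline{\mc J}^\sigma_\mu$, reduce extension of $\M$ to compatible extensions over each divisor $V(u_j)$; (ii) for each $j$, identify the quasistable twist $Z_j$ and compute $\beta(Y)$ of the special fiber of $\M_j\otimes\O(-Z_j)$ in terms of $\deg(\L|_Y)-\mu(Y)$, $a_j^N(Y)$, $b_j^N(Y,\L)$, $\delta_Y$; (iii) observe that quasistability over the DVR is \emph{equivalent} to the inequalities in (2) for the specific indexing function picked out by the $\delta_k$'s landing at each node when $u_j\to 0$, and that (2) for all indexing functions is what is needed once one also moves among the $Q_j$; (iv) use (1) to check that the various special fibers are related by single elementary modifications at the nodes, hence fit together into a sheaf flat over $S$, which by the open nature of quasistability (\cite[Proposition 34]{EE01}) and \cite[Theorem 32(4)]{EE01} is $(\sigma,\mu)$-quasistable in a neighborhood, giving the desired morphism $S\to\overline{\mc J}^\sigma_\mu$.

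The main obstacle I expect is step (iv): translating the numerical gluing condition (1) into an honest statement that the $d+1$ locally-defined families of sheaves (one over each $S_{\{j\}^c}$, each twisted by its own $Z_j$) descend to / agree on overlaps to give one coherent sheaf on all of $\C_S$ that is $B$-flat and torsion-free on every fiber — this requires carefully bookkeeping the twists $Z_j$ against each other on the overlaps $S_{\{j_1\}^c}\cap S_{\{j_2\}^c}$ and checking that the discrepancy is exactly absorbed by an elementary modification supported at the relevant node, which is where the ``$\le 1$'' in (1) is used sharply. The computation in step (ii) of $\beta(Y)$ is bookkeeping but must be done with care regarding which component each section specializes into and the sign conventions in \eqref{b_geometrico}.
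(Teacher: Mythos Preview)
This theorem is not proved in the present paper: it is quoted verbatim as ``Theorem 4.2 in \cite{AAJCMP}'' and no proof is given here. So there is no proof in this paper to compare your proposal against.

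That said, your sketch has a structural gap worth flagging. The reduction ``$S$ is regular, so indeterminacy has codimension $\ge 2$, so it suffices to extend over each $Q_j$'' is not valid in this setting: that principle applies to rational maps into projective (or more generally divisorial) targets, but $\overline{\mc J}^\sigma_\mu$ is an algebraic space whose functor of points is given by sheaves up to the equivalence \eqref{eq:equivalence}, and extending the map means producing a single flat family of quasistable torsion-free rank-$1$ sheaves on all of $\C_S/S$. Knowing the limit over each DVR $S_{\{j\}^c}$ separately does not, by any general nonsense, hand you such a global sheaf; your step (iv) is therefore not a gluing afterthought but the entire content of the theorem. In the approach of \cite{AAJCMP}, one constructs a candidate sheaf on $\C_S$ directly (roughly, twisting $\M$ by an explicit Cartier divisor built from the $Z_j$'s and then pushing forward along a suitable modification of $\C_S$), and Condition (1) is exactly what guarantees this candidate is torsion-free on every fiber (``admissibility''), while Condition (2) then yields quasistability via the $\beta$-computation you outlined in (ii)--(iii). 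Your identification of (1) with admissibility and (2) with quasistability is correct, but the mechanism is a direct construction of one sheaf, not a gluing of $d+1$ locally-defined ones.
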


\begin{Rem}
If we incorporate the degree $\O_{\C}(m\cdot\sigma(B))$ in the degree of invertible sheaf $\L$, that is, if we start with an invertible sheaf $\L$ on $\C/B$ of degree $k+m$ and we define the sheaf  
\[
\M':=\L_S
\otimes\I_{\delta_1(S)|\C_S}\otimes\cdots\otimes\I_{\delta_m(S)|\C_S}
\]
instead of the sheaf $\M$ in equation \eqref{eq:M}, then a result as in  Theorem \ref{Thm:4.2} holds with $\M$ replaced by $\M'$, where the Condition (2) translates into the following condition:
\begin{itemize}
    \item[$(2)'$] For every function $j\col C^{\sing}\ra\{1,\ldots,d+1\},$ we have
\[-\frac{\delta_Y}{2}<\deg(\L|_Y)-\mu(Y)+\sum_{N\in C^{\sing}}\big(a_{j(N)}^{N}(Y)-b_{j(N)}^{N}(Y,\L)\big)-m\leq\frac{\delta_Y}{2}.\]
\end{itemize}
\end{Rem}

\subsection{Toric varieties}\label{Chap:Tórica}

 Throughout the paper, we will follow the notation in Fulton's book \cite{Fulton}. In particular, we write $N$ for a lattice and $M$ for its dual.  For a rational strongly convex polyhedral cone $\sigma\subset N_{\mathbb{R}}$, we let $\sigma^{\vee}$ be its dual, $S_\sigma=\sigma^{\vee}\cap M$ be its associated semigroup, $A_{\sigma}:=k[S_{\sigma}]$ be its associated algebra and $U_{\sigma}=\Spec{A_\sigma}$ be its associated affine toric variety.
  For a finite fan $\Delta$ in $N$ of rational strongly convex polyhedral cones, we denote by $X(\Delta)$ the associated toric variety. Recall that if $\Delta'$ is a refinement of $\Delta$, then there is an induced birational morphism $X(\Delta')\to X(\Delta)$.\par
 
 A \tit{polytope} in $N_{\mathbb{R}}$ is a set of the form \[\P=\Conv(W)=\left\{\sum_{\bu\in W}\lambda_\bu \bu\ |\ \lambda_\bu\in\R_{\ge0} \text{  and } \sum_{\bu\in W}\lambda_\bu=1\right\}\subset N_{\mathbb{R}},\]
where $W\subset N_{\mathbb{R}}$ is a finite set. In this case, we say that $\P$ is the \tit{convex hull of $W.$}  The \tit{set of vertices of $\P$}  is the smallest set $V_{\P}$ such that $\Conv(V_{\P})=\P$.\par


A polytope $\P\subset N_{\mathbb{R}}$ gives rise to a polyhedral cone in $N_{\mathbb{R}}\times\mathbb{R}$, called the \tit{cone over $\P$} and denoted $\cone(\P)$, which is defined as
\[
\cone(\P)=\{\lambda\cdot(\bu,1)\in N_{\mathbb{R}}\times \mathbb{R}\ |\ \bu\in \P \text{ and } \lambda\geq0\}.
\]
If $\P=\Conv(W)
$, then $\cone(\P)=\mbox{cone}(W\times\{1\}).$  To avoid cumbersome notations we will simply write $S_{\P}$ and $S_{W}$ for the semigroup associated to  $\cone(\P)$ and $\cone(\Conv(W))$. Moreover, we will write $A_{\P}$ and $A_{W}$ for the algebra associated to these semigroups, and $U_\P$ and $U_{W}$ for the associated affine toric variety.

The \tit{hypercube of dimension $n$} is the polytope $\mathcal{H}_n=\Conv(V_n)$, where 
\[
V_n=\{(a_1,\ldots,a_n)\in\NR\ |\ a_i\in\{0,1\}\}.
\]
We simply write $\mathcal{H}$ if the dimension is clear.

\begin{Prop}\label{Prop:alg_do_cone}
Let $\mathcal{H}_n$ be a hypercube of dimension $n$ and $\cone(\mc{H}_n)$ be the cone over $\mc H_n$. Then $k[S_{\mc{H}_n}]$ is a finitely generated algebra and we have
\[k[S_{\mc{H}_n}]\cong\frac{k[X_1,Y_1,\ldots,X_n,Y_n]}{I}\]
where $I$ is the ideal generated by the polynomials $X_iY_i-X_jY_j$, with $1\leq i<j\leq n.$
\end{Prop}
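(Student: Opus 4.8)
The plan is to compute the semigroup $S_{\mc H_n}$ explicitly and identify the generators and relations of its semigroup algebra. First I would describe $\cone(\mc H_n)\subset \NR\times\R$ concretely: it is the cone over the $n$-cube sitting at height $1$, so a point $(w,\lambda)$ with $\lambda>0$ lies in it iff $w/\lambda\in[0,1]^n$, i.e. iff $0\le w_i\le\lambda$ for all $i$. Dualizing, the dual cone $\cone(\mc H_n)^\vee$ is generated by the linear functionals $m_i^+(w,\lambda)=w_i$ (coming from the facets $w_i\ge 0$) and $m_i^-(w,\lambda)=\lambda-w_i$ (coming from the facets $w_i\le\lambda$), for $i=1,\dots,n$. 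I would then argue that these $2n$ functionals generate the semigroup $S_{\mc H_n}=\cone(\mc H_n)^\vee\cap M$: since the cone is generated by the integral vertex vectors $(v,1)$ with $v\in V_n$, and any lattice point in the dual can be written as a nonnegative integer combination of the $m_i^{\pm}$ after checking it is nonnegative on all vertices. Set $X_i:=\chi^{m_i^+}$ and $Y_i:=\chi^{m_i^-}$ in the semigroup algebra $A_{\mc H_n}=k[S_{\mc H_n}]$. This shows $k[S_{\mc H_n}]$ is generated by $X_1,Y_1,\dots,X_n,Y_n$, hence is a finitely generated $k$-algebra, giving the surjection $k[X_1,Y_1,\dots,X_n,Y_n]\twoheadrightarrow k[S_{\mc H_n}]$.

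Next I would identify the kernel of this surjection. The obvious relations are $X_iY_i-X_jY_j$ for $i<j$, because $m_i^++m_i^-=m_j^++m_j^-$ (both equal the functional $(w,\lambda)\mapsto\lambda$) in $M$, so $\chi^{m_i^+}\chi^{m_i^-}=\chi^{m_j^+}\chi^{m_j^-}$ in the algebra; hence the ideal $I$ generated by these binomials is contained in the kernel, and we get an induced surjection $\phi\colon k[X_1,Y_1,\dots,X_n,Y_n]/I\twoheadrightarrow k[S_{\mc H_n}]$. To show $\phi$ is an isomorphism, I would compare $k$-bases. A monomial basis for the left-hand side modulo $I$ can be obtained by choosing a normal form: using the single relation type $X_iY_i=X_jY_j$ one can move all "mixed" factors $X_iY_i$ into (say) the index $1$ slot, so a spanning set for $R:=k[X_1,\dots,Y_n]/I$ is given by monomials of the form $X_1^{a_1}Y_1^{b_1}\cdots X_n^{a_n}Y_n^{b_n}$ in which at most one of $\{X_1\cdots X_n, Y_1\cdots Y_n\}$-type overlap survives; more cleanly, a monomial is in normal form if for every $i\ge 2$ we have $\min(a_i,b_i)=0$. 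On the other side, every element of $S_{\mc H_n}$ has a unique expression as $c\cdot(0,\dots,0,1)+\sum_i (d_i m_i^+ + e_i m_i^-)$ with $d_i,e_i\ge 0$ and $\min(d_i,e_i)=0$ for all $i$ — this follows since the extra "diagonal" functional $(w,\lambda)\mapsto\lambda$ equals $m_i^++m_i^-$ for each $i$, and modding out that one relation makes the $m_i^{\pm}$ into a basis of a simplicial subsemigroup. Matching these two descriptions of bases shows $\phi$ sends the normal-form monomial basis of $R$ bijectively to the lattice-point basis of $k[S_{\mc H_n}]$, so $\phi$ is injective, completing the proof.

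The main obstacle I anticipate is the bookkeeping needed to pin down the kernel of the presentation exactly, i.e. proving that the binomials $X_iY_i-X_jY_j$ generate \emph{all} relations and not merely some. The subtlety is that a toric ideal is generated by the binomials $\chi^{u}-\chi^{v}$ for all $u,v$ mapping to the same lattice point, and in principle there could be further independent relations; the argument above finesses this by directly exhibiting compatible $k$-bases on both sides rather than invoking a general toric-ideal generation theorem. A clean way to package it is: (i) show $I$ is a prime ideal of the right dimension $n+1$ (e.g. $k[X_1,\dots,Y_n]/I$ is a domain because it embeds into $k[s,t_1,\dots,t_n]$ via $X_i\mapsto t_i$, $Y_i\mapsto s/t_i$ — or rather $X_1\mapsto t_1, Y_1\mapsto t_1', X_i\mapsto t_i, Y_i\mapsto t_1t_1'/t_i$), and (ii) note $k[S_{\mc H_n}]$ is a domain of the same Krull dimension $n+1$; a surjection of domains of equal finite Krull dimension with the target a domain forces the kernel prime ideal to be $(0)$, hence $\phi$ is an isomorphism. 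Either route works; I would present the domain-plus-dimension argument as the cleanest, with the explicit basis comparison as a backup sanity check.
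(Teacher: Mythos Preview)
Your proposal is correct and follows essentially the same approach as the paper: both compute the dual cone, find the same $2n$ generators $m_i^+=\be_i^*$ and $m_i^-=-\be_i^*+\be_{n+1}^*$, verify they generate the lattice semigroup, and then identify the relations $X_iY_i=X_jY_j$. The paper is more explicit than you are on the semigroup-generation step (it writes down the decomposition of an arbitrary $\bu\in S_\sigma$ directly, splitting coordinates according to their sign), while you are more careful than the paper on the ``only relations'' step, offering both a normal-form basis comparison and a domain-plus-dimension argument where the paper simply asserts that the relations among the generators in $M$ are generated by $\be_i^*+\nf_i^*=\be_j^*+\nf_j^*$.
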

\begin{Rem}
We point out that the equations defining the completion $\widehat{k[S_{\mc{H}_n}]}$ are precisely the equations defining the local ring appearing in equation \eqref{eq:etalenodeint}.
\end{Rem}
\begin{proof}
The proof of the result can be found in  \cite[Proposition 3.4.1]{ID}. For the reader's convenience, we recall the argument. Let us denote $\sigma=\cone(\mc{H}_n)$. We claim that $\sigma^{\vee}$ is generated by the vectors
\[
\be_i^{*}
\quad \text{ and }\quad
\nf_i^*=-\be_i^*+\be_{n+1},
\]
 for $i=1,\ldots, n$. Notice that $\be_i^*+\nf_i^*=\be_{n+1}^*$ for all $1\leq i\leq n$.

First, we will prove that  $\cone\left(\left\{\be_i^*,\nf_i^*\right\}_{1\leq i\leq n}\right)=\sigma^{\vee}$. 
Consider a vector $\bv\in \cone\left(\mc{H}_n\right)$, with  $\bv=\sum\lambda_j \left(\bv_j,1\right)$ for $\lambda_j\geq 0$ and for the vertices $\bv_j$ of the polytope $\mc{H}_n$. Then 
\[
\left\langle \be_i^*,\bv\right\rangle=\sum\lambda_j\left\langle \be_i^{*},\left(\bv_j,1\right)\right\rangle=\sum_{\langle\be_i^*,(v_j,1)\rangle=1}\lambda_j\geq 0
\]
and 
\[
\left\langle \nf_i^{*},\bv\right\rangle=\sum\lambda_j\left\langle \nf_i^{*},\left(\bv_j,1\right)\right\rangle=\sum_{\langle\be_i^*,(v_j,1)\rangle=0} \lambda_j\geq 0.
\]
This is because $\langle \be_i^*,(v_j,1)\rangle$ is either equal to $0$ or $1$.
Thus $\be_i^{*}$ and $\nf_i^{*}$ are in  $\sigma^\vee$ for every $1\le i\le n$,  hence $\cone\left(\left\{\be_i^{*},\nf_i^{*}\right\}_{1\leq i\leq n}\right)\subset\sigma^{\vee}$.

Next we prove the other inclusion. Let $\bu=\left(b_1,b_2,...,b_n,h\right)\in \sigma^{\vee}$. Define $B^+=\left\{i | b_i\geq 0\right\}$ and $B^-=\left\{i|b_i< 0\right\}$. Let $\bv$ be the vertex of $\mc{H}_n$ with $i$-th entry equal to $1$ for $i\in B^-$ and $i$-th entry equal to $0$ for $i\in B^+$. So $\left\langle \bu,\left(\bv,1\right)\right\rangle=h+\sum_{i\in B^-}b_i\geq 0$ .
Thus
\begin{equation}
\label{eq:ueifi}
\bu=\sum_{i\in B^+}b_i \be_i^{*}+\sum_{i\in B^-}\left(-b_i\right) \nf_i^{*}+\left(h+\sum_{i\in B^-}b_i\right)\left(0,...,0,1\right). 
\end{equation}
Since all the coefficients in the above linear combination are positive, it follows that the vector $\bu$ is contained in $\cone\left(\left\{\be_i^{*},\nf_i^{*}\right\}_{1\leq i\leq n}\right)$, and hence $\sigma^{\vee}\subset\cone\left(\left\{\be_i^{*},\nf_i^{*}\right\}_{1\leq i\leq n}\right)$ .

Now, if $\bu=\left(b_1,b_2,...,b_n,h\right)\in S_\sigma=\sigma^{\vee}\cap M$ then $b_1,...,b_n,h\in \mathbb Z$. By equation \eqref{eq:ueifi}, we have that $u$ is a $\mb{Z}$-linear combination of  $\left\{\be_i^{*},\nf_i^{*}\right\}$, that is,  $S_\sigma$ is also generated (as a semigroup) by  $\left\{\be_i^{*},\nf_i^{*}\right\}$.
Define $X_i:=\chi^{\be_i}$ e $Y_i:=\chi^{\nf_i^{*}}$. The algebra $k\left[S_\sigma\right]$ is finitely generated by  $X_i$ and $Y_i$. Since $\left\{\be_i^{*}\right\}$ and $\left\{\nf_i^{*}\right\}$ are sets of linearly independent vectors and since the only relations follow from the equality  $\be_i^{*}+\nf_i^{*}=\left(0,...,0,1\right)$, we conclude that the only relations between $\left\{X_i\right\}$ and $\left\{Y_i\right\}$ are of type $X_iY_i=X_jY_j$ for all $1\leq i,j\leq n$. This concludes the proof.
\end{proof}

Next, consider the projection map 
\begin{equation}
\label{eq:mapa_cone}
\begin{aligned}
\pi\col\quad\cone\left(\mc{H}_n\right)\ \quad&\rightarrow \R_{\geq 0}\\
\left(a_1,a_2,...,a_n,h\right)&\mapsto \ h.
\end{aligned}
\end{equation}
We can write the map $\pi$ as  \[\pi\left(a_1,a_2,...,a_n,h\right)=\left\langle \left(a_1,a_2,...,a_n,h\right),\left(0,...,0,1\right) \right\rangle.\] Recall that $S_{\R_{\geq0}}\subset \textnormal{Hom}\left(\R,\R\right)$ and $S_{\mc{H}_n}\subset \textnormal{Hom}\left(\R^{n+1},\R\right)$). The map $\pi$ induces a dual map of semigroups: 
\begin{align*}
\pi^\vee\col S_{\R_{\geq0}}&\ra\  S_{\mc{H}_n}\\
\varphi\ &\mapsto \varphi\circ\pi.
\end{align*}
 Since $\R_{\geq0}=\cone(\be_1)$, we have $S_{\R_{\geq0}}=\left\langle \be^{*}_1\right\rangle$ and $\pi^\vee(\be^{*}_1)=\left(0,...,0,1\right)$.
Now, the  map $\pi^\vee$ also induces a morphism of $k$-algebras 
\[
k\left[S_{\R_{\geq0}}\right]\cong k\left[t\right]\rightarrow k\left[S_{\mc{H}_n}\right]
\]
that takes $t=\chi^{e_1^{\ast}}$ to  $\chi^{\left(0,...,0,1\right)}$. Since $\be_i+\nf_i=\left(0,...,0,1\right)$, the morphism of algebras takes $t$ to $\chi^{\be_i}\cdot\chi^{\nf_i}=X_i\cdot Y_i$. This induces the toric morphism 
\[\Spec\left(\dfrac{k[X_1,Y_1,\ldots,X_n,Y_n,t]}{\langle X_1Y_1-t,\dots,X_nY_n-t\rangle}\right)\to \Spec\left(k[t]\right)=\mathbb A^1.
\]

\begin{Def}
\label{def:volume}
A \tit{$d$-simplex} (or just a \tit{simplex}) is a polytope of   dimension $d$ with $d+1$ vertices. 
The \emph{volume} of a simplex $\Delta$ of maximal dimension and vertices $V=\{\bv_0,\ldots,\bv_n\}$ is
\[\mbox{vol}(\Delta)=\frac{1}{n!}\cdot \det[(\bv_0,1),\ldots,(\bv_n,1)].\]
An integral simplex $\Delta$ of maximal dimension is said to be \emph{unitary} if vol$(\Delta)=\frac{1}{n{\displaystyle !\,}}.$
\end{Def}

Given a simplex $\Delta$, we define the map
\begin{align*}
    \beta\col&\ \quad \cone(\Delta)\quad \ra\mb{R}_{\geq0}\\
    &(a_1,\ldots,a_n,h)\mapsto\ h
\end{align*}
The dual map of semigroups induced by $\beta$ is a map $\beta^{\vee}\: S_{\mb{R}_{\geq0}}\ra S_{\Delta}$ such that $\beta^{\vee}(\be^{*}_1)=\be_{n+1}^*$.

\begin{Prop}\label{Prop:simpplexo_unitario} 
Let $\Delta=\Conv(V)$ be a unitary simplex contained in the hypercube $\mc{H}_n$, for some subset of vertices $V=\{\bv_0,\ldots,\bv_n\}$ of  $\mc{H}_n$. Then there are vectors $\bu_0,\bu_1,...,\bu_n\in M\times \mathbb Z$ such that  $S_\Delta=\left\langle \bu_0,\bu_1,...,\bu_n\right\rangle$ and $\left\langle \bu_i,\left(\bv_j,1\right)\right\rangle=\delta_{ij}$ (Kronecker's delta). Moreover, we have
\[
k\left[S_\Delta\right]\cong k\left[U_0,U_1,...,U_n\right]
\]
where $U_i=\chi^{\bu_i}$, and the map of $k$-algebras $k\left[t\right] \rightarrow k\left[S_{\Delta}\right]$ induced by $\beta^{\vee}$ is given by $t=U_0 U_1\ldots U_n$.
\end{Prop}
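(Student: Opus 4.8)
The plan is to exploit the fact that a unitary simplex $\Delta = \Conv(V)$ sitting inside $\mathcal H_n$ is, from the point of view of lattice geometry, as simple as the standard simplex: its vertices $\{(\bv_0,1),\dots,(\bv_n,1)\}$ form a $\mathbb Z$-basis of the lattice $M' := M\times\mathbb Z \cong \mathbb Z^{n+1}$. This is exactly what the unitarity hypothesis $\vol(\Delta)=\frac1{n!}$ buys us: the determinant of the matrix with rows $(\bv_i,1)$ is $\pm 1$, so these vectors span $M'$ over $\mathbb Z$. First I would let $\{\bu_0,\dots,\bu_n\}$ be the dual basis in the dual lattice $N' = N\times\mathbb Z$, characterized by $\langle \bu_i,(\bv_j,1)\rangle = \delta_{ij}$; since $\{(\bv_i,1)\}$ is a $\mathbb Z$-basis of $M'$, this dual basis genuinely lives in $N'$ (not merely in $N'_{\mathbb R}$), which is the first assertion.

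Next I would identify $S_\Delta = \cone(\Delta)^\vee \cap N'$ with the monoid generated by $\bu_0,\dots,\bu_n$. Writing $\bu \in N'_{\mathbb R}$ as $\bu = \sum_i c_i \bu_i$ in the dual-basis coordinates, one has $\langle \bu,(\bv_j,1)\rangle = c_j$; since $\cone(\Delta)$ is generated by the rays $\mathbb R_{\ge0}(\bv_j,1)$, the condition $\bu \in \cone(\Delta)^\vee$ is precisely $c_j \ge 0$ for all $j$. Thus $\cone(\Delta)^\vee = \cone(\bu_0,\dots,\bu_n)$, and intersecting with the lattice $N'$ — in which the $\bu_i$ form a basis — gives $S_\Delta = \mathbb Z_{\ge0}\bu_0 \oplus \cdots \oplus \mathbb Z_{\ge0}\bu_n$, a free commutative monoid on $n+1$ generators with no relations. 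Consequently $k[S_\Delta] \cong k[U_0,\dots,U_n]$ as a polynomial ring, where $U_i = \chi^{\bu_i}$, which is the middle assertion.

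For the final claim about $\beta^\vee$, recall that $\beta\col \cone(\Delta)\to\mathbb R_{\ge0}$ is the height function $(a_1,\dots,a_n,h)\mapsto h = \langle (a,h),(0,\dots,0,1)\rangle = \langle(a,h), \be_{n+1}\rangle$ (using $\be_{n+1}\in M'$ as the last coordinate functional), and $\beta^\vee$ sends the generator $\be_1^*$ of $S_{\mathbb R_{\ge0}}$ to the pullback $\beta^\vee(\be_1^*) = \be_{n+1}$. I would then expand $\be_{n+1}\in M'$ in the basis $\{(\bv_i,1)\}$: since each $\bv_i$ is a vertex of $\mathcal H_n$ lying in the hyperplane of last coordinate $1$, the coefficients sum in the last slot to give $\sum_i \lambda_i = 1$ where $\be_{n+1} = \sum_i \lambda_i (\bv_i,1)$; but more directly, evaluating the identity $\be_{n+1} = \sum_i \langle \bu_i, \cdot\rangle$-components is cleaner — we have $\langle \be_{n+1}, (\bv_j,1)\rangle = 1$ for every $j$ (again because the last coordinate of $(\bv_j,1)$ is $1$), and since $\{\bu_i\}$ is the dual basis, the element of $M'$ pairing to $1$ against every $(\bv_j,1)$ is $\sum_i \bu_i^\vee$... more carefully: writing $\be_{n+1} = \sum_i m_i \bu_i^\vee$ in the dual-of-dual basis and pairing with $(\bv_j,1)$ forces $m_j = 1$, i.e. $\be_{n+1} = \bu_0^\vee + \cdots + \bu_n^\vee$ in $M'$. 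Under the monoid homomorphism $\beta^\vee$ this passes to $k$-algebras as $t = \chi^{\be_1^*}\mapsto \chi^{\be_{n+1}} = \chi^{\sum \bu_i^\vee} = \prod_i \chi^{\bu_i^\vee} = U_0 U_1\cdots U_n$, as claimed. (Here I am using the standard dictionary that the toric morphism dual to an inclusion of monoids is given on coordinate rings by the induced map of semigroup algebras, so $t\mapsto \beta^\vee(\be_1^*)$ read multiplicatively.)

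The only genuinely substantive point is the very first one — that unitarity makes $\{(\bv_i,1)\}$ a lattice basis of $M'$ rather than just an $\mathbb R$-basis; everything afterward is the standard "cone over a unimodular simplex is smooth" computation. I would make sure to state explicitly that $\det[(\bv_0,1),\dots,(\bv_n,1)] = \pm 1$ is equivalent to unitarity by Definition~\ref{def:volume}, and cite the basic fact (e.g.\ from \cite{Fulton}) that a full-rank sublattice of index $1$ is the whole lattice, so that the dual vectors $\bu_i$ lie in $N'$ and generate it. No other step should present difficulty.
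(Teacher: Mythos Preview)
Your approach is essentially identical to the paper's: both take the columns of the inverse of the matrix $A=[(\bv_0,1),\dots,(\bv_n,1)]$ as the $\bu_i$ (you phrase this as ``dual basis''), verify they generate $S_\Delta$ by expanding an arbitrary $\bu$ in these coordinates, and identify $(0,\dots,0,1)=\sum_i\bu_i$ by pairing against each $(\bv_j,1)$. One caution: you have swapped the roles of $M$ and $N$ relative to the paper's conventions (here $\mc H_n\subset N_{\mathbb R}$, so $(\bv_i,1)\in N\times\mathbb Z$ and $\bu_i\in M\times\mathbb Z$), and your final paragraph's ``$\bu_i^\vee$'' and ``dual-of-dual basis'' language is muddled --- just expand $\be_{n+1}^*\in M\times\mathbb Z$ directly in the basis $\{\bu_i\}$ and read off the coefficients $m_j=\langle \be_{n+1}^*,(\bv_j,1)\rangle=1$.
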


\begin{proof}
The proof of the result can be found in  \cite[Proposition 3.5.1]{ID}. For the reader's convenience, 
we will recall the argument.
Let $A$ be the matrix $\left[\left(\bv_0,1\right),\left(\bv_1,1\right),...,\left(\bv_n,1\right)\right]$, where $\left(\bv_i,1\right)$ is the $i$-th row of $A$. Let $A^{-1}=\left[\bu_0,\bu_1,...,\bu_n\right]$, where $\bu_i$ is the $i$-th column of $\cof\left(A\right)$. We have that $\bu_i\in M\times \mathbb{Z}$ because $\det(A)=1$ (since $\Delta$ is unitary). Thus $\left\langle \left(\bv_i,1\right),\bu_j\right\rangle=\delta_{ij}$, and hence $\{\bu_0,\dots,,\bu_n\}\subset \tau^{\vee}$, where $\tau=\cone\left(\Delta\right)$. \par

Let $\bu\in S_\Delta=S_{\tau}=\tau^{\vee}\cap M$ and take
\[
\bv:=\bu-\left\langle \bu,\left(\bv_0,1\right)\right\rangle\cdot \bu_0-\left\langle \bu,\left(\bv_1,1\right)\right\rangle\cdot \bu_1-...-\left\langle \bu,\left(\bv_n,1\right)\right\rangle\cdot \bu_n.
\] 
We have $\left\langle \bv,(\bv_i,1)\right\rangle=0$ for all $1\leq i\leq n$, then $\bv\cdot A=0$. Since $\det\left(A\right)\neq 0$ we have that $\bv=0$. It means that the semigroup $S_{\tau}=S_\Delta$ is generated by the vectors $\bu_0,\dots,\bu_n$. \par
    If we let $\bu=\bu_0+\bu_1+...+\bu_n$, then we have   $\left\langle \bu,(\bv_i,1)\right\rangle=1$ for all $i\in\{0,\dots,n\}$. We also have  $\left\langle \left(0,...,0,1\right),\left(\bv_i,1\right)\right\rangle=1$ for all 
    $i\in\{1,\dots,n\}$. Then, as before, we have that $\bu=\bu_0+\bu_1+...+\bu_n=\left(0,...,0,1\right)$.
Since the map of semigroups $S_{\R_{\geq0}} \rightarrow S_{\Delta}$ sends $\be_1^{\ast}$ to $\left(0,...,0,1\right)$, the morphism $k\left[t\right] \rightarrow k\left[S_{\tau}\right]$ takes $t=\chi^{\be_1^{\ast}}$ to $\chi^{\bu_0} \chi^{\bu_1}\ldots\chi^{\bu_n}=U_0 U_1\ldots  U_n$. 
\end{proof}

We conclude this section with some definitions that will became important later. We will use toric varieties to describe the local equations of the products of a family of curves, and their blowups. For this reason, we discuss the possibility of taking the completions of the algebras defined so far, and give notations for the varieties obtained after taking these completions.

Let $\mc{H}_n$ be a hypercube of dimension $n.$ The $k$-algebra associated to the $\cone(\mc{H}_n)$ is 
\[A_{\mc{H}_n}=\frac{k[X_1,Y_1,\ldots,X_n,Y_n]}{\langle X_iY_i-X_jY_j\rangle_{1\leq i,j\leq n}}\]
and its toric variety is $U_{\mc{H}_n}=\Spec(A_{\mc{H}_n}).$ Let us denote the $\Spec$ of the completion of $A_{\mc{H}_n}$ by
\[\wh{U}_{\mc{H}_n}:=\Spec\left( \frac{k[[X_1,Y_1,\ldots,X_n,Y_n]]}{\langle X_iY_i-X_jY_j\rangle_{1\leq i,j\leq n}}\right).
\]
We call $\widehat{U}_{\mc H_n}$ the \emph{local toric variety associated to $\cone(\mc H_n)$}. 
Notice that there is an inclusion 
\[
A_{\mc{H}_n}\longrightarrow \frac{k[[X_1,Y_1,\ldots,X_n,Y_n]]}{\langle X_iY_i-X_jY_j\rangle_{1\leq i,j\leq n}}
\]
which induces a morphism: 
\[
\wh{U}_{\mc{H}_n}\longrightarrow U_{\mc{H}_n}.\]
If $\Delta$ is a fan that refines  $\cone(\mc{H}_n)$, 
then there is a toric morphism $X(\Delta)\ra  U_{\mc{H}_n}$.
 In this case, we define the scheme $\widehat{X}(\Delta)$ as
 \begin{equation}\label{eq:localtoricvariety}
 \widehat{X}(\Delta):= \wh{U}_{\mc{H}_n}\times_{U_{\mc{H}_n}} X(\Delta).
 \end{equation}
We call $\widehat{X}(\Delta)\ra \wh{U}_{\mc{H}_n}$ the \emph{local toric blowup associated to the refinement $\Delta$}.

\section{Graphs and tropical curves}

Throughout the paper, we will use the notations and terminology introduced in  \cite[Sections 2.1, 3.1, 4, 5]{AAMPJac}.
  In particular, given a graph $\Gamma$ and a  
subset of edges $\E\subset E(\Gamma)$, we define the valence $\val_\E(v)$ of a vertex $v$ of $\Gamma$ as the number of edges in $\E$ incident to v (with loops counting twice). Moreover, given a subset of vertices $V\subset V(\Gamma)$, we let $\delta_{\Gamma,V}$ (or simply $\delta_V$) be the number of edges connecting a vertex of $V$ and a vertex in $V^c=V(\Gamma)\setminus V$. Finally, a divisor on $\Gamma$ is a function $D\col V(\Gamma)\ra \mathbb Z$, and  $\Div(\Gamma)$ denotes the group of divisors on $\Gamma$.
In this section we just recall some results which will be important later.

\subsection{Graphs}
Let $\Gamma$ be a graph.
We denote by $\G^{(n)}$ the graph obtained from $\G$ by inserting exactly $n-1$ vertices in the interior of each edge of $\G$. We call $\G^{(n)}$ the \tit{$n$-subdivision} of $\G$. Notice that, if we have an orientation $\ora{\G}$ on $\G$,
then there is a natural induced orientation $\ora{\G^{(n)}}$ on $\G^{(n)}$. Given an edge $e\in E(\ora{\Gamma})$ we let $s(e)$ and $t(e)$ the source and target of $e$. See Figure \ref{Fig:orient} for the orientation induced on the $2$-subdivision of an edge.
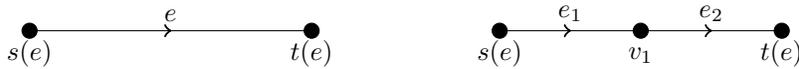
\begin{figure}[h!]
\begin{center}
\begin{tikzpicture}[scale=5]
\begin{scope}[shift={(0,0)}]
\draw (0,0) to (0.75,0);
\draw[fill] (0,0) circle [radius=0.02];
\draw[fill] (0.75,0) circle [radius=0.02];
\draw[->, line  width=0.3mm] (0.375,0) -- (0.377,0);
\node[below] at (0.75,0) {$t(e)$};
\node[below] at (0,0) {$s(e)$};
\node[above] at (0.375,0) {$e$};
\end{scope}
\begin{scope}[shift={(1.25,0)}]
\draw (0,0) to (0.75,0);
\draw[fill] (0,0) circle [radius=0.02];
\draw[fill] (0.75,0) circle [radius=0.02];
\draw[fill] (0.375,0) circle [radius=0.02];
\draw[->, line  width=0.3mm] (0.1875,0) -- (0.1895,0);
\draw[->, line  width=0.3mm] (0.5625,0) -- (0.5642,0);
\node[below] at (0.75,0) {$t(e)$};
\node[below] at (0,0) {$s(e)$};
\node[above] at (0.1875,0) {$e_1$};
\node[above] at (0.5625,0) {$e_2$};
\node[below] at (0.375,-0.02) {$v_1$};
\end{scope}
\end{tikzpicture}\end{center}
\caption{The orientation induced on a $2$-subdivision}\label{Fig:orient}
\end{figure}

There are an inclusion and a surjection 
\begin{equation}\label{eq:FG}
F\col V(\G)\ra V(\G^{(n)})
\quad \text{ and } \quad
G\col E(\G^{(n)})\ra E(\G)
\end{equation}
such that for each edge $e$ of $\G$ there exist distinct vertices $x^e_{0},\ldots, x^e_{n}\in V(\G^{(n)})$ and distinct edges $e_{1},\ldots, e_{n}\in E(\G^{(n)})$ such that
\begin{itemize}
\item[(1)] $x^e_{0}=F(v_{0}),\ x^e_{n}=F(v_{1})$ and $x^e_{i}\notin \mbox{Im}(F)$ for every $i=1,\ldots,n-1,$ where $v_{0}$ and $v_{1}$ are precisely the vertices of $\G$ incident to $e;$
\item[(2)] $G^{-1}(e)=\{e_{1},\ldots,e_{n}\};$
\item[(3)] the vertices in $V(\G^{(n)})$ incident to $e_{i}$ are precisely $x^e_{i-1}$ and $x^e_{i}$, for $i=1,\ldots,n.$
\end{itemize}
The edges $e_{1},\ldots,e_{n}$ (respectively, the vertices $x^e_{1},\ldots,x^e_{n-1}$) are called the edges (respectively, the exceptional vertices) \emph{over} $e$. More generally, a \emph{refinement} of $\Gamma$ is a graph obtained by inserting a certain number of vertices in the interior of each edge of $\Gamma$.

A \tit{cycle} on $\ora{\G}$ is just a cycle on $\G.$ Given a cycle $\gamma$ on $\ora{\G},$ we write 
 \begin{equation}\label{eq:gamma}
 \gamma(e)=
 \begin{cases}
 1 & \text{if $\gamma$ follows the orientation of $e\in \ora{\G}$;}\\
 0 & \text{if $e\not\in \gamma$;}\\
 -1 & \text{otherwise.}
 \end{cases}
 \end{equation}
 Notice that the definition of $\gamma(e)$ depends on the chosen orientation of $\Gamma$. Nevertheless, a change of orientation may only change signs of all  $\gamma(e)$, which is harmless for our purposes.

Given a graph $\G$ and a ring $A$, we define the \tit{$A$-module of 0-chains} and the \tit{$A$-module of 1-chains with coefficients in $A$}, respectively, as the sets
\begin{eqnarray}\label{Eq:c0c1}
 C_0(\G,A):=\bigoplus_{v\in V(\G)}A\cdot v\ \mbox{ and }\ 
 C_1(\G,A):=\bigoplus_{e\in E(\G)}A\cdot e.
\end{eqnarray}
We identify a $0$-chain $g=\sum_{v\in V(\G)}a_v\cdot v$ with the function $g\col V(\G)\ra A$ given by $g(v)=a_v.$ Similarly, a $1$-chain $\varepsilon=\sum_{e\in E(\G)}a_e\cdot e$ is identified with the function $\varepsilon\col E(\G)\ra A$ given by $\varepsilon(e)=a_e.$
 
Fix an orientation on $\G$, i.e., choose a digraph $\ora{\G}$ with $\G$ as underlying graph. 
We define the \tit{differential operator} $\delta\col C_0(\ora{\G},A)\ra C_1(\ora{\G},A)$ as the linear operator taking a generator $v$ of $C_0(\ora{\G},A)$ to
\begin{equation}\label{eq:delta}
\delta(v):=\sum_{\substack{e\in E(\ora{\G})\\ t(e)=v}} e-\sum_{\substack{e\in E(\ora{\G})\\ s(e)=v}}e.
\end{equation}
The \tit{adjoint of $\delta$} is the linear operator $\partial\col C_1(\ora{\G},A)\ra C_0(\ora{\G},A)$ taking a generator $e$ of $C_1(\ora{\G},A)$ to
\begin{equation}\label{eq:partial}
\partial(e):=t(e)-s(e).
\end{equation}
The space of $1$-cycles of $\ora{\G}$ (over $A$) is defined as $H_1(\ora{\G},A):=\ker(\partial).$


Note that we have a natural identification between $C_0(\G,\mathbb{Z})$ and the group $\Div(\G)$ of divisors on $\Gamma$. The composition $\partial\delta\col C_0(\ora{\G},A)\ra C_0(\ora{\G},A)$ does not depend on the choice of the orientation. The \tit{group of principal divisors on $\G$} is the subgroup $\Prin(\G):=\Im(\partial\delta)$ of $\Div(\G).$ Given $D,D'\in\Div(\G)$, we say that $D$ is \tit{equivalent} to $D'$, and we write $D\sim D'$, if $D-D'\in\Prin(\G).$ 
The \tit{degree $d$ Picard group} of a graph $\G$ is defined as 
\[\Pic^d(\G)=\Div^d(\G)/\sim.\]\par

For a subset $\mc{E}\subset E(\G)$, we define the $\mc{E}$-subdivision $\G^{\mc{E}}$ of $\G$ as the graph obtained from $\G$ by adding exactly one vertex in the interior of every edge in $\mc{E}$.
A \textit{pseudo-divisor} on $\G$ is a pair $(\mc{E},D)$ where $\mc{E}\subset E(\G)$ and $D$ is a divisor on $\G^{\mc{E}}$ such that $D(v)=-1$ for every exceptional vertex $v\in V(\G^{\mc{E}})$. If $\mc{E}=\emptyset$, then $(\mc{E},D)$ is just a divisor of $\G$. 

A \emph{flow} $\phi$ on $\ora{\Gamma}$ is a function $\phi\col E(\ora{\Gamma})\ra \mathbb Z$.
Given a flow $\phi$ on $\ora{\G}$, we define the divisor associated to $\phi$ as the image div$(\phi)=\partial(\phi),$ where $\phi$ is seen as an element of $ C_1(\ora{\G},\mathbb{Z})$.

We conclude the section describing the relation between torsion-free rank-1 sheaves on a nodal curve $C$ and pseudo-divisors on the \emph{dual graph} $\G=\G_C$ of $C$, which is the graph defined  as follows. To each component of $C$ we have a vertex of $\G$, and to each node we have an edge of $\G$, where two vertices $v,v'$ are connected by an edge $e$ if and only if the two components of $C$ corresponding to $v$ and $v'$ intersect at the node of $C$ corresponding to $e$.

Let $I$ be a degree-$d$ rank-$1$ torsion-free sheaf on a nodal curve $C$. We can define a pseudo-divisor $(\E_I,D_I)$ on $\Gamma_C$, called the \emph{multidegree} of $I$, as follows. The set $\E_I\subset E(\Gamma_C)$ is precisely the set of edges corresponding to nodes where $I$ is not locally free. For every $v\in V(\Gamma^\E_C)$, we set
\[
D_I(v)=\begin{cases}
        \deg(I|_{C_v}),&\text{ if $v\in V(\Gamma_C)$};\\
				-1,&\text{ if $v$ is exceptional},
				\end{cases}
\]
where $C_v$ is the component of $C$ corresponding to $v\in V(\Gamma_C)$.

Note that if $I$ is invertible, then $\E_I$ is empty and hence $(\E_I,D_I)$ is a divisor on $\Gamma_C$. Moreover, if $\mc C\ra B$ is a smoothing of a nodal curve $C$ and $\L$ is an invertible sheaf of type $\L=\O_{\mc C}(\sum_{v\in V(\Gamma_C)} \ell_v C_v)$ for $\ell_v\in\mathbb Z$,  then $D_{\L|_C}=\partial\delta (\sum_{v\in V(\Gamma_C)} \ell_v v)$, which is a principal divisor of $\Gamma_C$.

\subsection{Tropical curves}
A \tit{metric graph} is a pair $(\G,\ell)$ where $\G$ is a graph and $\ell$ is a function $\ell\col E(\G)\ra\R_{>0}$ called the \tit{length function}.
Let $(\G,\ell)$ be a metric graph. If $\ora{\G}$ is an orientation on $\G,$ we define the \tit{tropical curve} $X$ associated to $(\ora{\G},\ell)$ as
\[X=\frac{\left(\bigcup_{e\in E(\ora{\G})}I_{e}\cup V(\ora{\G})\right)}{\sim}\]
where $I_{e}=[0,\ell(e)]\times\{e\}$ and $\sim$ is the equivalence relation generated by $(0,e)\sim s(e)$ and $(\ell(e),e)\sim t(e).$ 
We usually just write $e$ to represent $I_e$ in $X$, and denote by $e^\circ$  the interior of $e$.

Let $X$ be a tropical curve with a model $\G_{X}$, and let $Y\subset X$ be a tropical subcurve of $X$. Then, there exists a \tit{minimal refinement} $\G_{X,Y}$ of $\G_{X}$ such that $Y$ is induced by a subgraph $\G_{Y}$ of $\G_{X,Y}$. We define
\begin{equation}
\delta_{X,Y}:=\sum_{v\in V(\G_{Y})}\mbox{val}_{E(\G_{X,Y})\setminus E(\G_{Y})}(v).
\end{equation}
When no confusion may rise we will simply write $\delta_{Y}$ instead of $\delta_{X,Y}$.

A \tit{divisor} on $X$ is a function $\mc{D}\col X\ra\mathbb{Z}$ such that $\mc{D}(p)\neq0$ only for finitely many points $p\in X.$ We define the \tit{support of} $\mc{D}$ as the set of points $p$ of $X$ such that $\mc{D}(p)\neq0$ and denote it by $\supp(\mc{D}).$ 

Every divisor on the model $\G$ of $X$ can be seen as a divisor on $X$. Given a divisor $\mc{D}$ on $X$, the degree of $\mc{D}$ is the integer $\deg\mc{D}:=\sum_{p\in X}\mc{D}(p).$ 
We let $\Div(X)$ be the Abelian group of divisors on $X$. We denote by $\Div^{d}(X)$ the subset of degree-$d$ divisors on $X.$ 

A \tit{rational function} on the tropical curve $X$ is a continuous, piece-wise linear function $f\col X\ra \R$ with integer slopes. Given an integer $s$ and an edge $e\in E(\Gamma)$, we say that a rational function $f$ on $\G$ has \tit{slope $s$ over the segment} $[p,q]$, for $p,q\in e$ if the restriction of $f$ to $[p,q]$ is linear and has slope $s$. A \tit{principal divisor} on $\G$ is a divisor 
\[\div(f):=\sum_{p\in X}\ord_{p}(f)p\ \in\Div(X),\]
where $f$ is a rational function on $X$ and $\ord_p(f)$ is the sum of the incoming slopes of $f$ at $p$. It is easy to check that a principal divisor has degree zero. The \tit{support} of a rational function $f$ on $X$ is defined as the set \[\supp(f)=\{p\in X|\ \ord_p(f)\neq0\}.\] We denote by $\Prin(X)$ the subgroup of Div$(X)$ of principal divisors. Given divisors $\mc{D}_1,\mc{D}_2\in\Div(X),$ we say that $\mc{D}_1$ and $\mc{D}_2$ are \tit{equivalent}, and we write $\mc D\sim \mc D'$,  if $\mc{D}_1-\mc{D}_2\in \mbox{Prin}(X).$ 
The \tit{Picard group} $\Pic(X)$ of $X$ and the \tit{degree-$d$ Picard group} $\Pic^d(X)$ of $X$ are defined, respectively, as
\[
\Pic(X):=\Div(X)/\Prin(X) 
\quad \text{ and } \quad
\Pic^{d}(X):=\Div^{d}(X)/\sim.
\]

Let $f$ be a rational function on the tropical curve $X$ and $\G$ be a model of $X$ such that supp$(f)\subset V(\G).$ Then $f$ is linear over each edge of $\G$. 
In this case, we can define a  flow $\phi_f$ on $\ora{\G}$ where $\phi_{f}(e)$ is equal to the slope of $f$ over $e,$ for every $e\in E(\ora{\G}).$ 
Notice that $\div_{X}(f)=\div(\phi_f)$, where div$(\phi_f)$ is seen as a divisor on $X$. 
Conversely, if $\phi$ is a flow on $\Gamma$, we have that $\phi$ induces a rational function $f$ on $X$ such that $\phi_f=\phi$ if, and only if,
\begin{equation}\label{eq:functionflow}
\sum_{e\in E(\G)} \phi(e)\gamma(e)\ell(e)=0
\end{equation}
for every cycle $\gamma$ on $\G$. (Recall the definition of $\gamma(e)$ in equation \eqref{eq:gamma}).


We let $\Omega(X)$ be the real vector \tit{space of harmonic $1$-forms} and $\Omega(X)^{\vee}$ be its dual. 
Given edges $e,e'\in E(\ora{\G_{X}})$ and points $p,q\in e'$, we define the \tit{integration of $de$ over} $\ora{pq}$ as 
\[\int_{p}^{q}de=\left\{
\begin{array}{ll}
\ell([p,q]) & \mbox{if } e'=e;\\
0, & \mbox{otherwise.}
\end{array}\right.\]
We have a natural isomorphism (\cite[Lemma 2.1]{BF}): 
\[\begin{array}{rcl}
H_1(\G_X,\R)&\ra&\Omega(X)^{\vee}\\
\gamma&\mapsto&\int_{\gamma}\col
\end{array}\]
Hence $H_1(\G_X,\mathbb{Z})$ can be viewed as a lattice in $\Omega(X)^{\vee}.$ The \tit{(tropical) Jacobian} of the tropical curve $X$ is defined as the real torus (see also \cite{MZ} for more properties of the tropical Jacobian)
\begin{equation}\label{eq:Jtropdef}
J^{\trop}(X)=\frac{\Omega(X)^{\vee}}{H_1(\G_X,\mathbb{Z})}.
\end{equation}


Fix a point $p_0$ in $X$ and assume that $p_0$ is a vertex of $\ora{\G_X}$. Let $p_{1},\ldots,p_{d}$ be points on $e_1, \ldots, e_d\in E(\ora{\Gamma_X})$. Choose a path $\gamma_i$ on $\ora{\G_X}$ from $p_0$ to $s(e_i)$ for every $i=1,\ldots,d.$ One can define a map 
\begin{equation}
\label{eq:alpha}
\begin{array}{rcl}
  X^{d}&\stackrel{\chi}{\longrightarrow}&\Omega(X)^{\vee}\\
(p_{e_i,a_i})_{i=1,\ldots,d}&\mapsto&\displaystyle\sum_{i=1}^{d}\left(\int_{\gamma_i}+\int_{s(e_i)}^{p_{i}}\right).
\end{array}
\end{equation}
The \tit{degree-$d$ tropical Abel map} of the tropical curve $X$ is the composition 
\begin{equation}
\label{eq:chi}
\alpha_d^\trop\col  X^{d}\stackrel{\chi}{\longrightarrow} \Omega(X)^{\vee}\stackrel{\rho}{\longrightarrow} J^{\trop}(X)
\end{equation}
where $\rho\col\Omega(X)^{\vee}\ra J^{\trop}(X)$ is the natural quotient map. Notice that, while the map $\chi\col X^{d}\ra\Omega(X)^{\vee}$ may depend on the choices of the paths $\gamma_1,\ldots,\gamma_d,$ the map $\alpha_d^\trop$ does not.



\subsection{Quasistability on graphs}\label{sec:quasi-graph}

We introduce the notion of quasistability for pseudo-divisors on graphs. All graphs will be considered connected unless otherwise specified.

Let $\G$ be a graph and  $d$ be an integer. A \tit{degree-$d$ polarization} on $\G$ is a function $\mu\col V(\G)\ra\R$ such that $\sum_{v\in V(\G)}\mu(v)=d.$
For every subset $V\subset V(\G)$ we define $\mu(V):=\sum_{v\in V}\mu(v).$ 



Given a subdivision $\G^{\E}$ of $\G$ for some $\E\subset E(\G)$, there is an induced degree-$d$ polarization $\mu^{\E}$ on $\G^{\E}$ taking a vertex $v\in V(\Gamma^\E)$ to:
\[\mu^{\E}(v):=\left\{
\begin{array}{ll}
\mu(v)& \mbox{if }v\in V(\G);\\
0 & \mbox{otherwise.}
\end{array}\right.
\]

Consider a graph $\G$, a degree-$d$ polarization $\mu$ on $G$, and a subset $V\subset V(\G).$ Given a degree-$d$ divisor $D$ on $\G$, 
we define
\[
\beta_{D}(V):=\mbox{deg}(D|_{V})-\mu(V)+\frac{\delta_{V}}{2}.
\]

\begin{Lem}[Lemma 4.1 in \cite{AAMPJac}]
\label{lem:beta}
Given subsets $V$ and $W$ of $V(\G),$ we have
\[\beta_{D}(V\cup W)+\beta_{D}(V\cap W)=\beta_{D}(V)+\beta_{D}(W)-|E(V,W)|.\]
In particular, $\beta_{D}(V)+\beta_{D}(V^{c})=\delta_{V}.$
\end{Lem}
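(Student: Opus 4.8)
The plan is to prove the identity by a direct bookkeeping of the three terms $\deg(D|_{-})$, $\mu(-)$, and $\frac{\delta_{-}}{2}$ appearing in the definition $\beta_D(V) = \deg(D|_V) - \mu(V) + \frac{\delta_V}{2}$. Since $\beta_D$ is a sum of three functions of the subset $V \subset V(\Gamma)$, and the claimed relation is ``additive up to the correction term $|E(V,W)|$'', I would check how each of the three pieces behaves under $V, W \mapsto V \cup W, V \cap W$.

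First, the divisor term: $\deg(D|_V) = \sum_{v \in V} D(v)$ is literally additive on the level of sets, so inclusion–exclusion gives $\deg(D|_{V\cup W}) + \deg(D|_{V\cap W}) = \deg(D|_V) + \deg(D|_W)$ with no correction. The polarization term behaves identically: $\mu(V) = \sum_{v\in V}\mu(v)$, so $\mu(V\cup W) + \mu(V\cap W) = \mu(V) + \mu(W)$. Thus the entire content of the lemma is the corresponding statement for the function $V \mapsto \frac{\delta_V}{2}$, namely
\[
\delta_{V\cup W} + \delta_{V\cap W} = \delta_V + \delta_W - 2\,|E(V,W)|,
\]
where $E(V,W)$ denotes the set of edges with one endpoint in $V$ and the other in $W$ (I would state this convention explicitly). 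So the main step — and the only place any real argument is needed — is this identity for $\delta$.

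To prove it I would argue edge by edge. Fix an edge $e = \{x,y\}$ of $\Gamma$ and record, for each of the six sets $V$, $W$, $V\cup W$, $V\cap W$, $V^c$, $W^c$, whether $e$ contributes to the corresponding $\delta$ (i.e., whether $e$ has exactly one endpoint in that set) and whether $e$ lies in $E(V,W)$. The endpoints $x$ and $y$ each fall into one of the four Venn regions $V\cap W$, $V\cap W^c$, $V^c\cap W$, $V^c\cap W^c$; up to symmetry there are a manageable number of cases for the unordered pair of regions containing $x$ and $y$ (loops, if allowed, should be checked to contribute $0$ to every $\delta$ and never to $E(V,W)$, so they are harmless). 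In each case one verifies that the contribution of $e$ to $\delta_{V\cup W} + \delta_{V\cap W}$ equals its contribution to $\delta_V + \delta_W$ minus twice its contribution to $|E(V,W)|$. Summing over all edges yields the displayed identity, and dividing by $2$ and adding the two trivially-additive terms gives the lemma. The ``in particular'' statement follows by taking $W = V^c$: then $V \cup W = V(\Gamma)$ and $V \cap W = \emptyset$, so $\beta_D(V(\Gamma)) = \deg(D) - \mu(V(\Gamma)) + 0 = d - d = 0$ and $\beta_D(\emptyset) = 0$, while $E(V, V^c)$ is exactly the edge cut, of size $\delta_V$; plugging in gives $0 + 0 = \beta_D(V) + \beta_D(V^c) - \delta_V$, as claimed.

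I do not anticipate a serious obstacle: the only mild care needed is the casework for $\delta$, keeping track of the factor of $2$ (the lemma's correction term is $|E(V,W)|$, not $2|E(V,W)|$, precisely because of the $\frac12$ in front of $\delta$), and making sure the conventions about loops and about whether $E(V,W)$ counts edges internal to $V\cap W$ are consistent with the definitions in \cite{AAMPJac}. Alternatively, one could cite submodularity-type results for cut functions, but since the statement is elementary I would just give the direct edge-counting proof.
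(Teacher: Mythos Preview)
Your proof is correct. The paper does not actually prove this lemma; it simply quotes it from \cite{AAMPJac}, so there is no in-paper argument to compare against. Your decomposition into the additive pieces $\deg(D|_{-})$ and $\mu(-)$ plus the edge-by-edge verification of the cut identity $\delta_{V\cup W}+\delta_{V\cap W}=\delta_V+\delta_W-2|E(V,W)|$ is the standard way to establish this, and your derivation of the ``in particular'' clause is correct. One small point: in this setting $E(V,W)$ must mean edges joining $V\setminus W$ to $W\setminus V$ (your casework forces this, since only the $BC$ case produces a discrepancy), so when you write ``one endpoint in $V$ and the other in $W$'' you should make the exclusion explicit.
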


Let $\G$ be a graph and $\mu$ be a degree-$d$ polarization on $\G$. 
Consider a vertex $v_0\in V(\G)$. We say that a degree-$d$ divisor $D$ on $\G$ is \tit{$(v_0,\mu)$-quasistable} if $\beta_{D}(V)\geq0$ for every proper subset $V\subsetneq V(\G),$ with strict inequality if $v_0\in V$. Equivalently, interchanging $V$ and $V^{c}$, we could ask $\beta_{D}(V)\leq \delta_{V},$ with strict inequality if $v_0\notin V.$ We say that a pseudo-divisor $(\E,D)$ on $\Gamma$ is \tit{$(v_0,\mu)$-quasistable} if $D$ is $(v_0,\mu^{\E})$-quasistable on $\G^{\E}.$

\begin{Thm}[Theorem 4.3 in \cite{AAMPJac}]\label{Thm:quasistable-divisor}
Every divisor $D$ on a graph $\G$ is equivalent to a unique $(v_0,\mu)$-quasistable divisor.
\end{Thm}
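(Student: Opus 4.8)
\emph{Strategy.} The statement says that the degree-$d$ $(v_0,\mu)$-quasistable divisors form a complete set of representatives for $\Pic^d(\G)$, so I would establish existence and uniqueness separately, using the submodularity of $\beta_D$ (Lemma \ref{lem:beta}) as the main tool in both.

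\emph{Uniqueness.} Suppose $D_1$ and $D_2$ are $(v_0,\mu)$-quasistable with $D_1\sim D_2$, and write $D_2-D_1=\partial\delta(f)$ for some $f\in C_0(\G,\mathbb Z)$, regarded as a function $f\col V(\G)\ra\mathbb Z$. Let $V=\{v\in V(\G)\ :\ f(v)=\max_w f(w)\}$; this is nonempty. If $V=V(\G)$, then $f$ is constant, $\partial\delta(f)=0$ and $D_1=D_2$. Otherwise $V$ is proper, and $\delta_V\ge 1$ since $\G$ is connected. For $v\in V$ one has $\partial\delta(f)(v)=\sum_{e\ni v}\big(f(v)-f(w_e)\big)\ge\#\{e\ :\ v\in e,\ w_e\notin V\}$, the sum running over the non-loop edges $e$ at $v$ with $w_e$ the opposite endpoint, because $f(v)-f(w_e)\ge 1$ whenever $w_e\in V^c$. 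Summing over $v\in V$ gives $\deg\big((D_2-D_1)|_V\big)\ge\delta_V$, hence
\[
\beta_{D_2}(V)\ \ge\ \beta_{D_1}(V)+\delta_V\ \ge\ \delta_V,
\]
using $\beta_{D_1}(V)\ge 0$. On the other hand, quasistability of $D_2$ gives $\beta_{D_2}(V)\le\delta_V$, with strict inequality if $v_0\notin V$; thus $\beta_{D_2}(V)=\delta_V$, forcing $v_0\in V$, and also $\beta_{D_1}(V)=0$, forcing $v_0\notin V$ by the strictness in the quasistability of $D_1$. This contradiction shows $V=V(\G)$, so $D_1=D_2$.

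\emph{Existence.} Here I would run a chip-firing algorithm of Dhar type. Suppose $D$ has degree $d$ and is not $(v_0,\mu)$-quasistable, and assume first $\beta^*:=\min\{\beta_D(V)\ :\ \emptyset\ne V\subsetneq V(\G)\}<0$; let $\mathcal M$ be the family of subsets attaining this minimum. By Lemma \ref{lem:beta}, for $V,W\in\mathcal M$ one has $\beta_D(V\cup W)+\beta_D(V\cap W)\le 2\beta^*$; since $\beta_D\ge\beta^*$ on every nonempty proper subset and $\beta_D(\emptyset)=\beta_D(V(\G))=0>\beta^*$, this forces $V\cup W$ and $V\cap W$ to be again nonempty, proper and in $\mathcal M$, so $\mathcal M$ has a unique minimal member $V_0$. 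Replace $D$ by $D'=D+\partial\delta(\mathbf 1_{V_0})$: then $\deg(D'|_W)$ differs from $\deg(D|_W)$ only through the edges between $V_0$ and $V_0^c$, while $\beta_{D'}(V_0)=\beta_D(V_0)+\delta_{V_0}=\beta^*+\delta_{V_0}>\beta^*$. Using the minimality of $V_0$ and submodularity, one checks that no nonempty proper subset acquires a value of $\beta$ below $\beta^*$, so after finitely many such moves one reaches $\beta_D(V)\ge 0$ for all proper nonempty $V$; a further bounded run of the same kind of moves, now targeting the proper sets $V$ with $\beta_D(V)=0$ and $v_0\in V$, produces the remaining strict inequalities, yielding a $(v_0,\mu)$-quasistable divisor equivalent to $D$. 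Combined with uniqueness, this gives the asserted bijection onto $\Pic^d(\G)$.

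\emph{Main obstacle.} The delicate point is the existence half: one must identify the correct canonical set to modify at each step (so that the move is genuinely improving and never worsens $\beta$ on another subcurve) and prove that the process terminates rather than cycling. I would handle this exactly as in the theory of reduced divisors — either by exhibiting a strictly convex ``energy'' functional on the lattice of firing vectors modulo constants that strictly decreases along the canonical moves, or by combining the finiteness of $\Pic^d(\G)$ with a monotonicity (``no vertex is modified twice before all are'') argument in the spirit of Dhar's burning algorithm. Everything else above is essentially formal manipulation of the inequality $\beta_D(V)\ge 0$ via Lemma \ref{lem:beta}.
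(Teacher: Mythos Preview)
The paper does not actually prove this statement; it is quoted verbatim as Theorem~4.3 of \cite{AAMPJac} and used as a black box (Remark~\ref{rem:algo} only records that the proof there is constructive). So there is no in-paper argument to compare your attempt against.

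On its own merits: your uniqueness argument is correct and complete. The existence sketch is along the right lines and matches the algorithmic flavour alluded to in Remark~\ref{rem:algo}. Two points deserve to be made precise. First, your claim that firing the minimal minimizer $V_0$ never pushes any $\beta_{D'}(W)$ below $\beta^*$ does follow from Lemma~\ref{lem:beta}: for $W\subset V_0^c$ one gets $\beta_{D'}(W)=\beta_D(V_0\cup W)-\beta^*\ge 0>\beta^*$, and for $W$ with $\emptyset\neq W\cap V_0\subsetneq V_0$ one gets $\beta_{D'}(W)=\beta_D(W\cup V_0)+\beta_D(W\cap V_0)-\beta^*+|E(W\cap V_0,W^c\cap V_0^c)|>\beta^*$ by strict minimality of $V_0$. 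Termination then follows because if the minimum stays at $\beta^*$ the new minimal minimizer strictly contains $V_0$, and the values of $\beta$ lie in a discrete set.

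Second, your phase-two description is slightly imprecise: the family $\{V:\beta_D(V)=0,\ v_0\in V\}$ is the right one to target, but you should say explicitly that it is closed under intersection (this uses $v_0\in V\cap W$ so the intersection is nonempty, then submodularity), hence has a unique minimal element to fire. The same case analysis as above shows semistability is preserved and the minimal bad set strictly grows, giving termination. With these two clarifications your sketch becomes a complete proof.
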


\begin{Rem}\label{Rem:div_quasistable} If $D$ is a $(v_0,\mu^{\E})$-quasistable divisor on $\G^{\E}$ then $D(v)=0,-1$ for every exceptional vertex $v\in V(\G^{\E}).$ Moreover if $\widehat{\G}$ is a refinement of $\G$ then $\mu$ induces a polarization $\widehat{\mu}$ in $\widehat{\G}$ given by
\[\widehat{\mu}(v)=
\left\{\begin{array}{ll}
0, & \mbox{if }v\mbox{ is exceptional;}\\
\mu(v), & \mbox{if }v\in V(\G),
\end{array}\right.
\]
where we view $V(\G)$ as a subset of $V(\widehat{\G})$ via the natural injection $F\col V(\G)\ra V(\widehat{\G})$. 
If $D$ is a $(v_0,\widehat{\mu})$-quasistable divisor on $\widehat{\G}$ then for every edge $e\in E(\G),$ we have that $D(v)=0$ for all but at most one exceptional vertex $v$ over $e$; if such a vertex $v$ over $e$ exists, then $D(v)=-1.$ Hence, every $(v_0,\widehat{\mu})$-quasistable divisor $D$ of $\widehat{\G}$ induces a $(v_0,\mu)$-quasistable pseudo-divisor $(\E',D')$ on $\G.$
\end{Rem}




Let $\G$ be a graph, $v_0$ a vertex of $\G,$ and $\mu$ a polarization on $\G$. The set $\mc{QD}_{v_0,\mu}(\G)$ of $(v_0,\mu)$-quasistable pseudo-divisors on $\G$ is a poset, where the partial order is given by $(\E,D)\geq(\E',D')$ if  $(\E,D)$ specializes to $(\E',D')$ (see \cite[Section 2.1 and Figure 3]{AAMPJac}). 

\begin{Rem}\label{Rem:multidegree}
Consider a nodal curve $X$ with a point $p_0$ and a degree-$d$ polarization $\mu$. Let $\I$ be a degree-$d$ simple torsion-free rank-1 sheaf on $X$. We have that $\I$ is $(p_0,\mu)$-quasistable on $X$ if and only if its multidegree $(\E_\I,D_\I)$ is a $(v_0,\mu)$-quasistable pseudo-divisor on the dual graph of $X$, where $v_0$ is the vertex corresponding to the component of $X$ containing $p_0$. 
\end{Rem}

\subsection{Quasistability on tropical curves}


Let $X$ be a tropical curve. A \tit{degree-$d$ polarization} on $X$ is a function $\mu\col X\to\R$ such that $\mu(p)=0$ for all, but finitely many $p\in X$, and $\sum_{p\in X}\mu(p)=d$. We define the \emph{support} of $\mu$ as 
\[
\supp(\mu):=\{p\in X;\mu(p)\neq 0\}.
\]
For every tropical subcurve $Y\subset X$, we define $\mu(Y):=\sum_{p\in Y}\mu(p)$. 
For any divisor $\D$ on $X$ and every tropical subcurve $Y\subset X$, we set 
\[
\beta_\D(Y):=\deg(\D|_Y)-\mu(Y)+\frac{\delta_Y}{2}.
\]
Given a polarization $\mu$ on $X$, we will always consider a model $\Gamma_X$ for the curve $X$ such that $\supp(\mu)\subset V(\G_X)$. 

\begin{Lem}[Lemma 5.1 in  \cite{AAMPJac}]
\label{lem:beta_curvas} Given tropical subcurves $Y$ and $Z$  of a tropical curve $X$, we have
\[
\beta_\D(Y\cap Z)+\beta_\D(Y\cup Z)=\beta_\D(Y)+\beta_\D(Z).
\]
In particular, if $Y\cap Z$ consists  of a finite number of points $p$ such that $p\notin V(\Gamma_X)$, $\mu(p)=0$ and $\D(p)=0$, then 
\[
\beta_\D(Y\cup Z)=\beta_\D(Y)+\beta_\D(Z)-|Y\cap Z|.
\]

\end{Lem}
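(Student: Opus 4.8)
The plan is to deduce the statement from the inclusion–exclusion-type formula $\beta_\D(Y\cap Z)+\beta_\D(Y\cup Z)=\beta_\D(Y)+\beta_\D(Z)$, which is the tropical analogue of Lemma~\ref{lem:beta} and whose proof follows the same pattern. First I would fix a model $\Gamma_X$ of $X$ fine enough that it simultaneously realizes $Y$, $Z$, $Y\cap Z$ and $Y\cup Z$ as subgraphs; concretely, one takes a common refinement of the minimal refinements $\Gamma_{X,Y}$ and $\Gamma_{X,Z}$, so that there are subgraphs $\Gamma_Y,\Gamma_Z\subset\Gamma_X$ inducing $Y$ and $Z$, and then $\Gamma_Y\cap\Gamma_Z$ and $\Gamma_Y\cup\Gamma_Z$ induce $Y\cap Z$ and $Y\cup Z$ respectively. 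One also enlarges the model so that $\supp(\mu)\subset V(\Gamma_X)$ and $\supp(\D)\subset V(\Gamma_X)$, which is harmless since $\beta_\D$ is refinement-invariant.

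Next I would unwind the definition $\beta_\D(W)=\deg(\D|_W)-\mu(W)+\tfrac{\delta_W}{2}$ term by term. The divisor term is additive on the nose: $\deg(\D|_{Y\cap Z})+\deg(\D|_{Y\cup Z})=\deg(\D|_Y)+\deg(\D|_Z)$, because with $\supp(\D)\subset V(\Gamma_X)$ this is just the statement that for each point $p$ the multiplicity of $p$ counted in $Y$ plus in $Z$ equals that counted in $Y\cap Z$ plus in $Y\cup Z$ (a pointwise identity of indicator functions). The polarization term is additive for exactly the same reason, using $\supp(\mu)\subset V(\Gamma_X)$. So the whole content is the edge-boundary identity
\[
\delta_{Y\cap Z}+\delta_{Y\cup Z}=\delta_Y+\delta_Z,
\]
where $\delta_W=\delta_{X,W}$ counts edges of $\Gamma_X$ with one endpoint in the vertex set of $\Gamma_W$ and one endpoint outside. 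This is purely combinatorial: I would apply Lemma~\ref{lem:beta} (or rather its proof) to the vertex sets $V(\Gamma_Y)$ and $V(\Gamma_Z)$ in the graph $\Gamma_X$. There the analogous statement reads $\delta_{V\cap W}+\delta_{V\cup W}=\delta_V+\delta_W-2|E(V,W)|$, where $E(V,W)$ is the set of edges with one endpoint in $V\setminus W$ and the other in $W\setminus V$; but in our situation $Y$ and $Z$ are \emph{subcurves}, so there is no such edge — an edge contributing to $E(V,W)$ would be an edge contained in neither $\Gamma_Y$ nor $\Gamma_Z$ but with one endpoint in each, which is impossible once the model is chosen so that $\Gamma_Y,\Gamma_Z$ are actual subgraphs inducing closed subsets. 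Hence $|E(V,W)|=0$ and the edge identity follows.

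For the ``in particular'' clause, I would combine the first part with the observation that when $Y\cap Z$ is a finite set of points $p$ none of which lies in $V(\Gamma_X)$ and all of which satisfy $\mu(p)=0$, $\D(p)=0$, the subcurve $Y\cap Z$ is zero-dimensional, so $\deg(\D|_{Y\cap Z})=0$, $\mu(Y\cap Z)=0$, and $\delta_{Y\cap Z}=2|Y\cap Z|$ — each such point $p$ is interior to some edge of a model and sits on the boundary of $Y\cap Z$ from both sides, contributing $2$. Plugging $\beta_\D(Y\cap Z)=|Y\cap Z|$ into the additive formula gives $\beta_\D(Y\cup Z)=\beta_\D(Y)+\beta_\D(Z)-|Y\cap Z|$. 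The only genuinely delicate point is bookkeeping the choice of model: one must make sure a single refinement $\Gamma_X$ works for all four subcurves and that the $\delta$'s computed in it agree with $\delta_{X,\cdot}$; once that is set up, everything reduces to the graph-theoretic Lemma~\ref{lem:beta} and pointwise additivity of $\deg(\D|_\cdot)$ and $\mu(\cdot)$, so I expect no real obstacle beyond careful setup.
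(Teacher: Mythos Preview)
The paper does not prove this lemma itself; it is quoted from \cite{AAMPJac}, so there is no in-paper argument to compare against. Your overall strategy --- pick a fine enough model and verify inclusion--exclusion term by term --- is the right one, and the $\deg(\D|_\cdot)$ and $\mu(\cdot)$ parts are fine. The gap is in the $\delta$ term.

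You write that $\delta_{X,W}$ ``counts edges of $\Gamma_X$ with one endpoint in the vertex set of $\Gamma_W$ and one endpoint outside''. That is the graph quantity $\delta_{V(\Gamma_W)}$, but the tropical $\delta_{X,W}=\sum_{v\in V(\Gamma_W)}\val_{E(\Gamma_X)\setminus E(\Gamma_W)}(v)$ is different: an edge outside $\Gamma_W$ with \emph{both} endpoints in $V(\Gamma_W)$ contributes $2$ to $\delta_{X,W}$ and $0$ to $\delta_{V(\Gamma_W)}$. Relatedly, your claim that $|E(V(\Gamma_Y),V(\Gamma_Z))|=0$ after taking the common refinement of the minimal refinements is not justified. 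Take $X$ the tropical triangle on vertices $a,b,c$, $Y=\overline{ab}$, $Z=\overline{bc}$. The minimal refinements are both $\Gamma_X$ itself; then $V(\Gamma_Y)\setminus V(\Gamma_Z)=\{a\}$, $V(\Gamma_Z)\setminus V(\Gamma_Y)=\{c\}$, and the edge $ca$ lies in $E(V(\Gamma_Y),V(\Gamma_Z))$. The same edge has both endpoints in $V(\Gamma_{Y\cup Z})$ without lying in $\Gamma_{Y\cup Z}$, so $\delta_{X,Y\cup Z}=2$ while $\delta_{V(\Gamma_{Y\cup Z})}=0$, and your reduction to Lemma~\ref{lem:beta} breaks down. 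Both issues disappear if you further subdivide every edge lying in neither $\Gamma_Y$ nor $\Gamma_Z$: after that no edge outside $\Gamma_W$ can have both endpoints in $V(\Gamma_W)$ (for $W\in\{Y,Z,Y\cap Z,Y\cup Z\}$), and no edge runs from $V(\Gamma_Y)\setminus V(\Gamma_Z)$ to $V(\Gamma_Z)\setminus V(\Gamma_Y)$, so your argument then goes through. Alternatively, one can skip Lemma~\ref{lem:beta} and verify $\delta_{X,Y\cap Z}+\delta_{X,Y\cup Z}=\delta_{X,Y}+\delta_{X,Z}$ directly by checking, for each half-edge $(e,v)$, the indicator identity $[v\in Y]+[v\in Z]=[v\in Y\cap Z]+[v\in Y\cup Z]$ together with the corresponding identity for $[e\subset\cdot]$.
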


\begin{Def}\label{def:quasistable}
Let $X$ be a tropical curve and $\mu$ be a degree-$d$ polarization on $X$. Let $\D$ be a degree-$d$ divisor on $X$. 
Given a point $p_0$ of $X$, we say that $\D$ is \emph{$(p_0,\mu)$-quasistable} if $\beta_\D(Y)\ge0$ for every subcurve $Y\subset X$  and $\beta_\D(Y)>0$ for every proper subcurve $Y\subset X$ such that $p_0\in Y$. 
\end{Def}

\begin{Rem}
Equivalently, a divisor $\D$ is $(p_0,\mu)$-quasistable if and only if for every tropical subcurve $Y\subset X$ we have $\beta_\D(Y)\leq\delta_{X,Y}$, with strict inequality if $p_0\notin Y$.
\end{Rem}

Let us illustrate the relationship between 
the quasistability for divisors on tropical curves and on graphs. 

\begin{Prop}[Proposition 5.3 in \cite{AAMPJac}]
\label{prop:quasiquasi}
Let $X$ be a tropical curve with a point $p_0\in X$. Let $\Gamma_X$ be a model of $X$. Let $\mu$ be a polarization on $X$ induced by a polarization on $\Gamma_X$ and $\D$ a degree-$d$ divisor on $X$. Let $\wh{\Gamma}_X$ be a refinement of $\Gamma_X$ such that $\supp(\D)\subset V(\wh \Gamma_X)$. The degree-$d$ divisor $\D$ on $X$ is $(p_0,\mu)$-quasistable if and only if $D$ is $(p_0,\wh\mu)$-quasistable on $\wh{\Gamma}_X$, where $D$ is the divisor $\D$ seen as divisor on $\wh{\Gamma}_X$ and $\wh\mu$ is the polarization induced on $\wh{\Gamma}_X$ (see Remark \ref{Rem:div_quasistable}).
\end{Prop}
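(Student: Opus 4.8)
The plan is to prove a single comparison between the tropical quantity $\beta_\D(Y)$ and a graph-theoretic quantity on the model $\wh\Gamma_X$, and to deduce both implications from it. Throughout, $\beta_D$ denotes the $\beta$-function computed on $\wh\Gamma_X$ with the divisor $D$ and the induced polarization $\wh\mu$, and recall that $p_0\in V(\Gamma_X)\subseteq V(\wh\Gamma_X)$ by the standing convention. For $V\subseteq V(\wh\Gamma_X)$ write $Y_V\subseteq X$ for the subcurve induced by the full subgraph of $\wh\Gamma_X$ on $V$ (vertex set $V$ and all edges of $\wh\Gamma_X$ with both ends in $V$). The claim I would establish is: for every tropical subcurve $Y\subseteq X$, with $V:=Y\cap V(\wh\Gamma_X)$,
\[
\beta_\D(Y)=\beta_D(V)+\tfrac12\big(\delta_{X,Y}-\delta_{\wh\Gamma_X,V}\big)\ \ge\ \beta_D(V),
\]
with equality when $Y=Y_V$.

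To prove the claim I would first note that $\supp(\D)\subset V(\wh\Gamma_X)$ forces $\supp(\D)\cap Y\subseteq V$, so $\deg(\D|_Y)=\deg(D|_V)$, and $\supp(\mu)\subset V(\Gamma_X)\subset V(\wh\Gamma_X)$ forces $\mu(Y)=\wh\mu(V)$; hence $\beta_\D(Y)-\beta_D(V)=\tfrac12(\delta_{X,Y}-\delta_{\wh\Gamma_X,V})$. It then remains to check $\delta_{X,Y}\ge\delta_{\wh\Gamma_X,V}$, with equality when $Y=Y_V$. Realize $Y$ by a subgraph $\Gamma_Y$ inside a common refinement $\Gamma'$ of $\wh\Gamma_X$ and of the minimal model $\Gamma_{X,Y}$ — this is harmless, since $\delta_{X,Y}$ does not depend on the model realizing $Y$, a fact from \cite{AAMPJac}. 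Any edge $e=vu$ of $\wh\Gamma_X$ with $v\in V$ and $u\notin V$ has $\bar e\not\subseteq Y$ (else $u\in Y\cap V(\wh\Gamma_X)=V$); walking from $v$ along $e$ in $\Gamma'$, the first subdivision edge over $e$ not lying in $\Gamma_Y$ is still incident to a vertex of $\Gamma_Y$, hence contributes to $\delta_{X,Y}$. These edges lie over distinct edges of $\wh\Gamma_X$, so they are distinct, giving $\delta_{X,Y}\ge\delta_{\wh\Gamma_X,V}$; and if $Y=Y_V$ there are no other missing subdivision edges, so the two numbers coincide.

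Granting the claim, the two implications follow quickly. If $\D$ is $(p_0,\mu)$-quasistable on $X$: for any proper $V\subsetneq V(\wh\Gamma_X)$ the subcurve $Y_V$ is proper and $p_0\in Y_V\iff p_0\in V$, so by the equality case $\beta_D(V)=\beta_\D(Y_V)\ge0$, and $\beta_D(V)=\beta_\D(Y_V)>0$ if $p_0\in V$; hence $D$ is $(p_0,\wh\mu)$-quasistable on $\wh\Gamma_X$. Conversely, if $D$ is $(p_0,\wh\mu)$-quasistable on $\wh\Gamma_X$, let $Y\subseteq X$ be an arbitrary tropical subcurve; the cases $Y=\emptyset$ and $Y=X$ are immediate ($\beta_\D(\emptyset)=\beta_\D(X)=0$), so assume $\emptyset\neq Y\subsetneq X$ and set $V:=Y\cap V(\wh\Gamma_X)$. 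If $V$ is proper, the claim gives $\beta_\D(Y)\ge\beta_D(V)\ge0$, with strict inequality whenever $p_0\in Y$: then $p_0\in V$ and so $\beta_D(V)>0$. If $V=V(\wh\Gamma_X)$, then $\beta_D(V)=d-d+0=0$, hence $\beta_\D(Y)=\tfrac12\delta_{X,Y}$, which is $>0$ because a proper nonempty subcurve of the connected curve $X$ has $\delta_{X,Y}\ge1$; and $p_0\in V\subseteq Y$, so the strict inequality holds where required. Thus $\D$ is $(p_0,\mu)$-quasistable on $X$.

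The one delicate point I anticipate is exactly the inequality $\delta_{X,Y}\ge\delta_{\wh\Gamma_X,Y\cap V(\wh\Gamma_X)}$ together with its equality case: one must track how a tropical subcurve can meet the interiors of the edges of $\wh\Gamma_X$, and in particular exploit that $Y$ need not be a \emph{full} subgraph, so that $\beta_\D(Y)$ can strictly exceed $\beta_D(Y\cap V(\wh\Gamma_X))$ — this slack is precisely what produces the strict inequality in the converse when $Y$ contains every vertex of $\wh\Gamma_X$. The reductions of the degree and polarization terms are immediate from the support hypotheses, and the remainder is a direct comparison of the two definitions of quasistability.
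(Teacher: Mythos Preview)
The paper does not contain a proof of this proposition: it is stated with attribution ``Proposition~5.3 in \cite{AAMPJac}'' and no proof environment follows. So there is nothing in the present paper to compare your argument against.

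That said, your approach is correct and is essentially the natural one. The core identity
\[
\beta_\D(Y)=\beta_D(V)+\tfrac12\big(\delta_{X,Y}-\delta_{\wh\Gamma_X,V}\big),\qquad V=Y\cap V(\wh\Gamma_X),
\]
reduces everything to the combinatorial inequality $\delta_{X,Y}\ge\delta_{\wh\Gamma_X,V}$, which you justify cleanly by tracking the first exit point of $Y$ along each crossing edge of $\wh\Gamma_X$. The equality case $Y=Y_V$ gives the forward implication, and the inequality plus the separate treatment of $V=V(\wh\Gamma_X)$ handles the converse. One small caveat: your line ``$p_0\in V(\Gamma_X)\subseteq V(\wh\Gamma_X)$ by the standing convention'' is doing real work in the strict-inequality step (it is what forces $p_0\in Y\Rightarrow p_0\in V$). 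The proposition as stated only says ``a point $p_0\in X$'', so strictly speaking you should either add $p_0$ as a vertex when forming $\wh\Gamma_X$ or note explicitly that this is harmless; in the paper this convention is indeed in force elsewhere (e.g.\ just before equation~\eqref{eq:alpha}), so the assumption is consistent with the ambient setup.
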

\begin{CorDef}[Corollary-Definition 5.4 in \cite{AAMPJac}]
\label{cor:quasiquasi}
Keep the notations of Proposition \ref{prop:quasiquasi} and let $\D$ be a $(p_0,\mu)$-quasistable degree-$d$ divisor on $X$. Then $\wh{\Gamma}_{X}$ is an $\E$-subdivision of $\Gamma_X$ for some $\E\subset E(\Gamma_X)$, and the pair $(\E,D)$ is a $(p_0,\mu)$-quasistable degree-$d$ pseudo-divisor on $\Gamma_X$, where $D$ is the divisor $\D$ seen as a divisor on $\Gamma_X^\E$. We call $(\E,D)$ the pseudo-divisor on $\Gamma_X$ induced by $\D$.
\end{CorDef}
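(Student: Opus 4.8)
The plan is to derive this directly from Proposition \ref{prop:quasiquasi} and Remark \ref{Rem:div_quasistable}; the statement is really a bookkeeping consequence of those two, the one substantive point being that quasistability forces $\D$ to meet the interior of each edge of $\Gamma_X$ in at most one point.

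First I would invoke Proposition \ref{prop:quasiquasi}: since $\D$ is $(p_0,\mu)$-quasistable on $X$, viewing $\D$ as a divisor $D$ on $\wh\Gamma_X$ makes $D$ a $(p_0,\wh\mu)$-quasistable divisor on $\wh\Gamma_X$, where $\wh\mu$ is the polarization induced on $\wh\Gamma_X$ (as in \eqref{eq:alpha}, the model $\Gamma_X$ is taken so that $p_0$ is a vertex, hence a non-exceptional vertex of every refinement, so the graph-theoretic notion of quasistability with respect to $p_0$ applies). Next, I would apply Remark \ref{Rem:div_quasistable} to the refinement $\wh\Gamma_X\to\Gamma_X$ and the divisor $D$: over every edge $e\in E(\Gamma_X)$, $D$ vanishes on all but at most one exceptional vertex, and equals $-1$ there when such a vertex exists. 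Since the exceptional vertices over $e$ carrying support of $\D$ are exactly those on which $D$ is nonzero, this says $\supp(\D)\cap e^\circ$ has at most one point. Taking $\wh\Gamma_X$ to be the minimal refinement of $\Gamma_X$ with $\supp(\D)\subset V(\wh\Gamma_X)$ — any coarsening of a larger refinement down to this one preserves quasistability by Proposition \ref{prop:quasiquasi}, so there is no loss of generality — its exceptional vertices are precisely the points of $\supp(\D)$ lying in edge interiors, so it inserts exactly one vertex in each edge of
\[
\E:=\{\, e\in E(\Gamma_X)\ :\ \supp(\D)\cap e^\circ\neq\emptyset\,\},
\]
that is, $\wh\Gamma_X=\Gamma_X^{\E}$.

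It then remains to check that $(\E,D)$ is a $(p_0,\mu)$-quasistable degree-$d$ pseudo-divisor on $\Gamma_X$. Every exceptional vertex $v$ of $\Gamma_X^{\E}$ lies in $\supp(\D)$ by definition of $\E$, so $D(v)\neq 0$, whence $D(v)=-1$ by Remark \ref{Rem:div_quasistable}; thus $(\E,D)$ is a pseudo-divisor, of degree $\deg D=\deg\D=d$ since degree is unchanged under subdivision and $\supp(\D)\subset V(\Gamma_X^\E)$. Finally, $\wh\mu$ is exactly the induced polarization $\mu^{\E}$ on $\Gamma_X^{\E}$ (both send $v\in V(\Gamma_X)$ to $\mu(v)$ and exceptional vertices to $0$), so the first step already gives that $D$ is $(p_0,\mu^{\E})$-quasistable on $\Gamma_X^{\E}$, which by the definition in Section \ref{sec:quasi-graph} is precisely the assertion that $(\E,D)$ is $(p_0,\mu)$-quasistable on $\Gamma_X$.

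I expect the only real obstacle to be the middle step: extracting, from quasistability, that $\D$ meets the interior of each edge of $\Gamma_X$ in at most one point, so that the relevant refinement is genuinely an $\E$-subdivision and not a finer one — this is exactly where Remark \ref{Rem:div_quasistable} does the work. The rest is definitional: identifying $\wh\mu$ with $\mu^{\E}$, reading off $D(v)=-1$ at exceptional vertices, and unwinding the meaning of ``$(p_0,\mu)$-quasistable pseudo-divisor''.
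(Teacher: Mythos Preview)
Your proposal is correct. The paper does not supply its own proof of this statement---it is quoted verbatim as Corollary-Definition~5.4 from \cite{AAMPJac}---so there is no in-paper argument to compare against; your derivation from Proposition~\ref{prop:quasiquasi} and Remark~\ref{Rem:div_quasistable} is exactly the intended route (indeed the last sentence of Remark~\ref{Rem:div_quasistable} already asserts the conclusion at the graph level). One small quibble: the citation of \eqref{eq:alpha} for ``$p_0$ is a vertex of $\Gamma_X$'' is tangential---that equation concerns the Abel map---and in the setting of Proposition~\ref{prop:quasiquasi} one may simply enlarge $\wh\Gamma_X$ (or $\Gamma_X$) to include $p_0$ as a vertex without affecting the argument.
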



In a sense, Corollary-Definition \ref{cor:quasiquasi} is the tropical analogue of Remark \ref{Rem:multidegree}.

Next we have a key result stating that quasistable divisors can be chosen as canonical representatives for equivalence classes of divisors on a tropical curve. 

\begin{Thm}[Theorem 5.6 in \cite{AAMPJac}]\label{Thm:div_qs_curve}
\label{thm:quasistable}
Let $X$ be a tropical curve with a point $p_0\in X$. Let $\mu$ be a degree-$d$ polarization on $X$. Given a divisor $\D$ on $X$ of degree $d$, there exists a unique degree-$d$ divisor equivalent to $\D$ which is $(p_0,\mu)$-quasistable.
\end{Thm}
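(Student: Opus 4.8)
The statement to prove is Theorem~\ref{Thm:div_qs_curve} (= Theorem 5.6 in \cite{AAMPJac}): every degree-$d$ divisor $\D$ on a tropical curve $X$ is equivalent to a \emph{unique} $(p_0,\mu)$-quasistable degree-$d$ divisor. The plan is to split into an existence part and a uniqueness part, and to bootstrap both from the already-established combinatorial analogue on graphs, namely Theorem~\ref{Thm:quasistable-divisor}, using the dictionary provided by Proposition~\ref{prop:quasiquasi} and Corollary-Definition~\ref{cor:quasiquasi}.

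\textbf{Existence.} Given $\D$, first choose a model $\Gamma_X$ of $X$ with $\supp(\mu)\subset V(\Gamma_X)$, and then a refinement $\wh\Gamma_X$ containing $\supp(\D)$ in its vertex set; let $D$ be $\D$ viewed as a divisor on $\wh\Gamma_X$ and $\wh\mu$ the induced polarization. By Theorem~\ref{Thm:quasistable-divisor} applied to the graph $\wh\Gamma_X$, there is a $(p_0,\wh\mu)$-quasistable divisor $D'$ on $\wh\Gamma_X$ with $D'\sim D$; the equivalence is witnessed by a principal divisor $\partial\delta(g)$ for some $g\colon V(\wh\Gamma_X)\to\mathbb Z$, which, reading $g$ as a rational function linear on each edge of $\wh\Gamma_X$, gives a principal divisor on $X$ in the sense of Section~3.2. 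Hence $D'\sim\D$ as divisors on $X$. By Remark~\ref{Rem:div_quasistable}, $\wh\Gamma_X$ is an $\E$-subdivision of $\Gamma_X$ and $D'$ has value $0$ or $-1$ on exceptional vertices; by Proposition~\ref{prop:quasiquasi}, the divisor $\D'$ on $X$ corresponding to $D'$ is $(p_0,\mu)$-quasistable. One subtlety: a priori the refinement needed to host $\D$ and the refinement witnessing quasistability of $D'$ could differ, but since quasistability of a divisor on $X$ (Definition~\ref{def:quasistable}) is intrinsic — it quantifies over \emph{all} tropical subcurves $Y\subset X$ — Proposition~\ref{prop:quasiquasi} guarantees the property is independent of the chosen model, so this causes no trouble.

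\textbf{Uniqueness.} Suppose $\D_1,\D_2$ are both $(p_0,\mu)$-quasistable of degree $d$ with $\D_1\sim\D_2$ on $X$. Pass to a common refinement $\wh\Gamma_X$ of $\Gamma_X$ whose vertex set contains $\supp(\D_1)\cup\supp(\D_2)\cup\supp(\div(f))$, where $f$ is a rational function with $\div(f)=\D_1-\D_2$. Then, writing $D_1,D_2$ for the divisors on $\wh\Gamma_X$, we have $D_1-D_2=\div(\phi_f)=\partial\delta(g)$ where $g$ is the (integer-valued, since supported on vertices) restriction of $f$ — here I would invoke the correspondence between rational functions supported on $V(\wh\Gamma_X)$ and integer flows satisfying the cycle condition \eqref{eq:functionflow}, which forces $f$ to take integer values on vertices because its slopes are integers and $X$ is connected (pin down $f$ up to an additive constant, which can be taken integral). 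So $D_1\sim D_2$ on $\wh\Gamma_X$. By Proposition~\ref{prop:quasiquasi}, both $D_1$ and $D_2$ are $(p_0,\wh\mu)$-quasistable on $\wh\Gamma_X$, so Theorem~\ref{Thm:quasistable-divisor} forces $D_1=D_2$, i.e. $\D_1=\D_2$.

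\textbf{Main obstacle.} The routine-looking steps (choosing models, translating principal divisors back and forth) are where the real care is needed: one must check that the notion of equivalence on $X$ and on a refining graph $\wh\Gamma_X$ agree — i.e. that a principal divisor on $X$ whose support lies in $V(\wh\Gamma_X)$ comes from a principal divisor on $\wh\Gamma_X$, and conversely. This hinges on the observation (made in Section~3.2) that a rational function on $X$ supported on the vertices of a model is linear on each edge, together with the flow/cycle characterization \eqref{eq:functionflow}; the only genuinely delicate point is ensuring integrality of the chain $g$ realizing the equivalence, so that Theorem~\ref{Thm:quasistable-divisor} — an integral statement — can be applied. Once the model-independence of $(p_0,\mu)$-quasistability (Proposition~\ref{prop:quasiquasi}) and this equivalence-compatibility are in hand, the theorem is an immediate transfer of the graph-theoretic Theorem~\ref{Thm:quasistable-divisor}.
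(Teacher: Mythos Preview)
Your reduction to Theorem~\ref{Thm:quasistable-divisor} has a genuine gap: linear equivalence on a graph model $\wh\Gamma_X$ and linear equivalence on the tropical curve $X$ do \emph{not} agree for divisors supported on $V(\wh\Gamma_X)$, so neither direction of your transfer goes through. In the existence step, if $D-D'=\partial\delta(g)$ on $\wh\Gamma_X$, the linear extension of $g$ to $X$ has slope $(g(t(e))-g(s(e)))/\ell(e)$ on an edge $e$, which need not be an integer; so it is not a rational function on $X$ and you cannot conclude $\D\sim\D'$ tropically. In the uniqueness step, a rational function $f$ on $X$ linear on the edges of $\wh\Gamma_X$ yields a flow $\phi_f$ satisfying the \emph{weighted} cycle relation $\sum_e\phi_f(e)\gamma(e)\ell(e)=0$ of~\eqref{eq:functionflow}, whereas $\phi_f\in\Im(\delta)$ (hence $\div(f)\in\Prin(\wh\Gamma_X)$) requires the \emph{unweighted} relation $\sum_e\phi_f(e)\gamma(e)=0$. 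A concrete counterexample to both directions: let $X$ have two vertices $u,v$ joined by two edges of lengths $1$ and $2$. Then $2(v-u)=\partial\delta(v)$ is principal on the graph, but any $f$ on $X$ with $\div(f)=2(v-u)$ would need slopes $s_1,s_2$ with $s_1+s_2=2$ and $s_1=2s_2$, forcing $s_2=2/3$; so $2(v-u)$ is not tropically principal. Conversely, $f$ with slopes $2$ and $1$ gives $\div(f)=3(v-u)$ on $X$, yet $\Prin(\Gamma)=2\mathbb{Z}\cdot(v-u)$, so $3(v-u)$ is not graph-principal. Your argument \emph{does} go through when every edge of $\wh\Gamma_X$ has length $1$---in particular for the curves $X=X_\Gamma$ that this paper actually uses---but not in the generality of the stated theorem, where edge lengths are arbitrary real numbers and no refinement produces unit lengths.

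The present paper does not give its own proof of this result: it is quoted from \cite{AAMPJac}, and Remark~\ref{rem:algo} records only that the argument there is constructive. That proof works directly on $X$ (via an explicit chip-firing-type procedure on the tropical curve) rather than by reduction to the finite-graph statement, precisely because the two equivalence relations are different.
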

\begin{Rem}
\label{rem:algo}
The proof of Theorem \ref{thm:quasistable} in \cite{AAMPJac} is constructive, so there is an algorithm to compute the $(p_0,\mu)$-quasistable divisor equivalente to $\D$.
\end{Rem}

Now we describe the tropical Jacobian of a tropical curve as a polyhedral complex by means of $(p_0,\mu)$-quasistable divisors. But, before, we need to introduce some notation.\par

A \tit{polyhedron} $\mc{P}\subset N_\R\cong\R^n$ is an intersection of a finite number of half-spaces of $N_\R$. A \emph{face} of a polyhedron $\mc{P}$ is the intersection of $\mc{P}$ and a hyperplane $H\subset N_{\mathbb R}$ such that $\mc{P}$ is contained in a closed half-space determined by $H$.
A morphism $f\col\mc{P}\ra\mc{P}'$ between polyhedra $\mc{P}\subset N_\R\cong\R^{n}$ and $\mc{P}'\subset N'_\R\cong\R^{m}$ is the restriction to $\mc{P}$ of an affine map $T\col N_\R\ra N'_\R$ such that $T(\mc{P})\subset\mc{P}'$. A morphism $f\col \mc{P}\ra\mc{P}'$ of polyhedra is a \tit{face morphism} if the image $f(\mc{P})$ is a face of $\mc{P}'$ and $f$ is an isometry. A \tit{polyhedral complex} $\Sigma$ is the colimit (as topological space) of a finite poset \tbf{D} of polyhedra with face morphisms (see \cite[section 2.2, page 4]{Poly} and \cite[section 3.2, page 13]{AAMPJac}). 
A \emph{morphism of polyedral complexes} $f\col \Sigma\ra \Sigma'$ is a continuous map of topological spaces $f\col\Sigma\to\Sigma'$ such that for every polyhedron $\mc P\in \mathbf{D}$ there exists a polyhedron $\mc P'\in \mathbf{D}'$ such that the induced map $\mc P\to\Sigma'$ factors through a morphism of polyhedra $\mc P\to\mc P'$.

Let $X$ be a tropical curve with length function $\ell$. Let $\mu$ be a degree-$d$ polarization on $X$.  Let $\Gamma_X$ be a model of $X$ such that $\supp(\mu)\subset V(\Gamma_X)$. Let $p_0\in X$ be a point of $X$. For each $(p_0,\mu)$-quasistable degree-$d$ pseudo-divisor $(\E,D)$ on $\Gamma_X$, we define the polyhedra
\begin{align*}
\P_{(\E,D)}:=&\prod_{e\in\E}e=\prod_{e\in\E}[0,\l(e)]\subset \R^{\E}\\
\P^\circ_{(\E,D)}:=&\prod_{e\in\E}e^\circ=\prod_{e\in\E}(0,\l(e))\subset \R^{\E},
\end{align*}
where $e^\circ$ denotes the interior of the edge $e$. 
Notice that if $\E=\emptyset$, then $\P_{\E,D}$ is just a point. If $(\E,D)\ra(\E',D')$ is a specialization of pseudo-divisors on the graph $\Gamma$, then we have an induced face morphism of polyhedra $f\col\P_{(\E',D')}\to\P_{(\E,D)}$. The polyhedron $\P^\circ_{(\E,D)}$ parametrizes $(p_0,\mu)$-quasistable divisors on $X$,  
whose induced pseudo-divisor on $\Gamma_X$ is $(\E,D)$.

\begin{Def}
\label{def:jtrop}
Let $(X,p_0)$ be a pointed tropical curve and $\mu$ be a degree-$d$ polarization on $X$. The \emph{Jacobian of $X$ with respect to $(p_0,\mu)$} is the polyhedral complex
\[
J^\trop_{p_0,\mu}(X)=\lim_{\longrightarrow}\P_{(\E,D)}=\coprod_{(\E,\D)} \P^\circ_{(\E,D)}
\]
where the limit and the union are taken over the poset $\mathcal{QD}_{p_0,\mu}(\Gamma_X)$. 
\end{Def}

\begin{Exa}
Let $X$ be the tropical curve associated to the graph with $2$ vertices and $3$ edges connecting them, where all edges have length $1$. 
 Let $\mu$ be the degree-$0$ polarization on $X$ given by $\mu(p)=0$ for every $p\in X$ and assume that $p_0$ is the leftmost vertex. We draw the polyhedral complex $J^\trop_{p_0,\mu}(X)$ in Figure \ref{fig:jac}.
\begin{figure}[h!]
\begin{tikzpicture}[scale=3]
\draw[ultra thick] (0,0) rectangle (1,1);
\draw[ultra thick] (0,0) -- (0,1) -- (-1,0) -- (-1,-1) -- (0,0);
\draw[ultra thick] (0,0) -- (1,0) -- (0,-1) -- (-1,-1) -- (0,0);
\draw[fill] (0,0) circle [radius=0.03];
\draw[fill] (0,1) circle [radius=0.03];
\draw[fill] (1,0) circle [radius=0.03];
\draw[fill] (1,1) circle [radius=0.03];
\draw[fill] (0,-1) circle [radius=0.03];
\draw[fill] (-1,0) circle [radius=0.03];
\draw[fill] (-1,-1) circle [radius=0.03];
\begin{scope}[shift={(0.3,0.5)},scale=0.5]
\draw (0,0) to [out=45, in=135] (1,0);
\draw (0,0) to (1,0);
\draw (0,0) to [out=-45, in=-135] (1,0);
\draw[fill] (0,0) circle [radius=0.02];
\draw[fill] (1,0) circle [radius=0.02];
\draw[fill] (0.5,0.21) circle [radius=0.02];
\draw[fill] (0.5,0) circle [radius=0.02];
\node[left] at (0,0) {1};
\node[right] at (1,0) {1};
\node[above] at (0.5,0.21) {-1};
\node[below] at (0.5,-0.15) {-1};
\end{scope}
\begin{scope}[shift={(-0.7,0)},scale=0.5]
\draw (0,0) to [out=45, in=135] (1,0);
\draw (0,0) to (1,0);
\draw (0,0) to [out=-45, in=-135] (1,0);
\draw[fill] (0,0) circle [radius=0.02];
\draw[fill] (1,0) circle [radius=0.02];
\draw[fill] (0.5,0.21) circle [radius=0.02];
\draw[fill] (0.5,-0.21) circle [radius=0.02];
\node[left] at (0,0) {1};
\node[right] at (1,0) {1};
\node[above] at (0.5,0.21) {-1};
\node[below] at (0.5,-0.21) {-1};
\end{scope}
\begin{scope}[shift={(-0.25,-0.5)},scale=0.5]
\draw (0,0) to [out=45, in=135] (1,0);
\draw (0,0) to (1,0);
\draw (0,0) to [out=-45, in=-135] (1,0);
\draw[fill] (0,0) circle [radius=0.02];
\draw[fill] (1,0) circle [radius=0.02];
\draw[fill] (0.5,-0.21) circle [radius=0.02];
\draw[fill] (0.5,0) circle [radius=0.02];
\node[left] at (0,0) {1};
\node[right] at (1,0) {1};
\node[below] at (0.5,-0.21) {-1};
\node[above] at (0.5,0.15) {-1};
\end{scope}
\begin{scope}[shift={(1.15,0)},scale=0.2]
\draw (0,0) to [out=45, in=135] (1,0);
\draw (0,0) to (1,0);
\draw (0,0) to [out=-45, in=-135] (1,0);
\draw[fill] (0,0) circle [radius=0.02];
\draw[fill] (1,0) circle [radius=0.02];
\node[left] at (0,0) {0};
\node[right] at (1,0) {0};
\end{scope}
\begin{scope}[shift={(-0.1,1.1)},scale=0.2]
\draw (0,0) to [out=45, in=135] (1,0);
\draw (0,0) to (1,0);
\draw (0,0) to [out=-45, in=-135] (1,0);
\draw[fill] (0,0) circle [radius=0.02];
\draw[fill] (1,0) circle [radius=0.02];
\node[left] at (0,0) {0};
\node[right] at (1,0) {0};
\end{scope}
\begin{scope}[shift={(-1.1,-1.1)},scale=0.2]
\draw (0,0) to [out=45, in=135] (1,0);
\draw (0,0) to (1,0);
\draw (0,0) to [out=-45, in=-135] (1,0);
\draw[fill] (0,0) circle [radius=0.02];
\draw[fill] (1,0) circle [radius=0.02];
\node[left] at (0,0) {0};
\node[right] at (1,0) {0};
\end{scope}
\begin{scope}[shift={(0.9,1.1)},scale=0.2]
\draw (0,0) to [out=45, in=135] (1,0);
\draw (0,0) to (1,0);
\draw (0,0) to [out=-45, in=-135] (1,0);
\draw[fill] (0,0) circle [radius=0.02];
\draw[fill] (1,0) circle [radius=0.02];
\node[left] at (0,0) {1};
\node[right] at (1,0) {-1};
\end{scope}
\begin{scope}[shift={(-0.1,-1.1)},scale=0.2]
\draw (0,0) to [out=45, in=135] (1,0);
\draw (0,0) to (1,0);
\draw (0,0) to [out=-45, in=-135] (1,0);
\draw[fill] (0,0) circle [radius=0.02];
\draw[fill] (1,0) circle [radius=0.02];
\node[left] at (0,0) {1};
\node[right] at (1,0) {-1};
\end{scope}
\begin{scope}[shift={(-1.4,0)},scale=0.2]
\draw (0,0) to [out=45, in=135] (1,0);
\draw (0,0) to (1,0);
\draw (0,0) to [out=-45, in=-135] (1,0);
\draw[fill] (0,0) circle [radius=0.02];
\draw[fill] (1,0) circle [radius=0.02];
\node[left] at (0,0) {1};
\node[right] at (1,0) {-1};
\end{scope}
\begin{scope}[shift={(0.13,0.1)},scale=0.2]
\draw (0,0) to [out=45, in=135] (1,0);
\draw (0,0) to (1,0);
\draw (0,0) to [out=-45, in=-135] (1,0);
\draw[fill] (0,0) circle [radius=0.02];
\draw[fill] (1,0) circle [radius=0.02];
\node[left] at (0.12,0) {-1};
\node[right] at (1,0) {1};
\end{scope}
\begin{scope}[shift={(0.6,-0.6)},scale=0.3]
\draw (0,0) to [out=45, in=135] (1,0);
\draw (0,0) to (1,0);
\draw (0,0) to [out=-45, in=-135] (1,0);
\draw[fill] (0,0) circle [radius=0.02];
\draw[fill] (1,0) circle [radius=0.02];
\draw[fill] (0.5,-0.21) circle [radius=0.02];
\node[left] at (0,0) {1};
\node[right] at (1,0) {0};
\node[below] at (0.5,-0.21) {-1};
\end{scope}
\begin{scope}[shift={(-0.83,0.6)},scale=0.3]
\draw (0,0) to [out=45, in=135] (1,0);
\draw (0,0) to (1,0);
\draw (0,0) to [out=-45, in=-135] (1,0);
\draw[fill] (0,0) circle [radius=0.02];
\draw[fill] (1,0) circle [radius=0.02];
\draw[fill] (0.5,-0.21) circle [radius=0.02];
\node[left] at (0,0) {1};
\node[right] at (0.95,0) {0};
\node[below] at (0.5,-0.21) {-1};
\end{scope}
\begin{scope}[shift={(1.14,0.5)},scale=0.3]
\draw (0,0) to [out=45, in=135] (1,0);
\draw (0,0) to (1,0);
\draw (0,0) to [out=-45, in=-135] (1,0);
\draw[fill] (0,0) circle [radius=0.02];
\draw[fill] (1,0) circle [radius=0.02];
\draw[fill] (0.5,0.21) circle [radius=0.02];
\node[left] at (0,0) {1};
\node[right] at (1,0) {0};
\node[above] at (0.5,0.21) {-1};
\end{scope}
\begin{scope}[shift={(-1.45,-0.5)},scale=0.3]
\draw (0,0) to [out=45, in=135] (1,0);
\draw (0,0) to (1,0);
\draw (0,0) to [out=-45, in=-135] (1,0);
\draw[fill] (0,0) circle [radius=0.02];
\draw[fill] (1,0) circle [radius=0.02];
\draw[fill] (0.5,0.21) circle [radius=0.02];
\node[left] at (0,0) {1};
\node[right] at (1,0) {0};
\node[above] at (0.5,0.21) {-1};
\end{scope}
\begin{scope}[shift={(0.35,1.1)},scale=0.3]
\draw (0,0) to [out=45, in=135] (1,0);
\draw (0,0) to (1,0);
\draw (0,0) to [out=-45, in=-135] (1,0);
\draw[fill] (0,0) circle [radius=0.02];
\draw[fill] (1,0) circle [radius=0.02];
\draw[fill] (0.5,0) circle [radius=0.02];
\node[left] at (0,0) {1};
\node[right] at (1,0) {0};
\node[above] at (0.5,0.1) {\small{-1}};
\end{scope}
\begin{scope}[shift={(-0.65,-1.1)},scale=0.3]
\draw (0,0) to [out=45, in=135] (1,0);
\draw (0,0) to (1,0);
\draw (0,0) to [out=-45, in=-135] (1,0);
\draw[fill] (0,0) circle [radius=0.02];
\draw[fill] (1,0) circle [radius=0.02];
\draw[fill] (0.5,0) circle [radius=0.02];
\node[left] at (0,0) {1};
\node[right] at (1,0) {0};
\node[above] at (0.5,-0.62) {\small{-1}};
\end{scope}
\end{tikzpicture}
\caption{The Jacobian $J_{p_0,\mu}^{\trop}(X)$.}
\label{fig:jac}
\end{figure}
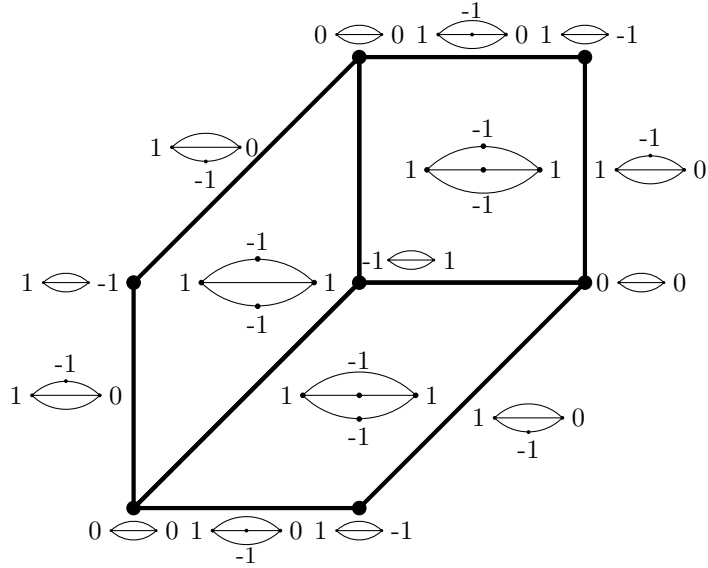
\end{Exa}

The important result relating the Jacobian of $X$ with respect to $(p_0,\mu)$ and the usual tropical Jacobian is given by the following result.

\begin{Thm}[Theorem 5.10 in \cite{AAMPJac}]
\label{thm:jX}
Let $X$ be a tropical curve with a point $p_0\in X$. Let $\mu$ be a degree-$d$ polarization on $X$. We have that $J^\trop_{p_0,\mu}(X)$ 
is homeomorphic to $J^\trop(X)$.
\end{Thm}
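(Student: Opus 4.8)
The plan is to build an explicit homeomorphism between the polyhedral complex $J^{\trop}_{p_0,\mu}(X)$ and the real torus $J^{\trop}(X) = \Omega(X)^\vee/H_1(\Gamma_X,\mathbb{Z})$. First I would set up the natural map. Each $(p_0,\mu)$-quasistable divisor $\D$ on $X$ of degree $d$ determines a class $[\D - d\cdot p_0] \in \Pic^0(X) \cong J^{\trop}(X)$ (using the classical tropical Abel--Jacobi isomorphism $\Pic^0(X)\cong J^{\trop}(X)$, e.g. from \cite{MZ,BF}). By Theorem \ref{thm:quasistable}, every degree-$d$ divisor is equivalent to a \emph{unique} $(p_0,\mu)$-quasistable one, so this assignment $\D \mapsto [\D - d\cdot p_0]$ is a bijection between the set of $(p_0,\mu)$-quasistable divisors and $\Pic^0(X)$. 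Since the points of $J^{\trop}_{p_0,\mu}(X) = \coprod_{(\E,D)} \P^\circ_{(\E,D)}$ are exactly the $(p_0,\mu)$-quasistable divisors (the interior $\P^\circ_{(\E,D)}$ parametrizing those with induced pseudo-divisor $(\E,D)$, as recalled just before Definition \ref{def:jtrop}), we get a set-theoretic bijection $\Phi\col J^{\trop}_{p_0,\mu}(X) \to J^{\trop}(X)$.

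Next I would check continuity and that $\Phi$ is a homeomorphism. On each open cell $\P^\circ_{(\E,D)}$, the map sending the position of the chips in the interiors of the edges of $\E$ to the corresponding point of $\Omega(X)^\vee$ (via the integration pairing defining $\chi$ in \eqref{eq:alpha}) is affine-linear, hence continuous; composing with $\rho\col \Omega(X)^\vee \to J^{\trop}(X)$ keeps it continuous. To see that these cellwise maps glue to a continuous map on all of $J^{\trop}_{p_0,\mu}(X)$, I would use the face morphisms: a specialization $(\E,D)\to(\E',D')$ corresponds to sliding a chip to an endpoint of an edge, and one checks that the two descriptions of the limiting divisor class agree — this is essentially the compatibility already packaged in Corollary-Definition \ref{cor:quasiquasi} together with the fact that a chip reaching a vertex changes the divisor on $\Gamma_X$ without changing the class. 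Since $J^{\trop}_{p_0,\mu}(X)$ is compact (it is a finite union of closed polytopes $\P_{(\E,D)}$, glued along faces — finitely many quasistable pseudo-divisors by Theorem \ref{Thm:quasistable-divisor}) and $J^{\trop}(X)$ is Hausdorff, a continuous bijection between them is automatically a homeomorphism.

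An alternative, perhaps cleaner, route is to produce a continuous \emph{surjection} the other way. Start from the classical tropical Abel map $\alpha_1^{\trop}\col X \to J^{\trop}(X)$ iterated, or more directly from the ``break divisor''/fundamental domain description: $\Omega(X)^\vee$ maps onto $J^{\trop}(X)$, and one can cut $J^{\trop}(X)$ into regions indexed by quasistable types using the algorithm of Theorem \ref{thm:quasistable} (Remark \ref{rem:algo}), which takes any divisor to its quasistable representative. This algorithm is piecewise-linear in the chip positions, so it realizes $J^{\trop}(X)$ as the image of $\coprod \P_{(\E,D)}$ with the identifications being exactly the face morphisms of the poset $\mathcal{QD}_{p_0,\mu}(\Gamma_X)$; that is precisely the colimit defining $J^{\trop}_{p_0,\mu}(X)$. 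Either way, the content is that $(p_0,\mu)$-quasistable representatives give a set of fundamental-domain representatives for the lattice action, assembled compatibly with the polyhedral structure.

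The main obstacle I expect is the gluing/continuity across cell boundaries: one must verify carefully that when a chip in the interior of an edge $e\in\E$ migrates to a vertex, the divisor class computed in $\P^\circ_{(\E,D)}$ converges to the class computed in the adjacent lower-dimensional cell, and that no two points of the disjoint union $\coprod \P^\circ_{(\E,D)}$ map to the same point of $J^{\trop}(X)$ \emph{except} where the face identifications already force it — i.e. injectivity of $\Phi$, which rests squarely on the uniqueness clause in Theorem \ref{thm:quasistable}. Surjectivity onto the torus is comparatively easy (every class has \emph{some} quasistable representative, again Theorem \ref{thm:quasistable}). Once bijectivity and continuity are in hand, compactness versus Hausdorffness finishes the argument with no further work.
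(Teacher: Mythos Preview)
The paper does not contain a proof of this statement: it is quoted verbatim as Theorem 5.10 of \cite{AAMPJac} and used as a black box, so there is no ``paper's own proof'' to compare against. Your outline is a reasonable sketch of how such a result is typically proved, and indeed the strategy you describe---define $\Phi$ by sending a $(p_0,\mu)$-quasistable divisor to its class, get bijectivity from the existence/uniqueness of quasistable representatives (Theorem~\ref{thm:quasistable}), check continuity cellwise and across faces, then invoke compact-to-Hausdorff---is essentially the approach taken in \cite{AAMPJac}.

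One small point: your appeal to Theorem~\ref{Thm:quasistable-divisor} for finiteness of $\mathcal{QD}_{p_0,\mu}(\Gamma_X)$ is not quite the right citation; that theorem gives uniqueness of the quasistable representative in a class, not finiteness of the poset. Finiteness is instead a direct consequence of the quasistability inequalities bounding the multidegree together with $\E\subset E(\Gamma_X)$ ranging over a finite set. Otherwise the logical structure of your argument is sound, and you have correctly identified the gluing/continuity across cell boundaries as the place where the actual work lies.
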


\section{The tropical Abel map}

In this section we prove one of our main results, Theorem \ref{Thm:4.2_versão_tropical}. 
This result tells us that the local criterion presented in Section \ref{Chap:Abel maps for nodal curves}  is always satisfied, provided a combinatorial condition involving the tropical Abel map is satisfied. This condition is quite explicit: the tropical Abel map should take the simplexes of a triangulation of the $d$-th product of a tropical curve to a cell of the polyhedral decomposition of the tropical Jacobian. 
An important technical tool to attain this result is given by the rearrangements of divisors on a graph, giving rise to what we call the \emph{organized version of a divisor}. We dedicate part of the section to the study of the properties of the organized divisors.
\par 
Throughout, we will use the following construction. 
   Let $\G$ be a graph and define $\ell\col E(\G)\to \mathbb{R}$ by $\ell(e)=1$ for every $e\in E(\G)$. We denote by $X_{\G}$ the tropical curve induced by the metric graph $(\G,\ell)$. If $\C\to B$ is a smoothing of a nodal curve $C$ with dual graph $\G$, then $X_{\G}$ is what is usually called the ``tropical curve associated to the smoothing". Notice, however, that many results in this section hold for more general length functions.

\subsection{Subdivisions and rearrangements}\label{sec:sub-rear}

Let $\Gamma$ be a graph. 
We can see $\G$ as a  metric graph whose edges have length $1$. We will often denote an oriented edge of $\Gamma$ by $e=\overrightarrow{v_0 v_1}$, where $v_0$ and $v_1$ are, respectively, the source and the target of $e$. 
For every oriented edge $e=\overrightarrow{v_0 v_1}$ of $\G$ and real number $r$ such that $0\le r\le 1$, we let $p_{e,r}$ be the point on $e$ at distance $r$ from the vertex $v_0$. Notice that $p_{e,0}=v_0$ and $p_{e,1}=v_1$. 
When no confusion may arise we will simply write $p_r$ instead of $p_{e,r}.$

\begin{Def}\label{def:convextuple}
Let $n$ be a positive integer and $l$ be a real number. A \emph{$l$-convex $n$-tuple} is a tuple 
 $\underline{t}=(t_1,\ldots, t_n)\in\R^n_{\ge0}$ such that $\sum_{1\le j\le n} t_j\le l$. When $l=1$, we simply call $\ul t$ a \emph{convex $n$-tuple}.
 \end{Def}

Let $\G$ be a graph and $n$ a positive integer. 
From now on, we fix an orientation on the graph $\Gamma$.
Given a $l$-convex $n$-tuple $\underline{t}$, we let $t_{i_1},\dots,t_{i_m}$ be the nonzero entries of $\ul t$, with $1\le i_1<\dots<i_m\le n$. We define a length function $\ell_{\underline{t}}\col E\big(\ora{\G^{(m+1)}}\big)\ra\R_{>0}$ such that, for each edge $e\in E(\Gamma)$,
\begin{equation}
\label{eq:lt}
    \ell_{\underline{t}}(e_{j})=t_{i_j}\ \mbox{ and }\ \ell_{\underline{t}}(e_{m+1})=l-\sum_{1\le j\le m}t_{i_j}.
\end{equation}
where $\{e_1,\dots,e_{m+1}\}=G^{-1}(e)$ (recall equation \eqref{eq:FG}). 
Thus we obtain a metric graph  $\Gamma^{(\ul t)}=(\G^{(m+1)},\ell_{\ul t})$. When $l=1$, the metric graphs $\G$ and $\Gamma^{(\ul t)}$ are models of the same tropical curve. 

 Let  $n$ be a positive integer and $\ul t$ be a convex $n$-tuple.  We let  
 \begin{equation}\label{eq:r_j}
     t_0:=0,\; t_{n+1}=1-\sum_{1\leq j\leq n} t_j\; \text{ and }\;r_j=\sum_{i=1}^{j} t_i.
 \end{equation}
 As usual, we let $p_{e,r_j}$ be the point on $X_\Gamma$ lying on an edge $e$ of $\G$ at distance $r_j$ from the source $s(e)$ of $e$. These points are precisely the vertices of $V(\G^{\ul t})$ seen as points on $X$. See Figure \ref{Fig:casa} for an example.
\begin{figure}[h!]
\begin{center}\begin{tikzpicture}[scale=5.5]
\begin{scope}[shift={(0,0)}]
\draw (0,0) to [out=-30, in=-150] (0.75,0);
\draw (0,0) to (0.375,0.15);
\draw (0.375,0.15) to (0.75,0);
\draw[->, line  width=0.3mm] (0.37,-0.11) to (0.375,-0.11);
\draw[->, line  width=0.3mm] (0.1825,0.075) to (0.1875,0.0756);
\draw[<-, line  width=0.3mm] (0.5625,0.0755) to (0.5675,0.075);

\draw[fill] (0,0) circle [radius=0.015];
\draw[fill] (0.75,0) circle [radius=0.015];
\draw[fill] (0.375,0.15) circle [radius=0.015];
\node[left] at (0,0) {$v_1$};
\node[above] at (0.375,0.15) {$v_2$};
\node[right] at (0.75,0) {$v_3$};

\node[above] at (0,0.1) {$\ora{\G}$};
\node[below] at (0.12,0.16) {$e_1$};
\node[below] at (0.63,0.16) {$e_2$};
\node[below] at (0.375,-0.12) {$e_3$};
\end{scope}

\begin{scope}[shift={(1.2,0)}]
\draw (0,0) to [out=-30, in=-150] (0.75,0);
\draw (0,0) to (0.375,0.15);
\draw (0.375,0.15) to (0.75,0);

\draw[fill] (0,0) circle [radius=0.015];
\draw[fill] (0.75,0) circle [radius=0.015];
\draw[fill] (0.375,0.15) circle [radius=0.015];
\node[left] at (0,0) {$v_1$};
\node[above] at (0.375,0.15) {$v_2$};
\node[right] at (0.75,0) {$v_3$};

\draw[fill] (0.25,-0.1) circle [radius=0.015];
\draw[fill] (0.5,-0.1) circle [radius=0.015];
\draw[fill] (0.125,0.05) circle [radius=0.015];
\draw[fill] (0.25,0.1) circle [radius=0.015];
\draw[fill] (0.625,0.05) circle [radius=0.015];
\draw[fill] (0.5,0.1) circle [radius=0.015];

\node[above] at (-0.1,0.1) {$\G^{(\ult)}$};
\node[below] at (0.03,0.14) {$p_{e_1,r_1}$};
\node[below] at (0.2,0.22) {$p_{e_1,r_2}$};
\node[below] at (0.75,0.14) {$p_{e_2,r_1}$};
\node[below] at (0.6,0.22) {$p_{e_2,r_2}$};
\node[below] at (0.25,-0.1) {$p_{e_3,r_1}$};
\node[below] at (0.5,-0.1) {$p_{e_3,r_2}$};
\end{scope}
\end{tikzpicture}\end{center}
\caption{For $n=2$, the points $p_{e,r_i}$ on $\G^{(\ul t)}=(\G^{(3)},\ell_{\ul t})$.}\label{Fig:casa}
\end{figure}

\begin{Def}
Let $a$ be an integer. We say that
a sequence of integers $(a_j)_{j=0}^k$ is \emph{$a$-admissible} if $\sum_{j=0}^k a_j=a$, with $a_j\in \{0,a,-a\}$, and the inequality
\[
|a_s+a_{s+1}+\dots+a_t|\le |a|
\]
holds, 
for every $0\le s\le t\le k$. We call \tit{admissible} every sequence that is $(-1)$-admissible.
\end{Def}

\begin{Rem}
A sequence of integers is a $a$-admissible if and only if the following properties hold:
\begin{enumerate}
    \item The values $a$ and $-a$ alternate each other in the sequence obtained by deleting the terms equal to $0$;
    \item The first and last nonzero elements are equal to $a$.  
\end{enumerate}
\end{Rem}

\begin{Def}\label{def:dI}
 Let $e=\ora{v_{0}v_{1}}$ be an oriented edge with length $l$. Let $\ul t=(t_1,\dots,t_n)$ be a $l$-convex $n$-tuple.  Set $t_0$, $t_{n+1}$ and $r_j$ as in equation \eqref{eq:r_j}. Consider a subset $I\subset \{-1,1,\dots,n\}$ and  define:
 \[
 d_I=\begin{cases}
\sum_{j\in I}t_{j} & \mbox{if }-1\notin I;\\
l-\sum_{j \in I\setminus\{-1\}}t_{j} & \mbox{if }-1\in I.
\end{cases}
\]
Moreover, for an integer $a$, we let $\D^{a,I}_{\ul t}=a p_{e,d_I}$ be the divisor on $e$ supported on $p_{e,d_I}$  of degree $a$. Finally, for a sequence of integers $\ul a=(a_0,a_1,\ldots,a_{n+1})$, we define the divisor on the metric edge $e$:
\[\widetilde{\D}_{\ult}^{\ul a}= \sum_{0\le j\le n+1}a_j p_{e, r_j}.
\]
If the sequence $\ul a$ is $a$-admissible for some integer $a$, we say that the divisor 
$\widetilde{\D}_{\ult}^{\ul a}$ is \emph{$a$-admissible}; if $\ul a$ is admissible, we simply say that $\widetilde{\D}_{\ult}^{\ul a}$ is \emph{admissible}.
\end{Def}

\begin{Rem}
Notice that for an index $k\in\{1,\dots,n\}$ we can consider the divisors $\D_{\ult}^{a,I}$ and $\widetilde{\D}^{\ul a}_{\ult}$ with $\ul t=(0,0,\ldots,0)$ or $\ult=(0,\ldots,l,\ldots,0)$, where $l$ sits in the $k$-th entry.
\end{Rem}

\begin{Exa}
Let $e=\ora{v_0v_1}$ be an oriented metric edge of length $1$. Let $\ult=(t_1,t_2,t_3)$ be a convex $3$-tuple and fix $a\in \mathbb Z$.
In Figure \ref{Fig:exa_inicial} we illustrate  examples of a divisor $\D_{\ult}^{a,I}$ for $I=\{2\}$ and $\{-1,1,3\}$. In Figure 
\ref{Fig:exa_inicial23} we specialize the previous examples to the cases $\ult=(0,0,0),$ $(1,0,0)$, $(0,1,0)$. 
In Figure \ref{Fig:exa_inicial4} we illustrate 
a divisor of type $\widetilde{\D}_{\ult}^{\ul a}$ and its specializations to the  cases $\ult=(0,0,0),$ $(1,0,0)$, $(0,1,0)$.

\begin{figure}[h!]
\begin{center}\begin{tikzpicture}[scale=5]

\begin{scope}[shift={(-0.5,-0.25)}]
\draw (0,0) to (0.5,0);
\draw[fill] (0,0) circle [radius=0.015];
\draw[fill] (0.5,0) circle [radius=0.015];
\draw[fill] (0.35,0.02) rectangle (0.36,-0.02);
\node[above] at (0.355,0.05) {$a$};
\node[below] at (0,0) {$v_0$};
\node[below] at (0.5,0) {$v_1$};
\node[below] at (0.175,0) {$t_2$};
\end{scope}


\begin{scope}[shift={(0.5,-0.25)}]
\draw (0,0) to (0.5,0);
\draw[fill] (0,0) circle [radius=0.015];
\draw[fill] (0.5,0) circle [radius=0.015];
\draw[fill] (0.18,0.02) rectangle (0.19,-0.02);
\node[above] at (0.18,0.05) {$a$};
\node[below] at (0,0) {$v_0$};
\node[below] at (0.5,0) {$v_1$};
\node[below] at (0.32,0) {$t_1+t_3$};
\end{scope}
\end{tikzpicture}
\end{center}
\caption{Examples of the divisor $\D_{\ult}^{a,I}$.}
\label{Fig:exa_inicial}
\end{figure}

\begin{figure}[h!]
\begin{center}
\begin{tikzpicture}[scale=5]
\begin{scope}[shift={(-0.5,0.3)}]
\draw (0,0) to (0.5,0);
\draw[fill] (0,0) circle [radius=0.015];
\draw[fill] (0.5,0) circle [radius=0.015];
\node[above] at (0,0.05) {$a$};
\node[below] at (0,0) {$v_0$};
\node[below] at (0.5,0) {$v_1$};
\end{scope}
\begin{scope}[shift={(0.5,0.3)}]
\draw (0,0) to (0.5,0);
\draw[fill] (0,0) circle [radius=0.015];
\draw[fill] (0.5,0) circle [radius=0.015];
\node[above] at (0.5,0.05) {$a$};
\node[below] at (0,0) {$v_0$};
\node[below] at (0.5,0) {$v_1$};
\end{scope}
\end{tikzpicture}\end{center}

\begin{center}\begin{tikzpicture}[scale=5]
\begin{scope}[shift={(-0.5,0)}]
\draw (0,0) to (0.5,0);
\draw[fill] (0,0) circle [radius=0.015];
\draw[fill] (0.5,0) circle [radius=0.015];
\node[above] at (0,0.05) {$a$};
\node[below] at (0,0) {$v_0$};
\node[below] at (0.5,0) {$v_1$};
\end{scope}
\begin{scope}[shift={(0.5,0)}]
\draw (0,0) to (0.5,0);
\draw[fill] (0,0) circle [radius=0.015];
\draw[fill] (0.5,0) circle [radius=0.015];
\node[above] at (0,0.05) {$a$};
\node[below] at (0,0) {$v_0$};
\node[below] at (0.5,0) {$v_1$};
\end{scope}
\end{tikzpicture}\end{center}
\begin{center}\begin{tikzpicture}[scale=5]
\begin{scope}[shift={(-0.5,-0.3)}]
\draw (0,0) to (0.5,0);
\draw[fill] (0,0) circle [radius=0.015];
\draw[fill] (0.5,0) circle [radius=0.015];
\node[above] at (0.5,0.05) {$a$};
\node[below] at (0,0) {$v_0$};
\node[below] at (0.5,0) {$v_1$};
\end{scope}
\begin{scope}[shift={(0.5,-0.3)}]
\draw (0,0) to (0.5,0);
\draw[fill] (0,0) circle [radius=0.015];
\draw[fill] (0.5,0) circle [radius=0.015];
\node[above] at (0.5,0) {$a$};
\node[below] at (0,0) {$v_0$};
\node[below] at (0.5,0) {$v_1$}; 
\end{scope}
\end{tikzpicture}\end{center}
\caption{$\D_{\ult}^{a,I}$ in the special cases  $\ul t=(0,0,0)$, $\ul t=(1,0,0)$ and $\ult=(0,1,0)$.}\label{Fig:exa_inicial23}
\end{figure}


\begin{figure}[h!]
\begin{center}\begin{tikzpicture}[scale=5]
\begin{scope}[shift={(0,0)}]
\draw (0,0) to (0.75,0);
\draw[fill] (0,0) circle [radius=0.015];
\draw[fill] (0.75,0) circle [radius=0.015];
\draw[fill] (0.15,0.02) rectangle (0.16,-0.02);
\draw[fill] (0.37,0.02) rectangle (0.38,-0.02);
\draw[fill] (0.53,0.02) rectangle (0.54,-0.02);
\node[above] at (0,0.05) {$a_0$};
\node[above] at (0.155,0.05) {$a_1$};
\node[above] at (0.375,0.05) {$a_2$};
\node[above] at (0.535,0.05) {$a_3$};
\node[above] at (0.75,0.05) {$a_4$};
\node[below] at (0,0) {$v_0$};
\node[below] at (0.75,0) {$v_1$};
\node[below] at (0.09,0) {$t_1$};
\node[below] at (0.27,0) {$t_2$};
\node[below] at (0.455,0) {$t_3$};
\end{scope}
\end{tikzpicture}\end{center}

\begin{center}\begin{tikzpicture}[scale=5]
\begin{scope}[shift={(-1,0)}]
\draw (0,0) to (0.5,0);
\draw[fill] (0,0) circle [radius=0.015];
\draw[fill] (0.5,0) circle [radius=0.015];
\node[above] at (0,0.05) {$\sum_{i=0}^{3} a_i$};
\node[above] at (0.5,0.05) {$a_4$};
\node[below] at (0,0) {$v_0$};
\end{scope}

\begin{scope}[shift={(-0.25,0)}]
\draw (0,0) to (0.5,0);
\draw[fill] (0,0) circle [radius=0.015];
\draw[fill] (0.5,0) circle [radius=0.015];
\node[above] at (0,0.05) {$a_0$};
\node[above] at (0.5,0.05) {$\sum_{i=1}^{4} a_i$};
\node[below] at (0,0) {$v_0$};
\node[below] at (0.5,0) {$v_1$};
\end{scope}

\begin{scope}[shift={(0.6,0)}]
\draw (0,0) to (0.5,0);
\draw[fill] (0,0) circle [radius=0.015];
\draw[fill] (0.5,0) circle [radius=0.015];
\node[above] at (0,0.05) {$a_0+a_1$};
\node[above] at (0.5,0.05) {$a_2+a_3+a_4$};
\node[below] at (0,0) {$v_0$};
\node[below] at (0.5,0) {$v_1$};
\end{scope}
\end{tikzpicture}\end{center}
\caption{$\widetilde{\D}_{\ult}^{\ul a}$ for  $n=3$ and in the special cases $\ult=(0,0,0),$ $(1,0,0)$, $(0,1,0)$.}\label{Fig:exa_inicial4}
\end{figure}
\end{Exa}

\begin{Prop}\label{prop:arrumar}
 Let $e=\ora{v_{0}v_{1}}$ be an oriented metric edge with length $l$. Let $\ul{t}=(t_1,\dots,t_n)$ be an $l$-convex $n$-tuple. 
Consider a divisor on $e$ of type $\D_{\ul t} ^{a,I}$, for some integer $a$ and some subset
$I\subset \{-1,1,\ldots, n\}$. 
Then there is an $a$-admissible sequence  $\ul a=(a_0,\ldots, a_{n+1})$ and a rational function $f$ on $e$ such that 
\begin{enumerate}
\item[(1)] $\D_{\ul t} ^{a,I}=
\widetilde{\D}_{\ult} ^{\ul a} + \div(f)$;
\item[(2)] $f(v_{0})=f(v_{1})$;
 \item[(3)]
 if either $\ult=(0,\ldots,0)$, or $\ult=(0,\ldots,l,\ldots,0)$ (where $t_k=l$ for some $1\le k\le n$), then 
 $\D_{(t_1,\ldots,t_n)}^{a,I}=
\widetilde{\D}_{(t_1,\ldots,t_n)}^{\ul a}$, and they are divisors supported on $v_0$ and $v_1$.
\end{enumerate}
 \end{Prop}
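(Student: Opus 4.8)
\emph{Plan of proof.} First dispose of $a=0$: take $\ul a=(0,\dots,0)$ (which is $0$-admissible) and $f=0$, so that all three conditions hold trivially. For $a\ne 0$, reduce to $a=1$: if a $1$-admissible sequence $\ul b=(b_0,\dots,b_{n+1})$ and a rational function $g$ on $e$ with $g(v_0)=g(v_1)$ satisfy $\D_{\ult}^{1,I}=\widetilde{\D}_{\ult}^{\ul b}+\div(g)$, then $\ul a:=(ab_0,\dots,ab_{n+1})$ is $a$-admissible and $f:=ag$ satisfies (1) and (2), while (3) is inherited because $\div(ag)=0$ whenever $\div(g)=0$. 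So assume $a=1$. The bookkeeping remark that drives everything is that, since $\sum_{j=1}^{n+1}t_j=l$, one can write $d_I=\sum_{j\in J}t_j$ for the subset $J\subseteq\{1,\dots,n+1\}$ equal to $I$ when $-1\notin I$ and to $\{1,\dots,n+1\}\setminus(I\setminus\{-1\})$ when $-1\in I$; consequently $0\le d_I\le l$ and
\[
l-d_I=\sum_{j\in J'}t_j,\qquad J':=\{1,\dots,n+1\}\setminus J .
\]
Since $I\subseteq\{-1,1,\dots,n\}$, the index $n+1$ is never in $I$, so $n+1\in J'$ precisely when $-1\notin I$.

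Next, build the admissible sequence. Set $P_0:=0$, $P_{n+2}:=1$, and for $1\le j\le n+1$ put $P_j:=1$ if $j\in J'$ and $P_j:=0$ otherwise; then let $b_j:=P_{j+1}-P_j$ for $0\le j\le n+1$. Using the identity $b_s+\dots+b_t=P_{t+1}-P_s$ together with the fact that every $P_j$ lies in $\{0,1\}$ (the values $P_0=0$ and $P_{n+2}=1$ being consistent with the previous remark, which forces $b_{n+1}\in\{0,1\}$), one reads off at once that $b_j\in\{-1,0,1\}$, that every partial sum of $\ul b$ has absolute value $\le 1$, and that $\sum_j b_j=P_{n+2}-P_0=1$; hence $\ul b$ is $1$-admissible. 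Moreover, by Abel summation,
\[
\widetilde{\D}_{\ult}^{\ul b}=\sum_{j=0}^{n+1}(P_{j+1}-P_j)\,p_{e,r_j}=v_1-\sum_{j\in J'}\bigl(p_{e,r_j}-p_{e,r_{j-1}}\bigr),
\]
using $p_{e,r_{n+1}}=v_1$ and $p_{e,r_0}=v_0$.

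Now construct the function. For points $p,q$ of $e$ at distances $x_p,x_q$ from $v_0$, let $f_{p,q}$ be the rational function on $e$ given in the length coordinate $y$ by $f_{p,q}(y)=\max(y-x_p,0)-\max(y-x_q,0)$; a routine check of $\ord$ gives $\div(f_{p,q})=p-q$, $f_{p,q}(v_0)=0$ and $f_{p,q}(v_1)=x_q-x_p$. Take
\[
g:=f_{p_{e,d_I},\,v_1}+\sum_{j\in J'}f_{p_{e,r_j},\,p_{e,r_{j-1}}},\qquad f:=a\,g .
\]
Then $\div(g)=(p_{e,d_I}-v_1)+\sum_{j\in J'}(p_{e,r_j}-p_{e,r_{j-1}})$, which combined with the Abel‑summation identity yields $\D_{\ult}^{1,I}=p_{e,d_I}=\widetilde{\D}_{\ult}^{\ul b}+\div(g)$, proving (1). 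For (2) one has $g(v_0)=0$ and the crucial telescoping cancellation
\[
g(v_1)=(l-d_I)+\sum_{j\in J'}(r_{j-1}-r_j)=(l-d_I)-\sum_{j\in J'}t_j=0,
\]
which uses exactly the identity $l-d_I=\sum_{j\in J'}t_j$. Finally, for (3): when $\ult=(0,\dots,0)$ or $\ult=(0,\dots,l,\dots,0)$ all the points $p_{e,r_j}$ collapse to $\{v_0,v_1\}$ and $d_I\in\{0,l\}$, so $\D_{\ult}^{1,I}$ and $\widetilde{\D}_{\ult}^{\ul b}$ are supported on $\{v_0,v_1\}$; each summand $f_{p_{e,r_j},p_{e,r_{j-1}}}$ with $j\in J'$ vanishes except for the unique index $j$ with $r_{j-1}\ne r_j$, and a short case distinction according to whether $d_I=0$ or $d_I=l$ shows that this surviving summand is the negative of $f_{p_{e,d_I},v_1}$, hence $g=0$ and $\D_{\ult}^{1,I}=\widetilde{\D}_{\ult}^{\ul b}$.

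The step I expect to be the real obstacle is condition (2). Moving the single chip from $p_{e,d_I}$ naïvely to a neighbouring vertex produces a rational function whose values at $v_0$ and $v_1$ differ by the displacement, so (2) fails; the entire point of allowing an alternating $\pm 1$ admissible pattern, rather than a single vertex, is to distribute the chip over the vertices $\{p_{e,r_j}:j\in J'\}$ in precisely the way that makes the endpoint values telescope to the same number, which is what the formula for $g$ above achieves. I would also take care with the sign convention for $\ord_p$ and with the harmless but notation‑sensitive degenerate cases $r_{j-1}=r_j$, $d_I\in V(\G^{(\ult)})$, and $d_I\in\{v_0,v_1\}$, which are exactly the situations where the explicit formula for $f_{p,q}$ must be checked at the ends of $e$.
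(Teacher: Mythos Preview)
Your proof is correct and takes a genuinely different route from the paper's. The paper argues by induction on $n$, splitting into the cases $1\in I$, $1\notin I$, and $-1\in I$, each time restricting to the sub-edge $\ora{p_{e,r_1}v_1}$ and invoking the result for the shorter tuple $(t_2,\dots,t_n)$. You instead give a direct, closed-form construction: encode the admissible sequence via a ``potential'' $P_j\in\{0,1\}$ indexed by the complement $J'$, recover $\ul b$ as the discrete derivative $P_{j+1}-P_j$, and read off both $1$-admissibility and property~(1) from Abel summation. Your argument is shorter and makes transparent why the endpoint condition~(2) is forced: the telescoping identity $g(v_1)=(l-d_I)-\sum_{j\in J'}t_j=0$ is nothing but the defining relation $l-d_I=\sum_{j\in J'}t_j$. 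In the paper's inductive argument, the verification of~(2) is spread across several sub-cases. On the other hand, the paper's approach produces $\ul a$ by a recursive algorithm, which fits the way Construction~\ref{Cons:dta} later uses this proposition edge by edge.

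One caveat: with the paper's convention that $\ord_p(f)$ is the sum of \emph{incoming} slopes (see for instance Figure~\ref{Fig:div_Dt2}, where a function leaving $v_0$ with positive slope gives $\div(f)(v_0)<0$), your formula $\div(f_{p,q})=p-q$ has the wrong sign; it should read $q-p$. The repair is to replace $g$ by $-g$ throughout, which leaves~(2) and~(3) untouched since $g(v_0)=g(v_1)=0$ either way.
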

 
\begin{figure}[htp]
\begin{center}\begin{tikzpicture}[scale=5]
\begin{scope}[shift={(0,0)}]
\draw (0,0) to (0.75,0);
\draw[fill] (0,0) circle [radius=0.015];
\draw[fill] (0.75,0) circle [radius=0.015];
\draw[fill] (0.45,0.02) rectangle (0.46,-0.02);

\node[above] at (0.45,0.05) {$a$};
\node[below] at (0,0) {$v_0$};
\node[below] at (0.75,0) {$v_1$};
\node[below] at (0.225,0) {$d$};
\end{scope}
\begin{scope}[shift={(1,0)}]
\draw (0,0) to (0.375,0);
\draw[dashed] (0.375,0) to (0.535,0);
\draw (0.535,0) to (0.75,0);
\draw[fill] (0,0) circle [radius=0.015];
\draw[fill] (0.75,0) circle [radius=0.015];
\draw[fill] (0.15,0.02) rectangle (0.16,-0.02);
\draw[fill] (0.37,0.02) rectangle (0.38,-0.02);
\draw[fill] (0.53,0.02) rectangle (0.54,-0.02);
\node[above] at (0,0.05) {$a_0$};
\node[above] at (0.155,0.05) {$a_1$};
\node[above] at (0.375,0.05) {$a_2$};
\node[above] at (0.535,0.05) {$a_n$};
\node[above] at (0.75,0.05) {$a_{n+1}$};
\node[below] at (0,0) {$v_0$};
\node[below] at (0.75,0) {$v_1$};
\node[below] at (0.09,0) {$t_1$};
\node[below] at (0.27,0) {$t_2$};
\end{scope}
\end{tikzpicture}\end{center}
\caption{The divisor $\D_{\ul t} ^{a,I}$ and its equivalent divisor $\widetilde{\D}_{\ult} ^{\ul{a}}$.}
\end{figure}

\begin{proof} 
 For $I=\emptyset$ we have nothing to do: in this case $\ul{a}=(a,0,\dots,0)$ and $f$ is constant, and items (1), (2), (3) clearly hold. 
 
 We now prove the proposition for $I\subset \{1,\ldots, n\}$ nonempty. We proceed by induction on $n$. For $n=1$ and $I=\{1\}$, the divisor $\D_{\ul t}^{a,I}=a p_{e,r_{1}}$ has already the desired form: we can take $\ul{a}=(0,a,0)$ and $f$ constant.  It is straightforward to check that items (1), (2), (3) hold. We argue similarly for $n=2$ and $I=\{1\}, \{1,2\}$.

 For $n=2$ and $I=\{2\}$, we can find a rational function $f$ on $e$ such that $\D_{\ul t} ^{a,I}-\div(f)=a p_{e,t_{2}}-\div(f)$ is of the desired form, as illustrated in  Figure \ref{Fig:div_Dt2} (when $t_2\geq t_1$, the other case being analogous). In particular we have that $\ul a=(a,-a,a,0)$. Notice that items (1) and (2) holds.
 \begin{figure}[h]\begin{center}
\begin{tikzpicture}[scale=5]
\begin{scope}[shift={(0,0)}]
\draw (0,0) to (0.75,0);
\draw[fill] (0,0) circle [radius=0.015];
\node[left] at (0,0) {$v_0$};
\draw[fill] (0.75,0) circle [radius=0.015];
\node[right] at (0.75,0) {$v_1$};
\node[left] at (-0.375,0) {$\D_{\ul t} ^{a,\{2\}}$};
\draw[fill] (0.4,0) circle [radius=0.015];
\node[above] at (0.4,0.03) {$a$};
\node[below] at (0.2,0) {$t_2$};

\draw (0,-0.4) to (0.15,-0.2);
\draw (0.15,-0.2) to (0.4,-0.2);
\draw (0.4,-0.2) to (0.55,-0.4);
\draw (0.55,-0.4) to (0.75,-0.4);
\draw[fill] (0,0) circle [radius=0.015];
\node[left] at (-0.38,-0.3) {$f$};
\node[left] at (0.05,-0.3) {$1$};
\node[right] at (0.43,-0.3) {$-1$};
\node[above] at (0.27,-0.2) {$0$};
\node[above] at (0.65,-0.4) {$0$};

\draw[dotted] (0,-0.4) to (0,-0.6);
\draw[dotted] (0.15,-0.2) to (0.15,-0.6);
\draw[dotted] (0.4,-0.2) to (0.4,-0.6);
\draw[dotted] (0.55,-0.4) to (0.55,-0.6);
\draw[dotted] (0.75,-0.4) to (0.75,-0.6);

\draw (0,-0.6) to (0.75,-0.6);
\draw[fill] (0,-0.6) circle [radius=0.015];
\node[above] at (0,-0.57) {$-a$};
\node[left] at (0,-0.6) {$v_0$};
\draw[fill] (0.75,-0.6) circle [radius=0.015];
\node[above] at (0.75,-0.57) {$0$};
\node[right] at (0.75,-0.6) {$v_1$};
\node[left] at (-0.375,-0.6) {$\div(f)$};
\draw[fill] (0.55,-0.6) circle [radius=0.015];
\node[above] at (0.55,-0.57) {$-a$};
\draw[fill] (0.15,-0.6) circle [radius=0.015];
\node[above] at (0.15,-0.57) {$a$};
\draw[fill] (0.4,-0.6) circle [radius=0.015];
\node[above] at (0.4,-0.57) {$a$};

\node[below] at (0.075,-0.6) {\small{$t_1$}};
\node[below] at (0.27,-0.6) {\small{$t_2-t_1$}};
\node[below] at (0.47,-0.6) {\small{$t_1$}};

\draw (0,-0.9) to (0.75,-0.9);
\draw[fill] (0,-0.9) circle [radius=0.015];
\node[above] at (0,-0.87) {$a$};
\node[left] at (0,-0.9) {$v_0$};
\draw[fill] (0.75,-0.9) circle [radius=0.015];
\node[right] at (0.75,-0.9) {$v_1$};
\node[left] at (-0.375,-0.9) {$\widetilde{\D}_{\ul t} ^{(a,-a,a,0)}$};
\draw[fill] (0.55,-0.9) circle [radius=0.015];
\draw[fill] (0.15,-0.9) circle [radius=0.015];
\node[above] at (0.15,-0.87) {$-a$};
\node[above] at (0.55,-0.87) {$a$};
\node[below] at (0.075,-0.9) {\small{$t_1$}};
\node[below] at (0.35,-0.9) {\small{$t_2$}};
\end{scope}
\end{tikzpicture}\end{center}
\caption{Finding $\widetilde{\D}_{\ult}^{(a_0,a_1,a_2,a_3)}$ for $n=2$ and $I=\{2\}$.}\label{Fig:div_Dt2}
\end{figure}

The divisors $\D_{(0,0)} ^{a,\{2\}}$, $\D_{(l,0)} ^{a,\{2\}}$, $\D_{(0,l)} ^{a,\{2\}}$ are equal to the divisors $\widetilde{\D}_{(0,0)}^{(a,-a,a,0)}$, $\widetilde{\D}_{(l,0)}^{(a,-a,a,0)}$, $\widetilde{\D}_{(0,l)}^{(a,-a,a,0)}$, respectively, as illustrated in Figure \ref{Fig:splz_Dt2}, hence item (3) holds.

\begin{figure}[h]
\begin{center}\begin{tikzpicture}[scale=4.8]
\begin{scope}[shift={(0,0)}]
\draw (0,0) to (0.5,0);
\draw (0.55,0) to (0.6,0.15);
\draw[->] (0.6,0.15) to (0.9,0.15);
\node[above] at (0.8,0.135) {\small{$\ult=(0,0)$}};
\draw[->] (0.55,0) to (0.9,0);
\node[above] at (0.8,-0.005) {\small{$\ult=(l,0)$}};
\draw (0.55,0) to (0.6,-0.15);
\draw[->] (0.6,-0.15) to (0.9,-0.15);
\node[above] at (0.8,-0.135) {\small{$\ult=(0,l)$}};

\draw (1.05,0.15) to (1.55,0.15);
\draw (1.05,0) to (1.55,0);
\draw (1.05,-0.15) to (1.55,-0.15);
\draw (2,0.15) to (2.05,0);
\draw[<-] (1.65,0.15) to (2,0.15);
\node[above] at (1.75,0.135) {\small{$\ult=(0,0)$}};
\draw[<-] (1.65,0) to (2.05,0);
\node[above] at (1.75,-0.005) {\small{$\ult=(l,0)$}};
\draw (2,-0.15) to (2.05,0);
\draw[<-] (1.65,-0.15) to (2,-0.15);
\node[above] at (1.75,-0.135) {\small{$\ult=(0,l)$}};

\draw (2.1,0) to (2.6,0);

\draw[fill] (0,0) circle [radius=0.015];
\draw[fill] (0.5,0) circle [radius=0.015];
\draw[fill] (1.05,0.15) circle [radius=0.015];
\draw[fill] (1.55,0.15) circle [radius=0.015];
\draw[fill] (1.05,0) circle [radius=0.015];
\draw[fill] (1.55,0) circle [radius=0.015];
\draw[fill] (1.05,-0.15) circle [radius=0.015];
\draw[fill] (1.55,-0.15) circle [radius=0.015];
\draw[fill] (2.1,0) circle [radius=0.015];
\draw[fill] (2.6,0) circle [radius=0.015];

\draw[fill] (0.15,0.02) rectangle (0.16,-0.02);
\draw[fill] (2.3,0.02) rectangle (2.31,-0.02);
\draw[fill] (2.45,0.02) rectangle (2.46,-0.02);

\node[above] at (0.15,0.05) {$a$};
\node[above] at (1.05,0.15) {$a$};
\node[above] at (1.05,0) {$a$};
\node[above] at (1.55,-0.15) {$a$};

\node[above] at (2.1,0.05) {$a$};
\node[above] at (2.3,0.05) {$-a$};
\node[above] at (2.45,0.05) {$a$};

\node[below] at (0.075,0) {\small{$t_2$}};
\node[below] at (2.2,0) {\small{$t_1$}};
\node[below] at (2.375,0) {\small{$t_2$}};

\end{scope}
\end{tikzpicture}\end{center}
\caption{$\D_{\ult}^{a,\{2\}}$ and  $\widetilde{\D}_{\ult}^{\{a,-a,a,0\}}$.}\label{Fig:splz_Dt2}
\end{figure}

 Next, consider an $l$-convex $n$-tuple $\ult=(t_1,\dots,t_n)$ and a divisor of  type
  $\D_{\ul t} ^{a,I}$, for $a\in\mathbb Z$ and for some 
$I\subset \{1,\ldots, n\}$. Recall that $\D_{\ul t}^{a,I}=a p_{e,d_I}$ with $d_I=\sum_{j\in I} t_j$. 
Assume that $1\in I$ and write the divisor as $\D_{\ul t}^{a,I}=0p_{e,r_1}+a p_{e,d_I}$. Consider the oriented edge $e_1=\ora{p_{e,r_1}v_{1}}$  and the divisor 
 \[\F_{\ul s}^{a,I\setminus\{1\}}=\D_{\ul t} ^{a,I}\bigg|_{e_1}=ap_{e_1,d_I-t_1}=ap_{e_1,d_{I\setminus\{1\}}}\] 
 where $\ul s=(t_2,\ldots,t_n)$ and $d_{I\setminus\{1\}}=\sum_{j\in I\setminus\{1\}}t_j$. By the inductive hypothesis,  the result holds for  $(t_2,\dots,t_n)$ and the divisor $\F_{\ul s}^{a,I\setminus\{1\}}$, giving rise to an $a$-admissible sequence $(a_1,\ldots, a_{n+1})$  and a function $f'$ on the edge $e_1$ such that $\F_{\ul s}^{a,I\setminus\{1\}}=\widetilde{\F}_{\ul s} ^{(a_1,\ldots,a_{n+1})}+\div(f')$. In particular the coefficient of $\widetilde{\F}_{\ul s} ^{(a_1,\ldots,a_{n+1})}$ on $p_{e,r_1}$ is  $a_1$, which is equal to either $0$ or $a$.
 We let $\ul a=(0,a_1,\ldots, a_{n+1})$ and $f$ be the function on $e$ defined as the extension of $f'$ which is constant over $\ora{v_0 p_{e,r_1}}$. Then $\widetilde{\D}_{\ul t}^{a}$ and $f$ are as required by the properties (1) and (2).   
 
 We check property (3). Let $\ul t=(0,\ldots, 0)$, or $\ul t=(0,\ldots, l,\ldots, 0)$. If  $t_1=0$, we have $e_1=e$, and we need only to check the identity $\F_{\ul s}^{a,I\setminus\{1\}}=\widetilde{\F}_{\ul s}^{(a_1,\ldots,a_{n+1})}$ for $\ul s=(0,\ldots, 0)$ and $\ul s=(0,\ldots, l,\ldots, 0)$, which holds by the induction hypothesis. If $t_1=l$, then $e_1=v_1$ and  $\D_{(l,\ldots,0)}^{a,I}=\widetilde{\D}_{(l,\ldots,0)}^{\ul a}=av_1$, as wanted.

 Assume now that $1\notin I$. We have two cases: $d_I-t_1>0$ and $d_I-t_1\leq0.$ In the first case, as illustrated in Figure \ref{Fig:div_D1},
 we can find a rational function $f_{1}$ on $e$ such that 
 \[\D_{\ul t} ^{a,I}-\div(f_{1})=av_0-ap_{e,r_1}+ap_{e,d_{I}+t_1}.\] 
In the second case, we can use the same rational function $f_{1}$: the only difference is that  $\div(f_1)=-av_0+ap_{e,d_I}+ap_{e,r_1}-ap_{e,t_1+d_I}.$ 
\begin{figure}[h]
\begin{center}
\begin{tikzpicture}[scale=5]
\begin{scope}[shift={(0,0)}]
\draw (0,0) to (0.75,0);
\draw[fill] (0,0) circle [radius=0.015];
\node[left] at (0,0) {$v_0$};
\draw[fill] (0.75,0) circle [radius=0.015];
\node[right] at (0.75,0) {$v_1$};
\node[left] at (-0.375,0) {$\D_{\ul t} ^{a,I}$};
\draw[fill] (0.4,0) circle [radius=0.015];
\node[above] at (0.4,0.03) {$a$};
\node[below] at (0.2,0) {\tiny{$d_I=\sum_{j\in I}t_j$}};

\draw (0,-0.4) to (0.15,-0.2);
\draw (0.15,-0.2) to (0.4,-0.2);
\draw (0.4,-0.2) to (0.55,-0.4);
\draw (0.55,-0.4) to (0.75,-0.4);
\draw[fill] (0,0) circle [radius=0.015];
\node[left] at (-0.38,-0.3) {$f_1$};
\node[left] at (0.05,-0.3) {$1$};
\node[right] at (0.43,-0.3) {$-1$};
\node[above] at (0.3,-0.2) {$0$};
\node[above] at (0.65,-0.4) {$0$};

\draw[dotted] (0,-0.4) to (0,-0.6);
\draw[dotted] (0.15,-0.2) to (0.15,-0.6);
\draw[dotted] (0.4,-0.2) to (0.4,-0.6);
\draw[dotted] (0.55,-0.4) to (0.55,-0.6);
\draw[dotted] (0.75,-0.4) to (0.75,-0.6);

\draw (0,-0.6) to (0.75,-0.6);
\draw[fill] (0,-0.6) circle [radius=0.015];
\node[above] at (0,-0.57) {$-a$};
\node[left] at (0,-0.6) {$v_0$};
\draw[fill] (0.75,-0.6) circle [radius=0.015];
\node[above] at (0.75,-0.57) {$0$};
\node[right] at (0.75,-0.6) {$v_1$};
\node[left] at (-0.375,-0.6) {$\div(f_1)$};
\draw[fill] (0.55,-0.6) circle [radius=0.015];
\node[above] at (0.55,-0.57) {$-a$};
\draw[fill] (0.15,-0.6) circle [radius=0.015];
\node[above] at (0.15,-0.57) {$a$};
\draw[fill] (0.4,-0.6) circle [radius=0.015];
\node[above] at (0.4,-0.57) {$a$};

\node[below] at (0.075,-0.6) {\small{$t_1$}};
\node[below] at (0.27,-0.6) {\small{$d_I-t_1$}};
\node[below] at (0.47,-0.6) {\small{$t_1$}};

\draw (0,-0.9) to (0.75,-0.9);
\draw[fill] (0,-0.9) circle [radius=0.015];
\node[above] at (0,-0.87) {$a$};
\node[left] at (0,-0.9) {$v_0$};
\draw[fill] (0.75,-0.9) circle [radius=0.015];
\node[right] at (0.75,-0.9) {$v_1$};
\node[left] at (-0.375,-0.9) {$\D_{\ul t} ^{a,I}-\div(f_{1})$};
\draw[fill] (0.55,-0.9) circle [radius=0.015];
\draw[fill] (0.15,-0.9) circle [radius=0.015];
\node[above] at (0.15,-0.87) {$-a$};
\node[above] at (0.55,-0.87) {$a$};
\node[below] at (0.075,-0.9) {\small{$t_1$}};
\node[below] at (0.35,-0.9) {\small{$d_I$}};
\end{scope}
\end{tikzpicture}\end{center}
\caption{Finding $f_1$ and $\D_{\ul t} ^{a,I}-\div(f_{1})$.}\label{Fig:div_D1}
\end{figure}

Consider the edge $e_1=\ora{p_{e,r_1}v_1}$ and the divisor $\F_{\ul s}^{a,I}=ap_{e_1,d_I}$, where $\ul s=(t_2,\dots.t_n)$ and $I$ is seen as a subset of $\{2,\dots,n\}$. Again, by the inductive hypothesis, the result holds for $(t_2,\dots,t_n)$ and the divisor $\F_{\ul s}^{a,I}$, giving rise to an admissible sequence 
$(a_1,\ldots,a_{n+1})$ 
and a function $f'$ on $e_1$ such that $\F_{\ul s}^{a,I}=\widetilde{\F}_{\ul s} ^{(a_1,\ldots,a_{n+1})}+\div(f')$. In particular, the coefficient of $\widetilde{\F}_{\ul s} ^{(a_1,\ldots,a_{n+1})}$ on $p_{e,r_1}$ is $a_1$, which is equal to either $0$ or $a$. Set $\ul a:=(a,a_1-a,a_2,\ldots, a_{n+1})$. Notice that $\ul a$ is $a$-admissible. 
Let $f_2$ be the function on $e$ defined as the extension of $f'$ which is constant over $\ora{v_0p_{e,r_1}}$. Then $\ul a$
and $f=f_1+f_2$ satisfy properties (1) and (2). 

Let us check property (3). Let $\ul t=(0,\ldots,0)$ or $\ul t=(0,\ldots, l,\ldots, 0)$. If  $t_1=0$, we have $e_1=e$, and we need only to check the identity $\F_{\ul s}^{a,I}=\widetilde{\F}_{\ul s}^{(a_1,\ldots,a_{n+1})}$ for $\ul s=(0,\ldots, 0)$ and $\ul s=(0,\ldots, l,\ldots, 0)$, which holds by the induction hypothesis. If $t_1=l$, we have
\[
\widetilde{\D}_{(l,\ldots,0)}^{ a}=av_0+(a_1-a+a_2\ldots+a_{n+1})v_1.
\]
Since $a_1-a+a_2+\ldots+a_{n+1}=0$, we have that   $\widetilde{\D}_{(l,\ldots,0)}^{\ul a}=av_0=\D_{(l,\ldots,0)}^{a,I}$, as required in (3).

As a final step, we prove the proposition for $-1\in I$. Recall that $d_I=l-\sum_{j\in I\setminus\{-1\}} t_j$. We have two cases: $2d_I<l$ and $2d_I\ge l$. In the first case, as illustrated in Figure \ref{Fig:finding_inverse}, we can find a rational function $f_1$ on $e$ such that
\[\D_{\ul t} ^{a,I}=av_0+av_1+\D_{\ul t}^{-a, I\setminus\{-1\}} +\div(f_1).\] 
In the second case, we can use the same rational function $f_1$: the only difference is that $\div(f_1)=-av_0+ap_{e,l-d_I}+ap_{e,d_I}-av_1$.
\begin{figure}[h]\begin{center}
\begin{tikzpicture}[scale=5]
\begin{scope}[shift={(0,0)}]
\draw (0,0) to (0.75,0);
\draw[fill] (0,0) circle [radius=0.015];
\node[left] at (0,0) {$v_0$};
\draw[fill] (0.75,0) circle [radius=0.015];
\node[right] at (0.75,0) {$v_1$};
\node[left] at (-0.375,0) {$\D_{\ul t} ^{a,I}$};
\draw[fill] (0.55,0) circle [radius=0.015];
\node[above] at (0.55,0.03) {$a$};
\node[below] at (0.65,0) {\small{$d_I$}};

\draw (0,-0.4) to (0.2,-0.2);
\draw (0.2,-0.2) to (0.55,-0.2);
\draw (0.55,-0.2) to (0.75,-0.4);
\draw[fill] (0,0) circle [radius=0.015];
\node[left] at (-0.38,-0.3) {$f_1$};
\node[left] at (0.1,-0.3) {$1$};
\node[right] at (0.65,-0.3) {$-1$};
\node[above] at (0.375,-0.2) {$0$};

\draw[dotted] (0,-0.4) to (0,-0.6);
\draw[dotted] (0.2,-0.2) to (0.2,-0.6);
\draw[dotted] (0.55,-0.2) to (0.55,-0.6);
\draw[dotted] (0.75,-0.4) to (0.75,-0.6);

\draw (0,-0.6) to (0.75,-0.6);
\draw[fill] (0,-0.6) circle [radius=0.015];
\node[above] at (0,-0.57) {$-a$};
\node[left] at (0,-0.6) {$v_0$};
\draw[fill] (0.75,-0.6) circle [radius=0.015];
\node[above] at (0.75,-0.57) {$-a$};
\node[right] at (0.75,-0.6) {$v_1$};
\node[left] at (-0.375,-0.6) {$\div(f_1)$};
\draw[fill] (0.55,-0.6) circle [radius=0.015];
\draw[fill] (0.2,-0.6) circle [radius=0.015];
\node[above] at (0.2,-0.57) {$a$};
\node[above] at (0.55,-0.57) {$a$};
\node[below] at (0.1,-0.6) {\small{$d_I$}};
\node[below] at (0.65,-0.6) {\small{$d_I$}};

\draw (0,-0.9) to (0.75,-0.9);
\draw[fill] (0,-0.9) circle [radius=0.015];
\node[above] at (0,-0.87) {$a$};
\node[left] at (0,-0.9) {$v_0$};
\draw[fill] (0.75,-0.9) circle [radius=0.015];
\node[above] at (0.75,-0.87) {$a$};
\node[right] at (0.75,-0.9) {$v_1$};
\node[left] at (-0.375,-0.9) {$a(v_0+v_1)+\D_{\ul t}^{-a, I\setminus\{-1\}}$};
\draw[fill] (0.55,-0.9) circle [radius=0.015];
\draw[fill] (0.2,-0.9) circle [radius=0.015];
\node[above] at (0.2,-0.87) {$-a$};
\node[above] at (0.55,-0.87) {$0$};
\node[below] at (0.1,-0.9) {\small{$d_I$}};
\node[below] at (0.65,-0.9) {\small{$d_I$}};
\end{scope}
\end{tikzpicture}\end{center}
\caption{Finding the rational function $f_1$.}\label{Fig:finding_inverse}
\end{figure}
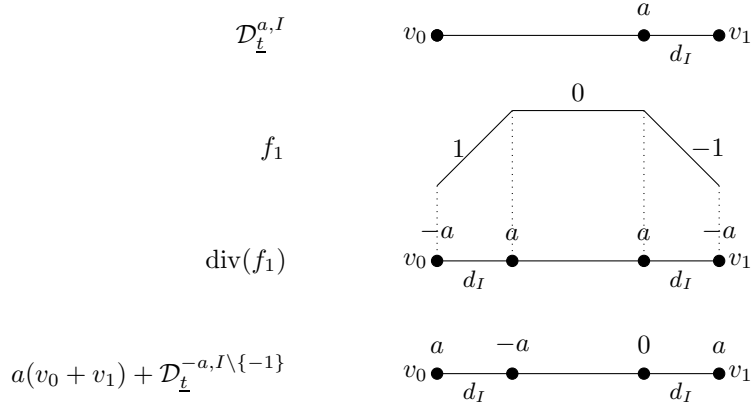

By the case in which $I\subset \{1,\ldots, n\}$, we know that there exists a $(-a)$-admissible sequence $\ul a'=(a_0,\ldots, a_{n+1})$ and a rational function $f_2$ on $e$ such that $\D_{\ul t}^{-a, I\setminus\{-1\}}=\widetilde{\D}_{\ul t}^{\ul a'}+\div(f_2)$.  
Set $\ul a:=(a+a_0,a_1,\ldots, a+a_{n+1})$. Notice that $\ul a$ is $a$-admissible. Define the function $f:=f_1+f_2$. The sequence $\ul a$ and the function $f$ satisfy item (1), because we have:
\begin{align*}
    \widetilde{\D}_{\ul t}^{\ul a}&=av_0+av_1+\widetilde{\D}_{\ul t}^{\ul a'}\\
    &=av_0+av_1+\D^{-a,I\setminus\{-1\}}_{\ul t}-\div(f_2)\\
    &=av_0+av_1-\div(f_2)+\D^{a,I}_{\ul t}-av_0-av_1-\div(f_1)\\
    &=\D^{a,I}_{\ul t}-\div(f_1+f_2).
\end{align*}
Notice that item (2) clearly holds as well.
Finally, for $\ul t=(0,\ldots,0)$ or $\ul t=(0,\ldots, l,\ldots, 0)$, we have  
\begin{equation}\label{eq:Dta}
\D_{\ul t} ^{a,I}= av_0+av_1+\D_{\ul t}^{-a, I\setminus\{-1\}}.
\end{equation} 
Using the case $I\subset\{1,\dots,n\}$, one can easily check  
 item (3). The proof is complete.
\end{proof}

\begin{Exa} Let $e$ be a metric edge with length 1. Let $\ult=(t_1,t_2,t_3)$ be a convex 3-tuple and consider the divisor $D=ap_{e,d}$. 
In Figure \ref{Fig:div_n3} we give some examples of   
the divisors obtained following the algorithm in the proof of Proposition \ref{prop:arrumar} for the cases $d=t_2$ and $t_1+t_3$. Notice that in the cases where $d=t_1, t_1+t_2, t_1+t_2+t_3$, we have nothing to do. 
In Figure \ref{Fig:div_n3_negativo}, we apply Proposition \ref{prop:arrumar} to the divisor in the cases where $-1\in I$.  
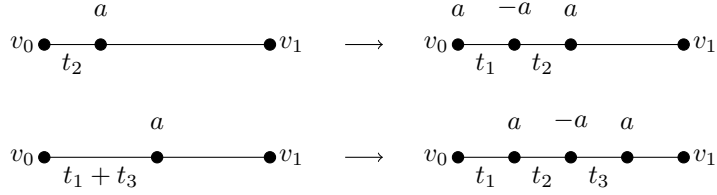
\begin{figure}[h!]
\begin{center}\begin{tikzpicture}[scale=5]
\begin{scope}[shift={(0,0)}]
\draw (0,0) to (0.6,0);
\draw[fill] (0,0) circle [radius=0.015];
\draw[fill] (0.15,0) circle [radius=0.015];
\draw[fill] (0.6,0) circle [radius=0.015];

\node[above] at (0.15,0.05) {$a$};
\node[left] at (0,0) {$v_0$};
\node[right] at (0.6,0) {$v_1$};
\node[below] at (0.075,0) {$t_2$};
\end{scope}

\begin{scope}[shift={(1.1,0)}]
\draw[->] (-0.3,0) to (-0.2,0);
\draw (0,0) to (0.6,0);
\draw[fill] (0,0) circle [radius=0.015];
\draw[fill] (0.15,0) circle [radius=0.015];
\draw[fill] (0.30,0) circle [radius=0.015];
\draw[fill] (0.6,0) circle [radius=0.015];

\node[above] at (0.0,0.05) {$a$};
\node[above] at (0.15,0.05) {$-a$};
\node[above] at (0.3,0.05) {$a$};
\node[left] at (0,0) {$v_0$};
\node[right] at (0.6,0) {$v_1$};
\node[below] at (0.075,0) {$t_1$};
\node[below] at (0.225,0) {$t_2$};
\end{scope}





\begin{scope}[shift={(0,-0.30)}]
\draw (0,0) to (0.6,0);
\draw[fill] (0,0) circle [radius=0.015];
\draw[fill] (0.30,0) circle [radius=0.015];
\draw[fill] (0.6,0) circle [radius=0.015];

\node[above] at (0.3,0.05) {$a$};
\node[left] at (0,0) {$v_0$};
\node[right] at (0.6,0) {$v_1$};
\node[below] at (0.15,0) {$t_1+t_3$};
\end{scope}

\begin{scope}[shift={(1.1,-0.30)}]
\draw[->] (-0.3,0) to (-0.2,0);
\draw (0,0) to (0.6,0);
\draw[fill] (0,0) circle [radius=0.015];
\draw[fill] (0.15,0) circle [radius=0.015];
\draw[fill] (0.30,0) circle [radius=0.015];
\draw[fill] (0.45,0) circle [radius=0.015];
\draw[fill] (0.6,0) circle [radius=0.015];

\node[above] at (0.15,0.05) {$a$};
\node[above] at (0.3,0.05) {$-a$};
\node[above] at (0.45,0.05) {$a$};
\node[left] at (0,0) {$v_0$};
\node[right] at (0.6,0) {$v_1$};
\node[below] at (0.075,0) {$t_1$};
\node[below] at (0.225,0) {$t_2$};
\node[below] at (0.375,0) {$t_3$};
\end{scope}




\end{tikzpicture}\end{center}
\caption{Using Proposition \ref{prop:arrumar}  for a convex 3-tuple.}\label{Fig:div_n3}
\end{figure}
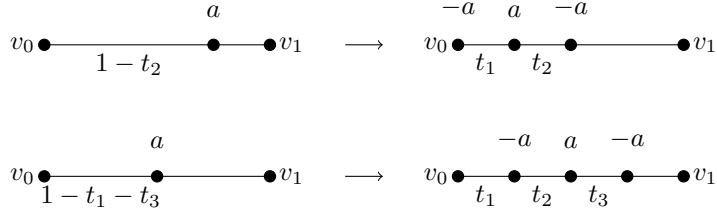
\begin{figure}[h!]
\begin{center}\begin{tikzpicture}[scale=5]





\begin{scope}[shift={(0,-0.35)}]
\draw (0,0) to (0.6,0);
\draw[fill] (0,0) circle [radius=0.015];
\draw[fill] (0.45,0) circle [radius=0.015];
\draw[fill] (0.6,0) circle [radius=0.015];

\node[above] at (0.45,0.05) {$a$};
\node[left] at (0,0) {$v_0$};
\node[right] at (0.6,0) {$v_1$};
\node[below] at (0.225,0) {$1-t_2$};
\end{scope}
\begin{scope}[shift={(1.1,-0.35)}]
\draw[->] (-0.3,0) to (-0.2,0);
\draw (0,0) to (0.6,0);
\draw[fill] (0,0) circle [radius=0.015];
\draw[fill] (0.15,0) circle [radius=0.015];
\draw[fill] (0.30,0) circle [radius=0.015];
\draw[fill] (0.6,0) circle [radius=0.015];

\node[above] at (0.0,0.05) {$-a$};
\node[above] at (0.15,0.05) {$a$};
\node[above] at (0.3,0.05) {$-a$};
\node[left] at (0,0) {$v_0$};
\node[right] at (0.6,0) {$v_1$};
\node[below] at (0.075,0) {$t_1$};
\node[below] at (0.225,0) {$t_2$};
\end{scope}

\begin{scope}[shift={(0,-0.7)}]
\draw (0,0) to (0.6,0);
\draw[fill] (0,0) circle [radius=0.015];
\draw[fill] (0.30,0) circle [radius=0.015];
\draw[fill] (0.6,0) circle [radius=0.015];

\node[above] at (0.3,0.05) {$a$};
\node[left] at (0,0) {$v_0$};
\node[right] at (0.6,0) {$v_1$};
\node[below] at (0.15,0) {$1-t_1-t_3$};
\end{scope}
\begin{scope}[shift={(1.1,-0.7)}]
\draw[->] (-0.3,0) to (-0.2,0);
\draw (0,0) to (0.6,0);
\draw[fill] (0,0) circle [radius=0.015];
\draw[fill] (0.15,0) circle [radius=0.015];
\draw[fill] (0.30,0) circle [radius=0.015];
\draw[fill] (0.45,0) circle [radius=0.015];
\draw[fill] (0.6,0) circle [radius=0.015];

\node[above] at (0.15,0.05) {$-a$};
\node[above] at (0.3,0.05) {$a$};
\node[above] at (0.45,0.05) {$-a$};
\node[left] at (0,0) {$v_0$};
\node[right] at (0.6,0) {$v_1$};
\node[below] at (0.075,0) {$t_1$};
\node[below] at (0.225,0) {$t_2$};
\node[below] at (0.375,0) {$t_3$};
\end{scope}

\end{tikzpicture}\end{center}
\caption{Using Proposition \ref{prop:arrumar}  for a convex 3-tuple when $-1\in I$.} \label{Fig:div_n3_negativo}
\end{figure}
\end{Exa}

\begin{Cons} \label{Cons:dta}
We can extend Proposition \ref{prop:arrumar} to tropical curves and more general divisors. For simplicity, we will just consider  tropical curves with edges of length 1. This will be enough for our purposes.
Let $\G$ be a graph and consider the tropical curve $X=X_\Gamma$. Let
\[
\mathfrak{a}\col E(\G)\times \P(\{-1,1,\ldots,n\})\to \mathbb{Z}
\]
be a function ($\P$ denotes the power set). Consider a  convex $n$-tuple $\ul t$. We define the divisor $\D^{\mathfrak{a}}_{\ul t}$ on $X$ as 
 \[
 \D^{\mathfrak{a}}_{\ul t}=\underset{I\subset\{-1,1,\ldots,n\}}{\sum_{e\in E(\G)}} \mathfrak{a}(e,I)p_{e,d_I}.
 \]
 We can apply Proposition \ref{prop:arrumar} to each $\mathfrak{a}(e,I)p_{e,d_I}$. We get an $\mathfrak{a}(e,I)$-admissible sequence $\ul a(e,I)$ and a rational function $f_{e,I}$ on $e$ such that $\mathfrak{a}(e,I)p_{e,d_I}=\wt{\D}^{\ul a(e,I)}_{\ul t}+\div(f_{e,I})$. Set 
 \[
\widetilde{\D}_{\ul t}^{\mathfrak{a}} :=\underset{I\subset\{-1,1,\ldots,n\}}{\sum_{e\in E(\G)}} \wt{\D}^{\ul a(e,I)}_{\ul t}
 \]
The divisors $\D^{\mathfrak{a}}_{\ul t}$ and $\widetilde{\D}_{\ul t}^{\mathfrak{a}}$ are equivalent. Indeed,  
since each $f_{e,I}$ takes the same value at the vertices incident to $e$, we can find a rational function $f$ on $X$ such that
 \[
 \D^{\mathfrak{a}}_{\ul t}=\widetilde{\D}_{\ul t}^{\mathfrak{a}}+\div(f).
 \]
 By Proposition \ref{prop:arrumar} (3) we get  $\widetilde{\D}_{\ul t}^{\mathfrak{a}}=\D^{\mathfrak{a}}_{\ul t}$
for $\ul t=(0,\ldots,0)$ and $\ul t=(0,\ldots, 1,\ldots, 0)$. We also have
\begin{equation}\label{eq:supp}
\supp(\widetilde{\D}_{\ul t}^{\mathfrak{a}})\subset \{p_{e,r_j} | e \in E(\G)\text{ and }j\in\{0,\ldots, n+1\}\},
\end{equation}
where $r_j=\sum_{1\le i\le j} t_i$.
We call $\widetilde{\D}_{\ul t}^{\mathfrak{a}}$ the \tit{organized version} of $\D_{\ul t}^{\mathfrak{a}}$. 
 \end{Cons}

\begin{Rem}\label{rem:admissible}
Let $(X,p_0)$ be a pointed tropical curve and $\mu$ be a polarization on $X.$
If $\D^{\mathfrak{a}}_{\ul t}$ is a $(p_0,\mu)$-quasistable divisor for every convex $n$-tuple $\ul t$, by Remark \ref{Rem:div_quasistable} and Proposition \ref{prop:quasiquasi}, we have that there exists a subset $\E\subset E(\Gamma)$ and a subset $I_e\subset \{-1,1,\ldots, n\}$ for each $e\in \E$ such that:
\begin{itemize}
    \item $I_e$ is different from $\emptyset$ and $\{-1\}$.
    \item $\mathfrak{a}(e,I_e)=-1$ for every $e\in \E$.
    \item $\mathfrak{a}(e,I)=0$ for every $I$ different from $\emptyset$, $\{-1\}$, and $I_e$.
\end{itemize}
Notice that $\supp(D^{\mathfrak{a}}_{\ul t})\cap e^\circ=\emptyset$ for every $e\notin \E$, and $\supp(D^{\mathfrak{a}}_{\ul t})\cap e^\circ$ consists of at most one point when $e\in \E$.\par

Applying Proposition \ref{prop:arrumar}, we see that the organized version $\widetilde{\D}_{\ul t}^{\mathfrak{a}}$ of a $(p_0,\mu)$-quasistable divisor is admissible. Notice that the organized version of a $(p_0,\mu)$-quasistable divisor $\D^{\mathfrak{a}}_{\ult}$ is nonzero at some point in the interior $e^\circ$ of every edge $e\in \E$. 
\end{Rem}
 
\begin{Exa}
Let $\G$ be a graph with two vertices $v_0$,$v_1$ and two edges $e_1=\ora{v_0v_1}$, $e_2=\ora{v_1v_0}$. Let $\ult=(t_1,t_2,t_3)$ be a convex $3$-tuple. Consider integers $a,b$. For $I_1=\{2\}$ and $I_2=\{-1,1,3\}$, we define $\mathfrak{a}(e_1,I_1)=a$, $\mathfrak{a}(e_2,I_2)=b$, $\mathfrak{a}(e_1,I_2)=\mathfrak{a}(e_2,I_1)=0$ and $\mathfrak{a}(e,I)=0$ for every $e\in E(\Gamma)$ and $I\ne I_1,I_2$. Then
\[
\D_{\ult}^{\mathfrak{a}}=
\mathfrak{a}(e_1,I_1)p_{e_1,d_{I_1}}+\mathfrak{a}(e_2,I_2)p_{e_2,d_{I_2}}=ap_{e_1,d_{I_1}}+bp_{e_2,d_{I_2}}.
\]
The divisors $\D_{\ult}^{\mathfrak{a}}$ and  $\wt{\D}_{\ult}^{\mathfrak{a}}$ are  illustrated in Figure \ref{Fig:exa_arrumargrafo}. The divisors $\widetilde{\D}_{\ul t}^{\mathfrak{a}}$ for   $\ult=(0,0,0),(1,0,0),(0,1,0)$, $(0,0,1)$ are illustrated in Figure \ref{Fig:exa_arrumargrafo2}.
\begin{figure}[h]
\begin{center}\begin{tikzpicture}[scale=7]
\begin{scope}[shift={(0,0)}]
\draw (0,0) to [out=30, in=150] (0.5,0);
\draw (0,0) to [out=-30, in=-150] (0.5,0);

\draw[fill] (0.125,0.055) circle [radius=0.01];
\draw[fill] (0.3,-0.069) circle [radius=0.01];
\draw[fill] (0,0) circle [radius=0.01];
\draw[fill] (0.5,0) circle [radius=0.01];

\node[left] at (-0.005,0) {$0$};
\node[right] at (0.505,0) {$0$};
\node[above] at (0.125,0.1) {$a$};
\node[below] at (0.3,-0.1) {$b$};
\node[above] at (0.04,0.02) {\small{$t_2$}};
\node[below] at (0.45,-0.05) {\small{$t_1+t_3$}};
\end{scope}
\begin{scope}[shift={(1,0)}]
\draw (0,0) to [out=30, in=150] (0.5,0);
\draw (0,0) to [out=-30, in=-150] (0.5,0);

\draw[fill] (0.125,0.055) circle [radius=0.01];
\draw[fill] (0.25,0.075) circle [radius=0.01];
\draw[fill] (0.25,-0.075) circle [radius=0.01];
\draw[fill] (0.125,-0.055) circle [radius=0.01];
\draw[fill] (0.375,-0.055) circle [radius=0.01];
\draw[fill] (0,0) circle [radius=0.01];
\draw[fill] (0.5,0) circle [radius=0.01];

\node[left] at (-0.005,0) {$a+b$};
\node[right] at (0.505,0) {$b$};
\node[above] at (0.11,0.09) {$-a$};
\node[above] at (0.04,0.03) {\small{$t_1$}};
\node[above] at (0.25,0.1) {$a$};
\node[below] at (0.25,-0.1) {$b$};
\node[below] at (0.2,-0.075) {\small{$t_2$}};
\node[above] at (0.2,0.065) {\small{$t_2$}};
\node[below] at (0.11,-0.09) {$-b$};
\node[below] at (0.04,-0.04) {\small{$t_1$}};
\node[below] at (0.375,-0.09) {$-b$};
\node[below] at (0.32,-0.06) {\small{$t_3$}};
\end{scope}
\end{tikzpicture}\end{center}
\caption{The divisor $\D_{\ult}^{\mathfrak{a}}$ and $\widetilde{\D}_{\ul t}^{\mathfrak{a}}.$} \label{Fig:exa_arrumargrafo}
\end{figure}
 
\begin{figure}[htp]
\begin{center}\begin{tikzpicture}[scale=4.75]
\begin{scope}[shift={(0,0)}]
\draw (0,0) to [out=30, in=150] (0.5,0);
\draw (0,0) to [out=-30, in=-150] (0.5,0);

\draw[fill] (0,0) circle [radius=0.015];
\draw[fill] (0.5,0) circle [radius=0.015];

\node[above] at (0.25,0.15) {$\ult=(0,0,0)$};
\node[left] at (0,0) {$a$};
\node[right] at (0.5,0) {$b$};
\end{scope}
\begin{scope}[shift={(0.9,0)}]
\draw (0,0) to [out=30, in=150] (0.5,0);
\draw (0,0) to [out=-30, in=-150] (0.5,0);

\draw[fill] (0,0) circle [radius=0.015];
\draw[fill] (0.5,0) circle [radius=0.015];

\node[above] at (0.25,0.15) {$\ult=(1,0,0),(0,0,1)$};
\node[below] at (0,-0.02) {$a+b$};
\node[right] at (0.5,0) {$0$};
\end{scope}
\begin{scope}[shift={(1.8,0)}]
\draw (0,0) to [out=30, in=150] (0.5,0);
\draw (0,0) to [out=-30, in=-150] (0.5,0);

\draw[fill] (0,0) circle [radius=0.015];
\draw[fill] (0.5,0) circle [radius=0.015];

\node[above] at (0.25,0.15) {$\ult=(0,1,0)$};
\node[left] at (0,0) {$0$};
\node[below] at (0.5,0) {$a+b$};
\end{scope}
\end{tikzpicture}\end{center}
\caption{The divisors $\widetilde{\D}_{\ul t}^{\mathfrak{a}}$.}\label{Fig:exa_arrumargrafo2}
\end{figure}
\end{Exa}

\subsection{Properties of organized divisors}\label{sec:prop-org}

In this section we will prove some results about organized divisors. We begin with the following result.

\begin{Prop}\label{Prop:beta_do_quasistavel} 
Let $\Gamma$ be a graph with a fixed vertex $v_0\in V(\Gamma)$ and $\mu$ be a polarization on $\Gamma$. Let $X=X_\Gamma$ be the associated tropical curve and $p_0$ the associated point. Let $\mathfrak{a}\col E(\G)\times \P(\{-1,1,\ldots,n\})\to \mathbb{Z}$ be a function such that
$\D^{\mathfrak{a}}_{\ul t}$ is a $(p_0,\mu)$-quasistable divisor on $X$ for every $\ult$. Consider the organized version  $\widetilde{\D}^{\mathfrak{a}}_{\ul t}$ of  $\D^{\mathfrak{a}}_{\ul t}$.
Then $\beta_{\widetilde{\D}^{\mathfrak{a}}_{\ul t}}(Y)\geqslant0$ for every subcurve $Y\subset X$ and the inequality is strict if $p_0\in Y$ and $V(\Gamma)\not\subset Y$.
\end{Prop}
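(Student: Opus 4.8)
The plan is to deduce the estimate from the quasistability of the divisors $\mathcal D^{\mathfrak a}_{\ul t'}$, for cleverly chosen convex $n$-tuples $\ul t'$, by exploiting that for a \emph{fixed} subcurve $Y\subset X$ the number $\beta_\bullet(Y)$ depends on the divisor only through its degree on $Y$, since $\mu(Y)$ and $\delta_{X,Y}$ depend on $Y$ alone. Thus for any divisors $\mathcal D,\mathcal D'$ on $X$ we have $\beta_{\mathcal D}(Y)-\beta_{\mathcal D'}(Y)=\deg(\mathcal D|_Y)-\deg(\mathcal D'|_Y)$, so it suffices, for each $Y$, to produce $\ul t'$ with $\deg(\widetilde{\mathcal D}^{\mathfrak a}_{\ul t}|_Y)\ge\deg(\mathcal D^{\mathfrak a}_{\ul t'}|_Y)$, and then invoke that $\mathcal D^{\mathfrak a}_{\ul t'}$ is $(p_0,\mu)$-quasistable.

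First I would record the structure provided by Remark \ref{rem:admissible}: there is $\mathcal E\subset E(\Gamma)$ and, for each $e\in\mathcal E$, a subset $I_e\subset\{-1,1,\dots,n\}$ with $I_e\ne\emptyset,\{-1\}$, such that $\mathfrak a(e,I_e)=-1$, while $\mathfrak a(e,I)=0$ for $I\notin\{\emptyset,\{-1\},I_e\}$ when $e\in\mathcal E$, and $\mathfrak a(e,I)=0$ for $I\notin\{\emptyset,\{-1\}\}$ when $e\notin\mathcal E$. Hence $\mathcal D^{\mathfrak a}_{\ul t}=D_0-\sum_{e\in\mathcal E}p_{e,d_{I_e}(\ul t)}$, where $D_0$ is a fixed divisor supported on $V(\Gamma)$, independent of $\ul t$, and by Proposition \ref{prop:arrumar} and Construction \ref{Cons:dta} we have $\widetilde{\mathcal D}^{\mathfrak a}_{\ul t}=D_0+\sum_{e\in\mathcal E}\widetilde{\mathcal D}^{\ul a(e,I_e)}_{\ul t}$ with each $\ul a(e,I_e)$ a $(-1)$-admissible sequence. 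Subtracting $\deg(D_0|_Y)$, the inequality to be produced becomes
\[
\sum_{e\in\mathcal E}\deg(\widetilde{\mathcal D}^{\ul a(e,I_e)}_{\ul t}|_Y)\ \ge\ -\,\#\{e\in\mathcal E:\ p_{e,d_{I_e}(\ul t')}\in Y\}.
\]

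For the core combinatorial step I would use that $(-1)$-admissibility of $\ul a(e,I_e)$ forces the degree of $\widetilde{\mathcal D}^{\ul a(e,I_e)}_{\ul t}$ on every connected sub-segment of $e$ to lie in $\{-1,0,1\}$, and that, as $\ul t'$ varies, the point $p_{e,d_{I_e}(\ul t')}$ sweeps a sub-segment of $e$ containing an endpoint. The idea is to place each $p_{e,d_{I_e}(\ul t')}$ inside a connected component of $Y\cap e$ on which $\widetilde{\mathcal D}^{\ul a(e,I_e)}_{\ul t}$ has negative degree whenever such a component exists, so that the contribution of $e$ to the left-hand side exceeds $-1$ precisely when the contribution of $e$ to the right-hand side equals $-1$. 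The main obstacle is that the positions $d_{I_e}(\ul t')$ are coupled — they are simultaneous affine-linear functions of the single tuple $\ul t'$ — and that $Y\cap e$ may have several components each lowering the degree by $1$; reconciling these is exactly where the full strength of the hypothesis (that $\mathcal D^{\mathfrak a}_{\ul t}$ is quasistable for \emph{every} $\ul t$) must be brought in, presumably through an induction on $n$ mirroring the recursion in the proof of Proposition \ref{prop:arrumar}, or through a direct analysis of which shapes of $Y$ are compatible with the quasistability of all the $\mathcal D^{\mathfrak a}_{\ul t'}$. I expect this to be the technically heaviest part of the argument.

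For the strict inequality, assume $p_0\in Y$, $Y\subsetneq X$ and $V(\Gamma)\not\subset Y$. I would then choose $\ul t'$ among the degenerate tuples $(0,\dots,0)$ and $(0,\dots,1,\dots,0)$. By item (3) of Proposition \ref{prop:arrumar} such $\ul t'$ satisfy $\mathcal D^{\mathfrak a}_{\ul t'}=\widetilde{\mathcal D}^{\mathfrak a}_{\ul t'}$, a divisor supported on $V(\Gamma)$; by Proposition \ref{prop:quasiquasi} it is a $(p_0,\mu)$-quasistable divisor on $X$, and since $p_0\in Y\subsetneq X$ we get $\beta_{\mathcal D^{\mathfrak a}_{\ul t'}}(Y)>0$. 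Because $\deg(\widetilde{\mathcal D}^{\mathfrak a}_{\ul t}|_Y)-\deg(\mathcal D^{\mathfrak a}_{\ul t'}|_Y)$ is a nonnegative integer — this is the point at which the hypothesis $V(\Gamma)\not\subset Y$ is used, to ensure that the degree comparison of the previous paragraph can be realized with such a degenerate $\ul t'$ — it follows that $\beta_{\widetilde{\mathcal D}^{\mathfrak a}_{\ul t}}(Y)\ge\beta_{\mathcal D^{\mathfrak a}_{\ul t'}}(Y)>0$, which completes the proof.
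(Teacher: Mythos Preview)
Your proposal has a genuine gap: the ``core combinatorial step'' you identify --- producing a single $\ul t'$ with $\deg(\widetilde{\D}^{\mathfrak a}_{\ul t}|_Y)\ge\deg(\D^{\mathfrak a}_{\ul t'}|_Y)$ --- is never carried out, and the obstacle you name (the coupling of the positions $d_{I_e}(\ul t')$ across edges through the single tuple $\ul t'$) is real. Even after the preliminary reductions you omit (restricting to connected $Y$, and showing that if $\{s(e),t(e)\}\subset Y$ but $e\not\subset Y$ then filling in the missing piece of $e$ does not increase $\beta_{\widetilde{\D}^{\mathfrak a}_{\ul t}}$), it is not at all clear that the constraints ``$p_{e,d_{I_e}(\ul t')}\in Y\cap e$ whenever $\deg(\widetilde{\D}^{\ul a(e,I_e)}_{\ul t}|_{Y\cap e})=-1$'' can be satisfied simultaneously for all $e\in\mathcal E$, or that failure on some edges is always compensated by slack on others. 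Your strict-inequality paragraph rests on the same unproved comparison (now specialized to degenerate $\ul t'$), so it does not stand on its own either.

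The paper takes a different route that sidesteps the coupling entirely: rather than varying $\ul t$, it varies the subcurve, keeping the \emph{same} $\ul t$ throughout. After the two reductions above, one adjusts $Y$ along each $e\in\mathcal E$ with $Y\cap e$ a proper nonempty segment, producing a subcurve $\widetilde Y$ with $\beta_{\widetilde{\D}^{\mathfrak a}_{\ul t}}(Y)\ge\beta_{\widetilde{\D}^{\mathfrak a}_{\ul t}}(\widetilde Y)$; the adjustment is made independently on each edge using only the $(-1)$-admissibility of $\ul a(e,I_e)$, so no simultaneous constraint arises. One then constructs $\overline Y$ from $Y$ by extending $Y\cap e$ along each such $e$ up to the unique point $q_e\in e^\circ$ with $\D^{\mathfrak a}_{\ul t}(q_e)=-1$, and checks directly that $\deg(\widetilde{\D}^{\mathfrak a}_{\ul t}|_{\widetilde Y})=\deg(\D^{\mathfrak a}_{\ul t}|_{\overline Y})$, $\mu(\widetilde Y)=\mu(\overline Y)$ and $\delta_{\widetilde Y}=\delta_{\overline Y}$, hence $\beta_{\widetilde{\D}^{\mathfrak a}_{\ul t}}(\widetilde Y)=\beta_{\D^{\mathfrak a}_{\ul t}}(\overline Y)$. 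Quasistability of $\D^{\mathfrak a}_{\ul t}$ applied to $\overline Y$ then gives $\beta_{\D^{\mathfrak a}_{\ul t}}(\overline Y)\ge 0$, with strict inequality when $p_0\in\overline Y$ and $\overline Y\ne X$, and both of these follow from $p_0\in Y$ and $V(\Gamma)\not\subset Y$. The hypothesis ``quasistable for every $\ul t$'' is used only upstream, through Remark~\ref{rem:admissible}, to obtain the structure of $\mathfrak a$ and the admissibility of the organized version.
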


\begin{proof}
Recall that, given a subcurve $Y$ of $X$, we have 
\[\beta_{\D^{\mathfrak{a}}_{\ul t}}(Y)=\deg(\D^{\mathfrak{a}}_{\ul t}|_{Y})-\mu(Y)+\frac{\delta_{Y}}{2}\geqslant0,
\]
with strict inequality if $p_{0}\in Y$. The divisor $\widetilde{\D}^{\mathfrak{a}}_{\ul t}$ is admissible by Remark \ref{rem:admissible} and its support is described in equation \eqref{eq:supp}.
It is enough to check the result when $Y$ is a connected curve, since the function $\beta_{\D^{\mathfrak{a}}_{\ul t}}$ is additive over connected components.

First, if $Y\cap V(\G)=\emptyset$, then $Y$ is a segment contained in the interior of some edge and in this case $\beta_{\wt{\D}^{\mathfrak{a}}_{\ult}}(Y)\ge 0$, since $\deg(\wt{\D}^{\mathfrak{a}}_{\ult}|_Y)\ge-1$ by the admissibility of $\wt{\D}^{\mathfrak{a}}_{\ult}$.

Now suppose that there is an edge $e\in E(\Gamma)$ such that $Y$ contains the set $\{s(e), t(e)\}$ of source and target of $e$, but does not contain $e$. Set $Z:=e\cap \overline{X\setminus Y}$ and $W=Y\cup Z$. 
Let us check that $\beta_{\widetilde{\D}^{\mathfrak{a}}_{\ul t}}(Y)\geq \beta_{\widetilde{\D}^{\mathfrak{a}}_{\ul t}}(W)$. Since $Y$ is connected we have that $Z=\overline{pq}$ for some $p,q\in e$ and the intersection $Y\cap Z$ consists of $p$ and $q$. We have $\mu(Z)=\mu(p)+\mu(q)$ because $\mu$ is zero in the interior of $e$ (recall that $\mu$ comes from a polarization on $\Gamma$). Thus, by Lemma \ref{lem:beta_curvas}, we have 
\begin{align*}
  \beta_{\widetilde{\D}^{\mathfrak{a}}_{\ul t}}(W)&= \beta_{\widetilde{\D}^{\mathfrak{a}}_{\ul t}}(Y)+\beta_{\widetilde{\D}^{\mathfrak{a}}_{\ul t}}(Z)-\beta_{\widetilde{\D}^{\mathfrak{a}}_{\ul t}}(Y\cap Z) \\
  &= \beta_{\widetilde{\D}^{\mathfrak{a}}_{\ul t}}(Y)+\beta_{\widetilde{\D}^{\mathfrak{a}}_{\ul t}}(Z)-\beta_{\widetilde{\D}^{\mathfrak{a}}_{\ul t}}(p)-\beta_{\widetilde{\D}^{\mathfrak{a}}_{\ul t}}(q).
\end{align*}
The expression  $\beta_{\widetilde{\D}^{\mathfrak{a}}_{\ul t}}(Z)-\beta_{\widetilde{\D}^{\mathfrak{a}}_{\ul t}}(p)-\beta_{\widetilde{\D}^{\mathfrak{a}}_{\ul t}}(q)$ is equal to
\begin{align*}
  =&\deg(\widetilde{\D}^{\mathfrak{a}}_{\ult}|_{Z})-\mu(Z) +\frac{\delta_{Z}}{2}-\left(\deg(\widetilde{\D}^{\mathfrak{a}}_{\ult}|_{p})-\mu(p)+\frac{\delta_{p}}{2}\right)\\
   &-\left(\deg(\widetilde{\D}^{\mathfrak{a}}_{\ult}|_{q})-\mu(q)+\frac{\delta_{q}}{2}\right)\\
  =&\deg(\widetilde{\D}^{\mathfrak{a}}_{\ult}|_{Z}) +1-\deg(\widetilde{\D}^{\mathfrak{a}}_{\ult}|_{p})-1
  -\deg(\widetilde{\D}^{\mathfrak{a}}_{\ult}|_{q})-1\\
   =&\deg(\widetilde{\D}^{\mathfrak{a}}_{\ult}|_{Z}) -\deg(\widetilde{\D}^{\mathfrak{a}}_{\ult}|_{p})-\deg(\widetilde{\D}^{\mathfrak{a}}_{\ult}|_{q})-1\\
   \le& 0,
\end{align*}
where in the last inequality we use that $\widetilde{\D}^{\mathfrak{a}}_{\ult}$ is admissible.
Then $\beta_{\widetilde{\D}^{\mathfrak{a}}_{\ul t}}(W)\leq \beta_{\widetilde{\D}^{\mathfrak{a}}_{\ul t}}(Y)$, as wanted.

Henceforth, we  will assume that if $Y$ contains the set $\{s(e),t(e)\}$, then it contains the edge $e$.  
Let $\E\subset E(\G)$ be the subset of edges such that $\deg(\D^{\mathfrak{a}}_{\ult}|_{e^\circ})=-1$, $e\cap Y\neq \emptyset$ and $e\not\subset Y$. 

Fix $e\in \E$ and assume that $s(e)\in Y$ (which means that $t(e)\notin Y$). Let $j\in\{1,\dots,n\}$ be the smallest index such that $\widetilde{\D}^{\mathfrak{a}}_{\ult}(p_{e,r_j})\neq0$. If $\widetilde{\D}^{\mathfrak{a}}_{\ult}(p_{e,r_j})=-1$, we set $p:=p_{e,r_j}$, otherwise we set $p:=s(e).$

If $p\not\in Y$, we define $Z^+_e:=\ol{p_{e,0}p}$ (notice that $p_{e,0}=s(e)\in Y$). If $p\in Y$, we define $Z^-_e:=Y\cap \ol{pp_{e,r_{n+1}}}$. If $t(e)\in Y$, we redo the same definitions choosing the maximum $j$ and switching $t(e)$ and $s(e)$. We let $Z^+$ be the union of all the subcurves $Z^+_e$ obtained in this way and $Z^-$ be the union of all the subcurves $Z^-_e$ obtained in this way. We define (see Figure \ref{Fig:Y_Ytil})
\[
\wt Y:=\ol{(Y\cup Z^+)\setminus Z^-}.
\]
Then we have
\begin{eqnarray}
\beta_{\widetilde{\D}^{\mathfrak{a}}_{\ult}}(\wt Y)&=& \deg(\widetilde{\D}^{\mathfrak{a}}_{\ult}|_{\wt Y})-\mu(\wt Y)+\frac{\delta_{\wt Y}}{2} \nonumber\\
&=& \deg(\widetilde{\D}^{\mathfrak{a}}_{\ult}|_{Y})+\deg(\widetilde{\D}^{\mathfrak{a}}_{\ult}|_{Z^+\setminus Y})-\deg(\widetilde{\D}^{\mathfrak{a}}_{\ult}|_{Z^-\setminus Y})-\mu(Y)+\frac{\delta_{Y}}{2} \nonumber\\
&\le&\beta_{\widetilde{\D}^{\mathfrak{a}}_{\ult}}(Y), \nonumber
\end{eqnarray}
where we use that $\deg(\widetilde{\D}^{\mathfrak{a}}_{\ult}|_{Z^+\setminus Y})\le 0$ and $\deg(\widetilde{\D}^{\mathfrak{a}}_{\ult}|_{Z^-\setminus Y})\ge0$.

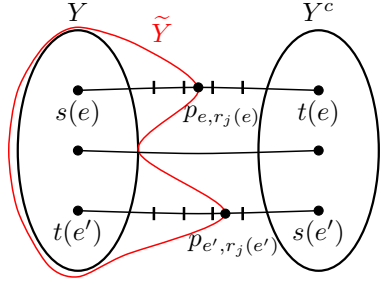
\begin{figure}[h]\begin{center}
\begin{tikzpicture}[scale=0.4]
\begin{scope}[shift={(1,0)}]
\draw[line  width=0.3mm] \boundellipse{4,1}{-2}{4};
\draw[line  width=0.3mm] \boundellipse{-4,1}{-2}{4};
\draw [red, line width=0.2mm] plot [smooth cycle, tension=0.5] coordinates {(-2.5,-2.5) (-4,-3.2) (-5,-2.8) (-6,-1) (-6.3,1) (-6,3) (-5.1,4.5) (-4,5.1) 
(-2,4.5) (0,3) (-2,1) 
(0.9,-1.1)
};

\draw [line width=0.2mm] plot [smooth, tension=0.5] coordinates {(-4,-1) (0,-1.1) (4,-1)};
\draw [line width=0.2mm] plot [smooth, tension=0.5] coordinates {(-4,1) (0,0.9) (4,1)};
\draw [black, line width=0.2mm] plot [smooth, tension=0.5] coordinates {(-4,3) (0,3.1) (4,3)};

\draw[fill] (-0.5,3.3) rectangle (-0.45,2.9);
\draw[fill] (-1.5,3.3) rectangle (-1.45,2.9);
\draw[fill] (0.5,3.3) rectangle (0.45,2.9);
\draw[fill] (1.5,3.3) rectangle (1.45,2.9);

\draw[fill] (-1.5,-1.3) rectangle (-1.45,-0.9);
\draw[fill] (-0.5,-1.3) rectangle (-0.45,-0.9);
\draw[fill] (0.5,-1.3) rectangle (0.45,-0.9);
\draw[fill] (1.5,-1.3) rectangle (1.45,-0.9);

\node[above] at (-1.2,4.3) {\textcolor{red}{$\widetilde{Y}$}};
\node[above] at (-4,5) {$Y$};
\node[above] at (4,5) {$Y^c$};
\node[below] at (-4,3) {$s(e)$};
\node[below] at (4,3) {$t(e)$};

\node[below] at (-4,-1) {$t(e')$};
\node[below] at (4,-1) {$s(e')$};

\node[below] at (0.8,2.8) {$p_{e,r_j(e)}$};
\draw[fill] (0,3.1) circle [radius=0.15];

\node[below] at (1.2,-1.5) {$p_{e',r_j(e')}$};
\draw[fill] (0.9,-1.1) circle [radius=0.15];

\draw[fill] (-4,-1) circle [radius=0.15];
\draw[fill] (-4,1) circle [radius=0.15];
\draw[fill] (-4,3) circle [radius=0.15];
\draw[fill] (4,-1) circle [radius=0.15];
\draw[fill] (4,1) circle [radius=0.15];
\draw[fill] (4,3) circle [radius=0.15];
\end{scope}
\end{tikzpicture}   
\end{center}
\caption{Curve $X$ and subcurves $Y$ and $\widetilde{Y}$.}\label{Fig:Y_Ytil}
\end{figure}

For every $e\in \E$
let $q_e$ be the unique point in $e^\circ$ such that $\D^{\mathfrak{a}}_{\ult}(q_e)=-1$. If $s(e)\in Y$, let $W_e:=\ol{p_{e,0}q_e}$, otherwise, if $t(e)\in Y$, let $W_e:=\ol{p_{e,r_{n+1}}q_e}$.
We define $\overline{Y}$ as the union of $Y$ and the segments $W_e$, for $e\in \E$ (see Figure \ref{Fig:Y_Ybarra}).

\begin{figure}[h!]\begin{center}
\begin{tikzpicture}[scale=0.4]
\begin{scope}[shift={(-1,0)}]
\end{scope}
\begin{scope}[shift={(1,0)}]
\draw[line  width=0.3mm] \boundellipse{4,1}{-2}{4};
\draw[line  width=0.3mm] \boundellipse{-4,1}{-2}{4};

\draw [red, line width=0.2mm] plot [smooth cycle, tension=0.5] coordinates {(-2.5,-2.5) (-4,-3.2) (-5,-2.8) (-6,-1) (-6.3,1) (-6,3) (-5.1,4.5) (-4,5.1) 
(-2,4.5) (0,3) (-2,1) 
(0.9,-1.1)};

\draw [line width=0.2mm] plot [smooth, tension=0.5] coordinates {(-4,-1) (0,-1.1) (4,-1)};
\draw [line width=0.2mm] plot [smooth, tension=0.5] coordinates {(-4,1) (0,0.9) (4,1)};
\draw [black, line width=0.2mm] plot [smooth, tension=0.5] coordinates {(-4,3) (0,3.1) (4,3)};

\node[above] at (-1.3,4.3) {\textcolor{red}{$\overline{Y}$}};
\node[above] at (-4,5) {$Y$};
\node[above] at (4,5) {$Y^c$};
\node[below] at (-4,3) {$s(e)$};
\node[below] at (4,3) {$t(e)$};

\node[below] at (-4,-1) {$t(e')$};
\node[below] at (4,-1) {$s(e')$};
\node[above] at (0,3.2) {$-1$};
\node[below] at (0,2.8) {$q_{e}$};
\draw[fill] (0,3.1) circle [radius=0.15];

\node[above] at (0.9,-1) {$-1$};
\node[below] at (0.9,-1.5) {$q_{e'}$};
\draw[fill] (0.9,-1.1) circle [radius=0.15];

\draw[fill] (-4,-1) circle [radius=0.15];
\draw[fill] (-4,1) circle [radius=0.15];
\draw[fill] (-4,3) circle [radius=0.15];
\draw[fill] (4,-1) circle [radius=0.15];
\draw[fill] (4,1) circle [radius=0.15];
\draw[fill] (4,3) circle [radius=0.15];
\end{scope}
\end{tikzpicture}   
\end{center}
\caption{Curve $X$ and subcurves $Y$ and $\overline{Y}$.}\label{Fig:Y_Ybarra}
\end{figure}

Since $\deg(\widetilde{\D}^{\mathfrak{a}}_{\ult}|_{\widetilde{Y}})=\deg(\D^{\mathfrak{a}}_{\ult}|_{\overline{Y}})$, $\mu(\widetilde{Y})=\mu(\overline{Y})$ and $\delta_{\widetilde{Y}}=\delta_{\overline{Y}}$, we  have
\[
\beta_{\widetilde{\D}^{\mathfrak{a}}_{\ult}}(Y)\geq \beta_{\widetilde{\D}^{\mathfrak{a}}_{\ult}}(\widetilde{Y})=\beta_{\D^{\mathfrak{a}}_{\ult}}(\overline{Y})\geq0,
\]
where the last inequality holds by the $(p_0,\mu)$-quasistability of $\D^{\mathfrak{a}}_{\ult}$, and it is strict if $p_0\in \ol Y$ and $\ol Y\ne X$ (which is true if $p_0\in Y$ and $V(\Gamma)\not\subset Y$).
\end{proof} 

Let $X$ be a tropical curve with underlying directed graph $\ora{\G}$ and with length function $\ell$ such that $\ell(e)=l$ for all $e\in E(\Gamma)$. Given a divisor $D=\sum_{v\in V(\Gamma^{(n+1)})} D(v)v$ on a subdivision $\Gamma^{(n+1)}$ of $\Gamma$, and an $l$-convex $n$-tuple $\ul t=(t_1,\ldots, t_n)$, we define the divisor $\D_{\ul t}\in \Div(X)$ as
\[
\D_{\ul t}:=\sum_{v\in V(\G)} D(v)v+\underset{j\in \{1,\dots, n\}}{\sum_{e\in E(\Gamma)}}D(x^e_j)p_{e,r_j},
\]
where, as usual, $\{x^e_1,\dots,x^e_n\}$ are the exceptional vertices over $e$, for every $e\in E(\Gamma)$. Conversely, given a divisor $\D_{\ul t}$ on $X$ such that 
\[
\supp \D_{\ul t}\subset V(\Gamma)\cup \left(\bigcup_{e\in E(\Gamma)}\{p_{e,r_j}; j=1,\dots,n\}\right)
\]
where $r_j=\sum_{1\le i\le j} t_i$,
we can write 
\[
\D_{\ul t}=\sum_{v\in V(\Gamma)} \D_{\ul t}(v)\cdot v+ \underset{j\in \{1,\dots,n\}}{\sum_{e\in E(\Gamma)}} \D_{\ul t}(p_{e,r_j})\cdot p_{e,r_j}.
\]
In this case we define the divisor $D$ on $\Gamma^{(n+1)}$ as
\begin{equation}\label{eq:Dt}
D:=\sum_{v\in V(\Gamma)} \D_{\ul t}(v)\cdot v+ \underset{j\in \{1,\dots,n\}}{\sum_{e\in E(\Gamma)}} \D_{\ul t}(p_{e,r_j})\cdot x^e_j.
\end{equation}

\begin{Prop}\label{prop:independece}
Let $\Gamma$ be a graph and consider the associated tropical curve $X=X_\Gamma$.
Let $D$ be a divisor on a subdivision $\G^{(n+1)}$ of $\Gamma$. For each convex $n$-tuple $\underline{t}$, consider the  divisor $\mc{D}_{\underline{t}}\in Div\big(X\big)$ associated to $D$. If $\mc{D}_{\ult}$ is principal for all choices of convex $n$-tuples $\ul t$ with nonzero entries, then there is a flow $\phi$ on $\G^{(n+1)}$ such that
\begin{enumerate}
    \item[(1)]
    $D+\div(\phi)=0$;
    \item[(2)]
    the flow $\phi$ induces the unique rational function $f_{\ul t}$ on $X$ (up to translations) such that $\mc{D}_{\ult}+ \div(f_{\ul t})=0$ for all $\ul t$.
\end{enumerate}
\end{Prop}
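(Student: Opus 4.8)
The plan is to work with flows on the subdivision $\G^{(n+1)}$ and to exploit the correspondence, recorded in equation \eqref{eq:functionflow}, between rational functions on a tropical curve and flows satisfying a system of linear conditions indexed by cycles. First I would fix, for each convex $n$-tuple $\ul t$ with strictly positive entries $t_1,\dots,t_n$ (so that $\G^{(n+1)}$ is genuinely a model of $X$ with the length function $\ell_{\ul t}$ of \eqref{eq:lt}), a rational function $f_{\ul t}$ on $X$ with $\mc D_{\ul t}+\div(f_{\ul t})=0$; such $f_{\ul t}$ exists since $\mc D_{\ul t}$ is principal by hypothesis, and it is unique up to an additive constant because $X$ is connected. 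Since $\supp(\mc D_{\ul t})\subset V(\G^{(n+1)})$, the function $f_{\ul t}$ is linear on each edge of $\G^{(n+1)}$, so it determines a flow $\phi_{\ul t}$ on $\ora{\G^{(n+1)}}$ by taking $\phi_{\ul t}(e)$ to be the slope of $f_{\ul t}$ over $e$, and $\div(\phi_{\ul t})=\div_X(f_{\ul t})$ as divisors on $\G^{(n+1)}$; hence $D+\div(\phi_{\ul t})=0$ for every such $\ul t$.

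The key point is that $\phi_{\ul t}$ does not depend on $\ul t$. Indeed, two flows $\phi,\phi'$ on $\ora{\G^{(n+1)}}$ with $\div(\phi)=\div(\phi')=-D$ differ by an element of $H_1(\ora{\G^{(n+1)}},\mathbb Z)=\ker\partial$, i.e. by an integral combination of cycles. Now fix a reference convex $n$-tuple, say with all entries equal, and call the corresponding flow $\phi$; for any other admissible $\ul t$ write $\phi_{\ul t}=\phi+\psi$ with $\psi\in H_1(\ora{\G^{(n+1)}},\mathbb Z)$. Both $\phi$ and $\phi_{\ul t}$ come from rational functions on $X$, the first with respect to the length function $\ell_{(1/(n+1),\dots)}$ and the second with respect to $\ell_{\ul t}$ — but all these metric graphs are models of the \emph{same} tropical curve $X$ (Section \ref{sec:sub-rear}), so the condition \eqref{eq:functionflow}, $\sum_{e}\phi(e)\gamma(e)\ell(e)=0$ for every cycle $\gamma$, is the condition relative to the one fixed length on $X$. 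Subtracting the two instances of \eqref{eq:functionflow} gives $\sum_e \psi(e)\gamma(e)\ell(e)=0$ for every cycle $\gamma$ of $X$. Since $\psi$ is itself a cycle, pairing $\psi$ with the cycle classes through the intersection form on $H_1$ forces $\psi=0$: concretely, choosing $\gamma=\psi$ yields $\sum_e \psi(e)^2\ell(e)=0$ after observing that the edge-lengths of $X$ coming from an edge $e$ of $\Gamma$ add up over the $e_1,\dots,e_{n+1}$ (and $\psi$ is constant along that chain since the exceptional vertices are $2$-valent, so $\psi(e)\gamma(e)\ell(e)$ telescopes correctly), whence $\psi\equiv 0$. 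Thus $\phi_{\ul t}=\phi$ for all $\ul t$, which is exactly statement (1), and statement (2) follows since $\phi$ then induces on $X$ a rational function $f$ with $\phi_f=\phi$, hence $\mc D_{\ul t}+\div(f)=0$ for every $\ul t$, and $f$ is unique up to translation.

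The main obstacle I anticipate is the bookkeeping in the last step: one must be careful that a flow $\phi$ on $\G^{(n+1)}$ which is principal for \emph{every} choice of positive $\ul t$ is constant along each chain $e_1,\dots,e_{n+1}$ lying over an edge $e$ of $\Gamma$ — this is automatic from $\div(\phi)=-D$ together with the fact that $D$ was built from a divisor on $\G^{(n+1)}$ in \eqref{eq:Dt} only when $D$ vanishes at exceptional vertices, which need not hold a priori. So I would first reduce to that situation: since $\mc D_{\ul t}$ is principal and $\mc D_{\ul t}(p_{e,r_j})=D(x^e_j)$, summing the defining relation of $\div(f_{\ul t})$ along the chain over $e$ shows the slopes of $f_{\ul t}$ change by $D(x^e_j)$ at each exceptional vertex; feeding this into \eqref{eq:functionflow} for the family of all positive $\ul t$ (letting individual $t_j\to 0$) forces $D(x^e_j)=0$ for all $j$ whenever such a forcing is needed, or else shows the chain-constancy of $\phi$ directly. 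Once this normalization is in place, the intersection-form argument of the previous paragraph goes through, and the uniqueness of $f_{\ul t}$ up to translation is immediate from connectedness of $X$.
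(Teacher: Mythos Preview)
The subtraction step in your second paragraph is where the argument breaks. You assert that both $\phi$ and $\phi_{\ul t}$ satisfy the cycle relation \eqref{eq:functionflow} ``relative to the one fixed length on $X$'' and then subtract to obtain $\sum_e \psi(e)\gamma(e)\ell(e)=0$. But the relation that $\phi_{\ul t}$ actually satisfies is $\sum_{e\in E(\G^{(n+1)})} \phi_{\ul t}(e)\gamma(e)\ell_{\ul t}(e)=0$, where the edge lengths $\ell_{\ul t}(e_j)=t_j$ of the subdivision genuinely depend on $\ul t$; the reference flow $\phi$ satisfies the analogous relation with a \emph{different} length function $\ell_{\ul t^0}$. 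These are two distinct linear conditions and their difference is not $\sum_e \psi(e)\gamma(e)\ell(e)=0$ for any single $\ell$. You are right that $\psi=\phi_{\ul t}-\phi\in H_1(\G^{(n+1)})\cong H_1(\G)$ is constant along each chain over an edge of $\G$, so that $\sum_e \psi(e)\gamma(e)\ell_{\ul t}(e)=\sum_{e\in E(\G)}\psi(e)\gamma(e)$ is independent of $\ul t$; but after writing $\phi_{\ul t}=\phi+\psi$ the remaining cross term $\sum_e \phi(e)\gamma(e)\ell_{\ul t}(e)$ is not known to vanish (it vanishes only for $\ul t=\ul t^0$), so setting $\gamma=\psi$ does not yield $\sum_e\psi(e)^2=0$. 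Your last paragraph senses a problem but misdiagnoses it: chain-constancy of $\phi$ itself is both false in general (whenever $D$ is nonzero at an exceptional vertex) and irrelevant; the actual obstruction is that the two cycle relations live over different length functions, so no positive-definiteness argument on $H_1$ is available.

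The paper sidesteps this entirely. It expands the single relation $\sum_{e}\phi_{\ul t}(e)\gamma(e)\ell_{\ul t}(e)=0$ as an affine-linear expression in $t_1,\dots,t_n$ with \emph{integer} coefficients (using $\ell_{\ul t}(e_j)=t_j$ and $\ell_{\ul t}(e_{n+1})=1-\sum_j t_j$), and then picks one particular $\ul t$ with $t_1,\dots,t_n,1$ linearly independent over $\mathbb Q$. For that choice every integer coefficient is forced to vanish separately, which means the flow $\phi:=\phi_{\ul t}$ obtained from this specific $\ul t$ satisfies the cycle relation \eqref{eq:functionflow} for \emph{every} convex $n$-tuple simultaneously, establishing (1) and (2) at once. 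No comparison between two different $\ul t$'s is ever needed.
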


\begin{proof}
The condition that $\mc{D}_{\ult}$ is principal for a convex $n$-tuple $\ul t$ with nonzero entries implies the existence of a rational function $f_{\ul t}$ on $X$ with associated flow $\phi_{\ul t}$ on $\Gamma^{(n+1)}$ such that $D+\div(\phi_{\ult})=0$ and (recall equations  \eqref{eq:gamma},   \eqref{eq:functionflow}, and  \eqref{eq:lt})
\begin{equation}\label{eq:cyclerel}
\sum_{e\in E(\Gamma^{(n+1)})} \phi_{\ul t}(e)\ell_{\ul t}(e)\gamma(e)=0,
\end{equation}
for every cycle $\gamma\in H_1(\Gamma^{(n+1)},\mathbb Z)$.
Conversely, every flow $\phi_{\ult}$ satisfying equation \eqref{eq:cyclerel} induces a rational function on $X$. The proof consists in showing that $\phi_{\ul t}$ is independent of $\ul t$.

Recall the notation
$\{e_1,\dots,e_{m+1}\}=G^{-1}(e)$ in equation \eqref{eq:FG}. 
Given a cycle $\gamma\in H_1(\Gamma,\mathbb Z)\cong H_1(\G^{(n+1)},\mathbb{Z})$, the relation \eqref{eq:cyclerel} translates to
\begin{align*}
    0 & =\sum_{e\in\gamma}\left(\sum_{1 \le j \le n}\phi_{\ul t}(e_{j})\ell_{\ul t}(e_{j})\gamma(e_{j})\right)+ \sum_{e\in\gamma}\phi_{\ul t}(e_{n+1})\ell_{\ul t}(e_{n+1})\gamma(e_{n+1})\\
    & = \sum_{e\in\gamma}\left(\sum_{1\le j\le n}\phi_{\ul t}(e_{j})\gamma(e_{j})t_j\right)+ \sum_{e\in\gamma}\phi_{\ul t}(e_{n+1})\gamma(e_{n+1})\left(1-\sum_{j=1}^{n}t_{j}\right) \\
    & =\sum_{1\le j\le n}\left(\sum_{e\in\gamma}\left(\phi_{\ul t}(e_{j})\gamma(e_{j})-\phi_{\ul t}(e_{n+1})\gamma(e_{n+1})\right)\right)t_j+ \\
    &  \;\;\;\;\;\;\;\;\;\;\;\; +\sum_{e\in\gamma}\phi_{\ul t}(e_{n+1})\gamma(e_{n+1}).
\end{align*}

We can view the last expression as a linear combination of $t_1,\dots,t_n,1$. Since $\gamma(e_{j})=\pm1$ and $\phi_{\ul t}(e_{j})\in\mathbb{Z}$, the coefficients of the linear combination are integer numbers. Therefore, choosing $t_{1},\ldots,t_{n},1$ as linearly independent  over $\Q$, we have that these coefficients are all equal to zero. Calling $\phi$ a solution for this particular choice, we have that $\phi$ satisfies $D+\div(\phi)=0$  and satisfies equation \eqref{eq:cyclerel} for every $\ul t$. This finishes the proof.
\end{proof}

\subsection{Local conditions: tropical version}\label{sec:local-cond-trop}

In this section we give a tropical version of the combinatorial numbers $a$ and $b$ defined in equations \eqref{a_geometrico} and \eqref{b_geometrico}.

Let $\G$ be a graph. Let $d$ be a positive integer and $f\col\{1,\ldots,d\}\ra E(\G)$ be a function. We define 
\begin{equation}
    \label{eq:Vf}
    V_f
    =\prod_{1\le k\le d}\{s(f(k)),t(f(k))\}.
\end{equation}
(Recall that $s(e)$ and $t(e)$ are the source and target of an edge $e\in E(\G)$.)

For every $Q=(v_{1},\ldots,v_{d})\in V_f$ and $k=1,\dots,d$, we will denote by $\delta_k(Q)=v_{k}$ the $k$-th coordinate of $Q$, so that  $\delta_k(Q)$ is a vertex of $\Gamma$ incident to the edge $f(k)$.
 Given $Q=(v_{1},\ldots,v_{d})\in V_f$, the \emph{divisor associated to $Q$} is the divisor on $\Gamma$:
     \[\div(Q)=-\sum_{1\le k\le d} v_k.
                    \]
Moreover, given an edge $e\in E(\G)$, we let
\begin{equation}\label{eq:diveQ}
\div(e,Q)=-\sum_{\{k | f(k)=e\}}v_k.
\end{equation}
We can see $\div(e,Q)$ as the ``contribution to $\div(Q)$ along $e$".

\begin{Def}\label{Def:a_tropical} 
Let $\Gamma$ be a graph. 
Let $f\col \{1,\dots,d\}\ra E(\Gamma)$ be a function. For an edge $e\in E(\G)$, a vertex $v\in V(\G)$, and an element $Q\in V_f$, we define
\[a^e_Q(v):=\#\left\{k\in \{1,\ldots,d\} |\ f(k)=e \mbox{ and }\delta_k(Q)=v \right\}.
\]
For a subset $W\subset V(\G)$, we define
\begin{align*}
a_{Q}^e(W)&:= \#\left\{k\in \{1,\ldots,d\} | \ f(k)=e \mbox{ and } \delta_k (Q)\subset W^{c}\right\}\\
&= \sum_{v\in \{s(e),t(e)\}\setminus W} a^e_Q(v).
\end{align*}
\end{Def}

Hence, by definition, we have
\begin{equation}\label{eq:sete}
\div(e,Q)=-a^e_Q(s(e)) \cdot s(e) - a^e_Q(t(e))\cdot t(e).
\end{equation}

\begin{Exa}\label{Exa:a}
Consider the graph $\Gamma$ with two vertices $u,v$ and two edges $e_1,e_2$. Let $d=2$, $f(1)=e_1$ and $f(2)=e_2$. We can see $V_f$ as the set of vertices $Q_0,\dots, Q_3$ of a square, and we can write $Q_0=(u,u)$, $Q_1=(v,u)$, $Q_2=(u,v)$ and $Q_3=(v,v)$ (see Figure \ref{Fig:exa_a}).  
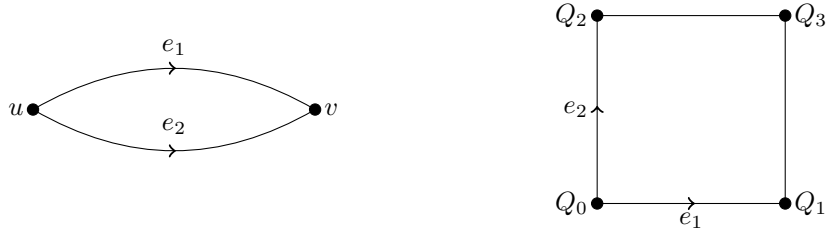
\begin{figure}[h]\begin{center}
\begin{tikzpicture}[scale=5]
\begin{scope}[shift={(0,0)}]
\draw (0,0) to [out=30, in=150] (0.75,0);
\draw (0,0) to [out=-30, in=-150] (0.75,0);
\draw[->, line  width=0.3mm]  (0.375,0.11) -- (0.38,0.11);
\draw[->, line  width=0.3mm]  (0.375,-0.11) -- (0.38,-0.11);

\draw[fill] (0,0) circle [radius=0.015];
\node[left] at (0,0) {$u$};
\draw[fill] (0.75,0) circle [radius=0.015];
\node[right] at (0.75,0) {$v$};
\node[above] at (0.375,0.12) {$e_1$};
\node[below] at (0.375,0) {$e_2$};
\end{scope}
\begin{scope}[shift={(1.5,0)}]
\draw (0,-0.25) to (0,0.25);
\draw (0,-0.25) to (0.5,-0.25);
\draw (0,0.25) to (0.5,0.25);
\draw (0.5,-0.25) to (0.5,0.25);
\draw[->, line  width=0.3mm]  (0.25,-0.25) -- (0.26,-0.25);
\draw[->, line  width=0.3mm]  (0,0) -- (0,0.01);

\draw[fill] (0,-0.25) circle [radius=0.015];
\node[left] at (0,-0.25) {$Q_0$};
\draw[fill] (0.5,-0.25) circle [radius=0.015];
\node[right] at (0.5,-0.25) {$Q_1$};
\draw[fill] (0,0.25) circle [radius=0.015];
\node[left] at (0,0.25) {$Q_2$};
\draw[fill] (0.5,0.25) circle [radius=0.015];
\node[right] at (0.5,0.25) {$Q_3$};
\node[left] at (0,0) {$e_2$};
\node[below] at (0.25,-0.25) {$e_1$};
\end{scope}
\end{tikzpicture}\end{center}
\caption{The graph $\G$ and the set $V_f$.}\label{Fig:exa_a}
\end{figure}

\noindent
An easy computation shows that: 
\[
\begin{tabular}{lllllll}
 $a_{Q_0}^{e_1}(u)=1$, && $a_{Q_0}^{e_1}(v)=0$, && $a_{Q_0}^{e_2}(u)=1$, && $a_{Q_0}^{e_2}(v)=0$.   
\end{tabular}\]
If we consider $W=\{u\}$, for 
$Q_0$ we have:
\[
\begin{tabular}{lll}
 $a_{Q_0}^{e_1}(W)=0$, && $a_{Q_0}^{e_2}(W)=0$ 
\end{tabular}\]
\end{Exa}

\begin{Not}
Let $v_0\in V(\G)$ be a vertex of $\Gamma$, and $\mu$ be a degree-$k$ polarization on $\G$.
For a degree-$d$ divisor $D$ on $\G$, we denote by $\qs(D)$ the unique $(v_0,\mu)$-quasistable divisor on the graph $\G$ which is equivalent to $D$ (recall Theorem \ref{Thm:div_qs_curve}).
\end{Not}

 Thus, given a divisor $D$ on $\Gamma$, there is $M_D\in C_0(\G,\mathbb Z)$ such that $D-\qs(D)=\partial \delta(M_D)$. We write:
 \[
D-\qs(D)=\partial \delta(M_D)=\sum_{v\in V(\G)}m_v v,
 \]
 where $m_v\in \mathbb Z$.
 We denote by $\phi_D$ the flow $\delta(M_D)\in C_1(\ora{\G},\mathbb Z)$ on $\G$:
  \[
 \phi_D:=\delta(M_D)=\sum_{e\in E(\G)}(m_{t(e)}-m_{s(e)})e.
 \] 
Thus, we have $D-\div(\phi_D)=\qs(D)$.

\begin{Rem}\label{rem:determine}
Notice that the condition $D-\qs(D)=\partial \delta(M_D)$ only determines $M_D$ up to a multiple of the divisor $\sum_{v\in V(\Gamma)} v$. Nevertheless, the flow $\phi_D$ is uniquely determined.
\end{Rem}

\begin{Def}\label{Def:b_tropical}
Let $\G$ be a graph.
Let $f\col \{1,\dots,d\}\ra E(\G)$ be a function.  
For an edge $e\in E(\G)$, an element $Q\in V_f$, and a divisor $D^{\dagger}$ on $\G$, we define 
\[
b^e_Q(D^{\dagger}):=\phi_{D^{\dagger}+\div(Q)}(e)=m_{t(e)}-m_{s(e)}.\]
For a subset $W\subset V(\G)$, we define
\[b_{Q}^{e}(W,D^{\dagger}):=\left\{ 
\begin{array}{cl}
b^e_Q(D^{\dagger}), & \mbox{ if }t(e)\in W\mbox{ and }s(e)\not \in W\\
-b^e_Q(D^{\dagger}), & \mbox{ if }t(e)\not \in W\mbox{ and }s(e)\in W\\
0,& \mbox{ otherwise.}
\end{array}\right.\]
\end{Def}

\begin{Exa}\label{Exa:b}
We let $d=2$ and consider the trivial degree-0 polarization. 
Consider the graph $\G$ in Example \ref{Exa:a}. We let $v_0=u$ and $D^{\dagger}=5u-3v$.
The divisors associated to $Q_0,$ is $\div(Q_0)=-2u$.
We have $D_0:=D^{\dagger}+\div(Q_0)=3u-3v$ and $\qs(D_0)=\qs(D^{\dagger}+\div(Q_0))=u-v$.
Keeping the notation introduced before Definition \ref{Def:b_tropical}, we find
$M_{D_0}=u$ and $\phi_{D_0}=-(e_1+e_2)$
 Then, 
\[
 b^{e_1}_{Q_0}(D^{\dagger})=\phi_{D_0}(e_1)=-1 \quad \text{ and } \quad b^{e_2}_{Q_0}(D^{\dagger})=\phi_{D_0}(e_2)=-1.\]
For the subset $W=\{u\}$ of $V(\G)$, we have
\[
b_{Q_0}^{e_1}(W,D^{\dagger})=1\quad\text{ and } \quad b_{Q_0}^{e_2}(W,D^{\dagger})=1.\]
\end{Exa}

Next, given a graph $\Gamma$ and a tropical curve $X$ having $\Gamma$
 as a model, we can present $X$ as
\[
X=\frac{\cup_{e\in E(\G)} [0,\ell(e)]}{\sim}
\]
where $\sim$ means identification of the intervals $[0,\ell(e)]$ at their endpoints, as
prescribed by the combinatorial data of $\G$.  (Recall that we identified $e$ with the interval $[0,\ell(e)]$.)

Consider a function $f\col\{1,\ldots,d\}\ra E(\G)$
and set $f_k=f(k)$. Define the box
\[
\mc{H}_f:=\prod_{1\le k\le d}f_k\cong [0,\ell(f_1)]\times \dots\times[0,\ell(f_d)]\subset \mathbb R^d.
\]
We will label the coordinates of the vertices $Q$ of any $\mc H_f$ using the vertices of $\G$, that is, we will write  $Q=(v_{1},\ldots,v_{d})$, where $v_{k}$ is a vertex of $\G$ incident to the edge $f_k$. We will identify the set of vertices $V(\mc H_f)$ of $\mc H_f$ with $V_f$ (see equation \eqref{eq:Vf}). 

Let $d$ be an integer. We can present the $d$-th product $X^d$ of $X$ as a union of hypercubes:
\[
X^d=\left(\frac{\cup_{e\in E(\G)} [0,\ell(e)]}{\sim}\right)^d=\bigcup_{f} \mc H_f.
\]

Consider a point $p_0\in X$, a degree-$k$ polarization $\mu$ on $X$, and a divisor $D^{\dagger}\in \Div^{d+k}(\G)$. Let $\D^{\dagger}$ be the divisor on $X$ induced by $D^\dagger$. We define the \emph{tropical Abel map associated to $\D^\dagger$} as 
\begin{equation}\label{eq:tropAbelmap}
\begin{array}{rcl}
\alpha^{\trop}_{d,\D^\dagger}\col& X^d \ra&J^{\trop}_{p_0,\mu}(X)\\
(p_1,\ldots,p_d)&\mapsto&\left[\D^{\dagger}-\displaystyle{\sum_{1\le i\le d}p_i}\right],
\end{array}
\end{equation}
where $[-]$ denotes the class of a divisor.
Notice that the degree-$d$ Abel map $\alpha_d^\trop$ defined in equation \eqref{eq:alpha} is the map $\alpha^{\trop}_{d,\D^\dagger}$ for $\D^{\dagger}=dp_0.$

From now on, we will restrict ourselves to the case where $X=X_{\G}$, that is all edges of $\Gamma$ are assigned length $1$. Consider a function $f\col \{1,\dots,d\}\ra E(\G)$.
Let  $S=\{Q_0,\dots,Q_n\}\subset V(\mc H_f)$ be a set of $n+1$ vertices in the hypercube $\mc{H}_f$ and $\Conv(S)\subset \mc H_f$ be the convex hull of $S$, for some positive integer $n$.
Given a convex $n$-tuple $\ult$, we consider the point $R_{\ult}$ in the convex hull $\Conv(S)$ defined as:
\begin{equation}\label{eq:Rt}
R_{\ult}=Q_0+\sum_{1\le j\le n} t_{j}(Q_{n-j+1}-Q_0).
\end{equation}
Notice that we can write $R_{\ult}=(R_1,\ldots,R_d)\in \prod_{1\le j\le d}f_i$,
with $R_i\in f_i$. 
Via the identification of $f_i$ with the interval $[0,1]$, one can easily check that  $R_i=p_{f_i,d_{J_i}}$ (that is the point on $f_i$ at distance $d_{J_i}$ from the source of $f_i$), for some $J_i\subset \{-1,1,\dots,n\}$. 
We define the divisor $\div(R_{\ul t})$ on $X$ as:
\begin{equation}\label{eq:divRt}
\div(R_{\ult})=-\sum_{1\le i \le d} p_{f_i,d_{J_i}}=\underset{J\subset \{-1,1,\ldots,n\}}{\sum_{e\in E(\Gamma)}} \mathfrak{a}(e,J)p_{e,d_{J}}    
\end{equation}
where 
\[
\mathfrak{a}(e,J)=-\#\{i\in \{1,\ldots, d\}; f_i=e\text{ and }R_i=p_{f_i,d_J}\}.
\]
Notice that $\div(R_{\ult})=\D^{\mathfrak{a}}_{\ult}$ (see Construction \ref{Cons:dta}). Moreover, 
\[
\div(R_{(0,\ldots,0)})=\div(Q_0)
\quad \text{ and } \quad
\div(R_{(0,\ldots,1,\ldots,0)})=\div(Q_{n-i+1}),
\]
where in the last formula the number $1$ appears in the $i$-th coordinate of $\ult$.
By definition, we have:
\[
\alpha^{\trop}_{d,\D^{\dagger}}(R_{\ult})=\left[\D^\dagger + \div(R_{\ul t})\right].
\]

 For every edge $e\in E(\G)$, we define 
\[
\div(e,R_{\ul t})=\sum_{J\subset\{-1,1,\ldots,n\}} \mathfrak{a}(e,J)p_{e,d_{J}}=-\sum_{\{i | f_i=e\}}p_{e,d_{J_i}}.\]
We can see $\div(e,R_{\ul t})$ as the ``contribution to $\div(R_{\ul t})$ along $e$".
Notice that $\div(e,R_{\ul t})=0$ if $e$ is not in the image of $f$. Moreover,
\[
\div(R_{\ul t})=\sum_{e\in E(\Gamma)} \div(e,R_{\ul t}).
\]

We denote by $\widetilde{\mathcal{R}}_{\ul t}$ the organized version of $\div(R_{\ul t})$. By equation \eqref{eq:supp}, 
\begin{equation}\label{eq:support}
\supp \widetilde{\mc{R}}_{\ult}\subseteq V(\G)\cup \left(\bigcup_{e\in E(\G)}\{p_{e,r_j} : j=1,\dots, n\}\right),
\end{equation}
where $r_j=\sum_{i=1}^{j}t_i$. We denote by $\widetilde{\mc{R}}^{e}_{\ult}$ the organized version of $\div(e,R_{\ult})$, so that we have: 
\[
\widetilde{\mc{R}}_{\ult}=\sum_{e\in E(\Gamma)} \widetilde{\mc{R}}^{e}_{\ult}.
\]

\begin{Exa}
Consider the tropical curve with  model as in Example \ref{Exa:a}, with edges of length 1. Recall that $d=2$. We identify $u=0$ and $v=1$, so that $Q_0=(0,0)$, $Q_1=(1,0)$, $Q_2=(0,1)$, and $Q_3=(1,1)$. Consider the set $S=\{Q_3,Q_1,Q_2\}$ and a convex 2-tuple 
 $\ult=(t_1,t_2)$. We have:
\[
R_{\ul t}=Q_3+t_1(Q_2-Q_3)+t_2(Q_1-Q_3)=(1-t_1,1-t_2).
\]
In this case, we have $J_1=\{-1,1\}$, $J_2=\{-1,2\}$ and the divisor associated to $R_{\ult}$ is \[
\div(R_{\ult})=-p_{e_1,d_{J_1}}-p_{e_2,d_{J_2}}=-p_{e_1,1-t_1}-p_{e_2,1-t_2}.
\]
Applying Proposition \ref{prop:arrumar}, we get $\widetilde{\mc{R}}_{\ult}=-p_{e_1,0}+p_{e_1,r_1}-p_{e_2,r_1}+p_{e_2,r_2}-2p_{e_2,1}$ (see Figure \ref{Fig:exa_Rt2}).
\begin{figure}[h]
\begin{center}
\begin{tikzpicture}[scale=4]
\begin{scope}[shift={(0,0)}]
\draw (0,0) to [out=30, in=150] (0.75,0);
\draw (0,0) to [out=-30, in=-150] (0.75,0);

\draw[fill] (0,0) circle [radius=0.012];
\node[left] at (0,0) {$0$};
\draw[fill] (0.75,0) circle [radius=0.012];
\node[right] at (0.75,0) {$0$};

\draw[fill] (0.6,0.067) circle [radius=0.012];
\node[above] at (0.6,0.1) {$-1$};
\node[above] at (0.7,0.04) {\small{$t_1$}};
\draw[fill] (0.6,-0.067) circle [radius=0.012];
\node[below] at (0.6,-0.1) {$-1$};
\node[below] at (0.7,-0.04) {\small{$t_2$}};
\end{scope}
\begin{scope}[shift={(1.25,0)}]
\draw (0,0) to [out=30, in=150] (0.75,0);
\draw (0,0) to [out=-30, in=-150] (0.75,0);

\draw[fill] (0,0) circle [radius=0.012];
\node[left] at (0,0) {$-1$};
\draw[fill] (0.75,0) circle [radius=0.012];
\node[right] at (0.75,0) {$-2$};
\draw[fill] (0.15,0.067) circle [radius=0.012];
\node[above] at (0.15,0.1) {$1$};
\node[above] at (0.06,0.04) {\small{$t_1$}};
\draw[fill] (0.35,-0.11) circle [radius=0.012];
\node[below] at (0.35,-0.15) {$-1$};
\node[below] at (0.245,-0.08) {\small{$t_2$}};
\draw[fill] (0.15,-0.067) circle [radius=0.012];
\node[below] at (0.15,-0.1) {$1$};
\node[below] at (0.06,-0.04) {\small{$t_1$}};
\end{scope}
\end{tikzpicture}\end{center}
\caption{The divisors  $\div(R_{\ult})$ and its organized version $\wt{\mc R}_{\ul t}$.}\label{Fig:exa_Rt2}
\end{figure}
\end{Exa}

Now, given a point $R_{\ult}\in \Conv(S)$ and an edge $e\in E(\Gamma)$, by equation \eqref{eq:support} we can write
\[
\widetilde{\mc{R}}^{e}_{\ult}=\sum_{0\le j\le n+1} \widetilde{\mc R}_{\ul t}^{e}(p_{e,r_j}) \cdot p_{e,r_j}.
\] 
for some integers $\widetilde{\mc R}_{\ul t}^{e}(p_{e,r_j})$ that does not depend on $\ul t$ (see Figure \ref{fig:Rtil}). 
In the next lemma we compute the coefficients of the divisor $\widetilde{\mc R}_{\ul t}^{e}$.

\begin{figure}[h!]
\begin{center}\begin{tikzpicture}[scale=6]
\begin{scope}[shift={(0,0)}]
\draw (0,0) to (0.3,0);
\draw[dotted] (0.3,0) to (0.45,0);
\draw (0.45,0) to (0.6,0);
\draw[dotted] (0.6,0) to (0.75,0);
\draw (0.75,0) to (1.3,0);
\draw[fill] (0,0) circle [radius=0.012];
\draw[fill] (0.15,0) circle [radius=0.012];
\draw[fill] (0.30,0) circle [radius=0.012];
\draw[fill] (0.45,0) circle [radius=0.012];
\draw[fill] (0.6,0) circle [radius=0.015];
\draw[fill] (0.75,0) circle [radius=0.012];
\draw[fill] (0.9,0) circle [radius=0.012];
\draw[fill] (1.05,0) circle [radius=0.012];
\draw[fill] (1.3,0) circle [radius=0.012];
\node[above] at (0.6,0.05) {$\widetilde{\mc R}_{\ul t}^{e}(p_{e,r_j})$};
\node[left] at (0,0) {$s(e)$};
\node[right] at (1.3,0) {$t(e)$};
\node[below] at (0.075,0) {$t_1$};
\node[below] at (0.225,0) {$t_2$};
\node[below] at (0.525,0) {$t_j$};
\node[below] at (0.825,0) {$t_{n-1}$};
\node[below] at (0.975,0) {$t_{n}$};
\end{scope}    
\end{tikzpicture}
\end{center}
\caption{The divisor $\widetilde{\mc{R}}^{e}_{\ult}$.}
\label{fig:Rtil}
\end{figure}

\begin{Lem}\label{Lem:deltaa}
  Let $S=\{Q_0,\ldots, Q_n\}$ be a subset of $V_f$ and consider the divisor $\widetilde{\mc R}_{\ul t}^{e}$ constructed above. For every edge $e\in E(\Gamma)$, we have
\begin{align*}
\widetilde{\mc R}_{\ul t}^{e}(s(e))&=-a^e_{Q_{n}}(s(e))\\    
\widetilde{\mc R}_{\ul t}^{e}(t(e))&=-a^e_{Q_0}(t(e))\\
\widetilde{\mc{R}}^{e}_{\ult}(p_{e,r_j})&=a^e_{Q_{n-j}}(t(e))-a^e_{ Q_{n-j+1}}(t(e)),\text{ for }j=1,\ldots, n.
\end{align*}
\end{Lem}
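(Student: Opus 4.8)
The plan is to compute $\widetilde{\mc{R}}^e_{\ult}$ by tracking the construction in Proposition \ref{prop:arrumar} and Construction \ref{Cons:dta} applied to the single-edge divisor $\div(e,R_{\ult})$. First I would recall from equation \eqref{eq:divRt} that $\div(e,R_{\ult})=\sum_{J}\mathfrak{a}(e,J)\,p_{e,d_J}$, where $-\mathfrak{a}(e,J)$ counts the indices $i$ with $f_i=e$ whose point $R_i$ lands at $p_{e,d_J}$; and that $J_i\subset\{-1,1,\dots,n\}$ records which of the vertices $Q_1,\dots,Q_n$ (relative to $Q_0$) the $i$-th coordinate of $R_{\ult}$ has ``moved toward''. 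Because each of the segments $[0,1]$ identified with $f_i$ has the two endpoints $s(f_i)=p_{f_i,0}$ and $t(f_i)=p_{f_i,1}$, and $R_{\ult}=Q_0+\sum_j t_j(Q_{n-j+1}-Q_0)$, the coordinate $R_i$ sits at the vertex $s(e)$ precisely when all the relevant $Q_{n-j+1}$ that differ from $Q_0$ in coordinate $i$ contribute a displacement of $0$, and at $t(e)$ in the opposite extreme — this is exactly the bookkeeping encoded by the subsets $J_i$ and the definition of $d_{J}$ in Definition \ref{def:dI}. So the key first step is to translate $\mathfrak{a}(e,J)$ into the combinatorial counts $a^e_{Q_\ell}(v)$: reading off when $i$-th coordinate of $Q_\ell$ equals $s(e)$ versus $t(e)$ for each $\ell$.

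Next I would run the organization algorithm of Proposition \ref{prop:arrumar} for each summand $\mathfrak{a}(e,J)\,p_{e,d_J}$, keeping explicit track of which coefficients of the resulting $a$-admissible sequence land at the endpoints $r_0=0$ (i.e.\ $s(e)$) and $r_{n+1}=1$ (i.e.\ $t(e)$) versus the interior points $p_{e,r_j}$. The crucial structural fact to invoke is Proposition \ref{prop:arrumar}(3) (and its telescoping consequence in Construction \ref{Cons:dta}): if we specialize $\ult$ to $(0,\dots,0)$ we collapse everything onto $s(e)$ and $t(e)$, while if we specialize to $(0,\dots,l,\dots,0)$ with $l=1$ in the $i$-th slot we collapse onto a two-point divisor as well, and the coefficients of $\widetilde{\mc R}^e_{\ult}$ at the $p_{e,r_j}$ are \emph{independent of} $\ult$. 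Using $\div(R_{(0,\dots,0)})=\div(Q_0)$ and $\div(R_{(0,\dots,1,\dots,0)})=\div(Q_{n-i+1})$ together with equation \eqref{eq:sete}, I can evaluate the endpoint coefficients: specializing to $(0,\dots,0)$ forces all the interior mass to $s(e)$, and a careful comparison with specialization at $(1,0,\dots,0)$ versus $(0,\dots,0,1)$ isolates $\widetilde{\mc R}^e_{\ult}(s(e))=-a^e_{Q_n}(s(e))$ and $\widetilde{\mc R}^e_{\ult}(t(e))=-a^e_{Q_0}(t(e))$ — the indices $Q_n$ and $Q_0$ appearing because of the reversal $Q_{n-j+1}$ in \eqref{eq:Rt}, so that ``moving along $t_1$'' corresponds to replacing $Q_0$ by $Q_n$ and so on.

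For the interior coefficients I would set up a telescoping argument: consider two consecutive specializations, say $\ult$ with $r_{j-1}=0$ vs.\ $r_j$ small and positive, so that the point $p_{e,r_j}$ ``passes'' the $J_i$-threshold for exactly those indices $i$ whose $i$-th coordinate of $R_{\ult}$ moves from being aligned with $Q_{n-j+1}$ to being aligned with $Q_{n-j}$. This shows the coefficient at $p_{e,r_j}$ equals the change $a^e_{Q_{n-j}}(t(e))-a^e_{Q_{n-j+1}}(t(e))$, since crossing the $j$-th vertex swaps the rôle of $Q_{n-j+1}$ for $Q_{n-j}$. Because by the Remark the organized version is independent of $\ult$, it suffices to compute these differences at any convenient generic $\ult$; and the sum of all coefficients must be $-\#\{i:f_i=e\}=\deg\div(e,R_{\ult})$, which is a useful consistency check (the three formulas indeed telescope to $-a^e_{Q_n}(s(e))-a^e_{Q_0}(t(e))+\sum_j(a^e_{Q_{n-j}}(t(e))-a^e_{Q_{n-j+1}}(t(e)))=-a^e_{Q_n}(s(e))-a^e_{Q_n}(t(e))$, which for each $i$ with $f_i=e$ contributes exactly $-1$). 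The main obstacle I expect is the sign and index bookkeeping: correctly matching the reversal $Q_{n-j+1}$ in \eqref{eq:Rt} to the ordering of the exceptional vertices $p_{e,r_1},\dots,p_{e,r_n}$ along $e$, and verifying that the algorithm in Proposition \ref{prop:arrumar} never dumps interior mass back onto the endpoints in a way that corrupts the clean formulas — this is where one must be careful that $\div(e,R_{\ult})$ is a sum of point masses at \emph{distinct} locations $p_{e,d_{J}}$ and that the organization respects the linear order, so the telescoping is genuine.
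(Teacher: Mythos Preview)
Your proposal is correct and follows essentially the same approach as the paper: exploit that the coefficients $a_j:=\widetilde{\mc R}^e_{\ul t}(p_{e,r_j})$ are independent of $\ul t$ (via Proposition~\ref{prop:arrumar}(3) and Construction~\ref{Cons:dta}), then specialize $\ul t$ to $(0,\dots,0)$ and to each $(0,\dots,1,\dots,0)$, use $\div(e,R_{\ul t})=\div(e,Q_{n-j+1})$ together with equation~\eqref{eq:sete}, and solve the resulting telescoping linear system for the $a_j$. The paper's proof is slightly more streamlined than your plan suggests: you do not need to compute $\mathfrak a(e,J)$ explicitly or track the organization algorithm step by step, and your worry that the algorithm might ``dump interior mass back onto the endpoints'' is moot, since the argument works purely at the level of specializing the fixed integers $a_0,\dots,a_{n+1}$ and reading off partial sums.
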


\begin{proof}
 We set $a_j:=\widetilde{\mc R}_{\ul t}^{e}(p_{e,r_j})$.
We can view $\div(e,Q_i)$ as a divisor on the tropical curve $X$ (recall equation \eqref{eq:diveQ}). 
In the case $\ul t=(0,\ldots,0)$, we have  $R_{(0,\ldots,0)}=Q_0$, hence $\div(e,R_{(0,\ldots,0)})=\div(e,Q_0)$, that is, a divisor supported on the source $s(e)=p_{e,0}$ of $e$ and on the target $t(e)=p_{e,1}$ of $e$.
By Proposition \ref{prop:arrumar} (3), we have $\widetilde{\mc{R}}^{e}_{(0,\ldots,0)}=\div(e,Q_0)$, then, by equation \eqref{eq:sete}, 
\[
\left(\sum_{0\le k\le n}a_{k}\right) \cdot p_{e,0}+a_{n+1}\cdot  p_{e,1}=-a^e_{Q_0}(s(e))\cdot p_{e,0}-a^e_{Q_0}(t(e))\cdot p_{e,1}.
\]
We deduce that $a_{n+1}=-a^e_{Q_0}(t(e)).$

Similarly, in the case $\ul t=(0,\ldots,0,1)$, we have $R_{(0,\ldots,0,1)}=Q_1$, hence $\div(e,R_{(0,\ldots,0,1)})=\div(e,Q_1)$. By Proposition \ref{prop:arrumar} (3), we have $\widetilde{\mc{R}}^{e}_{(0,\ldots,0,1)}=\div(e,Q_1)$. Then, as before, 
\[
\left(\sum_{0\le k\le  n-1}a_{k}\right)\cdot p_{e,0}+(a_{n}+a_{n+1})\cdot p_{e,1}=-a^e_{Q_1}(s(e))\cdot p_{e,0}-a^e_{Q_1}(t(e))\cdot p_{e,1}.
\]
Taking into account the value of $a_{n+1}$  already computed, we deduce that
$a_{n}=a^e_{Q_0}(t(e))-a^e_{Q_1}(t(e)).$

In general, let us compute $a_{n-1},\dots,a_1$ by iterating this process. Let $j\in\{1,\dots,n-1\}$ and consider the convex $n$-tuple $\ult=(0,\ldots,0,1,0,\ldots,0)$ where $1$ appears in the $j$-th entry. We know that $R_{\ult}=Q_{n-j+1}$, hence  $\div(e,R_{\ult})=\div(e,Q_{n-j+1}).$ By  Proposition \ref{prop:arrumar} (3), we have $\widetilde{\mc{R}}^{e}_{\ult}=\div(e,Q_{n-j+1})$, then
\begin{multline*}
\left(\sum_{0\le k\le j-1}a_{k}\right)\cdot p_{e,0}+\left(\sum_{j\le k\le n+1}a_{k}\right)\cdot p_{e,1}=-a^e_{Q_{n-j+1}}(s(e))\cdot p_{e,0}-a^e_{Q_{n-j+1}}(t(e))\cdot p_{e,1}.
\end{multline*}
Taking into account the value of $a_{k}$ for $k=j+1,\ldots,n+1$ already computed, we deduce that 
\[
a_{j}=a^e_{Q_{n-j}}(t(e))-a^e_{Q_{n-j+1}}(t(e)), \text{ for } j=1,\dots,n-1.
\]
Notice that for $j=1$ we get 
\[a_{0}\cdot p_{e,0}+\left(\sum_{1\le k\le n+1}a_{k}\right)\cdot p_{e,1}=-a^e_{Q_{n}}(s(e))\cdot p_{e,0}-a^e_{Q_{n}}(t(e))\cdot p_{e,1}.
\]
and hence 
$a_{0}=-a^e_{Q_{n}}(s(e))$, concluding the proof.
\end{proof}

Let $\Gamma$ be a graph with a fixed orientation, a fixed vertex $v_0\in V(\Gamma)$, and a degree-$k$ polarization $\mu$. Moreover, we let $X=X_\Gamma$ be the tropical curve associated to $\Gamma$, and we denote by $p_0\in X$ the point corresponding to $v_0$ and $\mu$ the induced polarization on $X$. Consider a divisor $D^\dagger$ of degree $k+d$ on $\Gamma$. We write $\D^\dagger$ to denote the divisor $D^\dagger$ seen as a divisor on $X$.  We consider a subset $S=\{Q_0,\dots,Q_n\}\subset V_f$ of $n+1$ vertices of a box $\mc H_f$.

 Given a convex $n$-tuple $\ult$. We let $R_{\ult}$ be the point in the convex hull $\Conv(S)$ as in equation \eqref{eq:Rt}, and $\div(R_{\ult})$   
 the associated divisor as in equation \eqref{eq:divRt}. Recall that $\Conv(S)^\circ$ denotes the interior of $\Conv(S)$.

\begin{Lem}\label{lem:qs}
Suppose that $\alpha^{\trop}_{d,\D^{\dagger}}(\Conv(S)^\circ)$ is contained in an open cell $\P_{\mc{E},D}^\circ \subset J^{\trop}_{p_0,\mu}(X)$, for some $\mu$-quasistable pseudo-divisor $(\E,D)$ on $\Gamma$.  Then, for each $e\in \E$ there is a subset $I_e\subset\{-1,1,\ldots, n\}$ such that:
\[
\qs(\alpha^{\trop}_{d,\D^{\dagger}}(R_{\ul t}))=\sum_{v\in V(\G)}D(v)\cdot v-\sum_{e\in \E} p_{e,d_{I_e}},
\]
for every convex $n$-tuple $\ul t$.
\end{Lem}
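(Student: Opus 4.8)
The plan is to locate, for each $e\in\E$, the point $p_{e,c_e(\ult)}$ at which the quasistable representative $\qs\bigl(\alpha^{\trop}_{d,\D^\dagger}(R_{\ult})\bigr)$ carries its chip on $e$, and to show that $c_e(\ult)=d_{I_e}$ for a subset $I_e\subset\{-1,1,\dots,n\}$ independent of $\ult$. I would combine three facts: (i) $\alpha^{\trop}_{d,\D^\dagger}$ is affine on the simplex $\Conv(S)$, so $c_e$ depends affinely on $\ult$; (ii) at each vertex of $\Conv(S)$ the divisor $\div(R_{\ult})$ is supported on $V(\G)$, so there $\alpha^{\trop}_{d,\D^\dagger}(R_{\ult})$ is the class of a divisor on $\G$ and its quasistable representative carries no chip in an edge interior; (iii) an affine function on a simplex with $\{0,1\}$-values at all vertices is exactly a function of the form $d_{I_e}$.

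For (i), write $\Delta=\{\ult\in\R^n_{\ge0}:\sum_j t_j\le1\}$, with interior $\Delta^\circ$, so that $\ult\mapsto R_{\ult}$ identifies $\Delta$ with $\Conv(S)$ and $\Delta^\circ$ with $\Conv(S)^\circ$ (we use here that $S$ spans an $n$-simplex, as in the triangulations that will concern us). By \eqref{eq:alpha} each summand $\int_{s(e_i)}^{p_i}de$ is linear in the position of $p_i$, so $\chi$ is affine on every hypercube of $X^d$, hence so is $\alpha^{\trop}_d=\rho\circ\chi$ and hence so is $\alpha^{\trop}_{d,\D^\dagger}$, which differs from $\alpha^{\trop}_d$ by a constant in $J^\trop(X)$; in particular $\alpha^{\trop}_{d,\D^\dagger}$ is affine on $\Conv(S)$. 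On the target side, the cell $\P^\circ_{(\E,D)}=\prod_{e\in\E}e^\circ$ embeds affinely and injectively into $J^\trop_{p_0,\mu}(X)\cong J^\trop(X)$ (Theorem \ref{thm:jX}): moving $p_{e,c_e}$ along $e$ changes its Abel--Jacobi image linearly in $c_e$, and a quasistable divisor is determined by its class (Theorem \ref{thm:quasistable}). Since by hypothesis $\alpha^{\trop}_{d,\D^\dagger}$ sends $\Conv(S)^\circ$ into $\P^\circ_{(\E,D)}$, and this cell parametrizes the $(p_0,\mu)$-quasistable divisors with induced pseudo-divisor $(\E,D)$ (Corollary-Definition \ref{cor:quasiquasi} and Remark \ref{Rem:div_quasistable}), I would conclude that for every $\ult\in\Delta^\circ$
\[
\qs\bigl(\alpha^{\trop}_{d,\D^\dagger}(R_{\ult})\bigr)=\sum_{v\in V(\G)}D(v)\,v-\sum_{e\in\E}p_{e,c_e(\ult)},\qquad c_e(\ult)\in(0,1),
\]
with $\ult\mapsto\bigl(c_e(\ult)\bigr)_{e\in\E}$ the restriction to $\Delta^\circ$ of an affine map $\Delta\to\R^\E$.

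For (ii), let $\ult_0$ be a vertex of $\Delta$, i.e.\ $\ult_0=(0,\dots,0)$ or $\ult_0=\mathbf e_j$; then $R_{\ult_0}=Q_0$, resp.\ $Q_{n-j+1}$, is a vertex of $\mc H_f$, so $\div(R_{\ult_0})$ is supported on $V(\G)$ and $\D^\dagger+\div(R_{\ult_0})$ is a divisor on $\G$. By Theorem \ref{Thm:quasistable-divisor} its $(v_0,\mu)$-quasistable representative on $\G$ is again a divisor on $\G$, and by Proposition \ref{prop:quasiquasi} (applied with the trivial refinement) this is also its $(p_0,\mu)$-quasistable representative on $X$; hence $\qs(\alpha^{\trop}_{d,\D^\dagger}(R_{\ult_0}))$ carries no chip in any $e^\circ$. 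By continuity of $\alpha^{\trop}_{d,\D^\dagger}$ and affineness, $c_e(\ult_0)\in[0,1]$; and if $c_e(\ult_0)\in(0,1)$ for some $e\in\E$, then $\alpha^{\trop}_{d,\D^\dagger}(R_{\ult_0})$ lies in a face cell $\P^\circ_{(\E',D')}$ of $\P_{(\E,D)}$ with $e\in\E'$, whose parametrized divisor has a chip in $e^\circ$ --- a contradiction. Therefore $c_e(\ult_0)\in\{0,1\}$ for every vertex $\ult_0$ of $\Delta$ and every $e\in\E$.

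For (iii), an affine function on $\Delta$ equals $c_e(0)\bigl(1-\sum_j t_j\bigr)+\sum_j c_e(\mathbf e_j)t_j$; with the $\{0,1\}$-values just found this is $d_{I_e}$, where $I_e=\{j:c_e(\mathbf e_j)=1\}$ if $c_e(0)=0$ and $I_e=\{-1\}\cup\{j:c_e(\mathbf e_j)=0\}$ if $c_e(0)=1$ (Definition \ref{def:dI}), and the constraint $c_e(\ult)\in(0,1)$ on $\Delta^\circ$ forbids $I_e=\emptyset$ and $I_e=\{-1\}$. This establishes the formula for $\ult\in\Delta^\circ$, and for $\ult\in\Delta\setminus\Delta^\circ$ it follows by continuity of the Abel--Jacobi class in the $c_e$: then $\alpha^{\trop}_{d,\D^\dagger}(R_{\ult})=\bigl[\sum_v D(v)v-\sum_{e\in\E}p_{e,d_{I_e}}\bigr]$ lies on the face of $\P_{(\E,D)}$ along which the chips on the edges with $d_{I_e}\in\{0,1\}$ have been specialized to a vertex (with the convention $p_{e,0}=s(e)$, $p_{e,1}=t(e)$), and the divisor on the right is precisely the quasistable representative parametrized by that point. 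The step I expect to be the main obstacle is (i) --- verifying that the homeomorphism of Theorem \ref{thm:jX} is affine on each cell $\P^\circ_{(\E,D)}\cong\prod_{e\in\E}e^\circ$, so that ``$\alpha^{\trop}_{d,\D^\dagger}$ affine on $\Conv(S)$'' really produces an affine $c_e(\ult)$ --- together with the combinatorial bookkeeping in the boundary case (which face of $\P_{(\E,D)}$ a boundary $\ult$ maps to, and that specialization preserves quasistability).
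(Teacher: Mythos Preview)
Your proposal is correct and follows essentially the same route as the paper: show that the restriction $\alpha^{\trop}_{d,\D^\dagger}|_{\Conv(S)}\to\P_{\E,D}$ is affine, observe that the vertices $Q_j$ of $\Conv(S)$ map to vertices of $\P_{\E,D}$ (so each coordinate lies in $\{0,1\}$), and read off $I_e$ from these corner values.

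On the obstacle you flagged in~(i), the paper sidesteps the question of whether the homeomorphism of Theorem~\ref{thm:jX} is cellwise affine by lifting to $\Omega(X)^\vee$: since $\rho^{-1}(\P^\circ_{\E,D})$ is a disjoint union of open polyhedra each isomorphic to $\P^\circ_{\E,D}$, and $\chi(\Conv(S)^\circ)$ is connected, the image lies in a single such sheet $\P^\circ$; then $\alpha^{\trop}_{d,\D^\dagger}|_{\Conv(S)}=\rho|_{\P}\circ\chi|_{\Conv(S)}$ is a composition of two maps that are visibly linear. This is cleaner than your route through the Abel--Jacobi embedding of the cell, though both lead to the same conclusion. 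Your step~(ii) makes explicit what the paper leaves implicit (namely, that $\qs(\D^\dagger+\div(Q_j))$ is a divisor on $\Gamma$, hence a vertex of the cell complex), and your step~(iii) matches the paper's computation of $I_e$ in terms of which $\pi_e\circ\alpha^{\trop}_{d,\D^\dagger}(Q_j)$ differ from $\pi_e\circ\alpha^{\trop}_{d,\D^\dagger}(Q_0)$.
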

\begin{proof}
We begin proving the linearity of the restriction 
\[
\alpha^{\trop}_{d,\D^{\dagger}}|_{\Conv(S)}\col \Conv(S)\to \P_{\E,D}.
\]
 Indeed, let $\chi \col X^d\to \Omega(X)^\vee $ and $\rho \col \Omega(X)^\vee\to J^{\trop}(X)$ be the maps defined in equations \eqref{eq:chi} and \eqref{eq:alpha}. We have that $\rho^{-1}(\P_{\E,D}^\circ)$  is a disjoint union of (relatively) open polyhedra in $\Omega(X)^\vee$ isomorphic to $\P_{\E,D}^\circ$. Since $\chi(\Conv(S)^\circ)$ is connected, we have that it is contained in one of these open polyhedra $\P^\circ$. In particular, we have that 
 \[
 \alpha^{\trop}_{d,\D^{\dagger}}|_{\Conv(S)}=\rho|_{\P}\circ \chi|_{\Conv(S)}.
 \]
Both $\rho|_{\P}$ and $\chi|_{\Conv(S)}$ are linear, and so is $\alpha^{\trop}_{d,\D^{\dagger}}|_{\Conv(S)}$.

We know that $\alpha^{\trop}_{d,\D^{\dagger}}(R_{\ul t})$ is the class of the divisor $\D^{\dagger}+\div(R_{\ult})$ in $J_{p_0,\mu}^{\trop}(X)$. Recall that $\P_{\E,D}= \prod_{e\in \E} [0,1]$. We let $\pi_e$ be the projection $\pi_e\col \P_{\E,D}\ra e\cong [0,1]$ for every edge $e\in E(\Gamma)$. So we can identify $\pi_e\circ \alpha^{\trop}_{d,\D^{\dagger}}(Q_j)$ with one of the two end-points of $[0,1]$, i.e., either with $0$ or with $1$. 
Consider  
\[
I'_e=\left\{j\in\{1,\dots,n\} \big|\ \pi_{e}\circ\alpha_{d,D^{\dagger}}^{\trop}(Q_{n-j+1})\neq \pi_{e}\circ\alpha_{d,D^{\dagger}}^{\trop}(Q_0)\right\}.
\] 
and define
\[
I_e:=
\begin{cases}
\begin{array}{ll}
I'_e, & \text{ if } \pi_e\circ\alpha_{d,D^\dagger}^\trop(Q_0)=0; \\
\{-1\}\cup I'_e, & \text{ if } \pi_e\circ\alpha_{d,D^\dagger}^\trop(Q_0)=1.
\end{array} 
\end{cases}
\]
By equation \eqref{eq:Rt} we have
\begin{align*}
    \pi_e\circ\alpha^{\trop}_{d,\D^{\dagger}}(R_{\ult}) & = \pi_e\circ\alpha^{\trop}_{d,\D^{\dagger}}\left(Q_0+\sum_{1\le j\le n} t_{j}(Q_{n-j+1}-Q_0)\right)\\
    & =\pi_e\circ\alpha^{\trop}_{d,\D^{\dagger}}(Q_0)+\pi_e\circ\alpha^{\trop}_{d,\D^{\dagger}}\left(\sum_{1\le j\le n} t_{j}(Q_{n-j+1}-Q_0)\right)\\
    & =\pi_e\circ\alpha^{\trop}_{d,\D^{\dagger}}(Q_0)+\sum_{1\le j\le n} t_{j}\cdot \pi_e\circ\alpha^{\trop}_{d,\D^{\dagger}}(Q_{n-j+1}-Q_0)
\end{align*}
which implies
\[
\pi_e\circ\alpha^{\trop}_{d,\D^{\dagger}}(R_{\ult})  =\left\{
\begin{array}{cc}
\sum_{j\in I_e}t_j,  & \mbox{if }\pi_e\circ\alpha^{\trop}_{d,\D^{\dagger}}(Q_0)=0; \\
1-\sum_{j\in I_e}t_j, &  \mbox{if }\pi_e\circ\alpha^{\trop}_{d,\D^{\dagger}}(Q_0)=1.
\end{array}\right.
\]
Recalling the definition of $d_{I}$ in Definition \ref{def:dI}, we deduce that
\[
\qs\left(\alpha^{\trop}_{d,\D^{\dagger}}(R_{\ul t})\right)=\sum_{v\in V(\G)}D(v)\cdot v-\sum_{e\in \E} p_{e,d_{I_e}},
\]
as wanted.
\end{proof}

\begin{Rem}
\label{Ds-admissible} 
Assume that the hypotheses of Lemma \ref{lem:qs} are satisfied. Then Lemma \ref{lem:qs} gives rise to
 a function $\mathfrak{a}_S\col E(\Gamma)\times \P(\{-1,1,\dots,n\})\ra \mathbb Z$ as in Construction \ref{Cons:dta} such that 
\[
\D^{\mathfrak{a}_S}_{\ul t}=\qs(\alpha^{\trop}_{d,\D^{\dagger}}(R_{\ul t})).
\]
Since $\D_{\ul t}^{\mathfrak{a}_S}$ is quasistable, by Remark \ref{rem:admissible} we have that its organized version $\widetilde{\D}^{\mathfrak{a}_S}_{\ul t}$  is admissible.
For simplicity, we denote 
\[
\D^S_{\ul t}:=\D^{\mathfrak{a}_S}_{\ul t}
\quad \text{ and }\quad 
\widetilde{\D}^S_{\ul t}:=\widetilde{\D}^{\mathfrak{a}_S}_{\ul t}.
\]
\end{Rem}

The organized version of $\div(R_{\ult})$ is denoted by $\widetilde{\mc{R}}_{\ult}$.
Recall that we have $\alpha^{\trop}_{d,\D^{\dagger}}(R_{\ul t})=\D^\dagger+\div(R_{\ul t})$ and we denote by $f_{\ul t}$ a rational function such that 
\begin{equation}\label{eq:fourterms}
\D^\dagger+\widetilde{\mc{R}}_{\ult}=\widetilde{\D}^{S}_{\ul t}+\div(f_{\ul t}).
\end{equation}
Notice that $\qs(\alpha^{\trop}_{d,\D^{\dagger}}(R_{\ul t}))$ and $\qs(\D^\dagger+\widetilde{\mc{R}}_{\ul t})$ are the same divisor, since 
there is a unique quasistable representative in an equivalence class.

We now relate the slopes of the function $f_{\ult}$ to the numbers $b^e_Q(D^\dagger)$. Recall that $b^e_Q(D^\dagger)$ is defined by means of a flow on $\Gamma$, namely $b^e_Q(D^\dagger)=\phi_{D^\dagger+\div(Q)}(e)$ (see Definition \ref{Def:b_tropical}).

\begin{Lem}\label{Lem:dob} 
Assume that the hypotheses of Lemma \ref{lem:qs} hold. For every convex $n$-tuple $\ult$ and every edge $e\in E(\Gamma)$, the slope of $f_{\ult}$ over the oriented segment $\overrightarrow{p_{e,r_{j-1}}p_{e,r_{j}}}$ is constant and it is equal to $b^e_{Q_{n-j+1}}(D^{\dagger})$ for every $j=1,\dots,n+1$.
\begin{figure}[htp]
\begin{center}\begin{tikzpicture}[scale=7]
\begin{scope}[shift={(0,0)}]
\draw (0,0) to (0.3,0);
\draw[dotted] (0.3,0) to (0.45,0);
\draw (0.45,0) to (1,0);
\draw[->, line  width=0.3mm] (0.15,0) to (0.16,0);
\draw[->, line  width=0.3mm] (0.9,0) to (0.91,0);
\draw[->, line  width=0.3mm] (0.6,0) to (0.61,0);
\draw[fill] (0,0) circle [radius=0.012];
\draw[fill] (0.30,0) circle [radius=0.012];
\draw[fill] (0.45,0) circle [radius=0.012];
\draw[fill] (0.75,0) circle [radius=0.012];
\draw[fill] (1,0) circle [radius=0.012];

\node[left] at (0,0) {$s(e)$};
\node[right] at (1,0) {$t(e)$};
\node[below] at (0.15,0) {$e_1$};
\node[below] at (0.6,0) {$e_{n}$};
\node[below] at (0.9,0) {$e_{n+1}$};
\node[above] at (0.15,0) {$b^{e}_{Q_n}(D^{\dagger})$};
\node[above] at (0.6,0) {$b^{e}_{Q_1}(D^{\dagger})$};
\node[above] at (0.9,0) {$b^{e}_{Q_0}(D^{\dagger})$};
\end{scope}    
\end{tikzpicture}
\end{center}
\caption{The flow inducing $f_{\ult}$ over the edges $e_j$.}
\end{figure}
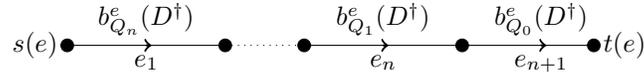
\end{Lem}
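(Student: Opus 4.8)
The key identity to exploit is equation \eqref{eq:fourterms}, namely $\D^\dagger+\widetilde{\mc{R}}_{\ult}=\widetilde{\D}^{S}_{\ul t}+\div(f_{\ul t})$, which I will rewrite as $\div(f_{\ult}) = \D^\dagger+\widetilde{\mc{R}}_{\ult}-\widetilde{\D}^{S}_{\ul t}$. The plan is to compute the flow $\phi_{f_{\ult}}$ associated to $f_{\ult}$ edge-by-edge and then compare it with the flow $\phi_{\D^\dagger+\div(Q_{n-j+1})}$ whose value on $e$ defines $b^e_{Q_{n-j+1}}(D^\dagger)$. First I would note that all the divisors appearing in \eqref{eq:fourterms} are supported on $V(\Gamma)\cup\{p_{e,r_j}\}$ by \eqref{eq:support} and Construction \ref{Cons:dta}, so $f_{\ult}$ is linear over each segment $\overrightarrow{p_{e,r_{j-1}}p_{e,r_j}}$ of $\Gamma^{(\ult)}$, and hence its slope over each such segment is a well-defined integer; this already gives the "constant slope" part of the statement. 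Call this slope $c^e_j$.

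The next step is to specialize to the degenerate convex $n$-tuples $\ul t=(0,\dots,0)$ and $\ul t=(0,\dots,1,\dots,0)$. By Proposition \ref{prop:arrumar}(3) (applied through Construction \ref{Cons:dta}) the organized divisors $\widetilde{\mc R}_{\ult}$ and $\widetilde{\D}^S_{\ult}$ become honestly supported on $V(\Gamma)$ for these choices, and equal $\div(R_{\ult})=\div(Q_i)$ and $\qs(\alpha^\trop_{d,\D^\dagger}(R_{\ult}))=\qs(\D^\dagger+\div(Q_i))$ respectively, where $Q_i=Q_0$ or $Q_i=Q_{n-j+1}$ according to which coordinate of $\ul t$ is $1$ (recall the identities $\div(R_{(0,\dots,0)})=\div(Q_0)$ and $\div(R_{(0,\dots,1,\dots,0)})=\div(Q_{n-i+1})$ recorded right before Lemma \ref{lem:qs}). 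Thus for these $\ult$, equation \eqref{eq:fourterms} reads $\D^\dagger+\div(Q_i)=\qs(\D^\dagger+\div(Q_i))+\div(f_{\ult})$, so $\div(f_{\ult})=\D^\dagger+\div(Q_i)-\qs(\D^\dagger+\div(Q_i))=\div(\phi_{\D^\dagger+\div(Q_i)})$. By uniqueness of the flow (Remark \ref{rem:determine}), the flow on $\Gamma$ induced by $f_{\ult}$ is exactly $\phi_{\D^\dagger+\div(Q_i)}$, whose value on $e$ is $b^e_{Q_i}(D^\dagger)$. So when $\ult=(0,\dots,0,1,0,\dots,0)$ with the $1$ in the $j$-th slot, $f_{\ult}$ has slope $b^e_{Q_{n-j+1}}(D^\dagger)$ on all of $e$.

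The heart of the argument is then to show that the slope $c^e_j$ of $f_{\ult}$ over the particular segment $\overrightarrow{p_{e,r_{j-1}}p_{e,r_j}}$ does not depend on $\ult$: once this independence is established, I can evaluate $c^e_j$ at the degenerate tuple $(0,\dots,0,1,0,\dots,0)$ with $1$ in the $j$-th slot — where the entire edge $e$ has become the single segment under consideration (all $t_i=0$ for $i\neq j$ collapses $p_{e,r_{i-1}}$ and $p_{e,r_i}$) — and read off $c^e_j=b^e_{Q_{n-j+1}}(D^\dagger)$. The independence of $\ul t$ is precisely the content of Proposition \ref{prop:independece}: the divisor $\mc D_{\ult}:=\D^\dagger+\widetilde{\mc R}_{\ult}-\widetilde{\D}^S_{\ult}$ is principal for every convex $n$-tuple $\ult$ (it equals $\div(f_{\ult})$ by \eqref{eq:fourterms}), hence there is a single flow $\phi$ on $\Gamma^{(n+1)}$ inducing the rational function $f_{\ult}$ for all $\ult$ simultaneously; in particular $\phi(e_j)$ — which is the slope of $f_{\ult}$ over $\overrightarrow{p_{e,r_{j-1}}p_{e,r_j}}$ — is one fixed integer independent of $\ult$. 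I expect the main obstacle to be the careful bookkeeping needed to set up $\mc D_{\ult}$ as the divisor $D_{\ult}$ associated (in the sense of equation \eqref{eq:Dt}) to a single divisor $D$ on $\Gamma^{(n+1)}$, so that Proposition \ref{prop:independece} applies verbatim; this requires checking that the coefficients of $\widetilde{\mc R}_{\ult}$ and $\widetilde{\D}^S_{\ult}$ at $p_{e,r_j}$ genuinely do not vary with $\ult$, which is Construction \ref{Cons:dta} together with Lemma \ref{Lem:deltaa}, but must be invoked with some care. After that, the matching of the $j$-th edge $e_j$ of $\Gamma^{(n+1)}$ with the segment $\overrightarrow{p_{e,r_{j-1}}p_{e,r_j}}$ and the evaluation at the degenerate tuple is routine.
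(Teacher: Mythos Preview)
Your proposal is correct and follows essentially the same approach as the paper: apply Proposition~\ref{prop:independece} to the principal divisor $\D^\dagger+\widetilde{\mc R}_{\ult}-\widetilde{\D}^S_{\ult}$ to obtain a single flow $\phi$ on $\Gamma^{(n+1)}$ independent of $\ult$, then specialize to $\ult=(0,\dots,1,\dots,0)$ (and $\ult=(0,\dots,0)$ for $j=n+1$) and use Proposition~\ref{prop:arrumar}(3) together with Remark~\ref{rem:determine} to identify $\phi(e_j)$ with $b^e_{Q_{n-j+1}}(D^\dagger)$. Your flagging of the bookkeeping needed to realize $\D^\dagger+\widetilde{\mc R}_{\ult}-\widetilde{\D}^S_{\ult}$ as the divisor on $X$ associated to a fixed divisor on $\Gamma^{(n+1)}$ is exactly the point the paper handles by the parenthetical ``recall equation~\eqref{eq:Dt}''.
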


\begin{proof}
 Notice that the divisor $\D^{\dagger}+\widetilde{\mc R}_{\ul t}-\widetilde{\D}^{S}_{\ult}$ is principal for every $\ul t$, and it is induced by the divisor $D^\dagger+\widetilde{R}-\widetilde{D}^S$ on $\G^{(n+1)}$ (recall equation \eqref{eq:Dt}). Then,  it satisfies the hypotheses of Proposition \ref{prop:independece}.  So we get a flow $\phi$ (that does not depend on $\ul t$) on $\Gamma^{(n+1)}$ which, in particular, induces  $f_{\ul t}$, for every $\ul t$. This means that  $\phi(e_j)$ is equal to the slope of $f_{\ult}$ over $\overrightarrow{p_{e,r_{j-1}}p_{e,r_{j}}}$ for every $e\in E(\Gamma)$
 (recall that $e_1,\dots,e_{n+1}$ denote the edges over $e$).
 We need to prove that $\phi(e_j)=b^e_{Q_{n-j+1}}(D^\dagger)$, for every edge $e\in E(\G)$ and $j\in\{1,\dots.n+1\}$.
 
 Since $\phi$ is independent of $\ul t$, we can choose $\ul t=(0,\ldots, 1,\ldots,0)$, where $1$ is placed in the $j$-th entry, or $\ul t=(0,\ldots, 0)$. In the former case, we have  $p_{e,r_{j-1}}=s(e)$ (the source of $e$) and $p_{e,r_{j}}=t(e)$ (the target of $e$), hence, by Proposition \ref{prop:arrumar} (3),
    \[
    \div(f_{\ul t})= \D^{\dagger}+\widetilde{\mc R}_{\ult}-
    \widetilde{\D}^{S}_{\ul t}=\D^\dagger+\div(Q_{n-j+1})-\qs(\D^\dagger+\div(Q_{n-j+1})).
    \]
 In the latter case, we have  $p_{e,r_{n}}=s(e)$ and $p_{e,r_{n+1}}=t(e)$ and, using again Proposition \ref{prop:arrumar} (3), 
 \[
 \div(f_{\ul t})= \D^{\dagger}+\widetilde{\mc R}_{\ult}-
 \widetilde{\D}^{S}_{\ul t}=\D^\dagger+\div(Q_{0})-\qs(\D^\dagger+\div(Q_{0})).
 \]
    Since all edges have length $1$ and the support of $\D^\dagger+\div(Q_{n-j+1})$ is contained in $V(\Gamma)$ for every $j=1,\dots,n+1$, we have that $\div(f_{\ul t})\in \Prin(\Gamma)$.
    Using Remark \ref{rem:determine}, we deduce that $\phi(e_j)=\phi_{D^\dagger+\div(Q_{n-j+1})}(e)=b^e_{Q_{n-j+1}}(D^\dagger)$ for every $j=1,\dots,n+1$.
 \end{proof}

Now we can prove the main theorem of the section.

\begin{Thm}\label{Thm:4.2_versão_tropical}
Let $\G$ be a graph, $v_0$ a vertex of $\G$, and $D^{\dagger}$ a divisor of degree $k+d$ on $\G$. Let $X=X_\Gamma$ be the tropical curve associated to $\G$ and 
 $\mu$ a degree-$k$ polarization on $X$. Consider a set of vertices $S=\{Q_0,\ldots,Q_n\}$ in $V_f$, for some function $f\col \{1,\dots,d\}\ra E(\Gamma)$.  
Let $\D^\dagger$ be the divisor on $X$ induced by $D^\dagger$, and suppose that the image of $\Conv(S)^\circ$ via the tropical Abel map $\alpha^{\trop}_{d,\D^{\dagger}}\col X^d\ra J_{p_0,\mu}^{\trop}(X)$ is contained in an open cell  $\P^\circ_{\mc{E},D}\subset J_{p_0,\mu}^{\trop}(X)$, for some pseudo-divisor $(\E,D)$ on $\Gamma$. Then:
\begin{itemize}
\item[(1)] for every $i,j\in\{0,\dots, n\}$ and every edge $e\in E(\G)$ we have
\[
\Big|\left(a^{e}_{Q_{i}}(t(e))+b^{e}_{Q_{i}}(D^{\dagger})\right)- \left(a^{e}_{Q_{j}}(t(e))+b^{e}_{Q_{j}}(D^{\dagger})\right)\Big|\leq1;
\]
\item[(2)] for every function $j\col E(\G)\ra \{0,\ldots,n\},$ and every subset $W\subset V(\G)$ such that $v_0\in W$, we have \end{itemize}
\[
-\frac{\delta_{W}}{2}<\deg(D^{\dagger}|_{W})-\mu(W)-d+\sum_{e\in E(\G)} 
\left(a^{e}_{Q_{j(e)}}(W)-b^{e}_{Q_{j(e)}}(W,D^{\dagger})\right)\leq \frac{\delta_{W}}{2}.
\]

\end{Thm}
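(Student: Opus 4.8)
The strategy is to translate the two inequalities into statements about the organized divisor $\widetilde{\D}^S_{\ul t}$ and the rational function $f_{\ul t}$ from equation \eqref{eq:fourterms}, and then invoke the already-established admissibility of $\widetilde{\D}^S_{\ul t}$ (Remark \ref{Ds-admissible}) together with Proposition \ref{Prop:beta_do_quasistavel}. First I would record the identity, for each edge $e\in E(\Gamma)$ and each convex $n$-tuple $\ul t$,
\[
\widetilde{\D}^S_{\ul t}\big|_e=\widetilde{\mc R}^e_{\ult}+\div(f_{\ul t})\big|_e+\D^\dagger\big|_e,
\]
so that the coefficient of $\widetilde{\D}^S_{\ul t}$ at an interior point $p_{e,r_j}$ ($1\le j\le n$) is exactly $\widetilde{\mc R}^e_{\ult}(p_{e,r_j})$ plus the jump in the slope of $f_{\ul t}$ at $p_{e,r_j}$ (there is no contribution of $\D^\dagger$ in the interior of $e$). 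By Lemma \ref{Lem:deltaa} the first summand equals $a^e_{Q_{n-j}}(t(e))-a^e_{Q_{n-j+1}}(t(e))$, and by Lemma \ref{Lem:dob} the slope of $f_{\ul t}$ over $\overrightarrow{p_{e,r_{j-1}}p_{e,r_j}}$ is $b^e_{Q_{n-j+1}}(D^\dagger)$, so the slope jump at $p_{e,r_j}$ is $b^e_{Q_{n-j}}(D^\dagger)-b^e_{Q_{n-j+1}}(D^\dagger)$. Hence
\[
\widetilde{\D}^S_{\ul t}(p_{e,r_j})=\bigl(a^e_{Q_{n-j}}(t(e))+b^e_{Q_{n-j}}(D^\dagger)\bigr)-\bigl(a^e_{Q_{n-j+1}}(t(e))+b^e_{Q_{n-j+1}}(D^\dagger)\bigr),
\]
and similarly one computes $\widetilde{\D}^S_{\ul t}$ at $s(e)$ and $t(e)$ using the endpoint formulas in Lemma \ref{Lem:deltaa}.

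\textbf{Item (1).} Since $\widetilde{\D}^S_{\ul t}$ is admissible on $e$ (each restriction $\widetilde{\D}^S_{\ul t}\big|_e$ is an $a$-admissible divisor with $a\in\{0,-1\}$ by Remark \ref{rem:admissible}), its coefficients lie in $\{0,1,-1\}$, and moreover the partial sums of its coefficient sequence along $e$ are bounded in absolute value by $1$. The telescoping computation above shows that for any $0\le i<j\le n$ the quantity $\bigl(a^e_{Q_i}(t(e))+b^e_{Q_i}(D^\dagger)\bigr)-\bigl(a^e_{Q_j}(t(e))+b^e_{Q_j}(D^\dagger)\bigr)$ is a sum of consecutive coefficients of $\widetilde{\D}^S_{\ul t}\big|_e$ (reading the exceptional points $p_{e,r_1},\dots,p_{e,r_n}$ in order, after reindexing $Q_{n-j}\mapsto$ the $j$-th interior point). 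Admissibility then gives the bound $\le 1$ in absolute value; swapping $i$ and $j$ gives both signs, which is exactly item (1).

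\textbf{Item (2).} Here I would compute $\beta_{\widetilde{\D}^S_{\ul t}}(\ol W)$ for the subcurve $\ol W\subset X$ obtained by thickening $W\subset V(\Gamma)$ appropriately. Concretely, given the function $j\col E(\Gamma)\to\{0,\dots,n\}$, choose the convex $n$-tuple(s) $\ul t$ so that, edge by edge, the point $R_{\ul t}$ on $f_i$ realizes the vertex $Q_{j(f_i)}$; equivalently, use Lemma \ref{Lem:deltaa} and the endpoint formulas to see that summing the coefficients of $\widetilde{\D}^S_{\ul t}$ on the sub-segment of each edge $e$ lying in the tropical subcurve determined by $W$ produces exactly the term $a^e_{Q_{j(e)}}(W)-b^e_{Q_{j(e)}}(W,D^\dagger)$ (the $a$-part from the support of $\widetilde{\mc R}^e_{\ult}$, the $b$-part from the slopes of $f_{\ul t}$ crossing $\partial W$, with the sign conventions in Definitions \ref{Def:a_tropical} and \ref{Def:b_tropical}). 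Then
\[
\deg\bigl(\widetilde{\D}^S_{\ul t}\big|_{\ol W}\bigr)=\deg(D^\dagger|_W)-d+\sum_{e\in E(\Gamma)}\bigl(a^e_{Q_{j(e)}}(W)-b^e_{Q_{j(e)}}(W,D^\dagger)\bigr),
\]
where the $-d$ comes from $\sum_e\deg(\div(e,R_{\ul t}))=-d$ distributed onto $\ol W$, and $\delta_{\ol W}=\delta_W$. By Proposition \ref{Prop:beta_do_quasistavel}, $\beta_{\widetilde{\D}^S_{\ul t}}(\ol W)\ge 0$, i.e. $\deg(\widetilde{\D}^S_{\ul t}\big|_{\ol W})-\mu(W)+\delta_W/2\ge 0$, which is the left inequality; applying the same bound to $\ol W^c$ (or to $W^c$, noting $v_0\notin W^c$ so the inequality is strict there) and using $\beta_{\widetilde{\D}^S_{\ul t}}(\ol W)+\beta_{\widetilde{\D}^S_{\ul t}}(\ol W^c)=\delta_W$ gives the right inequality, with strictness on the appropriate side matching the placement of $p_0$.

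\textbf{Main obstacle.} The routine parts are the telescoping identities; the delicate point I expect to spend the most care on is matching the bookkeeping in item (2): verifying that when one chooses $\ul t$ realizing a fixed edge-labeling $j$, the degree of the organized divisor restricted to the thickened subcurve $\ol W$ really is $\deg(D^\dagger|_W)-d+\sum_e(a^e_{Q_{j(e)}}(W)-b^e_{Q_{j(e)}}(W,D^\dagger))$, with the signs of the $b$-terms correctly tracked across $\partial W$ in both orientations, and that the exceptional interior points of each edge are assigned to $\ol W$ or its complement consistently with the definition of $\ol Y$ used in Proposition \ref{Prop:beta_do_quasistavel}. Once the dictionary "coefficients of $\widetilde{\D}^S_{\ul t}$ $\leftrightarrow$ the $a+b$ expressions" is pinned down precisely, both items follow formally from admissibility and from $\beta_{\widetilde{\D}^S_{\ul t}}\ge 0$.
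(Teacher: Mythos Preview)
Your approach is essentially the paper's: item~(1) via the identity $\widetilde{\D}^S_{\ul t}(p_{e,r_j})=\bigl(a^e_{Q_{n-j}}(t(e))+b^e_{Q_{n-j}}(D^\dagger)\bigr)-\bigl(a^e_{Q_{n-j+1}}(t(e))+b^e_{Q_{n-j+1}}(D^\dagger)\bigr)$ and admissibility, and item~(2) via computing $\beta_{\widetilde{\D}^S_{\ul t}}$ on a thickened subcurve and invoking Proposition~\ref{Prop:beta_do_quasistavel} on it and its complement. Your telescoping argument for arbitrary $i,j$ in~(1) is a clean alternative to the paper's ``permute the $Q_i$'' remark.

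One point to correct in your execution of~(2): you cannot ``choose $\ul t$ so that $R_{\ul t}$ realizes $Q_{j(f_i)}$ edge by edge'', because $j$ assigns potentially different indices to different edges while $\ul t$ is a single global tuple. The paper instead fixes an arbitrary $\ul t$ and encodes the function $j$ in the \emph{subcurve}: for each boundary edge $e\in E(W,W^c)$ the cut point is taken at $p_{e,r_{n-j(e)}}$ (or $p_{e,r_{n-j(e)+1}}$, depending on which endpoint lies in $W$), so that the partial sum of coefficients of $\widetilde{\mc R}^e_{\ul t}$ and the slope of $f_{\ul t}$ at that cut produce exactly $a^e_{Q_{j(e)}}(W)$ and $b^e_{Q_{j(e)}}(W,D^\dagger)$. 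Your ``equivalently'' clause is the right picture; drop the $\ul t$-choosing formulation. Also, the strictness goes the other way from your parenthetical: $v_0\in W$ gives $\beta_{\widetilde{\D}^S_{\ul t}}(Y)>0$ (the strict left inequality), while $\beta_{\widetilde{\D}^S_{\ul t}}(Y^c)\ge 0$ is non-strict, yielding the non-strict right inequality.
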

\begin{proof} Recall  that $\widetilde{\D}^{S}_{\ult}$ is admissible (see Remark \ref{Ds-admissible}), and 
\begin{equation}\label{Eq:Dstil}
\D^{\dagger}+\widetilde{\mc{R}}_{\ult}-\widetilde{\D}^{S}_{\ult}=\div(f_{\ult}),
\end{equation}
where $\widetilde{\mc{R}}_{\ult}$ is the organized version of $\div(R_{\ult}),$ for $R_{\ult}\in \Conv(S)$ (see equation \eqref{eq:fourterms}).
By Lemmas \ref{Lem:deltaa} and \ref{Lem:dob}, we have
\[
\widetilde{\mc{R}}_{\ult}(p_{e,r_j})=a^{e}_{Q_{n-j}}(t(e))-a^{e}_{Q_{n-j+1}}(t(e)) \text{ for } j=1,\dots,n
\]
and
\[
\div(f_{\ult})(p_{e,r_j})=-b^{e}_{Q_{n-j}}(D^{\dagger})+b^{e}_{Q_{n-j+1}}(D^{\dagger})  \text{ for } j=1,\dots,n
\]
(see Figure \ref{Fig:divft}).
\begin{figure}[htp]
\begin{center}\begin{tikzpicture}[scale=5]
\begin{scope}[shift={(0,0)}]
\draw[dotted] (0,0) to (0.15,0);
\draw (0.15,0) to (2.15,0);
\draw[->, line  width=0.3mm] (0.65,0) to (0.66,0);
\draw[->, line  width=0.3mm] (1.65,0) to (1.66,0);
\draw[dotted] (2.15,0) to (2.3,0);

\draw[fill] (0.15,0) circle [radius=0.017];
\draw[fill] (1.15,0) circle [radius=0.017];
\draw[fill] (2.15,0) circle [radius=0.017];

\node[below] at (1.15,-0.05) {$p_{e,r_j}$};
\node[below] at (0.65,-0.05) {$e_j$};
\node[below] at (1.65,-0.05) {$e_{j+1}$};
\node[above] at (0.65,0) {$b^{e}_{Q_{n-j+1}}(D^{\dagger})$};
\node[above] at (1.65,0) {$b^{e}_{Q_{n-j}}(D^{\dagger})$};
\node[above] at (1.15,0.15) {$a^{e}_{Q_{n-j}}(t(e))-a^{e}_{Q_{n-j+1}}(t(e))$};
\end{scope}    
\end{tikzpicture}
\end{center}
\caption{The divisor $\div(f_{\ult})$ around $p_{e,r_j}$. }\label{Fig:divft}
\end{figure}

\noindent
Therefore equation (\ref{Eq:Dstil}) gives us, for every $j\in\{1,\dots,n\}$:
\[
a^{e}_{Q_{n-j}}(t(e))-a^{e}_{Q_{n-j+1}}(t(e))-\widetilde{\D}^{S}_{\ult}(p_{e,r_j})  =-b^{e}_{Q_{n-j}}(D^{\dagger})+b^{e}_{Q_{n-j+1}}(D^{\dagger}).
\]
Thanks to the admissibility of $\widetilde{\D}^{S}_{\ult}$, this proves the first statement. In fact, we proved the result only for $j=i+1$; nevertheless, by permuting the vertices $\{Q_0,Q_1,\ldots,Q_n\}$, we see that the argument works for every pair of indices $i,j.$

Next we prove the second statement. The divisor
$\D^{\dagger}+\widetilde{\mc{R}}_{\ult}-\widetilde{\D}^{S}_{\ult}$ is induced by the divisor $D^\dagger+\widetilde{R}-\widetilde{D}^S$ on $\G^{(n+1)}$ (recall equation \eqref{eq:Dt}). 
Let $W\subset V(\Gamma)$ such that $v_0\in W$. Consider the subgraph $\Gamma_W$ of $\Gamma$ formed by the edges of $\Gamma$ connecting two (possibly coinciding) vertices of $W$. Consider the subcurve $X_{\G_W}\subset X_{\G}$
  and a function $j\col E(\G)\ra \{0,\ldots,n\}.$ 
For every edge $e\in E(W,W^{c})$ either $s(e)\in W$ or $t(e)\in W$ ($s(e)$ and $t(e)$ are the source and target of $e$). We define the subcurve
\[
Y:=X_{\G_W}\cup \underset{s(e)\in W}{\bigcup_{e\in E(W,W^{c})}}\ora{p_{e,r_0}p_{e,r_{n-j(e)}}}\cup\underset{t(e)\in W}{\bigcup_{e\in E(W,W^{c})}} \ora{p_{e,r_{n+1}}p_{e,r_{n-j(e)+1}}}.
\]
We compute $\beta_{\widetilde{\D}^{S}_{\ult}}(Y)$:
\begin{align*}
\beta_{\widetilde{\D}^{S}_{\ult}}(Y) & =\deg(\widetilde{\D}^{S}_{\ult}|_{Y})-\mu(Y)+\frac{\delta_{Y}}{2} \\
& =\deg\left(\D^{\dagger}|_{Y}+\widetilde{\mc{R}}_{\ult}|_{Y}-\div(f_{\ult})|_{Y}\right)-\mu(W)+\frac{\delta_{W}}{2}  \\
& =\deg(D^{\dagger}|_{W})+\deg(\widetilde{\mc{R}}_{\ult}|_{Y})-\deg(\div(f_{\ult})|_{Y})-\mu(W)+\frac{\delta_{W}}{2}.   
\end{align*}
Using Lemma \ref{Lem:deltaa}, we have that the degree $\deg(\widetilde{\mc{R}}_{\ult}|_{Y})=d_1-d_2-d_3$ where
\begin{align*}
d_1= &\sum_{e\in E(\Gamma_W)}\deg(\widetilde{\mc{R}}_{\ult}^e)\\
d_2=&\underset{s(e)\in W}{\sum_{e\in E(W,W^{c})}}
\left(a^{e}_{Q_n}(s(e))-\sum_{k=1}^{n-j(e)} \left(a^{e}_{Q_{n-k}}(t(e))-a^{e}_{Q_{n-k+1}}(t(e))\right)\right)\\
 d_3=&\underset{t(e)\in W}{\sum_{e\in E(W,W^{c})}}
 \left(a^{e}_{Q_0}(t(e))-\sum_{k=n-j(e)+1}^{n} \left(a^{e}_{Q_{n-k}}(t(e))-a^{e}_{Q_{n-k+1}}(t(e))\right)\right).
\end{align*}
Hence $\deg(\widetilde{\mc{R}}_{\ult}|_{Y})$ is equal to
\begin{align*}
 &\sum_{e\in E(\Gamma_W)}\deg(\widetilde{\mc{R}}_{\ult}^e)+
\underset{s(e)\in W}{\sum_{e\in E(W,W^{c})}}
\left(a^{e}_{Q_{j(e)}}(t(e))-a^{e}_{Q_{n}}(t(e))-a^{e}_{Q_{n}}(s(e))\right)-
\underset{t(e)\in W}{\sum_{e\in E(W,W^{c})}}
a^{e}_{Q_{j(e)}}(t(e)).
\end{align*}

For each $e\in E(\G)$ we let $m(e)$ be the number of times that $e$ appears in the image of the function $f$. 
By Definition \ref{Def:a_tropical}, given a point $Q\in V_f$, we have 
\begin{equation}\label{Eq:m(e)}
m(e)= a^{e}_{Q}(t(e))+a^{e}_{Q}(s(e)).  
\end{equation}
Notice that $m(e)$ is independent of the choice of $Q$. Moreover, $a^{e}_{Q_{j(e)}}(W)=0$ and $\deg(\widetilde{\mc{R}}_{\ul t}^e)=-m(e)$ for every edge $e\in E(\G_W)$. Then
\begin{equation}\label{eq:GammaW}
\sum_{e\in E(\Gamma_W)}\deg(\widetilde{\mc{R}}_{\ult}^e)=-\sum_{e\in E(\Gamma_W)}m(e)=\sum_{e\in E(\Gamma_W)} \left(a^{e}_{Q_{j(e)}}(W)-m(e)\right).
\end{equation}
Of course, for every $e\in E(W,W^c)$, we have 
\begin{equation}\label{eq:ts}
a^{e}_{Q_{j(e)}}(W)=
\begin{cases}
a^{e}_{Q_{j(e)}}(t(e)), & \text{ if } s(e)\in W\\
a^{e}_{Q_{j(e)}}(s(e)), & \text{ if } t(e)\in W.
\end{cases}
\end{equation}

Combining equations \eqref{eq:GammaW}, \eqref{eq:ts}, \eqref{Eq:m(e)}, we can write:
\begin{align*}
\deg(\widetilde{\mc{R}}_{\ult}|_{Y})&
=-\sum_{e\in E(\G_W)} m(e)+
\underset{s(e)\in W}{\sum_{e\in E(W,W^{c})}}
\left(a^{e}_{Q_{j(e)}}(W)-m(e)\right) +
\underset{t(e)\in W}{\sum_{e\in E(W,W^{c})}}
\left(a^{e}_{Q_{j(e)}}(W)-m(e)\right)\\
&
=-\sum_{e\in E(\Gamma_W)}m(e)+\sum_{e\in E(W,W^{c})} \left(a^{e}_{Q_{j(e)}}(W)-m(e)\right)\\
&=\sum_{e\in E(W,W^{c})\cup E(\Gamma_W)} \left(a^{e}_{Q_{j(e)}}(W)-m(e)\right).
\end{align*}
Since for every $e\in E(\Gamma_{W^{c}})$ we have $a^{e}_{Q_{j(e)}}(W)=m(e)$, we get
\[
\deg(\widetilde{\mc{R}}_{\ult}|_{Y})=\sum_{e\in E(\G)} \left(a^{e}_{Q_{j(e)}}(W)-m(e)\right)=\left(\sum_{e\in E(\G)} a^{e}_{Q_{j(e)}}(W)\right)-d.
\]

Next, recall that the degree of $\div(f_{\ult})$ over a vertex is the sum of the slopes of $f_{\ult}$ over the edges going in the vertex. So the contribution to $\deg(\div(f_{\ult})|_Y)$ along an edge $e\in E(W,W^c)$ is equal to $-b^e_{Q_{j(e)}}(D^{\dagger})$ if $s(e)\in W$ and to $b^e_{Q_{j(e)}}(D^{\dagger})$ if $t(e)\in W$. Then  
\begin{align*}
\deg(\div(f_{\ult})|_{Y})&=
\sum_{e\in E(W,W^{c})}b^{e}_{Q_{j(e)}}(W,D^{\dagger}).
\end{align*}
Putting all together, we obtain:
\begin{align*}
\beta_{\widetilde{\D}^{S}_{\ult}}(Y) &=\deg(D^{\dagger}|_{W})-d-\mu(W)+\sum_{e\in E(\G)} \left(a^{e}_{Q_{j(e)}}(W)-b^{e}_{Q_{j(e)}}(W,D^{\dagger})\right)+\frac{\delta_{W}}{2}.
\end{align*}
Since $V(\G)\not\subset Y$, applying Proposition \ref{Prop:beta_do_quasistavel} to $Y$ and $Y^c$, we deduce that $0<\beta_{\widetilde{\mathcal D}^{S}_{\ult}}(Y)\leq \delta_{Y}$, and we are done.
\end{proof}

\section{Abel maps of smoothings via tropical geometry}\label{Chap:5}\label{Chap:Applications}

In this section we come back to algebraic geometry and prove our main result, Theorem \ref{thm:tropgeoAbel}. This results tells us that if the restriction of the tropical Abel map to a triangulation of the $d$-th product of a tropical curve is a morphism of polyhedral complexes, then we get a resolution of the geometric Abel map for a one-parameter family of curves. We use Theorem \ref{thm:tropgeoAbel}  to show that all the degree-$1$ maps for a one-parameter family of curves already extends to the total space of the family (see Theorem \ref{thm:Abel1}).

\subsection{The geometric and the tropical Abel map}\label{sec:geo-trop}

Our first goal is to relate the tropical and the geometric setups of the previous sections.

Let $C$ be a nodal curve over an algebraically closed field $k$ and $\Gamma$ be its dual graph. For each vertex $v\in V(\G)$ we denote by $C_v$ the corresponding component of $C$, and for each edge $e\in E(\G)$ we denote by $N_e$ the corresponding node.\par
Recall that a \emph{smoothing} of $C$ is a family $\pi\col \C\ra B=\Spec(k[[t]])$ of curves, with smooth generic fiber and with special fiber isomorphic to $C$, such that the total space $\C$ of the family is regular. We will always fix a section $\sigma\col B\ra \C$ of $\pi$ through its smooth locus. For a node $N\in C$, the completion of the local ring of $\C$ at $N$ is
\[\widehat{\O}_{\C,N}\cong\frac{k[[x,y,t]]}{(xy-t)}\]
where $x,y$ are \'etale coordinates of $\C$ at $N$ such that the map $\pi\col\C\ra B$ is locally around $N$ given by the natural inclusion of rings:
\[
k[[t]]\to \frac{k[[x,y,t]]}{(xy-t)}.
\]

Let $\pi\col \C\ra B$ be a smoothing of a nodal curve $C$ with dual graph $\Gamma$. We denote
\begin{equation}\label{eq:Cdlocal}
\C^d=\C\times_B\C\times_B\cdots\times_B\C
\end{equation}
the $d$-th fiber product of $\C$ over $B$. Given a 
 point $\mc{N}=(N_1,\ldots,N_d)\in \C^d$, the completion of the local ring of $\C^d$ in $\mc{N}$ is
\[\widehat{\O}_{\C^d,\mc{N}}\cong\frac{k[[x_1,y_1,\ldots,x_d,y_d,t]]}{(x_1y_1-t,\ldots,x_dy_d-t)},\]
where $x_i,y_i$ are \'etale coordinates of $\C$ at $N_i$.  
The map $\C^d\ra B$ is given by $x_1 y_1=\ldots=x_d y_d=t$ locally around $\mc{N}$.

The \emph{pointed tropical curve associated to $\pi$} is $(X,p_0)$, where $X=X_\Gamma$ and $p_0$ is the point corresponding to the vertex $v_0$  of $\Gamma$ such that $\sigma(0)\in C_{v_0}$.
Since the total space $\C$ of the family  is smooth, we can view $X$ as the tropicalization of the family.

Consider a degree-$k$ polarization $\mu$ on $\G$. Abusing notation we will also call $\mu$ the induced polarization on $C$ and $X$. The Jacobian of the tropical curve $X$ with respect to $(p_0,\mu)$ can be presented as a union of cells:
\[
J_{p_0,\mu}^{\trop}(X)=\bigcup_{(\E,\mc{D})}\mc{P}_{(\E,D)}
\]
where the union is taken over $(\E,\mc{D})\in \mc{QD}_{p_0,\mu}(\G)$ (recall Definition \ref{def:jtrop}). 

As usual, for every function $f\col \{1,\dots,d\}\ra E(\Gamma)$, we denote by $\mc H_f$ the hypercube
\[
\mc{H}_f:=\prod_{i=1}^{d}f_i=[0,1]^d,
\]
where we identify each segment $f_i=f(i)$ with $[0,1]$. 
A vertex of the hypercube $\mc H_f$ is a point  $Q=(v_1,\ldots,v_d)$, where $v_i$ is a vertex of the edge $f_i$. Recall that we call $V_f$ the set of vertices of $\mc H_f$. 
Let $\cone(\mc H_f)$ be the cone generated by the $(d+1)$-tuples $(v_1,\ldots,v_d,1)$, where $(v_1,\ldots,v_d)$ varies through the vertices of $\mc{H}_f.$ Let $\widehat{U}_{\mc H_f}$ the local toric variety associated to $\cone(\mc H_f)$ (recall equation \eqref{eq:localtoricvariety}). By Proposition \ref{Prop:alg_do_cone} and equation \eqref{eq:Cdlocal}, given a point $\mc N=(N_{f_1},\dots,N_{f_d})\in \mc C^d$, we have that
\[
\wh{U}_{\mc{H}_f}=\Spec\left(
\frac{k[[x_1,y_1,\ldots,x_d,y_d,t]]}{(x_1y_1-t,\ldots,x_dy_d-t)}\right)\cong\Spec\left(\widehat{\O}_{\C^d,\mc{N}}\right).
\]

Recall that the $d$-th product of $X$ can be presented as
\[
X^d=\bigcup_{f} \mc H_f,
\]
where $f$ varies through all functions $f\col \{1,\dots,d\}\ra E(\Gamma)$.

Since refinements of cones induce blowups of the associated toric varieties, it follows that any refinement of $\cone(\mc H_f)$ corresponds to a blowup of $\C^{d}$ locally around the point $\N$. This is the key ingredient used in the proof of Theorem \ref{thm:tropgeoAbel}.

\begin{Def}
Let $\mc H$ be a hypercube. A polyhedral complex $\Delta$ \emph{refines} $\mc H$ if $\mc H$ is equal to the union of the polyhedra of $\Delta$ and the interiors of two polyhedra of $\Delta$ have empty intersection inside $\mc H$. If $\Delta$ refines $\mc H$ then $\cone(\Delta)$ refines $\cone(\mc H)$.
A \tit{triangulation $\wt{\mc{H}}$ of $\mc{H}$} is a polyhedral complex $\wt{\mc{H}}$ refining $\mc{H}$ and whose polyhedra are simplexes. Notice that there is a natural inclusion $\wt{\mc H} \hookrightarrow\mc H$. 
The triangulation is \tit{unimodular} if its simplexes are unitary (recall Definition \ref{def:volume}). 
\end{Def}

\begin{Def}\label{def:compatible}
Let $\L$ be an invertible sheaf of degree $k$ on $\C$. Let $D^\dagger_\L$ be the divisor of degree $k+d$ on $\G$ induced by the multidegree of $\L(d\sigma(B))$, and let $\D^\dagger_\L$ the induced divisor on $X$. 
We say that a refinement $\wt{\mc{H}}_f$ of $\mc{H}_f$ is \emph{compatible with the tropical Abel map associated to $\D^\dagger_\L$} if for every polyhedral $\Conv(S)$ in the refinement $\wt{\mc{H}}_n$, there is a cell $\P_{\mc{E},D} \subset J^{\trop}_{p_0,\mu}(X)$ for some $(p_0,\mu)$-quasistable pseudo-divisor $(\E,D)$ on the graph $\G$ underlying $X$ such that the image of $\Conv(S)^\circ$ via $\alpha^{\trop}_{d,\D^{\dagger}_\L}\col X^d\ra J^{\trop}_{p_0,\mu}(X)$ is contained in $\P_{\mc{E},D}^\circ$.
\end{Def}

\begin{Rem}
Consider a refinement $\wt{\mc{H}}_f$ of $\mc{H}_f$ and the restriction $\alpha\col \mc H_f\ra J^{\trop}_{p_0,\mu}(X)$ of $\alpha^{\trop}_{d,\D^{\dagger}_\L}$ to $\mc H_f$. If the following composition is a morphism of polyhedral complexes:
\[
\wt{\mc{H}}_f\hookrightarrow \mc{H}_f \stackrel{\alpha}{\longrightarrow} J^{\trop}_{p_0,\mu}(X),
\] 
then $\wt{\mc{H}}_f$ is compatible with the tropical Abel map associated to $\D^\dagger_\L$.
\end{Rem}

\begin{Thm}\label{thm:tropgeoAbel}
Let $\pi\col \C\ra B$ be a smoothing of a nodal curve $C$ with smooth components. Let $\Gamma$ be the dual graph of $C$. Let $\sigma\col B\ra \C$ be a section of $\pi$ through its smooth locus. Let $\mu$ be a polarization on $\pi$ of degree $k$ and $\L$ be an  invertible sheaf on $\C$ of degree $k$. Consider a point $\N=(N_1,\ldots,N_d)\in \C^d$ where $N_i$ is a node of $C$ and let $f\col \{1,\dots,d\}\ra E(\Gamma)$ be the function such that $N_{f_i}=N_i$. Let $(X,p_0)$ be the pointed tropical curve associated to $\pi$ and $\sigma$. 
Assume that $\beta\col T\ra \wh{U}_{\mc H_f}$ is the local toric blowup associated to a unimodular triangulation of $\mc H_f$ compatible with the tropical Abel map associated to $\D^\dagger_\L$.
Then the composed map
\[
T\stackrel{\beta}{\ra}\wh{U}_{\mc H_f}\cong \Spec(\widehat{\O}_{\C^d,\N})\stackrel{\alpha_\L^d}{\dashrightarrow} \overline{\mathcal{J}}_\mu^\sigma
\]
is a morphism, i.e., it is defined everywhere.
\end{Thm}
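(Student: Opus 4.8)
The strategy is to reduce the global statement to the local criterion of Theorem~\ref{Thm:4.2} (in its primed form, Condition $(2)'$), applied simplex-by-simplex to the unimodular triangulation. The point is that a unitary simplex $\Delta=\Conv(S)$ with $S=\{Q_0,\dots,Q_d\}$ in the triangulation corresponds, via Proposition~\ref{Prop:simpplexo_unitario}, to an affine chart $U_\Delta=\Spec(k[U_0,\dots,U_d])$ of $T$ on which the pullback of the family $\mc C^d\times_B\mc C\to\mc C^d$ is exactly the family $\mc C_S\to S=\Spec(k[[U_0,\dots,U_d]])$ studied in Section~\ref{Chap:Abel maps for nodal curves}, with $t=U_0U_1\cdots U_d$. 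Since $T$ is covered by such charts (as the triangulation is unimodular, every maximal simplex is unitary), it suffices to prove that on each such chart the rational map $U_\Delta\dashrightarrow\overline{\mc J}_\mu^\sigma$ extends to a morphism; then, because $\overline{\mc J}^\sigma_\mu$ is separated over $B$ (Theorem~\ref{theoA}(3)), the local extensions glue to a morphism on all of $T$.

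The first concrete step is to identify the sheaf. On the chart $U_\Delta$ the Abel map is induced, up to twist by a vertical divisor, by the sheaf $\M'$ of the Remark following Theorem~\ref{Thm:4.2}, built from $\L$ (now of degree $k+d$ after absorbing $\O_{\mc C}(d\sigma(B))$, so $m=d$ there) and the $d$ sections $\delta_k$ of $\pi_S$ obtained as the restrictions to $U_\Delta$ of the $d$ projections $\mc C^d\to\mc C$. These sections pass through the nodes $N_{f_k}$, and the combinatorial datum $A_k\subset\{1,\dots,d+1\}$ recording $x=u_{A_k},\ y=u_{A_k^c}$ for $\delta_k$ is exactly read off from the vertices $Q_0,\dots,Q_d$ of $\Delta$ (which coordinate of the barycentric description places $\delta_k$ on which branch of $N_{f_k}$). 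Matching this up, one checks that $a_{j}^{N_{f_k}}(Y)$ of equation~\eqref{a_geometrico} equals, for $Y=X_{\Gamma_W}$ with $W\subset V(\Gamma)$ containing $v_0$, the tropical quantity $a^{f_k}_{Q_{j}}(t(f_k))$ or $a^{f_k}_{Q_j}(W)$ of Definition~\ref{Def:a_tropical}, and that $b_j^{N_{f_k}}(Y,\L)=b^{f_k}_{Q_j}(W,D^\dagger_\L)$ of Definition~\ref{Def:b_tropical} — here one uses that $D^\dagger_\L$ is the multidegree of $\L(d\sigma(B))$ and that the integers $\ell_{j,i}$ producing the twisting divisor $Z_j$ are precisely the values $m_v$ of the flow $\phi_{D^\dagger+\div(Q_j)}$ computing the quasistable representative, as in the discussion preceding Definition~\ref{Def:b_tropical}. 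This dictionary is the technical heart of the argument.

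Once the dictionary is in place, the proof is essentially immediate: the compatibility hypothesis says that for the simplex $\Delta=\Conv(S)$ the image $\alpha^{\trop}_{d,\D^\dagger_\L}(\Conv(S)^\circ)$ lies in a single open cell $\P^\circ_{(\E,D)}$ of $J^{\trop}_{p_0,\mu}(X)$, which is exactly the hypothesis of Theorem~\ref{Thm:4.2_versão_tropical}. That theorem then yields, for every edge $e$ and every $i,j$, the inequality $|(a^e_{Q_i}(t(e))+b^e_{Q_i}(D^\dagger_\L))-(a^e_{Q_j}(t(e))+b^e_{Q_j}(D^\dagger_\L))|\le 1$ and, for every $j\col E(\Gamma)\to\{0,\dots,n\}$ and every $W\ni v_0$, the bound $-\tfrac{\delta_W}{2}<\deg(D^\dagger_\L|_W)-\mu(W)-d+\sum_e(a^e_{Q_{j(e)}}(W)-b^e_{Q_{j(e)}}(W,D^\dagger_\L))\le\tfrac{\delta_W}{2}$. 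Translating these through the dictionary gives precisely Conditions (1) and $(2)'$ of the geometric criterion (Theorem~\ref{Thm:4.2} with $m=d$), for every subcurve $Y\subset C$ containing $P=\sigma(0)$ — note every such $Y$ has the form $X_{\Gamma_W}$ for some $W\ni v_0$. Hence the rational map $U_\Delta\dashrightarrow\overline{\mc J}^\sigma_\mu$ extends to a morphism. Running this over all maximal simplices of the triangulation, and invoking separatedness of $\overline{\mc J}^\sigma_\mu/B$ to glue, we conclude that $T\to\overline{\mc J}^\sigma_\mu$ is defined everywhere.

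The main obstacle, I expect, is not the final logical assembly but the bookkeeping in the dictionary step: carefully matching the \'etale-local coordinates $(x_k,y_k)$ at $N_{f_k}$, the subsets $A_k\subset\{1,\dots,d+1\}$, the barycentric coordinates on $\Delta$ given by Proposition~\ref{Prop:simpplexo_unitario}, and the orientation conventions on $\Gamma$, so that the two families of combinatorial numbers $a,b$ (geometric, from~\eqref{a_geometrico}–\eqref{b_geometrico}) and $a^e_Q,b^e_Q$ (tropical, from Definitions~\ref{Def:a_tropical}–\ref{Def:b_tropical}) genuinely coincide — including getting the signs and the role of the twisting divisors $Z_j$ exactly right. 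Once that identification is pinned down the theorem follows formally from Theorem~\ref{Thm:4.2_versão_tropical}, Theorem~\ref{Thm:4.2}, and the separatedness in Theorem~\ref{theoA}(3).
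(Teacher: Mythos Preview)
Your proposal is correct and follows essentially the same route as the paper's proof: cover $T$ by the affine charts $U_S$ coming from unitary simplices (via Proposition~\ref{Prop:simpplexo_unitario}), identify the geometric invariants $a_j^N(Y)$, $b_j^N(Y,\L)$ of equations~\eqref{a_geometrico}--\eqref{b_geometrico} with the tropical $a^e_{Q}(W)$, $b^e_Q(W,D^\dagger)$ of Definitions~\ref{Def:a_tropical}--\ref{Def:b_tropical}, and feed the compatibility hypothesis into Theorem~\ref{Thm:4.2_versão_tropical} to verify the conditions of Theorem~\ref{Thm:4.2} on each chart. The paper is terser on the dictionary step and does not explicitly invoke separatedness for gluing (it simply observes that the $U_S$ cover $T$), but the argument is the same.
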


\begin{proof}
  Recall that, given a subset of vertices $S=\{Q_0,\ldots,Q_n\}\subset\mc H_f$,
 we let $\Conv(S)\subset \mathcal H_f$ be the convex hull of $S$. 
 Given a simplex $\Conv(S)$  of the triangulation of $\mc H_f$ (which is unitary, by the unimodularity of the triangulation),  we set $U_S:=U_{\Conv(S)}$, which is an open subset of $T$. 
 We know that $T$ is covered by the open subsets $U_S$. By Proposition \ref{Prop:simpplexo_unitario} the map $U_S\to B$ is induced by $t=u_1u_2\ldots u_{d+1}$. Thus we can apply Theorem \ref{Thm:4.2}. 
 
 Let us check Conditions (1) and (2) of Theorem \ref{Thm:4.2}.
  We have a bijection between the vertices $Q=(v_1,\ldots,v_d)\in V_f$ and the divisors of type $C_{v_1}\times \cdots\times C_{v_d}$ of $\C^d$ containing $\N$. We will abuse the notation writing $Q=(v_1,\ldots,v_d)$ for the generic point of the divisor $C_{v_1}\times \cdots\times C_{v_d}$ of $\C^d$.
 Let $\delta_i\col \wh{U}_{\mc{H}_f}\ra \C$ be the composition of the map $\wh{U}_{\mc{H}_f}\ra \C^d$ and the $i$-th projection $\C^d \ra \C$. If $0$ is the closed point of $\wh{U}_{\mc{H}_f}$, then  $\delta_i(0)=N_i=N_{f_i}$ and $\delta_i(Q)\in C_{v_i}.$
 Moreover, we have an identification between the subcurves $Y\subset C$ and the subset of vertices $W\subset V(\G).$  If a subcurve $Y\subset C$ correspond to a subset $W\subset V(\Gamma)$, then the numerical invariants $a^e_Q(Y)$ and $b_Q^e(Y,\L)$ in equations \eqref{a_geometrico} and \eqref{b_geometrico} are equal to the numerical invariants $a_Q^{e}(W)$ and $b_Q^e(W, D^\dagger)$
 in Definitions \ref{Def:a_tropical} and \ref{Def:b_tropical}.
 Therefore, by Theorem \ref{Thm:4.2_versão_tropical}, we have that the Conditions (1) and (2) of 
 Theorem \ref{Thm:4.2} are satisfied. We deduce that the composed map 
 \[
 U_S\stackrel{\beta|_{U_S}}{\ra} \wh{U}_{\mc H_f}\stackrel{\alpha^d_\L}{\dashrightarrow} \mathcal{\ol J}_\mu^\sigma
 \]
 is defined everywhere. Since the open subsets $U_S$ cover $T$, this implies that the composed map $\alpha^d_\L\circ\beta\col T\dashrightarrow \mathcal{\ol J}_\mu^\sigma$ is defined everywhere, as required.
\end{proof}

\begin{Rem}
The criterion of Theorem \ref{thm:tropgeoAbel} can be generalized for a point $\N\in \C^d$ whose entries are not necessarily all nodes of $C$. Indeed,
if $\N=(P_1,\ldots, P_{d-m},N_1,\ldots, N_m)\in \C^d$, where $P_i$ are smooth points and $N_j$ are nodes of $C$, then the completion of the local ring of $\C^d$ at $\N$ is
\[
\wh{\O}_{\C^d,\N}\cong\frac{k[[z_1,\ldots, z_{d-m},x_1,y_1,\ldots, x_{m},y_{m},t]]}{( x_1y_1-t,\ldots, x_my_m-t)}.
\]
Writing $\C^d=\C^{d-m}\times \C^m$ we have that 
\[
\wh{\O}_{\C^d,\N}\cong\wh{\O}_{\C^{d-m},(P_1,\ldots, P_{d-m})}\otimes_{k[[t]]} \wh{\O}_{\C^m,(N_1,\ldots, N_m)}.
\]
Let $\sigma_1,\ldots, \sigma_{d-m}\col B\to \C$ be sections of $\pi$ passing through $P_1,\ldots, P_{d-m}$. Consider 
\[
\L'=\L((d-m)\sigma(B)-\sigma_1(B)-\ldots-\sigma_{d-m}(B)).
\]
We apply Theorem \ref{thm:tropgeoAbel} to the Abel map $\alpha^m_{\mc L'}$ to construct a map:
\[
\gamma\col \Spec(\wh{\O}_{\C^{d-m},(P_1,\ldots, P_{d-m})})\times_B T\ra  T\to \widehat{U}_{H_f}\cong \wh{\O}_{\C^m,(N_1,\ldots, N_m)}\stackrel{\alpha^m_{\L'}}{\dashrightarrow}\overline{\mc J}_\mu^\sigma,
\]
which is a morphism.
We denote by $\mc I'$ the $(\sigma,\mu)$-quasistable torsion-free rank-$1$ sheaf on \[
\Spec(\wh{\O}_{\C^{d-m},(P_1,\ldots, P_m)})\times_B T\times_B \C
\]
inducing the morphism $\gamma$. We call $f$ and $g$ the following maps 
\[
\Spec(\wh{\O}_{\C^{d-m},(P_1,\ldots, P_m)})\times_B T\times_B \C\stackrel{g}{\longrightarrow} \Spec(\wh{\O}_{\C^{d},\N})\times_B \C \stackrel{f}{\longrightarrow} \C,
\]
and let $h=f\circ g$.
We define:
\[
\I=\I'(h^*\sigma_1(B)+\ldots +h^*\sigma_{d-m}(B)-g^*\Delta_{1,d+1}-g^*\Delta_{2,d+1}-\ldots - g^*\Delta_{d-m,d+1}).
\]
The sheaves $\I$ and $\I'$ have the same multidegree on each fiber, hence $\I$ is $(\sigma,\mu)$-quasistable. Moreover, $\I$ and $f^*\L\otimes \O_{\Spec(\wh{\O}_{\C^d,\N})\times_B\C}(d\cdot f^*(\sigma(B))-\Delta_{1,d+1}-\Delta_{2,d+1}-\ldots - \Delta_{d,d+1})$ agree on the generic point of $\Spec(\wh{\O}_{\C^{d-m},(P_1,\ldots, P_{d-m})})\times_B T$ (which is the same as the generic point of $\Spec(\wh{\O}_{\C^{d},\N})$), hence the map
\[
\Spec(\wh{\O}_{\C^{d-m},(P_1,\ldots, P_{d-m})})\times_B T \ra \overline{\mc J}_\mu^\sigma
\]
induced by $\I$ is a resolution of the rational Abel map $\alpha^d_{\mc L}$. 
\end{Rem}
\begin{Rem}\label{rem:image-multidegree}
   With a more careful analysis of the proofs of \cite[Theorem 4.2]{AAJCMP} and  Theorems \ref{Thm:4.2_versão_tropical} and \ref{thm:tropgeoAbel} we can actually conclude that if  $S=\{Q_0,\ldots, Q_d\}$ is such that $\alpha^{\trop}_{d,\D^\dagger_{\mathcal{L}}}(\Conv(S)^\circ)$ is contained in $\P_{\E,D}^\circ$ and $0$ is the distinguished point of $U_S$, then the multidegree of the torsion-free rank-1 sheaf $\alpha_{\mathcal{L}}^d\circ \beta|_{U_s}(0)$ is $(\E,D)$.
\end{Rem}

\begin{Exa}\label{exa:two} 
Let $C$ be a curve with $3$ irreducible components $C_1, C_2,C_3$, and $4$ nodes $N_1=C_1\cap C_3$, $N_2=C_1\cap C_2$ and $\{N_3,N_4\}\subset C_2\cap C_3$. The marked point $p_0$ is contained in $C_3$ and the polarization is $\mu=(0,0,0).$ The dual graph of $C$ is as in Figure \ref{Fig:Exa_casquinhatropical}.
\begin{figure}[h]
\begin{center}
\begin{tikzpicture}
\begin{scope}[shift={(0,0)}]
\draw [black] plot [smooth, tension=0.5] coordinates {(1,0) (3,2) (3,3) (1,5)};
\draw [black] plot [smooth, tension=0.5] coordinates {(4,0) (2,2) (2,3) (4,5)};
\draw [black] plot [smooth, tension=0.5] coordinates {(0,4) (2.5,4.25) (5,4)};
\node[above] at (2.5,4.5) {$C_1$};
\node[left] at (1.5,2.5) {$C_2$};
\node[right] at (3.5,2.5) {$C_3$};
\node[left] at (2,3.8) {$N_1$};
\draw[fill] (1.85,4.2) circle [radius=0.05];
\node[right] at (3,3.8) {$N_2$};
\draw[fill] (3.15,4.2) circle [radius=0.05];
\node[below] at (2.5,3.25) {$N_3$};
\draw[fill] (2.5,3.55) circle [radius=0.05];
\node[below] at (2.5,1) {$N_4$};
\draw[fill] (2.5,1.45) circle [radius=0.05];
\draw[fill] (2.85,1.8) circle [radius=0.05];
\node[right] at (2.8,1.5) {$P_0$};
\end{scope}
\begin{scope}[shift={(6,2.5)}]
\draw (0,0) to [out=-30, in=-150] (4,0);
\draw (0,0) to (2,1);
\draw (2,1) to (4,0);
\draw (0,0) to (4,0);

\draw[fill] (0,0) circle [radius=0.05];
\node[left] at (0,0) {$v_2$};
\draw[fill] (4,0) circle [radius=0.05];
\node[right] at (4,0) {$v_3$};
\draw[fill] (2,1) circle [radius=0.05];
\node[above] at (2,1) {$v_1$};

\node[above] at (2,0) {$e_3$};
\node[below] at (2,-0.55) {$e_4$};
\node[above] at (1,0.5) {$e_2$};
\node[above] at (3,0.5) {$e_1$};

\node[above] at (2,2) {$\G$};
\end{scope}
\end{tikzpicture}   
\end{center}
\caption{The curve $C$ and the dual graph $\G$.}\label{Fig:Exa_casquinhatropical}
\end{figure}
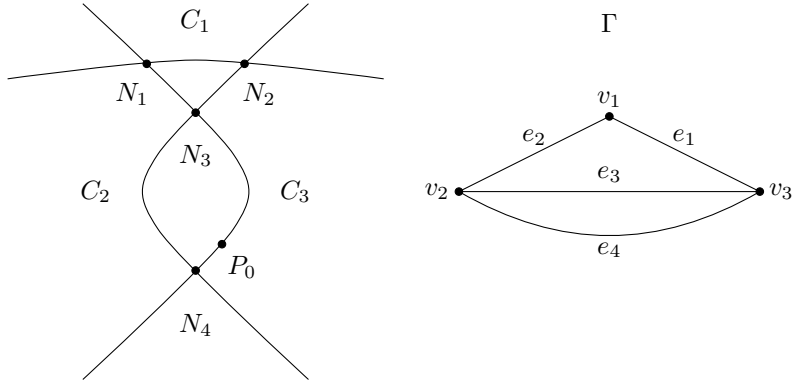

We are interested in the degree-$3$ 
Abel map locally around $\N=(N_4,N_3,N_2)$. So we consider the hypercube $\mc{H}_f=e_4\times e_3\times e_2,$ where the function $f\:\{1,2,3\}\ra E(\G)$ is $f(1)=e_4,\ f(2)=e_3$ and $f(3)=e_2.$ The vertices of $\mc{H}_f$ are $V_f=\{Q_1, Q_2,Q_3,Q_4,Q_5,Q_6,Q_7,Q_8\}$ where 
\begin{align*} Q_1=(v_2,v_2,v_2),\  Q_2=(v_3,v_2,v_2),\  Q_3=(v_3,v_3,v_2),\  Q_4=(v_2,v_3,v_2),\\ Q_5=(v_2,v_2,v_1),\  Q_6=(v_3,v_2,v_1),\  Q_7=(v_3,v_3,v_1),\  Q_8=(v_2,v_3,v_1).\end{align*}
(see Figure \ref{Fig:H3_casquinha}). Moreover, we have $D^{\dagger}=3v_3.$
\begin{figure}[h!]
\begin{center}
\begin{tikzpicture}[scale=2]

\begin{scope}[shift={(0,0)},scale=0.5]
\draw (0,0) to [out=-30, in=-150] (4,0);
\draw (0,0) to (2,1);
\draw (2,1) to (4,0);
\draw (0,0) to (4,0);

\draw[->, line  width=0.3mm] (2,0) to (2.05,0);
\draw[->, line  width=0.3mm] (2,-0.58) to (2.05,-0.58);
\draw[->, line  width=0.3mm] (1,0.5) to (1.05,0.55);
\draw[->, line  width=0.3mm] (3,0.5) to (3.05,0.45);

\draw[fill] (0,0) circle [radius=0.05];
\node[left] at (0,0) {$v_2$};
\draw[fill] (4,0) circle [radius=0.05];
\node[right] at (4,0) {$v_3$};
\draw[fill] (2,1) circle [radius=0.05];
\node[above] at (2,1) {$v_1$};

\node[above] at (2,0) {$e_3$};
\node[below] at (2,-0.6) {$e_4$};
\node[above] at (1,0.5) {$e_2$};
\node[above] at (3,0.5) {$e_1$};

\node[above] at (2,2) {$\ora{\G}$};
\end{scope}

\begin{scope}[shift={(3.3,-0.5)}]
\draw[fill] (0,0) circle [radius=.05];
\node[below] at (0,0) {$Q_1$};
\draw[fill] (1,0) circle [radius=.05];
\node[below] at (1,0) {$Q_2$};
\draw[fill] (1.7,0.3) circle [radius=.05];
\node[below] at (1.7,0.3) {$Q_3$};
\draw[fill] (1.7,1.3) circle [radius=.05];
\node[right] at (1.7,1.3) {$Q_7$};
\draw[fill] (1,1) circle [radius=.05];
\node[right] at (1,0.8) {$Q_6$};
\draw[fill] (0,1) circle [radius=.05];
\node[left] at (0,1) {$Q_5$};
\draw[fill] (0.7,1.3) circle [radius=.05];
\node[left] at (0.7,1.4) {$Q_8$};
\draw[fill] (0.7,0.3) circle [radius=.05];
\node[above] at (0.55,0.3) {$Q_4$};

\draw[->, line  width=0.3mm] (0.5,0) to (0.505,0);
\node[below] at (0.5,0) {$e_4$};
\draw[->, line  width=0.3mm] (0,0.5) to (0,0.505);
\node[left] at (0,0.5) {$e_2$};
\draw[->, line  width=0.3mm] (0.4,0.155) to (0.405,0.158);
\node[above] at (0.3,0.15) {$e_3$};

 \draw (0,0) to (1,0);
 \draw (1,0) to (1,1);
 \draw (0,1) to (1,1);
 \draw (0,0) to (0,1);
 
 \draw (0.7,0.3) to (1.7,0.3);
 \draw (1.7,0.3) to (1.7,1.3);
 \draw (0.7,1.3) to (1.7,1.3);
 \draw (0.7,0.3) to (0.7,1.3);
 
 \draw (0,0) to (0.7,0.3);
 \draw (1,0) to (1.7,0.3);
 \draw (0,1) to (0.7,1.3);
 \draw (1,1) to (1.7,1.3);
\end{scope}\end{tikzpicture}\end{center}
\caption{The graph $\protect\ora{\G}$ and the hypercube $\mc{H}_f.$} \label{Fig:H3_casquinha}
\end{figure}

Let us take a point $R_{\ult}=(x,y,z)\in [0,1]^{3}=e_4\times e_3\times e_2$. The  induced divisor is:
\[
    \div(R_{\ult})=-p_{e_4,x}-p_{e_3,y}-p_{e_2,z}.
\]
Applying the tropical Abel map we have that 
\[
\alpha^{\trop}_{3,\D^{\dagger}}(R_{\ult})=[D^{\dagger}+\div(R_{\ult})]
=[3v_3-p_{e_4,x}-p_{e_3,y}-p_{e_2,z}] \]
We have to compute (see Remark \ref{rem:algo}) the quasistable divisor $\qs(\alpha^{\trop}_{3,\D^{\dagger}}(R_{\ult}))$ equivalent to $\alpha^{\trop}_{3,\D^{\dagger}}(R_{\ult})$  (see Figure \ref{Fig:qs(Dt_casquinha)_grau3}).

\begin{figure}[h!]
\begin{center}
\begin{tikzpicture}
\begin{scope}[shift={(0,0)},scale=1.1]
\draw (0,0) to [out=-30, in=-150] (4,0);
\draw (0,0) to (2,1);
\draw (2,1) to (4,0);
\draw (0,0) to (4,0);


\draw[fill] (0,0) circle [radius=0.05];
\node[left] at (0,0) {$0$};
\draw[fill] (4,0) circle [radius=0.05];
\node[right] at (4,0) {$3$};
\draw[fill] (2,1) circle [radius=0.05];
\node[above] at (2,1) {$0$};
\draw[fill] (1,0.6) rectangle (1.02,0.4);
\draw[fill] (1.2,0.1) rectangle (1.22,-0.1);
\draw[fill] (1.1,-0.35) rectangle (1.12,-0.55);

\node[above] at (1,0.6) {$-1$};
\node[above] at (1.2,0.1) {$-1$};
\node[below] at (1,-0.5) {$-1$};
\node[above] at (0.7,0.05) {$z$};
\node[below] at (0.8,0) {$y$};
\node[below] at (0.5,-0.3) {$x$};

\end{scope}
\end{tikzpicture}
\caption{The divisor  $\alpha^{\trop}_{3,\D^{\dagger}}(R_{\ult})$.}
\label{Fig:qs(Dt_casquinha)_grau3}
\end{center}
\end{figure}

We have six possibilities for the combinatorial types of $\qs(\alpha^{\trop}_{3,\D^{\dagger}}(R_{\ult}))$. Below we put the conditions on $(x,y,z)$ under which the combinatorial type is fixed: the corresponding regions in the hypercube are unitary simplexes (which are identified by means of $4$ vertices of the hypercube) inducing a unitary triangulation of the hypercube.
\[
\begin{array}{lcl}
    1.\ x>y>z &\Rightarrow & S_1=\{Q_1,Q_2,Q_3,Q_7\};\\
    2.\ y>z>x  &\Rightarrow & S_2=\{Q_1,Q_4,Q_8,Q_7\};\\
    3.\ z>x>y &\Rightarrow & S_3=\{Q_1,Q_5,Q_6,Q_7\};\\
    4.\ y>x>z &\Rightarrow & S_4=\{Q_1,Q_4,Q_3,Q_7\};\\
    5.\ z>y>x  &\Rightarrow & S_5=\{Q_1,Q_5,Q_8,Q_7\};\\
    6.\ x>z>y   &\Rightarrow & S_6=\{Q_1,Q_2,Q_6,Q_7\}.
\end{array}
\]
In Figure \ref{Fig:triangulation_H3_casquinha} we draw the simplexes $\Delta_i$ associated to the set of vertices $S_i$. The upshot is that, thanks to Theorem \ref{thm:tropgeoAbel}, this triangulation gives rise to a resolution of the Abel map $\alpha^3_\L$ locally around $\mc N$.
\begin{figure}[h]
\begin{center}
\begin{tikzpicture}[scale=1.5]
\begin{scope}[shift={(0,0)}]
\draw[fill] (0,0) circle [radius=.05];
\draw[fill] (1,0) circle [radius=.05];
\draw[fill] (1.7,0.3) circle [radius=.05];
\draw[fill] (1.7,1.3) circle [radius=.05];

\draw[line  width=0.3mm] (0,0) to (1.7,1.3);
\draw[line  width=0.3mm] (1,0) to (1.7,1.3);
\draw[line  width=0.3mm] (0,0) to (1.7,0.3);
\filldraw[fill=black!50!white, draw opacity=0, fill opacity = 0.2] (0,0) -- (1.7,1.3) -- (1.7,0.3)  -- (1,0) -- cycle; 

 \draw[line  width=0.3mm] (0,0) to (1,0);
 \draw (1,0) to (1,1);
 \draw (0,1) to (1,1);
 \draw (0,0) to (0,1);
 
 \draw (0.7,0.3) to (1.7,0.3);
 \draw[line  width=0.3mm] (1.7,0.3) to (1.7,1.3);
 \draw (0.7,1.3) to (1.7,1.3);
 \draw (0.7,0.3) to (0.7,1.3);
 
 \draw (0,0) to (0.7,0.3);
 \draw[line  width=0.3mm] (1,0) to (1.7,0.3);
 \draw (0,1) to (0.7,1.3);
 \draw (1,1) to (1.7,1.3);
 \node[above] at (0,1.4) {$\Delta_1$};
\end{scope}

\begin{scope}[shift={(3,0)}]
\draw[fill] (0,0) circle [radius=.05];
\draw[fill] (0.7,1.3) circle [radius=.05];
\draw[fill] (0.7,0.3) circle [radius=.05];
\draw[fill] (1.7,1.3) circle [radius=.05];

\draw[line  width=0.3mm] (0,0) to (0.7,1.3);
\draw[line  width=0.3mm] (0,0) to (1.7,1.3);
\draw[line  width=0.3mm] (0.7,0.3) to (1.7,1.3);
\filldraw[fill=black!50!white, draw opacity=0, fill opacity = 0.2] (0,0) -- (0.7,1.3) -- (1.7,1.3)  -- (0.7,0.3) -- cycle;  

 \draw (0,0) to (1,0);
 \draw (1,0) to (1,1);
 \draw (0,1) to (1,1);
 \draw (0,0) to (0,1);
 
 \draw (0.7,0.3) to (1.7,0.3);
 \draw (1.7,0.3) to (1.7,1.3);
 \draw[line  width=0.3mm] (0.7,1.3) to (1.7,1.3);
 \draw[line  width=0.3mm] (0.7,0.3) to (0.7,1.3);
 
 \draw[line  width=0.3mm] (0,0) to (0.7,0.3);
 \draw (1,0) to (1.7,0.3);
 \draw (0,1) to (0.7,1.3);
 \draw (1,1) to (1.7,1.3);
 \node[above] at (0,1.4) {$\Delta_2$};
\end{scope}
\begin{scope}[shift={(6,0)}]
\draw[fill] (0,0) circle [radius=.05];
\draw[fill] (0,1) circle [radius=.05];
\draw[fill] (1,1) circle [radius=.05];
\draw[fill] (1.7,1.3) circle [radius=.05];

\draw[line  width=0.3mm] (0,0) to (1.7,1.3);
\draw[line  width=0.3mm] (0,1) to (1.7,1.3);
\draw[line  width=0.3mm] (0,0) to (1,1);
\filldraw[fill=black!50!white, draw opacity=0, fill opacity = 0.2] (0,0) -- (0,1) -- (1.7,1.3)   -- cycle;

 \draw (0,0) to (1,0);
 \draw (1,0) to (1,1);
 \draw[line  width=0.3mm] (0,1) to (1,1);
 \draw[line  width=0.3mm] (0,0) to (0,1);
 
 \draw (0.7,0.3) to (1.7,0.3);
 \draw (1.7,0.3) to (1.7,1.3);
 \draw (0.7,1.3) to (1.7,1.3);
 \draw (0.7,0.3) to (0.7,1.3);
 
 \draw (0,0) to (0.7,0.3);
 \draw (1,0) to (1.7,0.3);
 \draw (0,1) to (0.7,1.3);
 \draw[line  width=0.3mm] (1,1) to (1.7,1.3);
 \node[above] at (0,1.4) {$\Delta_3$};
\end{scope}

\begin{scope}[shift={(0,-2.5)}]
\draw[fill] (0,0) circle [radius=.05];
\draw[fill] (0.7,0.3) circle [radius=.05];
\draw[fill] (1.7,0.3) circle [radius=.05];
\draw[fill] (1.7,1.3) circle [radius=.05];

 \draw[line  width=0.3mm] (0,0) to (1.7,1.3);
\draw[line  width=0.3mm] (0.7,0.3) to (1.7,1.3);
\draw[line  width=0.3mm] (0,0) to (1.7,0.3);
\filldraw[fill=black!50!white, draw opacity=0, fill opacity = 0.2] (0,0) -- (1.7,1.3) -- (1.7,0.3)  -- cycle;

 \draw (0,0) to (1,0);
 \draw (1,0) to (1,1);
 \draw (0,1) to (1,1);
 \draw (0,0) to (0,1);
 
 \draw[line  width=0.3mm] (0.7,0.3) to (1.7,0.3);
 \draw[line  width=0.3mm] (1.7,0.3) to (1.7,1.3);
 \draw (0.7,1.3) to (1.7,1.3);
 \draw (0.7,0.3) to (0.7,1.3);
 
 \draw[line  width=0.3mm] (0,0) to (0.7,0.3);
 \draw (1,0) to (1.7,0.3);
 \draw (0,1) to (0.7,1.3);
 \draw (1,1) to (1.7,1.3);
 \node[above] at (0,1.4) {$\Delta_4$};
\end{scope}

\begin{scope}[shift={(3,-2.5)}]
\draw[fill] (0,0) circle [radius=.05];
\draw[fill] (0,1) circle [radius=.05];
\draw[fill] (0.7,1.3) circle [radius=.05];
\draw[fill] (1.7,1.3) circle [radius=.05];

\draw[line  width=0.3mm] (0,0) to (1.7,1.3);
\draw[line  width=0.3mm] (0,0) to (0.7,1.3);
\draw[line  width=0.3mm] (0,1) to (1.7,1.3);
\filldraw[fill=black!50!white, draw opacity=0, fill opacity = 0.2] (0,0) -- (0,1) -- (0.7,1.3)  -- (1.7,1.3) -- cycle;  

 \draw (0,0) to (1,0);
 \draw (1,0) to (1,1);
 \draw (0,1) to (1,1);
 \draw[line  width=0.3mm] (0,0) to (0,1);
 
 \draw (0.7,0.3) to (1.7,0.3);
 \draw (1.7,0.3) to (1.7,1.3);
 \draw[line  width=0.3mm] (0.7,1.3) to (1.7,1.3);
 \draw (0.7,0.3) to (0.7,1.3);
 
 \draw (0,0) to (0.7,0.3);
 \draw (1,0) to (1.7,0.3);
 \draw[line  width=0.3mm] (0,1) to (0.7,1.3);
 \draw (1,1) to (1.7,1.3);
 \node[above] at (0,1.4) {$\Delta_5$};
\end{scope}

\begin{scope}[shift={(6,-2.5)}]
)\draw[fill] (0,0) circle [radius=.05];
\draw[fill] (1,0) circle [radius=.05];
\draw[fill] (1,1) circle [radius=.05];
\draw[fill] (1.7,1.3) circle [radius=.05];

\draw[line  width=0.3mm] (0,0) to (1.7,1.3);
\draw[line  width=0.3mm] (1,0) to (1.7,1.3);
\draw[line  width=0.3mm] (0,0) to (1,1);
\filldraw[fill=black!50!white, draw opacity=0, fill opacity = 0.2] (0,0) -- (1,1) -- (1.7,1.3)  -- (1,0) -- cycle;  

 \draw[line  width=0.3mm] (0,0) to (1,0);
 \draw (1,0) to (1,1);
 \draw[line  width=0.3mm] (0,1) to (1,1);
 \draw (0,0) to (0,1);
 
 \draw (0.7,0.3) to (1.7,0.3);
 \draw (1.7,0.3) to (1.7,1.3);
 \draw (0.7,1.3) to (1.7,1.3);
 \draw (0.7,0.3) to (0.7,1.3);
 
 \draw (0,0) to (0.7,0.3);
 \draw (1,0) to (1.7,0.3);
 \draw (0,1) to (0.7,1.3);
 \draw[line  width=0.3mm] (1,1) to (1.7,1.3);
 \node[above] at (0,1.4) {$\Delta_6$};
\end{scope}
\end{tikzpicture}
\end{center}
   \caption{The unitary 3-simplexes $\Delta_i$ in $\mc{H}_3$.}
   \label{Fig:triangulation_H3_casquinha}
\end{figure}
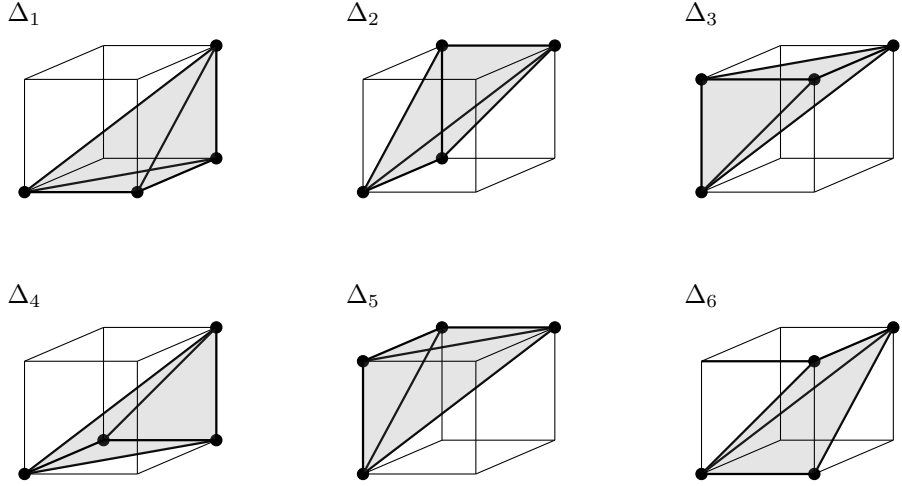


\end{Exa}

\subsection{Degree-1 Abel maps}\label{sec:degree1}

In this section we analyze the degree-$1$ Abel map.  Using Theorem \ref{thm:tropgeoAbel} we will prove the following result: 

\begin{Thm}[Degree-$1$ Abel map]\label{thm:Abel1} 
Let $\pi\col\C\ra B$ be a smoothing of a nodal curve.
Let $\sigma\col B\ra \C$ be a section of $\pi$ through its smooth locus. Let $\mu$ be a polarization of degree $k$ over the family and $\L$ be an invertible sheaf on $\C$ of degree $k$.
The degree-$1$ Abel map $\alpha^1_{\L}\col\C\ra \overline{\J}_{\mu}^\sigma$ is defined everywhere.
\end{Thm}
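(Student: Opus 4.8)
The plan is to apply Theorem \ref{thm:tropgeoAbel} in the special case $d=1$, where the combinatorial condition it requires becomes essentially vacuous. Since the resolution of a rational map can be checked locally, and since by \cite[Proposition 34]{EE01} quasistability is an open property, it suffices to show that $\alpha^1_{\L}$ is defined at every point $\mc N = N \in \C$. If $N$ is a smooth point of $C$, then $\alpha^1_{\L}$ is already given by the invertible sheaf $\L|_{\C_b}(\sigma(b)-Q)$ modified by the twist $Z$ as in equation \eqref{eq:Abelmap}, and the construction preceding \eqref{eq:Abelmap} (via \cite[Theorem 32(4)]{EE01} and \cite[Lemma 4.1]{C94}) already exhibits a $(\sigma,\mu)$-quasistable sheaf there; so there is nothing to prove. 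Thus the only points to worry about are the nodes $N$ of $C$.

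First I would reduce to the local picture at a node $N = N_e$, corresponding to an edge $e$ of the dual graph $\Gamma$. Here $d=1$, so the function $f\col\{1\}\to E(\Gamma)$ is just $f(1)=e$, and the hypercube $\mc H_f$ is the single segment $e\cong[0,1]$ itself — it is already a unitary simplex, with vertex set $V_f = \{s(e),t(e)\}$. Hence the ``triangulation'' of $\mc H_f$ is trivial: $\mc H_f$ is its own unique maximal simplex, and it is automatically unimodular. The corresponding toric blowup $\beta\col T\to\wh U_{\mc H_f}$ is the identity. So to invoke Theorem \ref{thm:tropgeoAbel} all I need to check is that this trivial triangulation is \emph{compatible with the tropical Abel map associated to $\D^\dagger_\L$} in the sense of Definition \ref{def:compatible}, i.e. that $\alpha^{\trop}_{1,\D^\dagger_\L}(e^\circ)$ is contained in a single open cell $\P^\circ_{(\E,D)}$ of $J^{\trop}_{p_0,\mu}(X)$ for some $(p_0,\mu)$-quasistable pseudo-divisor $(\E,D)$ on $\Gamma$.

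The key step is therefore this last verification, and it follows directly from Theorem \ref{thm:quasistable} (Theorem 5.6 in \cite{AAMPJac}) together with the cell structure of $J^{\trop}_{p_0,\mu}(X)$ from Definition \ref{def:jtrop}. Indeed, as the point $p$ moves along the open edge $e^\circ$, the divisor $\D^\dagger_\L - p$ varies in a connected family of degree-$(k+1)$ divisors of degree... its class traces a path in $J^{\trop}_{p_0,\mu}(X) \cong J^{\trop}(X)$. By Theorem \ref{thm:quasistable} each class has a unique $(p_0,\mu)$-quasistable representative, and the assignment $p\mapsto \qs(\D^\dagger_\L - p)$ is continuous; since $J^{\trop}_{p_0,\mu}(X)$ is the union $\coprod \P^\circ_{(\E,D)}$ of its open cells, a connected set whose image is a \emph{single point} in every fiber of the quotient... more precisely, I would argue that $\alpha^{\trop}_{1,\D^\dagger_\L}|_{e^\circ}$ is linear (as in the proof of Lemma \ref{lem:qs}: $\chi|_{e^\circ}$ and $\rho$ are both affine, so the composite is affine on the interior of the segment), hence $\alpha^{\trop}_{1,\D^\dagger_\L}(e^\circ)$ is connected and, being disjoint from the lower-dimensional skeleton except possibly on a subset, must land in one open cell — and if it met two distinct open cells it would have to pass through a face, contradicting linearity on the full open interval $e^\circ$ unless the whole image is that face, in which case we take $(\E,D)$ to be the pseudo-divisor of that face. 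Either way the compatibility condition holds. Then Theorem \ref{thm:tropgeoAbel} gives that $\alpha^1_\L$ composed with the (trivial) blowup is a morphism at $\mc N$, i.e. $\alpha^1_\L$ is defined at $N$.

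The main obstacle is making the ``single open cell'' assertion fully rigorous: a priori the image of $e^\circ$ could slide along a boundary wall of the cell decomposition, so I need to rule out the image meeting two open cells of the same dimension. The clean way is the linearity argument borrowed from Lemma \ref{lem:qs}: $\alpha^{\trop}_{1,\D^\dagger_\L}$ restricted to the (convex) segment $e$ lifts, after choosing a path $\gamma$ from $p_0$ to $s(e)$, to the affine map $\chi$ of \eqref{eq:alpha} followed by the linear quotient $\rho$; preimages of open cells under $\rho$ are disjoint open polyhedra, the connected set $\chi(e^\circ)$ lies in one of them, and hence $\alpha^{\trop}_{1,\D^\dagger_\L}(e^\circ)$ lies in one open cell $\P^\circ_{(\E,D)}$. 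I would also record the minor point that the degree bookkeeping is consistent: $\D^\dagger_\L$ has degree $k+d = k+1$ by Definition \ref{def:compatible}, so $\D^\dagger_\L - p$ has degree $k$, matching the degree of the polarization $\mu$, so that $(p_0,\mu)$-quasistability applies. With these observations the proof is complete.
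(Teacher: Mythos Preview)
Your overall strategy matches the paper's: reduce to nodes, observe that for $d=1$ the hypercube $\mc H_f = e$ is already a unitary simplex so the toric blowup is the identity, and invoke Theorem \ref{thm:tropgeoAbel}. But your verification of the compatibility condition --- that $\alpha^{\trop}_{1,\D^\dagger_\L}(e^\circ)$ lies in a single open cell $\P^\circ_{(\E,D)}$ --- has a real gap. Linearity of $\chi|_e$ together with connectedness of $\chi(e^\circ)$ does \emph{not} force the image to lie in a single open cell of the lifted polyhedral decomposition of $\Omega(X)^\vee$: an affine segment can perfectly well cross codimension-one walls at interior points and thereby meet several open cells, so ``passing through a face'' in no way contradicts linearity. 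Your sentence ``preimages of open cells under $\rho$ are disjoint open polyhedra, the connected set $\chi(e^\circ)$ lies in one of them'' is either circular (one must already know which cell to pull back) or false (the preimages of the lower-dimensional open cells are not open in $\Omega(X)^\vee$, so the collection is not an open partition). The linearity argument in Lemma \ref{lem:qs} that you borrow does not establish this containment; it \emph{assumes} it as a hypothesis. The paper does genuine combinatorial work here: it replaces $D^\dagger_\L$ by its $(v_0,\mu^\sharp)$-quasistable representative $D$ for a shifted polarization $\mu^\sharp$, and then either the pseudo-divisor $(\{e_0\},D^\flat)$ of equation \eqref{eq:Dflat} is already $(v_0,\mu)$-quasistable (so the image sits in $\P_{(\{e_0\},D^\flat)}$), or Lemma \ref{lem:abel1} explicitly produces a pseudo-divisor $(\wt\E,\wt D)$ and a principal divisor $\mc P$ exhibiting $\D^\flat_t-\mc P$ as $(p_0,\mu)$-quasistable of fixed combinatorial type $(\wt\E,\wt D)$ for every $t\in(0,1)$.

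There is a second, independent gap: Theorem \ref{thm:tropgeoAbel} carries the hypothesis that $C$ has smooth irreducible components, which you silently drop. The paper handles this separately in its final paragraph, passing to an auxiliary curve $C_{\sm}$ obtained by smoothing the self-nodes of each component, applying the already-established smooth-components case there, and transferring the $(\sigma,\mu)$-quasistability conclusion back to a sheaf on $\C^2/\C$ via a multidegree comparison and a pushforward along the desingularization.
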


Before proving Theorem \ref{thm:Abel1} we need a lemma about divisors on graphs.   

Let $\G$ be a graph with a vertex $v_0$ and a polarization $\mu^\sharp$ of degree $k+1$ on $\Gamma$. Consider a $(v_0,\mu^\sharp)$-quasistable divisor $D$ of degree $k+1$ on $\G$ and an edge $e_0\in E(\G)$. Define the divisor $D^\flat$ of degree $k$ on $\Gamma^{\{e_0\}}$ as:
\begin{equation}
    \label{eq:Dflat}
D^\flat(v)=
\begin{cases} 
\begin{array}{ll}
D(v), & \text{ if } v\in V(\G)\\
-1 ,&  \text{ if } v=v_{e_0},
\end{array}
\end{cases}
\end{equation}
where $v_{e_0}$ is the exceptional vertex over $e_0$.
Define the polarization 
\[
\mu(v)=
\begin{cases}
\begin{array}{ll}
\mu^\sharp(v) & \text{ if } v\ne s(e_0)\\
\mu^\sharp(v)-1 & \text{ if } v=s(e_0).
\end{array}
\end{cases}
\]

From now on, to avoid ambiguities in the notation, we will include the  polarization in the notation of the invariant $\beta$, so for example we will write $\beta_{D^\flat,\mu}$ instead of $\beta_{D^\flat}$. 

\begin{Def}\label{def:V}
We define $\ol{V}\subset V(\G^{\{e_0\}})\setminus\{s(e_0)\}$ as the maximal subset $\ol V$ with $\beta_{D^\flat,\mu}(\ol{V})$ minimal. We set $V:=\ol V\cap V(\Gamma)$.
\end{Def}

We also define 
\begin{equation}\label{eq:Etilde}
\wt\E:=E(V,V^c)\setminus \{e_0\}
\end{equation}
and we consider the divisor $\wt D$ on $\Gamma^{\wt \E}$ of degree $k$ taking $v\in V(\Gamma^{\wt \E})$ to 
\begin{equation}\label{eq:Dtilde}
\wt D(v)=\left\{ 
\begin{array}{ll}
D(v)-1, & \mbox{if }v=s(e_0)\\
D(v)+\val_{E(V,V^c)}(v) ,& \mbox{if } v\in V\\
-1, & \mbox{if $v$ is exceptional}\\
D(v), & \mbox{otherwise}.
\end{array}\right.
\end{equation}

\begin{Lem}\label{lem:abel1}
If $(\{e_0\},D^\flat)$ is not $(v_0,\mu)$-quasistable, then $(\wt \E,\wt D)$ is $(v_0,\mu)$-quasistable.
\end{Lem}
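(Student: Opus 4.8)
The strategy is to verify directly the quasistability inequalities for $(\wt\E,\wt D)$ by leveraging the minimality of $\beta_{D^\flat,\mu}(\ol V)$ coming from Definition \ref{def:V}. First I would record the basic relations: since $D$ is $(v_0,\mu^\sharp)$-quasistable of degree $k+1$ and $D^\flat$ is obtained by adding an exceptional vertex of degree $-1$ over $e_0$ while decreasing $\mu^\sharp$ by $1$ at $s(e_0)$, one checks that for a subset $U\subset V(\Gamma)$ we have $\beta_{D^\flat,\mu}(U)=\beta_{D,\mu^\sharp}(U)$ if $s(e_0)\notin U$ and $\beta_{D^\flat,\mu}(U)=\beta_{D,\mu^\sharp}(U)$ (the $+1$ from $\mu$ and the possible change in $\delta$ from splitting $e_0$ cancel appropriately) — the precise bookkeeping of how $\delta_U$ and $\mu(U)$ change when $e_0$ is subdivided is the first thing to nail down. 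The key point is that, by the failure of $(v_0,\mu)$-quasistability of $(\{e_0\},D^\flat)$, there is some subset witnessing $\beta_{D^\flat,\mu}<0$ or $=0$ with $v_0$ inside; minimality then forces $\ol V$ (which avoids $s(e_0)$) to have $\beta_{D^\flat,\mu}(\ol V)\le 0$, and I would pin down whether the exceptional vertex $v_{e_0}$ lies in $\ol V$ (it should, since $D^\flat(v_{e_0})=-1<0$ and including it can only help minimize, unless $s(e_0)\in\ol V$ which is excluded; so $t(e_0)\in\ol V$ as well by a similar argument, or one treats the cases).

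Next I would translate $\beta_{D^\flat,\mu}(\ol V)$ into information about $\wt D$. The divisor $\wt D$ is, by construction \eqref{eq:Dtilde}, the "chip-firing move from $V$": it is $D$ pushed along all edges in $E(V,V^c)$ into $V$, with $e_0$ subdivided and $-1$ placed on $s(e_0)$ instead of a full push. I expect $\wt D$ to be exactly the quasistable-type representative one gets by firing $V^c$ (or equivalently by the standard Dhar/minimality argument in \cite{AAMPJac}), so $\wt D \sim D^\flat$ up to the chosen subdivision, and $\beta_{\wt D,\mu}(U)$ for any $U$ should relate to $\beta_{D^\flat,\mu}(U\cap\ol V)$ or $\beta_{D^\flat,\mu}(U\cup\ol V)$ via Lemma \ref{lem:beta} (the submodularity identity $\beta(U\cup W)+\beta(U\cap W)=\beta(U)+\beta(W)-|E(U,W)|$). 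Concretely, for a proper $U\subsetneq V(\Gamma^{\wt\E})$ I would compute $\beta_{\wt D,\mu}(U)$ and show it equals $\beta_{D^\flat,\mu}(U')$ for an appropriate $U'$ built from $U$ and $\ol V$, then invoke minimality of $\beta_{D^\flat,\mu}(\ol V)$: if $v_0\in U$ then $v_0\in V$ (one must check $v_0\in V$, which follows because the witnessing set for non-quasistability contained $v_0$ and minimality aggregates it into $\ol V$), and the maximality of $\ol V$ gives the strict inequality $\beta_{\wt D,\mu}(U)>0$; if $v_0\notin U$ one gets $\beta_{\wt D,\mu}(U)\ge 0$ from minimality alone. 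Handling the exceptional vertices over $\wt\E$ is routine since they carry degree $-1$ and $\mu$-weight $0$, so $\beta_{\wt D,\mu}$ on a segment inside such an edge is $\ge 0$ automatically, and the genuine content is entirely at subsets of $V(\Gamma)$.

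\textbf{Main obstacle.} The delicate part is the interplay between the subdivision at $e_0$ and the subdivision along $\wt\E=E(V,V^c)\setminus\{e_0\}$: one must be careful that $e_0$ may or may not join $V$ to $V^c$, that $s(e_0)$ is deliberately excluded from $\ol V$ (so $e_0$ is \emph{not} in $\wt\E$ even if it connects $V$ to $V^c$), and that the "$-1$ at $s(e_0)$" in \eqref{eq:Dtilde} behaves like a half-fired exceptional chip. Getting the three $\beta$-identities (between $\beta_{D,\mu^\sharp}$, $\beta_{D^\flat,\mu}$, and $\beta_{\wt D,\mu}$) exactly right, with all the $\pm\tfrac12\delta$ and $\pm 1$ terms accounted for, and then correctly deducing $v_0\in V$, is where the proof could go wrong; I would structure it as a sequence of small lemmas/claims each isolating one identity, then assemble them at the end. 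I would also double-check the edge case where $\ol V=\emptyset$ or $\ol V\cup\{s(e_0)\}=V(\Gamma^{\{e_0\}})$ separately, since the "maximal with minimal $\beta$" set could be degenerate.
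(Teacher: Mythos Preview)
Your overall strategy---record the $\beta$-identities between $D$, $D^\flat$, and $\wt D$, then exploit the minimality/maximality of $\ol V$ together with the submodularity identity of Lemma~\ref{lem:beta}---is right and matches the paper's approach. However, there is a genuine error in your sketch: you assert that $v_0\in V$, arguing that ``the witnessing set for non-quasistability contained $v_0$ and minimality aggregates it into $\ol V$.'' This is false in general. The failure of quasistability can come from a subset $U$ with $\beta_{D^\flat,\mu}(U)<0$ that does \emph{not} contain $v_0$; in that case the minimizing set $\ol V$ need not contain $v_0$ either. What the paper actually establishes is only that $\{t(e_0),v_{e_0}\}\subset\ol V$ and that $0\le\beta_{D,\mu^\sharp}(V)\le 1$, with the left inequality strict if $v_0\in V$ and the right one strict if $v_0\notin V$; both cases must be handled. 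When $v_0\notin V$ and $v_0\in W$, the strict inequality $\beta_{\wt D}(W)>0$ comes not from the minimality of $\ol V$ but from $\beta_{D,\mu^\sharp}(V)<1$ combined with the $(v_0,\mu^\sharp)$-quasistability of $D$ applied to $V\cup W_2$ (where $W_2=W\cap V^c$). Your argument for strictness collapses exactly in this case.

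On the implementation side, the paper does something cleaner than relating $\beta_{\wt D,\mu}(U)$ to $\beta_{D^\flat,\mu}(U')$ directly on $\Gamma^{\wt\E}$: it first invokes \cite[Proposition~4.6(ii)]{AAMPJac} to reduce the quasistability of $(\wt\E,\wt D)$ to the quasistability of the induced divisor $\wt D_{\wt\E}$ on the graph $\Gamma_{\wt\E}$ obtained by \emph{deleting} (not subdividing) the edges in $\wt\E$, with a shifted polarization $\mu_{\wt\E}(v)=\mu(v)+\tfrac12\val_{\wt\E}(v)$. This eliminates the exceptional vertices over $\wt\E$ from the picture entirely. Then for $W\subset V(\Gamma)$ it splits $W=W_1\cup W_2$ with $W_1=W\cap V$, $W_2=W\cap V^c$, computes $\beta_{\wt D_{\wt\E},\mu_{\wt\E}}(W_1)=\beta_{D,\mu^\sharp}(W_1)$ and $\beta_{\wt D_{\wt\E},\mu_{\wt\E}}(W_2)=\beta_{D,\mu^\sharp}(W_2)-\val_{\wt\E}(W_2)$, and controls the latter via $\beta_{D,\mu^\sharp}(V\cup W_2)$ using submodularity and the maximality of $\ol V$. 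Your plan to compare with $\beta_{D^\flat,\mu}(U\cap\ol V)$ or $\beta_{D^\flat,\mu}(U\cup\ol V)$ is close in spirit, but the edge-deletion device together with the $W_1/W_2$ split makes the bookkeeping substantially cleaner and largely sidesteps the $e_0$-versus-$\wt\E$ subdivision interaction you correctly flagged as the main obstacle.
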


\begin{proof}
We begin by noting that $\{t(e_0),v_{e_0}\}\subset \ol{V}$. If $\ol{W}\subset V(\Gamma^{\{e_0\}})\setminus\{s(e_0)\}$, with $\{t(e_0),v_{e_0}\}\subset \ol{W}$ and $W=\ol{W}\cap V(\G)$, we have
\begin{align*}
\beta_{D^\flat,\mu}(\ol{W})=&\deg(D^\flat|_{\ol{W}})-\mu(\ol{W})+\frac{\delta_{\ol{W}}}{2}\\
=&\deg(D|_{W})-1-\mu^\sharp(W)+\frac{\delta_{W}}{2}\\
=&\beta_{D,\mu^\sharp}(W)-1.
\end{align*}
In the other case, it is not hard to see that $\beta_{D^\flat,\mu}(\ol{W})=\beta_{D,\mu^\sharp}(W)$ or $\beta_{D^\flat,\mu}(\ol{W})=\beta_{D,\mu^\sharp}(W)+1$.\par
  Since $D$ is $(v_0,\mu^\sharp)$-quasistable, we have that $\beta_{D,\mu^\sharp}(W)\geq0$ for every $W\subset V(\G)$, with strict inequality if $v_0\in W$.   Since $(\{e_0\},D^\flat)$ is not $(v_0,\mu)$-quasistable, we have that the minimal value for $\beta_{D^\flat,\mu}$ is either negative, or $0$ with $v_0\in V$. In either case, we must have that $\{t(e_0),v_{e_0}\}\subset \ol{V}$. Moreover, we have that 
  \begin{equation}\label{obeta}
0\leq\beta_{D,\mu^\sharp}(V)\leq1.
\end{equation}
with strict inequality on the left if $v_0\in V$, and strict inequality on the right if $v_0\notin V$. Also notice that $e_0\in E(V,V^c)$.

Consider the graph $\Gamma_{\wt \E}$ obtained by removing the edges in $\wt \E$. We define the polarization $\mu_{\wt \E}$ of degree $k+1+|\wt \E|$ on $\Gamma_{\wt \E}$ such that
\[
\mu_{\wt\E}(v)=\mu(v)+\frac{1}{2}\val_{\wt \E}(v).
\]
We now show that $(\wt \E, \wt D)$ is $(v_0,\mu)$-quasistable, which is equivalent to prove that $\wt D_{\wt \E}$ is $\mu_{\wt \E}$-quasistable on $\Gamma_{\wt \E}$ by \cite[Proposition 4.6 (ii)]{AAMPJac}.

Consider a subset $W\subset V(\Gamma_{\wt \E})=V(\Gamma)$. Set $W_1=V\cap W$ and $W_2:=V^c\cap W$, so that $W=W_1\cup W_2$. Notice that $s(e_0)\not\in W_1$ (see Figure \ref{Fig:map_grau1}).

\begin{figure}[htp]\begin{center}
\begin{tikzpicture}[scale=0.4]
\draw[line  width=0.3mm] \boundellipse{4,1}{-2}{4};
\draw[line  width=0.3mm] \boundellipse{-4,1}{-2}{4};
\draw[red, line width=0.2mm] \boundellipse{-4,2}{-1}{2};
\draw[red, line width=0.2mm] \boundellipse{4,-0.1}{-1}{2};

\draw[fill] (-0.5,3.3) rectangle (-0.45,2.9);
\draw [dashed, line width=0.2mm] plot [smooth, tension=0.5] coordinates {(-4,-1) (0,-1.1) (4,-1)};
\draw [dashed, line width=0.2mm] plot [smooth, tension=0.5] coordinates {(-4,1) (0,0.9) (4,1)};
\draw [black, line width=0.2mm] plot [smooth, tension=0.5] coordinates {(-4,3) (0,3.1) (4,3)};

\node[above] at (-4,5) {$V$};
\node[above] at (4,5) {$V^c$};
\node[below] at (-4,3) {$t(e_0)$};
\node[below] at (4,3) {$s(e_0)$};
\node[left] at (-4.1,0) {$W_1$};
\node[right] at (4.5,1) {$W_2$};

\node[above] at (1.5,3) {$e_0$};

\draw[fill] (-4,-1) circle [radius=0.15];
\draw[fill] (-4,1) circle [radius=0.15];
\draw[fill] (-4,3) circle [radius=0.15];
\draw[fill] (4,-1) circle [radius=0.15];
\draw[fill] (4,1) circle [radius=0.15];
\draw[fill] (4,3) circle [radius=0.15];
\end{tikzpicture}   
\end{center}
\caption{The subsets $W_1$ and $W_2$.}
\label{Fig:map_grau1}
\end{figure}
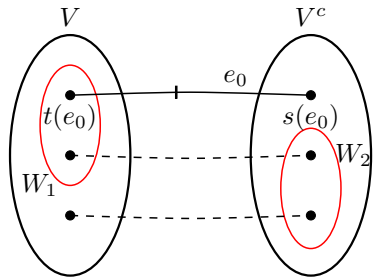

Moreover, by Lemma \ref{lem:beta},
\begin{equation}\label{eq:W1W2}
\beta_{\wt D_{\wt \E},\mu_{\wt \E}}(W)=\left\{ 
\begin{array}{ll}
\beta_{\wt D_{\wt \E},\mu_{\wt \E}}(W_1)+\beta_{\wt D_{\wt \E},\mu_{\wt \E}}(W_2), & \mbox{if }e_0\notin E(W_1,W_2)\\
\beta_{\wt D_{\wt \E},\mu_{\wt \E}}(W_1)+\beta_{\wt D_{\wt \E},\mu_{\wt \E}}(W_2)-1 ,& \mbox{if } e_0\in E(W_1,W_2).
\end{array}\right.
\end{equation}

We compute $\beta_{\wt D_{\wt \E},\mu_{\wt \E}}(W_1)$:
\begin{eqnarray}\label{eq:betaW1}
\beta_{\wt D_{\wt \E},\mu_{\wt \E}}(W_1)&=& \deg(\wt D|_{W_1})-\mu_{\wt\E}(W_1)+\frac{\delta_{\G_{\mc{E}},W_1}}{2} \nonumber\\
&=& (\deg(D|_{W_1})+\val_{\wt \E}(W_1))-\big(\mu^\sharp(W_1)+\frac{1}{2}\val_{\wt \E}(W_1)\big)+\frac{\delta_{\G,W_1}-\val_{\wt \E}(W_1)}{2}\nonumber \\ 
&=&\beta_{D,\mu^\sharp}(W_1)
\end{eqnarray}
By the $(v_0,\mu^\sharp)$-quasistability of $D$ we have 
 $\beta_{\wt D_{\wt \E},\mu_{\wt \E}}(W_1)\geq0$, where the inequality is strict if $v_0\in W_1$.

For the subset $W_2$, we have:
\begin{eqnarray}\label{eq:betaW2}
\beta_{\wt D_{\wt \E}}(W_2)&=& \deg(\wt D|_{W_2})-\mu_{\wt \E}(W_2)+\frac{\delta_{\G_{\mc{E}},W_2}}{2} \nonumber\\
&=& \left(\deg(D|_{W_2})-|\{s(e_0)\}\cap W_2|\right)-\big(\mu(W_2)+\frac{1}{2}\val_{\wt \E}(W_2)\big) +\frac{\delta_{\G,W_2}-\val_{\wt \E}(W_2)}{2}\nonumber \\
&=& \deg(D|_{W_2})-|\{s(e_0)\}\cap W_2|-\mu^\sharp(W_2)+|\{s(e_0)\}\cap W_2|\nonumber\\&&-\frac{1}{2}\val_{\wt \E}(W_2)+\frac{\delta_{\G,W_2}-\val_{\wt \E}(W_2)}{2}\nonumber \\
&=&\beta_{D,\mu^\sharp}(W_2)-\val_{\wt \E}(W_2)
\end{eqnarray}
 On the other hand,
\begin{eqnarray}\label{eq0}
\beta_{D,\mu^\sharp}(V\cup W_2)=\beta_{D,\mu^\sharp}(V)+\beta_{D,\mu^\sharp}(W_2)-\val_{\wt\E}(W_2)-|\{s(e_0)\}\cap W_2|.
\end{eqnarray}

If $s(e_0)\notin W_2$, then, by the properties defining the subset $V$, we have $\beta_{D,\mu^\sharp}(V\cup W_2)>\beta_{D,\mu^\sharp}(V)$. Hence, if $s(e_0)\not\in W_2$, from equation \eqref{eq0} we get $\beta_{D,\mu^\sharp}(W_2)-\val_{\wt \E}(W_2)>0$.
At any rate we have $\beta_{D,\mu^\sharp}(V\cup W_2)\geqslant0$ since $D$ is $(v_0,\mu^\sharp)$-quasistable. Hence in the case $s(e_0)\in W_2$,  again from equation \eqref{eq0}, we get
\[
\beta_{D,\mu^\sharp}(W_2)-\val_{\wt \E}(W_2)\ge 1-\beta_{D,\mu^\sharp}(V).
\]
However, we have $\beta_{D,\mu^\sharp}(V)\le 1$ by equation \eqref{obeta}, and hence  $\beta_{D,\mu^\sharp}(W_2)-\val_{\wt \E}(W_2)\ge0$. Moreover, if $v_0\in W_2$, then $v_0\notin V$ and hence $\beta_{D,\mu^\sharp}(V)<1$ again by equation \eqref{obeta}, from which we obtain $\beta_{D,\mu^\sharp}(W_2)-\val_{\wt  \E}(W_2)>0$.
By equation \eqref{eq:betaW2} we conclude that, in any case, $\beta_{\wt D_{\wt \E}}(W_2)\ge 0$, and the inequality is strict if $v_0\in W_2$.

Therefore, if $e_0\not\in E(W_1,W_2)$, by equation \eqref{eq:W1W2} we get $\beta_{\wt D_{\wt \E},\mu_{\wt \E}}(W)\ge0$ and the inequality is strict if $v_0\in W$. If $e_0\in E(W_1,W_2)$, then:
\begin{align*}
\beta_{\wt D_{\wt \E},\mu_{\wt \E}}(W)&=    \beta_{\wt D_{\wt \E},\mu_{\wt \E}}(W_1)+\beta_{\wt D_{\wt \E},\mu_{\wt \E}}(W_2)-1, \;\text{ by } \eqref{eq:W1W2}
    \\&=\beta_{D,\mu^\sharp}(W_1)+\beta_{D,\mu^\sharp}(W_2)-\val_{\wt\E}(W_2)-1, \;\text{ by } \eqref{eq:betaW1} \text{ and } \eqref{eq:betaW2}\\
                                &= \beta_{D,\mu^\sharp}(W_1)+(1-\beta_{D,\mu^\sharp}(V)+\beta_{D,\mu^\sharp}(V\cup W_2))-1, \;\text{ by }\eqref{eq0}\\
                           &=\beta_{D,\mu^\sharp}(W_1)-\beta_{D,\mu^\sharp}(V)+\beta_{D,\mu^\sharp}(V\cup W_2)\\
                           &\geq\beta_{D,\mu^\sharp}(V\cup W_2) , \;\text{ by the property defining $V$}
                           \\
                           &\geq0, \;\text{ by the $(v_0,\mu^\sharp)$-quasistability of $D$}.
\end{align*}
Moreover, if $v_0\in W=W_1\cup W_2$, then $v_0\in V\cup W_2$, hence $\beta_{D,\mu^\sharp}(V\cup W_2)>0$. This concludes the proof.
\end{proof}

We are now ready to prove the main theorem of this section.

\begin{proof}[Proof of Theorem \ref{thm:Abel1}]
First we assume that the components of $C$ are smooth. 
Let us check that the hypotheses of Theorem \ref{thm:tropgeoAbel} are satisfied. Let $(X,p_0)$ be the pointed tropical curve associated to $\pi$. We denote by $\mu$ the induced polarization of degree $k$ on $X$ and $\G$ the dual graph of $C$ (so that $X=X_\Gamma$).  As in Definition \ref{def:compatible} we let $D^\dagger_{\L}$ and $\D^\dagger_\L$ be the divisors on $\Gamma$ and $X$, respectively, associated to $\L$.\par

Since we deal with the Abel map of degree $1$, the hypercubes $\mc{H}_{f}$ are just segments of length $1$. As such, they are also  unitary simplexes, and the unique triangulation is the trivial one. It is enough to prove that this trivial triangulation is compatible (in the sense of Definition \ref{def:compatible}) with the tropical degree-$1$ Abel map 
$\alpha_{1,\D^\dagger_\L}^\trop\col X\ra  J^{\trop}_{p_0,\mu}(X)$. 

Let $e_0\in E(\G)$  be an edge, which, as we have just said, corresponds to a hypercube $\mc{H}_f$. Consider the degree $k+1$ polarization $\mu^\sharp$ defined by 
\[\mu^\sharp(v)=\left\{ 
\begin{array}{cl}
\mu(v), & \mbox{ if } v\neq s(e_0)\\
\mu(v)+1, & \mbox{ if }v=s(e_0).
\end{array}\right.\]
Let $D$ be the $(v_0,\mu^\sharp)$-quasistable divisor of degree $k+1$ equivalent to $D^\dagger_\L$. Denote by $\D$ the divisor $D$, seen as a divisor on $X$. Consider the divisor of degree $k$ on $X$ defined as $\D_t^\flat=\D-p_{e_{0},1-t}$, where $t$ is a real number $t\in [0,1]$. Since $\D$ and $\D^\dagger_\L$ are equivalent, we have: 
\[
\alpha^{\trop}_{1,\D^\dagger_\L}(p_{e_0,1-t})=[\D_t^\flat].
\]
Notice that $\D_t^\flat$ has combinatorial type $(\{e_0\},D^\flat)$ for every $t\in(0,1)$, 
where $D^\flat$ is the divisor defined in the equation \eqref{eq:Dflat}.
Therefore, if $(e_0,D^\flat)$ is $(v_0,\mu)$-quasistable, then $\D_t^\flat$ is $(p_0,\mu)$-quasistable for every $t\in[0,1]$ (see Proposition \ref{prop:quasiquasi}) and hence the classes $[\D_t^\flat]$ are all contained in the cell $\P_{\{e_0\},D^\flat}$, and we are done. So, we can assume that $(\{e_0\},D^\flat)$ is not $(v_0,\mu)$-quasistable. 

We will prove that $[\D_t^\flat]$ is contained in the cell $\P_{\wt \E,\wt D}$ for every $t\in [0,1]$, where $\wt \E$ and $\wt D$ are defined in equations \eqref{eq:Etilde} and \eqref{eq:Dtilde}. To prove this, we will write down explicitly the $(p_0,\mu)$-quasistable divisor $\qs(\D_t^\flat)$ equivalent to $\D_t^\flat$ and show that its combinatorial type is constant and equal to $(\wt\E,\wt D)$ for $t\in (0,1)$. The idea is that $\D_t^\flat$ is almost $(p_0,\mu)$-quasistable, so a little modification of $\D_t^\flat$ allows us to find $\qs(\D_t^\flat)$. 
   Let $\ol{V}$ and $V$ be as in Definition \ref{def:V}.
 
 Denote by $\mc{P}$ the principal divisor on $X$:
\begin{align*}
\mc{P}=&
s(e_0)
+\underset{s(e)\in V}{\sum_{e\in \wt{\E}}}
p_{e,t}
+\underset{t(e)\in V}{\sum_{e\in \wt{\E}}}
p_{e,1-t}
-p_{e_0,1-t}
-\underset{s(e)\in V}{\sum_{e\in \wt{\E}}}p_{e,0}
-\underset{t(e)\in V}{\sum_{e\in \wt{\E}}}p_{e,1}.
\end{align*}
 
Then, the divisor $\wt\D_t:=\D_t^\flat-\mc{P}$ has combinatorial type  equal to $(\wt\E,\wt D)$ for every $t\in(0,1)$. By Lemma \ref{lem:abel1}, we have that $(\wt{\E},\wt{D})$ is $(v_0,\mu)$-quasistable, which means that $\wt{\D}_t$ is $(p_0,\mu)$-quasistable. Hence, we have that
\[
\alpha^{\trop}_{\D^\dagger_\L,1}(p_{e_0,1-t})=[\D_t^\flat]=[\wt \D_t]\in \P_{\wt \E,\wt D},
\]
as claimed.

For the general case in which $C$  could have non-smooth components we argue as follows. First, let $C_1, \ldots, C_h$ be the components of $C$, and  assume without loss of generality that $\sigma(0)\in C_1$. We consider a curve $C_{\sm}=C_{\sm,1}\cup C_{\sm,2}\cup\ldots C_{\sm,h}$  obtained from $C$ by smoothing just the nodes of components of $C$. Let $\pi_{\sm}\col\C_{\sm}\to B$ be a smoothing of $C_{\sm}$ with a section $\sigma_{\sm}\col B\to \C_{\sm}$ of $\pi_{\sm}$ such that $\sigma_{\sm}(0)\in C_{\sm,1}$.  Let $\mu_{\sm}$ be the polarization on $\C_{\sm}$ induced by $\mu$. We let $\widetilde{\C}^2_{\sm}\to \C^2_{\sm}$ be a desingularization of $\C^2$ obtained by blowing up (strict transforms of) Weil divisors of the form $C_{\sm,i}\times C_{\sm,j}$ in some order (see \cite[Remark 3.7]{AAJCMP}). We define $g\col \widetilde{\C}^2\to \C^2$ to be the series of blow-ups of $\C^2$ along the strict transforms of Weil divisors of the form $C_i\times C_j$ in the same order as before. We note that $\C^2$  could smooth. However, each strict transform to $\widetilde{\C}^2$ of a divisor of type $C_i\times C_j$ is Cartier. 
 Let $\L_{\sm}$ be an invertible sheaf over $\C_{\sm}$ with the same multidegree of $\L$. Consider the  sheaves on $\widetilde{\C}^2_{\sm}$ and $\widetilde{\C}^2$:
 \[
 \widetilde{\M}_{\sm}:=f_{\sm}^*(\L_{sm})\otimes \O_{\widetilde{\C}^2_{\sm}}(f_{\sm}^*(\sigma_{\sm}(B)))\otimes \I_{\wt{\Delta}_{\sm}|\wt{\C}^2_{\sm}}\otimes \O_{\widetilde{\C}^2_{\sm}}\Big(-\sum_{1\le i\le h} \overline{Z}_{\sm,i}\Big)
 \]
and
\[
 \widetilde{\M}:=f^*(\L)\otimes \O_{\widetilde{\C}^2}(f^*(\sigma(B)))\otimes \I_{\wt{\Delta}|\wt{\C}^2}\otimes \O_{\widetilde{\C}^2}\Big(-\sum_{1\le i\le h} \overline{Z}_i\Big)
 \]
 where:
 \begin{itemize}
     \item the maps $f_{\sm}\col \wt{\C}^2_{\sm}\ra \C^2_{\sm}\ra \C_{\sm}$ and $f\col \wt{\C}^2\ra \C^2\ra \C$  are the compositions of the blowups and the projections onto the second factor;
     \item the Weil divisors $\wt{\Delta}_{\sm}$ and $\wt{\Delta}$ are the strict transforms to $\wt{\C}^2_{\sm}$ and $\wt{\C}^2$ of the diagonal subschemes of $\C^2_{\sm}$ and $\C^2$;  
     \item the Cartier divisors $\ol{Z}_{\sm,i}$ and $\ol{Z}_i$ are the closure in $\wt{\C}^2_{\sm}$ and $\wt{\C}^2$ of the divisors defined in equation \eqref{Eq:twZ} (which are obtained one from the other just replacing $C_{\sm,i}$ and $\dot{C}_{\sm,i}$ with $\dot{C}_i$ and $C_i$).
 \end{itemize}
It is clear that $\widetilde{\M}$ is a torsion-free rank-1 sheaf, as the only non invertible sheaf in the tensor products defining $\widetilde{\M}$ is $\I_{\Delta|\C^2}$, which is a torsion-free rank-1 sheaf. Moreover, we have that $\widetilde{\M}_{\sm}$ and $\widetilde{\M}$ have the same multidegree. By the first part of the proof and \cite[Equation 16 and Theorem 4.2]{AAJCMP}, we have that $\widetilde{\M}_{\sm}$ is $(\sigma_{\sm}, \mu_{\sm})$-quasistable. Since quasistability is a numerical condition, we have that $\widetilde{\M}$ is $(\sigma, \mu)$-quasistable on $\wt\C^2/\C$. By \cite[Theorem 2.1]{AAJCMP}, we have that $g_*(\widetilde{\M})$ is $(\sigma,\mu)$-quasistable on $\C^2/\C$ and hence induces a map $\C\to \overline{\J}^{\sigma}_{\mu}$ extending $\alpha^1_\L$.
\end{proof}

\begin{Def}
We say that a connected graph is \emph{biconnected} if it remains connected after the removal of any of its vertices.
\end{Def}

In the next statement, for a curve $C$ with a polarization $\mu$ and a smooth point $P$, we denote $\ol{\mc J}^P_{C,\mu}:=\ol{\mc J}^{\sigma}_\mu$, where $\sigma\col \Spec(k)\ra C$ has image $P$.

\begin{Thm}\label{thm:injective}
Let $\pi\col \C\ra B$ be a smoothing of a nodal curve $C$ whose dual graph is biconnected. Let $\sigma\col B \ra \C$ be a section of $\pi$ through its smooth locus. Let $\mu$ be a polarization of degree $k$ over the family and $\L$ be an invertible sheaf of degree $k$. Then the degree-$1$ Abel map $\alpha_\L^1\col \C\ra \ol{\J}_\mu^\sigma$ is injective. In particular, given a smooth point $P$ of $C$, we have that any point of $\ol{\J}_{C,\mu}^P$ parametrizing an invertible sheaf is contained in a subscheme of $\ol{\J}_{C,\mu}^P$ homeomorphic to $C$. 
\end{Thm}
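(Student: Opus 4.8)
\textbf{Proof strategy for Theorem \ref{thm:injective}.}

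The plan is to prove injectivity by a local-to-global argument combined with a careful analysis of the combinatorial type of the image sheaves. First I would reduce to checking that $\alpha^1_\L$ separates points: given two distinct points $P_1,P_2\in \C$, I want $\alpha^1_\L(P_1)\neq\alpha^1_\L(P_2)$. If $P_1$ and $P_2$ lie in distinct fibers $\C_{b_1},\C_{b_2}$ of $\pi$, the images lie in distinct fibers of $\overline{\J}^\sigma_\mu\to B$ (the structure map to $B$ is respected by the Abel map), so there is nothing to prove; hence we may assume $P_1,P_2$ lie in a single fiber. Over the generic point of $B$ the fiber is a smooth curve and the classical Abel map of degree one is injective (it is a closed embedding), so the only interesting case is $P_1,P_2\in C$, the special fiber. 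Here I would split into two subcases according to whether $P_1,P_2$ lie on the same component $C_v$ of $C$ or on different components.

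\textbf{Same component.} Suppose $P_1,P_2\in C_v$, with $v$ corresponding to a vertex of the dual graph $\Gamma$. Restricting the sheaves $\alpha^1_\L(P_i)=\L|_C(\sigma(0)-P_i)$ to $C_v$, we see that if the two sheaves were equal then in particular $\mathcal{O}_{C_v}(P_1)\cong \mathcal{O}_{C_v}(P_2)$ as line bundles on the smooth curve $C_v$; since a smooth curve of positive genus has injective degree-one Abel map, this forces $P_1=P_2$ unless $C_v\cong\mathbb{P}^1$. When $C_v\cong\mathbb{P}^1$, I would use that $\Gamma$ is biconnected: removing $v$ keeps $\Gamma$ connected, so $C_v$ meets the rest of the curve in at least two nodes lying on at least two distinct edges, and the multidegree of the torsion-free sheaf $\alpha^1_\L(P_i)$ (equivalently, the combinatorial type of the corresponding pseudo-divisor on $\Gamma$, as computed via the tropical Abel map in Remark \ref{rem:image-multidegree}) together with the position of $P_i$ relative to those two nodes pins down $P_i$. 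Concretely, the tropical picture: the point $P_i\in C_v$ corresponds to the vertex $p_0$-side endpoint of the trivial hypercube $\mc H_f=e$ for edges $e$ incident to $v$, and the analysis in the proof of Theorem \ref{thm:Abel1} shows the $(p_0,\mu)$-quasistable representative changes combinatorial type (gains an exceptional vertex on some edge $e\in\wt\E$) precisely as one moves off the vertex; biconnectedness guarantees $\wt\E$ is nonempty whenever $P_i$ is not at a distinguished position, which lets us recover $P_i$.

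\textbf{Different components.} Suppose $P_1\in C_v$, $P_2\in C_w$ with $v\neq w$. Here the key point is that the multidegrees of $\alpha^1_\L(P_1)$ and $\alpha^1_\L(P_2)$ already differ, or more precisely the combinatorial types of their $(p_0,\mu)$-quasistable pseudo-divisors on $\Gamma$ differ: subtracting a point on $C_v$ versus a point on $C_w$ from the fixed divisor $D^\dagger_\L$ and then passing to the unique quasistable representative (Theorem \ref{Thm:div_qs_curve}) gives pseudo-divisors whose supports/combinatorial data one can distinguish using biconnectedness — again the hypothesis ensures no component is a "leaf" whose removal disconnects $\Gamma$, which is what would otherwise allow two different components to yield the same quasistable type. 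The hard part of the argument is exactly this bookkeeping: showing that the map $v\mapsto (\text{combinatorial type of }\qs(\D^\dagger_\L - p_v))$ is injective on vertices of a biconnected graph, and that within a fixed component the residual position is recovered. I expect the main obstacle to be handling the $\mathbb{P}^1$-components cleanly and making the tropical computation of Remark \ref{rem:image-multidegree} precise enough to conclude; once injectivity is established, the final sentence follows immediately: $\alpha^1_\L$ restricted to the special fiber $C$ is a continuous injection from the compact space $C$ into $\overline{\J}^P_{C,\mu}$, hence a homeomorphism onto its image, and that image contains the point parametrizing any invertible sheaf of the relevant multidegree (namely $\L|_C(P-Q)$ for $Q\in C$ a smooth point, as $Q$ ranges over $C$), giving a copy of $C$ inside $\overline{\J}^P_{C,\mu}$ through that point.
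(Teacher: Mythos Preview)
Your overall decomposition is close to the paper's, but there are genuine gaps.

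\textbf{Same component, rational case.} Your argument here does not work. If $P_1,P_2$ are smooth points of the same component $C_v$, then they tropicalize to the \emph{same} vertex $v$, and the multidegree (combinatorial type) of $\alpha^1_\L(P_i)$ is the same for both: it depends only on $v$, not on which smooth point of $C_v$ you pick. So the combinatorial type cannot ``pin down $P_i$'' as you suggest. The paper's argument is different and cleaner: if $\alpha^1_\L(P_1)=\alpha^1_\L(P_2)$ then $\mathcal O_C(P_1-P_2)\cong\mathcal O_C$ on the \emph{whole} curve $C$, and biconnectedness of $\Gamma$ rules this out directly (a rational function with divisor $P_1-P_2$ would be constant on every component other than $C_v$, and since $\Gamma\setminus\{v\}$ is connected all those constants agree; then $f|_{C_v}$ would take that common value at $\geq 2$ nodes while having a single simple pole, a contradiction).

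\textbf{Nodes.} You never treat the case where $P_1$ or $P_2$ is a node. The paper handles this by showing that the map $V(\Gamma)\cup E(\Gamma)\to \mathcal{QD}_{p_0,\mu}(\Gamma)$ sending $v\mapsto (\emptyset,D_v)$ and $e\mapsto(\E_e,D_e)$ is injective. Biconnectedness (every cut has $\geq 2$ edges) together with Lemma~\ref{lem:abel1} forces $\E_e\neq\emptyset$, separating nodes from smooth points. Separating two distinct nodes uses the injectivity of the \emph{tropical} Abel map (cited from \cite{BF}): two edges mapping to the same cell $\P_{\E,D}$ would have images that are long diagonals of the same hypercube, hence intersect.

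\textbf{Last statement.} Your argument only shows that the image of $\alpha^1_\L$ passes through sheaves of the special form $\L|_C(P-Q)$. To hit an \emph{arbitrary} invertible sheaf $I\in\overline{\J}^P_{C,\mu}$, you must vary $\L$: extend $I$ to an invertible sheaf $\mathcal I$ on $\C$ and use $\alpha^1_{\mathcal I}$ instead.
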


\begin{proof}
Let $\Gamma$ be the dual graph of $C$. Let $(X,p_0)$ be the pointed tropical curve associated to $\pi$. We consider the tropical Abel map $\alpha^{\trop}_{1,\D^\dagger_\L}\col X\to J^{\trop}_{(p_0,\mu)}$. By \cite{BF} we have that $\alpha^{\trop}_{1,\D^{\dagger}_\L}$ is injective. By Theorem \ref{thm:Abel1} above, we have that for each edge $e\in E(\Gamma)$ and every $t\in (0,1)$, there exists a $(v_0,\mu)$-quasistable pseudo-divisor $(\E_e, D_e)$ on $\Gamma$ such that $\D^{\dagger}_{\mathcal{L}}-p_{e,t}$ is equivalent to a $(p_0,\mu)$-quasistable divisor with combinatorial type $(\E_e,D_e)$. Similarly, for each vertex $v\in V(\Gamma)$ we have a $(v_0,\mu)$-quasistable divisor $D_v$ on $\Gamma$ equivalent to $D^{\dagger}_{\mathcal{L}}-v$. 

We claim that the function
\begin{align*}
  f\col V(\Gamma)\cup E(\Gamma)&\to \mathcal{QD}_{p_0,\mu}(\Gamma)\\
                v&\mapsto (\emptyset, D_v)\\
                e&\mapsto (\E_e, D_e)
\end{align*}
is injective (recall that $\mathcal{QD}_{p_0,\mu}(\Gamma)$ denotes the poset of $(v_0,\mu)$-quasistable pseudo-divisors on $\Gamma$). 
 Indeed,
  by Lemma \ref{lem:abel1} and by the fact that, being $\Gamma$ biconnected, each cut in $\Gamma$ has at least two edges, it follows that $\E_e\neq\emptyset$ for every $e$. Then we have that $f(v)\neq f(e)$ for every $v\in V(\Gamma)$ and $e\in E(\Gamma)$.  If $f(v_1)=f(v_2)$ for distinct vertices $v_1,v_2\in V(\Gamma)$, then $v_1-v_2$ will be equivalent to $0$ in $\Gamma$, which is a contradiction since $\Gamma$ is biconnected. If $f(e_1)=f(e_2)$ for distinct edges $e_1,e_2\in E(\Gamma)$, then the images of $e_1$ and $e_2$ via $\alpha^{\trop}_{1,\D^\dagger_\L}$ are contained in the same hypercube $\P_{\E,D}$. By the first part of the proof of Lemma \ref{lem:qs}, the restrictions $\alpha^{\trop}_{1,\D^\dagger_\L}\big|_{e_j}\col e_j\to \P_{\E,D}$ are linear. Then the images of $e_1$ and $e_2$ in the hypercube $\P_{\E,D}$ are long diagonals, so they must intersect. This contradicts the injectivity of $\alpha^{\trop}_{1,\D^\dagger_\L}$.\par

    By Remark \ref{rem:image-multidegree}, we have that if $q$ is a smooth point of $C_v$ for $v\in V(\Gamma)$, then the multidegree of $\alpha^1_{\mathcal{L}}(q)$ is $f(v)$, while if $N_e$ is a node of $C$ for $e\in E(\Gamma)$, then the multidegree of $\alpha^{1}_{\mathcal{L}}(N_e)$ is $f(e)$. In particular,  if $q_1\neq q_2$ and $\alpha^{1}_{\mathcal{L}}(q_1)=\alpha^{1}_{\mathcal{L}}(q_2)$ for some $q_1,q_2\in C$, then $q_1,q_2$ are smooth points of $C$ in the same component $C_v$. Then we would have  $\mathcal{O}_C(q_1-q_2)\cong \mathcal{O}_C$ which is a contradiction with the fact that the dual graph of $C$ is biconnected.\par

     The last statement of the theorem follows from the fact that each invertible sheaf $I\in \overline{\J}_{C,\mu}^P$ is the image $\alpha^{1}_{\mathcal{I}}(P)$, where $\mathcal{I}$ is any extension of $I$ to $\C$.
\end{proof}

\begin{Rem} Notice that Theorem \ref{thm:injective} could fail if the dual graph of $C$ is not biconnected. For instance,
   if $C$ has a \emph{separating line}, that is a rational smooth irreducible component $E$ such that each node of the set $E\cap E^c$ is disconnecting, then it is clear that two smooth points in the separating line will have the same image via the degree-1 Abel map.\par
   We believe that Theorem \ref{thm:injective} still holds for curves without separating lines, but we were unable to find a suitable proof. If $\mathcal{L}$ is trivial and $\mu$ is the canonical polarization, then 
   the injectivity part of Theorem \ref{thm:injective} is true for every curve without separating lines:  this is \cite[Theorem 6.3]{CCE}.
\end{Rem}

\section{Acknowledgements}
\noindent
This is the first part of the Ph.D. thesis of the second author, under the supervision of the first and third authors. We want to thank Juliana Coelho, Ethan Cotterill, Eduardo Esteves, Rodrigo Gondim, Nivaldo Medeiros, Margarida Melo,  for the carefully reading of a preliminary version of the work. We feel that their observations and questions really improve our work. 

\bibliographystyle{abbrv}
\bibliography{bibliography}

\bigskip
\bigskip

\noindent{Alex Abreu, Sally Andria, and Marco Pacini
\\
Universidade Federal Fluminense
\\ 
Rua Prof. M. de Freitas, Instituto de Matem\'atica, Rio de Janeiro, Brazil}\\
{E-mail: \small\verb?alexbra1@gmail.com? \;\;
\small\verb?sally.andrya@gmail.com ?
\;\;   \small\verb?pacini.uff@gmail.com?}

\end{document}